\theoremstyle{definition}
\theoremstyle{plain}
\newtheorem{theorem}{Theorem}[section]
\newtheorem{remark}{Remark}[section]
\newtheorem{lemma}{Lemma}[section]
\newtheorem{proposition}{Proposition}[section]
\numberwithin{equation}{section}
\newcommand{\vs}{\vspace}
\begin{document}

\title{Normalized solutions for Sch\"odinger equations
with potential and general nonlinearities involving critical case on large convex domains\footnote{  This work was partially supported by NNSFC (No. 11671407 and No. 11801076), FDCT (No. 0091/2018/A3), Guangdong Special Support Program (No. 8-2015) and the key project of NSF of Guangdong Province (No. 2016A030311004).}}

\author{ Jun Wang$^{a}$, Zhaoyang Yin$^{a, b}$\footnote {Corresponding author. wangj937@mail2.sysu.edu.cn (J. Wang), mcsyzy@mail.sysu.edu.cn (Z. Yin)
} \\
{\small $^{a}$Department of Mathematics, Sun Yat-sen University, Guangzhou, 510275, China } \\
{\small $^{b}$Shenzhen Campus of Sun Yat-sen University, Shenzhen, 518107, China } \\
}

	\date{}

	\maketitle

\date{}

 \maketitle \vs{-.7cm}

  \begin{abstract}
In this paper, we
study the  following Schr\"odinger equations with potentials and general nonlinearities
\begin{equation*}
 \left\{\aligned
& -\Delta u+V(x)u+\lambda u=|u|^{q-2}u+\beta f(u),  \\
& \int |u|^2dx=\Theta,
\endaligned
\right.
\end{equation*}
both on $\mathbb{R}^N$ as well as on domains $r \Omega$ where $\Omega \subset \mathbb{R}^N$ is an open bounded convex domain and $r>0$ is large. The exponent satisfies $2+\frac{4}{N}\leq q\leq2^*=\frac{2 N}{N-2}$ and $f:\mathbb{R}\rightarrow \mathbb{R}$ satisfies $L^2$-subcritical or $L^2$-critical growth. This paper generalizes the conclusion of Bartsch et al. in \cite{TBAQ2023}(2023, arXiv preprint). Moreover, we consider the Sobolev critical case and $L^2$-critical case of the above problem.
\end{abstract}

{\footnotesize {\bf   Keywords:}  Sch\"odinger equations; Normalized solutions;  Variational methods, Mixed nonlinearity.

{\bf 2010 MSC:}  35A15, 35B09, 35B38, 35J50.
}

\section{ Introduction and main results}

This paper studies the existence of normalized solutions for the following Schr\"odinger equations with potentials and general nonlinearities
\begin{equation} \label{eq1.1}
 \left\{\aligned
& -\Delta u+V(x)u+\lambda u=|u|^{q-2}u+\beta f(u), &x\in \Omega, \\
& \int_{\Omega} |u|^2dx=\Theta,u\in H_0^1(\Omega), &x\in \Omega,
\endaligned
\right.
\end{equation}
where $\Omega \subset \mathbb{R}^N$ is either all of $\mathbb{R}^N$ or a bounded smooth convex domain, $N \geq 3$, $2+\frac{4}{N}\leq q\leq2^*=\frac{2 N}{N-2}$, the mass $\Theta>0$ and the parameter $\beta \in \mathbb{R}$ are prescribed. The frequency $\lambda$ is unknown and to be determined.

Such problems are motivated in particular by searching for solitary waves (stationary states) in nonlinear equations of the Schr\"odinger type. Specifically, consider the following nonlinear Schr\"odinger equation
\begin{equation*}
 \left\{\aligned
&-i \frac{\partial}{\partial t} \Psi=\Delta \Psi-V(x) \Psi+f\left(|\Psi|^2\right) \Psi=0,&(x, t) \in \mathbb{R}^N \times \mathbb{R}, \\
&\Psi=\Psi(x,t),&(x, t) \in \mathbb{C},
\endaligned
\right.
\end{equation*}
where $N \geq 1$. Researchers are interested in finding the existence of standing wave solutions to the above equations, that is, $\Psi(x, t)=$ $e^{i \lambda t} u(x), \lambda \in \mathbb{R}$, and $u: \mathbb{R}^N \rightarrow \mathbb{R}$, so we get the equation
$$
-\Delta u+(V(x)+\lambda) u=Q(u),\ x\in\mathbb{R}^N,
$$
where $Q(u)=f(|u|^2)u$. For physical reasons, we focus on the existence of normalized solutions for the following problem
\begin{equation} \label{eq1.2}
 \left\{\aligned
&-\Delta u+(V(x)+\lambda) u=Q(u),\ x\in \mathbb{R}^N, \\
& \int_{\mathbb{R}^N} |u|^2dx=\Theta,\ x\in \mathbb{R}^N.
\endaligned
\right.
\end{equation}
For more physical background about the above equation, please refer to \cite{{DCQN1999},{LE2010}}.

If potential $V(x)$ in \eqref{eq1.2} is constant, we call \eqref{eq1.2} is autonomous. In this case, recalling paper \cite{LJJ1997}, Jeanjean developed an approach based on the
Pohozaev identity which has been used successfully in recent years. The key to this method is to find a bounded Palais-Smale sequences by using the transformation $s*u(x) = e^{\frac{sN}{2}}u(e^sx)$. After that, by weakening the conditions in \cite{LJJ1997}, Jeanjean \cite{LJSL2020} and Bieganowski \cite{BBJM2021} improved these results. Of course, these articles only consider the problem of a single nonlinear term. Recently, there have been many studies on mixed nonlinear terms. For example, Soave \cite{{NSJDE2020},{NSJFA2020}} studied normalized solution of \eqref{eq1.2}
with mixed nonlinearity $f(|u|)u = \mu|u|^{q-2}u + |u|^{p-2}u$, $2 < p < 2 + \frac{4}{N} < q \leq2^*=\frac{2N}{N-2}$. Specifically, Soave in \cite{NSJDE2020} obtained many results of existence and non-existence. More precisely, if $2<q<p=2+\frac{4}{N}$, that is, the
leading nonlinearity is $L^2$-critical and a $L^2$-subcritical lower order term. \eqref{eq1.2} had a real-valued positive and radially symmetric solution for some $\lambda<0$ in $\mathbb{R}^N$ provided $\mu>0$ and $\Theta>0$ small enough. Moreover, if $\mu<0$, \eqref{eq1.2} had no solution. If $2+\frac{4}{N}=q<p<2^*$, that is, the
leading term is $L^2$-critical and $L^2$-supercritical, \eqref{eq1.2} had a real-valued positive, radially symmetric solution for some $\lambda<0$ in $\mathbb{R}^N$ provided $\mu>0$ and $\mu, \Theta$ satisfy the appropriate conditions. If $2<q< 2+\frac{4}{N}<p<2^*$, that is, the
leading term $L^2$-subcritical and $L^2$-supercritical, \eqref{eq1.2} also had a real-valued positive and radially symmetric solution for some $\lambda<0$ in $\mathbb{R}^N$ provided $\Theta>0, \mu<0$ and $\mu, \Theta$ satisfy the appropriate conditions. Soave in \cite{NSJFA2020} considered the Sobolev critical case and obtained some similar results. In particular, the Sobolev critical case also has been considered in \cite{{TAK2012},{TAK2013},{RKTO2017},{CMGX2013}}(see also the references therein). It is worth mentioning that many researchers are also interested in the existence of normalized multiple solutions. In \cite{LJTL2022}, Jeanjean et al. obtained the existence of normalized multiple solutions for Sobolev critical case in \eqref{eq1.2}. For more results on this aspect, please refer to \cite{{JWYW2022},{TBNS2017},{TBNS2019},{DBJC2019},{JBLT2013}} and its references.

If \eqref{eq1.2} is non-autonomous, Ikoma and Miyamoto in \cite{NIYM2020} considered question \eqref{eq1.2} with $V(x)\in C(\mathbb{R}^N), 0\not\equiv V(x)\leq0, V(x) \rightarrow0 (|x|\rightarrow\infty)$, they obtained some existence and non-existence results. After that, Ding and Zhong in \cite{YDXZ2022} proved the existence of normalized solutions to the following Schr\"odinger equation
$$
\left\{\begin{array}{l}
-\Delta u(x)+V(x) u(x)+\lambda u(x)=g(u(x)), \quad x\in\mathbb{R}^N, \\
0 \leq u(x) \in H^1(\mathbb{R}^N), N \geq 3,
\end{array}\right.
$$
where $g$ satisfies:
\begin{itemize}
\item[$(G1)$]\ $g: \mathbb{R} \rightarrow \mathbb{R}$ is continuous and odd.

\item[$(G2)$]\ There exists some $(\alpha, \beta) \in \mathbb{R}_{+}^2$ satisfying $2+\frac{4}{N}<\alpha \leq \beta<\frac{2 N}{N-2}$ such that
$$
\alpha G(s) \leq g(s) s \leq \beta G(s) \text { with } G(s)=\int_0^s g(t) d t .
$$

\item[$(G3)$]\ The functional defined by $\widetilde{G}(s):=\frac{1}{2} g(s) s-G(s)$ is of class $C^1$ and
$$
\widetilde{G}^{\prime}(s) s \geq \alpha \widetilde{G}(s), \forall s \in \mathbb{R},
$$
where $\alpha$ is given by $(G2)$.
\end{itemize}
Note that, $(G3)$ plays a crucial role in the uniqueness of $t_u$(see \cite{YDXZ2022} or \cite[Lemma 2.9]{{LJJ1997}}). However, we do not need this condition, since we directly perform scaling and complex calculations on energy functionals. Recently, Bartsch et al. in \cite{TBAQ2023} considered following Schr\"odinger equations with potentials and inhomogeneous nonlinearities on large convex domains
\begin{equation*}
 \left\{\aligned
& -\Delta u+V(x)u+\lambda u=|u|^{q-2}u+\beta|u|^{p-2}u,  \\
& \int |u|^2dx=\Theta,
\endaligned
\right.
\end{equation*}
they developed a robust method to study the existence of normalized solutions of nonlinear Schr\"odinger equations with potential. Under the stimulation of \cite{TBAQ2023}, our goal is to generalize its conclusion to general nonlinear terms and the Sobolev critical case.

In order to state our main results, we introduce some notations. Set $s_{+}=\max \{s, 0\}$, $s_{-}=\min \{s, 0\}$ for $s \in \mathbb{R}$. The Aubin-Talenti constant \cite{TA1976} is denoted by $S$, that is, $S$ is the best constant in the Sobolev embedding $\mathcal{D}^{1,2}(\mathbb{R}^N) \hookrightarrow L^{2^*}(\mathbb{R}^N)$, where $\mathcal{D}^{1,2}(\mathbb{R}^N)$ denotes the completion of $C_c^{\infty}(\mathbb{R}^N)$ with respect to the norm $\|u\|_{\mathcal{D}^{1,2}}:=$ $\|\nabla u\|_2$. It is well known \cite{GT1976} that the optimal constant is achieved by (any multiple of)
\begin{equation}\label{eq1.3}
  U_{\varepsilon, y}(x)=[N(N-2)]^{\frac{N-2}{4}}\left(\frac{\varepsilon}{\varepsilon^2+|x-y|^2}\right)^{\frac{N-2}{2}},\ \varepsilon>0,\ y \in \mathbb{R}^N,
\end{equation}
which are the only positive classical solutions to the critical Lane-Emden equation
$$
-\Delta w=w^{2^*-1}, \quad w>0 \quad \text { in } \mathbb{R}^N .
$$

Let $C_{N, s}$ be the best constant in the Gagliardo-Nirenberg inequality
$$
\|u\|_s^s \leq C_{N, s}\|u\|_2^{\frac{2 s-N(s-2)}{2}}\|\nabla u\|_2^{\frac{N(s-2)}{2}},\ 2<s<2^* .
$$

For some results, we expect that $V$ is $C^1$ and consider the function
$$
\widetilde{V}: \mathbb{R}^N \rightarrow \mathbb{R}, \quad \widetilde{V}(x)=\nabla V(x) \cdot x
$$
For $\Omega \subset \mathbb{R}^N$ and $r>0$, let
$$
\Omega_r=\left\{r x \in \mathbb{R}^N: x \in \Omega\right\}
$$
and
$$
S_{r, \Theta}:=S_\Theta \cap H_0^1(\Omega_r)=\left\{u \in H_0^1(\Omega_r):\|u\|_{L^2(\Omega_r)}^2=\Theta\right\} .
$$
From now on we assume that $\Omega \subset \mathbb{R}^N$ is a bounded smooth convex domain with $0 \in \Omega$.

Our assumptions on $V$ are:
\begin{itemize}
\item[$(V_0)$]\ $ V \in C^1(\mathbb{R}^N) \cap L^{\frac{N}{2}}(\mathbb{R}^N)$ is bounded and $\left\|V_{-}\right\|_{\frac{N}{2}}<S$.

\item[$(\widetilde{V_0})$]\ $ V \in C^1(\mathbb{R}^N) \cap L^{\frac{N}{2}}(\mathbb{R}^N)$ is bounded and $\left\|V_{-}\right\|_{\frac{N}{2}}<\frac{N(q-p_2)-2[N(p_2-2)-4]}{N(q-p_2)}S$.

\item[$(\widehat{V_0})$]\ $ V \in C^1(\mathbb{R}^N) \cap L^{\frac{N}{2}}(\mathbb{R}^N)$ is bounded and $\left\|V_{-}\right\|_{\frac{N}{2}}<\left(1-\frac{NC_N\Theta^{\frac{2}{N}}}{N+2}\right)S$.

\item[$(V_1)$]\ $V$ is of class $C^1, \lim\limits_{|x| \rightarrow \infty} V(x)=0$, and there exists $\rho \in(0,1)$ such that
$$
\liminf\limits_{|x| \rightarrow \infty} \inf _{y \in B(x, \rho|x|)}(x \cdot \nabla V(y)) e^{\tau|x|}>0 \quad \text { for any } \tau>0.
$$
\end{itemize}
\begin{remark}\label{R1.1}
In order to obtain the existence of normalized solutions in $\mathbb{R}^N$ by taking $\Omega=B_1$, the unit ball centered at the origin in $\mathbb{R}^N$, and analyzing the compactness of the solutions $u_{r, \Theta}$ established in Theorems \ref{t1.1}, \ref{t1.2} and \ref{t1.3} as $r$ tends to infinity, we require the condition $(V_1)$.
\end{remark}

Now, we make the following assumptions on the nonlinearity $f$:
\begin{itemize}
\item[$(f_1)$]\ $f\in C^1(\mathbb{R}, \mathbb{R})$ and $f$ is odd.

\item[$(f_2)$]\ There exists some $(p_1, p_2) \in \mathbb{R}_{+}^2$ satisfying $2< p_2 \leq p_1<2+\frac{4}{N}$ such that
$$
p_2 F(\tau) \leq f(\tau) \tau \leq p_1 F(\tau) \text { with } F(\tau)=\int_0^\tau f(t) d t .
$$

\item[($\widetilde{f_2})$]\ There exists some $(p_1, p_2) \in \mathbb{R}_{+}^2$ satisfying $2<p_2 < p_1=2+\frac{4}{N}$ such that
$$
p_2 F(\tau) \leq f(\tau) \tau \leq p_1 F(\tau).
$$
\end{itemize}
\begin{remark}\label{R1.2}
If $f(u)=\sum\limits_{i=1}^ma_i|u|^{\sigma_i-2}u$, where $a_i>0$ and $2<\sigma_i<2+\frac{4}{N}$, then the assumption $(f_1)$ can be weakened to $f\in C(\mathbb{R}, \mathbb{R})$ and $f$ is odd. In order to ensure the boundedness of Palais-Smale sequence under constraint conditions in Lemma \ref{L3.2}, we need to slightly strengthen the conditions for the nonlinear term $f$, that is, $f\in C^1(\mathbb{R}, \mathbb{R})$.
\end{remark}

The main results of this paper are as follows. Firstly, we consider the Sobolev subcritical case, that is, $2+\frac{4}{N}<q<2^*$.
\begin{theorem}\label{t1.1}(\textbf{case $\beta \leq 0$})
Assume $V$ satisfies $\left(V_0\right)$, is of class $C^1$ and $\widetilde{V}$ is bounded, $f$ satisfies $(f_1)-(f_2)$. There hold:

(i) For every $\Theta>0$, there exists $r_\Theta>0$ such that \eqref{eq1.1} on $\Omega_r$ with $r>r_\Theta$ has a mountain pass type solution $\left(\lambda_{r, \Theta}, u_{r, \Theta}\right)$ with $u_{r, \Theta}>0$ in $\Omega_r$ and positive energy $I_r\left(u_{r, \Theta}\right)>0$. Moreover, there exists $C_\Theta>0$ such that
$$
\limsup _{r \rightarrow \infty} \max _{x \in \Omega_r} u_{r, \Theta}(x)<C_\Theta .
$$

(ii) If in addition $\|\widetilde{V}_{+}\|_{\frac{N}{2}}<2 S$, then there exists $\widetilde{\Theta}>0$ such that
$$
\liminf _{r \rightarrow \infty} \lambda_{r, \Theta}>0 \text { for any } 0<\Theta<\widetilde{\Theta} .
$$
\end{theorem}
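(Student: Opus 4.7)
The plan is to associate to \eqref{eq1.1} the energy
$$I_r(u)=\tfrac{1}{2}\int_{\Om_r}\bigl(|\na u|^2+V(x)u^2\bigr)dx-\tfrac{1}{q}\int_{\Om_r}|u|^q\,dx-\be\int_{\Om_r}F(u)\,dx$$
on the sphere $S_{r,\Theta}$, and to set up a mountain-pass structure via the $L^2$-preserving fibration $s*u(x):=e^{sN/2}u(e^s x)$, which maps $H_0^1(\Om_r)$ into itself for $s\ge 0$ (the support contracts). On any fibre, $g_u(s):=I_r(s*u)$ runs from $I_r(u)$ at $s=0$ to $-\infty$ as $s\to+\infty$: the $L^2$-supercritical exponent $q>2+4/N$ makes $-\tfrac{1}{q}e^{sN(q-2)/2}\|u\|_q^q$ dominate over $e^{2s}\|\na u\|_2^2$, while the subcritical $F$-term, of scale $e^{sN(p_1-2)/2}$ with exponent less than $2$ by $(f_2)$, is negligible at infinity and moreover non-negative since $\be\le 0$ and $F\ge 0$. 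Combining Gagliardo--Nirenberg with $(V_0)$, which yields $\tfrac12\int(|\na u|^2+Vu^2)\ge \tfrac12(1-\|V_-\|_{N/2}/S)\|\na u\|_2^2$, gives coercivity and positivity of $I_r$ on $\{u\in S_{r,\Theta}:\|\na u\|_2\le R\}$ for some $R$ independent of $r$. This produces a genuine mountain pass geometry with value $c_r>0$; a direct comparison with the autonomous limit shows $c_r$ is bounded from below and above uniformly for $r$ large.

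Following Jeanjean's device, I would next consider $\widehat I_r(s,u):=I_r(s*u)$ on $\R_{\ge 0}\times S_{r,\Theta}$, whose MP value equals $c_r$, and extract via Ekeland/deformation a sequence $(s_n,u_n)$ with $\widehat I_r(s_n,u_n)\to c_r$, $\pa_s\widehat I_r(s_n,u_n)\to 0$, and $\|\mathrm{grad}_S\widehat I_r(s_n,u_n)\|\to 0$. Setting $v_n:=s_n*u_n\in S_{r,\Theta}$ yields a PS sequence for $I_r|_{S_{r,\Theta}}$ that asymptotically satisfies the Pohozaev-type identity
$$P_r(v_n)=\|\na v_n\|_2^2-\tfrac{N(q-2)}{2q}\|v_n\|_q^q-\be\int\bigl(\tfrac{N}{2}f(v_n)v_n-NF(v_n)\bigr)dx-\tfrac12\int\widetilde V v_n^2\,dx=o(1).$$
Eliminating $\|\na v_n\|_2^2$ between $I_r(v_n)=c_r+o(1)$ and $P_r(v_n)=o(1)$, then exploiting $q>2+\tfrac{4}{N}$, boundedness of $\widetilde V$, and the sign information $\tfrac{N}{2}f(\tau)\tau-NF(\tau)\ge\tfrac{N(p_2-2)}{2}F(\tau)\ge 0$ from $(f_2)$, yields a uniform bound on $\|\na v_n\|_2$.

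Because $\Om_r$ is bounded, the embedding $H_0^1(\Om_r)\hookrightarrow L^s(\Om_r)$ is compact for every $s\in[2,2^*)$, and so $v_n\rightharpoonup u_{r,\Theta}$ weakly in $H_0^1(\Om_r)$ and strongly in $L^s(\Om_r)$ for such $s$. The growth $|f(\tau)|\le C|\tau|^{p_1-1}$ (with $p_1<2+4/N<2^*$) forces strong convergence in the $f$-term, while a Br\'ezis--Lieb argument on the $L^q$-term identifies the weak limit $u_{r,\Theta}$ as a weak solution of \eqref{eq1.1} with some Lagrange multiplier $\la_{r,\Theta}$, and the PS condition promotes convergence in $H^1$ so that $\|u_{r,\Theta}\|_2^2=\Theta$. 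Positivity of $c_r$ forces $u_{r,\Theta}\not\equiv 0$; replacing $v_n$ by $|v_n|$ at every step (the scaling and all integrals respect the modulus) forces $u_{r,\Theta}\ge 0$, and the strong maximum principle gives $u_{r,\Theta}>0$ in $\Om_r$. A Moser iteration applied to the equation, with coefficients controlled uniformly via $\|V\|_{N/2}$ and the uniform $H^1$-bound, supplies the uniform $L^\infty$-estimate $\limsup_{r\to\infty}\max_{\Om_r}u_{r,\Theta}<C_\Theta$.

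For part (ii) the plan is to combine the Nehari-type identity (from testing the equation with $u_{r,\Theta}$) with $P_r(u_{r,\Theta})=0$ to eliminate $\|\na u_{r,\Theta}\|_2^2$, producing
$$\la_{r,\Theta}\Theta=\text{(terms controlled by Gagliardo--Nirenberg and $\widetilde V_+,V$)}.$$
The hypothesis $\|\widetilde V_+\|_{N/2}<2S$ ensures the $\widetilde V$-contribution is absorbable by the gradient term, while the $L^q$-term is controlled by $\|\na u_{r,\Theta}\|_2^{N(q-2)/2}\Theta^{(2q-N(q-2))/4}$ with small factor in $\Theta$ for $\Theta<\widetilde\Theta$; the $F$-contribution is analogously dominated. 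The main obstacle of the whole argument is (a) maintaining the boundedness of the PS sequence and the validity of $P_r(v_n)=o(1)$ \emph{uniformly in} $r$, since the Pohozaev identity on $\Om_r$ carries a boundary term $\tfrac12\int_{\pa\Om_r}(x\cdot\nu)|\na u|^2$ that is non-negative on the convex $\Om$ and must be absorbed against the energy upper bound, and (b) keeping the mountain-pass level $c_r$ uniformly bounded away from $0$ while the first Dirichlet eigenvalue of $\Om_r$ collapses to zero; both rely on a careful comparison with the autonomous problem on $\R^N$.
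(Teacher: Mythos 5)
Your plan has two genuine gaps, both traceable to the fact that the problem lives on a bounded domain. First, the bounded Palais--Smale sequence. The fibration $s*u(x)=e^{sN/2}u(e^{s}x)$ maps $H_0^1(\Omega_r)$ into itself only for $s\ge 0$ (the support contracts into the star-shaped $\Omega_r$); for $s<0$ it leaves the space. Jeanjean's augmented functional $\widehat I_r(s,u)=I_r(s*u)$ therefore lives on a half-line in $s$, and the deformation/Ekeland argument only yields the one-sided information $\partial_s\widehat I_r(0,u_n)\le o(1)$, i.e.\ $P_r(v_n)\le o(1)$, at points sitting on the boundary $s=0$. That inequality goes the wrong way: combined with $I_r(v_n)=c_r+o(1)$ it gives $\|\nabla v_n\|_2^2\le \frac{N(q-2)}{4}\|\nabla v_n\|_2^2+\cdots$ with $\frac{N(q-2)}{4}>1$, which bounds nothing. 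This is precisely why the paper does not use the fibration at all: it runs the monotonicity trick (the family $I_{r,s}$ with the $|u|^q$-term scaled by $s\in[\tfrac12,1]$, Proposition \ref{p3.1}) to get bounded PS sequences and genuine solutions of \eqref{eq3.2} for a.e.\ $s$, and only then derives the uniform gradient bound (Lemma \ref{L3.3}) from the \emph{exact} Pohozaev identity for those solutions, in which the boundary term $\frac{1}{2N}\int_{\partial\Omega_r}|\nabla u|^2(x\cdot\mathbf n)\,d\sigma$ appears with the favourable sign and is discarded using convexity. Relatedly, your identity $P_r(u_{r,\Theta})=0$ is simply false for solutions on $\Omega_r$: the true identity carries that boundary term, and it is nonzero in general.

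Second, part (ii) is essentially missing. In the Nehari--Pohozaev elimination that produces $\lambda\Theta$, one needs a \emph{lower} bound on $\|\nabla u\|_2^2$ of order $\Theta^{(q(N-2)-2N)/(N(q-2)-4)}$, and there the boundary term enters with the unfavourable sign, so the computation cannot be closed on $\Omega_r$ directly. The paper instead (Lemma \ref{L3.6}) first proves a non-vanishing statement ($\liminf_r\max_{\Omega_r}u_{r,\Theta}>0$ for small $\Theta$, using the blow-up of the mountain-pass level as $\Theta\to 0$ together with an eigenvalue comparison), then passes to the limit $r\to\infty$ — ruling out concentration near $\partial\Omega_r$ by a half-space Liouville theorem — and only performs the Nehari--Pohozaev computation for the limiting solution on $\mathbb R^N$ (or a translate), where no boundary term exists and $\|\widetilde V_+\|_{N/2}<2S$ can be used. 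None of this compactness/non-vanishing analysis appears in your sketch. On the positive side, your Moser/Brezis--Kato route to the uniform $L^\infty$ bound is a legitimate alternative to the paper's rescaling-plus-Liouville argument of Lemma \ref{L3.5}, since $|u|^{q-2}\in L^{2^*/(q-2)}$ with $2^*/(q-2)>N/2$ and the iteration constants are domain-independent once $\{\lambda_{r,\Theta}\}$ is shown to be bounded.
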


\begin{theorem}\label{t1.2}(\textbf{case $\beta > 0$})
Assume $V$ satisfies $\left(V_0\right)$, $f$ satisfies $(f_1)-(f_2)$ and set
$$
\Theta_V=\left[\frac{1-\left\|V_{-}\right\|_{\frac{N}{2}} S^{-1}}{2N(q-p_1)}\right]^{\frac{N}{2}}\left[\frac{q(4-N(p_1-2))}{C_{N, q}}\right]^{\frac{4-N(p_1-2)}{2 (q-p_1)}}\left[\frac{N(q-2)-4}{\alpha\beta C_{N, p_1}}\right]^{\frac{N(q-2)-4}{2 (q-p_1)}}.
$$
Then the following hold for $0<\Theta<\Theta_V$:

(i) There exists $r_\Theta>0$ such that \eqref{eq1.1} on $\Omega_r$ with $r>r_\Theta$ has a local minimum type solution $\left(\lambda_{r, \Theta}, u_{r, \Theta}\right)$ with $u_{r, \Theta}>0$ in $\Omega_r$ and negative energy $I_r\left(u_{r, \Theta}\right)<0$.

(ii) There exists $C_\Theta>0$ such that
$$
\limsup\limits_{r \rightarrow \infty} \max\limits_{x \in \Omega_r} u_{r, \Theta}(x)<C_\Theta, \quad \liminf\limits_{r \rightarrow \infty} \lambda_{r, \Theta}>0 .
$$
\end{theorem}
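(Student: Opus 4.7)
The plan is to realize $u_{r,\Theta}$ as a constrained local minimizer of
\[
I_r(u)=\frac{1}{2}\int_{\Omega_r}|\nabla u|^2+\frac{1}{2}\int_{\Omega_r}Vu^2-\frac{1}{q}\int_{\Omega_r}|u|^q-\beta\int_{\Omega_r}F(u)
\]
on $S_{r,\Theta}$, sitting inside a small gradient-ball whose radius is dictated by the threshold $\Theta_V$. The foundational estimate is a uniform-in-$r$ lower bound obtained by combining $(V_0)$ with the Sobolev and Gagliardo-Nirenberg inequalities: from $\int Vu^2\geq -\|V_-\|_{N/2}S^{-1}\|\nabla u\|_2^2$, the GN bound on $\|u\|_q^q$, and the growth $F(\tau)\lesssim|\tau|^{p_1}$ extracted from the upper half of $(f_2)$, one obtains $I_r(u)\geq h(\|\nabla u\|_2)$ with
\[
h(t)=\frac{1-\|V_-\|_{N/2}S^{-1}}{2}\,t^2-\frac{C_{N,q}}{q}\,\Theta^{\frac{2q-N(q-2)}{4}}\,t^{\frac{N(q-2)}{2}}-c\beta\,C_{N,p_1}\,\Theta^{\frac{2p_1-N(p_1-2)}{4}}\,t^{\frac{N(p_1-2)}{2}}.
\]
Because $N(q-2)/2>2>N(p_1-2)/2$, the profile $h$ has a strictly positive local maximum precisely when $\Theta<\Theta_V$; this is exactly the content of the explicit formula for $\Theta_V$ in the statement, obtained by saturating $\max_t h(t)=0$.

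Fix $\rho$ inside the interval on which $h>0$ and set $A_\rho:=\{u\in S_{r,\Theta}:\|\nabla u\|_2<\rho\}$. The sphere $\{\|\nabla u\|_2=\rho\}\cap S_{r,\Theta}$ is a barrier on which $I_r\geq\kappa>0$ uniformly in $r$. For $r$ large, $\Omega_r$ contains arbitrarily large balls, so normalized bump test functions with small gradient and substantial $L^q$ and $L^{p_1}$ mass produce $u_*\in A_\rho$ with $I_r(u_*)<0$, whence $c_r:=\inf_{\overline{A_\rho}}I_r<0$. A minimizing sequence is bounded in $H_0^1(\Omega_r)$, and compactness of $H_0^1(\Omega_r)\hookrightarrow L^s(\Omega_r)$ for $s\in[2,2^*)$ on the bounded domain delivers a minimizer $u_{r,\Theta}\in\overline{A_\rho}$ attaining $c_r$. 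The positive barrier rules out $\|\nabla u_{r,\Theta}\|_2=\rho$, so $u_{r,\Theta}$ is a genuine constrained critical point with Lagrange multiplier $\lambda_{r,\Theta}$. Replacing $u_{r,\Theta}$ by $|u_{r,\Theta}|$ leaves every term of $I_r$ and the constraint unchanged (since $f$ odd forces $F$ even), and elliptic regularity together with the strong maximum principle produces a positive classical solution.

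For part (ii), the inclusion $u_{r,\Theta}\in A_\rho$ gives the uniform bound $\|\nabla u_{r,\Theta}\|_2<\rho$; combined with the subcritical growth of $f$ and $q<2^*$, a standard Moser iteration using the sharp Sobolev inequality yields $\|u_{r,\Theta}\|_\infty\leq C_\Theta$ uniformly in $r$. To bound $\lambda_{r,\Theta}$ from below, I would test the Euler-Lagrange equation against $u_{r,\Theta}$ and combine with $2I_r(u_{r,\Theta})<0$ to obtain
\[
\lambda_{r,\Theta}\Theta\geq\Bigl(1-\frac{2}{q}\Bigr)\int_{\Omega_r}|u_{r,\Theta}|^q+\beta\int_{\Omega_r}\bigl(f(u_{r,\Theta})u_{r,\Theta}-2F(u_{r,\Theta})\bigr)\geq\Bigl(1-\frac{2}{q}\Bigr)\int_{\Omega_r}|u_{r,\Theta}|^q,
\]
using $\beta>0$ and $f(\tau)\tau\geq p_2F(\tau)$ with $p_2>2$. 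A uniform lower bound on $\|u_{r,\Theta}\|_q^q$ then follows from $c_r<0$, the gradient bound, and Gagliardo-Nirenberg, delivering $\liminf_{r\to\infty}\lambda_{r,\Theta}>0$.

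The main obstacle I anticipate is the exact quantitative matching to $\Theta_V$: tracking $C_{N,q}$ and $C_{N,p_1}$ through an explicit optimization in $t$, and handling the small-$\tau$ regime of $F$ where $|F(\tau)|\lesssim|\tau|^{p_1}$ fails in general, so that either the a priori $L^\infty$ envelope or a two-sided bound $F(\tau)\lesssim|\tau|^{p_1}+|\tau|^{p_2}$ has to be invoked without spoiling the sharp boundary case. A secondary technical point is uniformity in $r$: both the negative-energy test bump and the Moser iteration constants must be chosen independently of $r$, which relies on $\Omega_r$ exhausting $\mathbb{R}^N$ as $r\to\infty$ and on the boundedness of $V$.
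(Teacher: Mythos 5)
Your overall strategy coincides with the paper's (Section 4, Lemma \ref{L4.1}): the same Gagliardo--Nirenberg profile $h_1$ determines the radius of the gradient ball and the threshold $\Theta_V$, the negative-energy test function is a spread-out $L^2$-subcritical bump (the paper uses the rescaled principal Dirichlet eigenfunction of $\Omega$, whose $F$-term decays like $r^{-N(p_2-2)/2}$ and beats the $r^{-2}$ kinetic term --- note it is the $L^{p_2}$ mass, not the $L^q$ or $L^{p_1}$ mass, that produces negativity), and the minimizer is obtained by Ekeland/direct method plus the compact embedding, with the barrier forcing interiority. Your Moser-iteration route to the uniform $L^\infty$ bound differs from the paper's blow-up-plus-Liouville argument (Lemma \ref{L3.5}) but is viable here since $q<2^*$ and the Sobolev constant on $H_0^1(\Omega_r)$ is $r$-independent. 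Your concern about $F(\tau)\le\alpha|\tau|^{p_1}$ failing near $\tau=0$ is well taken (indeed $(f_2)$ only yields $F(\tau)\le F(1)|\tau|^{p_2}$ for $|\tau|\le 1$); a two-sided bound $F(\tau)\le\alpha(|\tau|^{p_1}+|\tau|^{p_2})$ preserves the geometry because both exponents are $L^2$-subcritical, only altering the explicit constant $\Theta_V$.

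The one step that does not go through as written is the proof of $\liminf_{r\to\infty}\lambda_{r,\Theta}>0$. After discarding $-2I_r(u_{r,\Theta})>0$ you are left needing a uniform positive lower bound on $\|u_{r,\Theta}\|_q^q$, and this does not follow from $c_r<0$, the gradient bound, and Gagliardo--Nirenberg: the identity $I_r(u_{r,\Theta})=c_r$ only gives
\begin{equation*}
\frac{1}{q}\int_{\Omega_r}|u_{r,\Theta}|^q\,dx+\beta\int_{\Omega_r}F(u_{r,\Theta})\,dx\ \ge\ -c_r>0,
\end{equation*}
and the entire right-hand side can be carried by the $F$-term, since for functions spreading over $\Omega_r$ the $L^q$ norm (with $q>2+\tfrac4N>p_1\ge p_2$) decays faster than $\int F(u)$. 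The repair is to \emph{keep} the term you dropped: from the Euler--Lagrange equation and the definition of $I_r$ one has
\begin{equation*}
\lambda_{r,\Theta}\Theta=\Bigl(1-\tfrac{2}{q}\Bigr)\int_{\Omega_r}|u_{r,\Theta}|^q\,dx+\beta\int_{\Omega_r}\bigl(f(u_{r,\Theta})u_{r,\Theta}-2F(u_{r,\Theta})\bigr)\,dx-2I_r(u_{r,\Theta})\ \ge\ -2c_r,
\end{equation*}
using $q>2$ and $f(\tau)\tau\ge p_2F(\tau)\ge 2F(\tau)$. Since $\Omega$ is convex with $0\in\Omega$, the sets $\mathcal{V}_{r,\Theta}$ are nested in $r$, so $c_r\le c_{r_\Theta}<0$ for all $r>r_\Theta$, and $\liminf_{r\to\infty}\lambda_{r,\Theta}\ge-2c_{r_\Theta}/\Theta>0$. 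This is exactly the inequality \eqref{eq4.6} in the paper.
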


\begin{theorem}\label{t1.3}(\textbf{case $\beta > 0$})
Assume $V$ satisfies $\left(V_0\right)$, is of class $C^1$ and $\widetilde{V}$ is bounded, $f$ satisfies $(f_1)-(f_2)$. Set
$$
\widetilde{\Theta}_V=\frac{1}{2}\left(1-\left\|V_{-}\right\|_{\frac{N}{2}} S^{-1}\right)^{\frac{N}{2}}\left(\frac{C_{N,q}}{q}A_{p_1,q}+\frac{C_{N, q}}{q}\right)^{-\frac{N}{2}}\left(\frac{\alpha\beta q C_{N, p_1}}{C_{N, q}A_{p_1,q}}\right)^{\frac{N(q-2)-4}{2N(q-p_1)}},
$$
where
$$
A_{p_1, q}=\frac{(q-2)(N(q-2)-4)}{(p_1-2)(4-N(p_1-2))}.
$$
Then the following hold for $0<\Theta<\widetilde{\Theta}_V$:

(i) There exists $\widetilde{r}_\Theta>0$ such that \eqref{eq1.1} in $\Omega_r$ admits for $r>r_\Theta$ a mountain pass type solution $\left(\lambda_{r, \Theta}, u_{r, \Theta}\right)$ with $u_{r, \Theta}>0$ in $\Omega_r$ and positive energy $I_r\left(u_{r, \Theta}\right)>0$. Moreover, there exists $C_\Theta>0$ such that
$$
\limsup\limits_{r \rightarrow \infty} \max\limits_{x \in \Omega_r} u_{r, \Theta}(x)<C_\Theta .
$$

(ii) There exists $0<\bar{\Theta} \leq \widetilde{\Theta}_V$ such that
$$
\liminf\limits_{r \rightarrow \infty} \lambda_{r, \Theta}>0 \quad \text { for any } 0<\Theta \leq \bar{\Theta}.
$$
\end{theorem}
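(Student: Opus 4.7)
The plan is to adapt the Jeanjean-type minimax framework for $L^2$-constrained problems, as developed in the $\beta>0$ setting by Bartsch et al.\ and by Soave, to our mixed-exponent equation on the bounded convex domain $\Omega_r$. Since the leading term $|u|^{q-2}u$ is $L^2$-supercritical and Sobolev subcritical while the perturbation $\beta f(u)$ is $L^2$-subcritical (as $p_1<2+4/N$), the fiber map
\[
h_u(s) := I_r(s\star u), \qquad (s\star u)(x) := e^{sN/2} u(e^s x),
\]
exhibits the classical mixed-exponent shape on the mass constraint. I would first use the Gagliardo--Nirenberg inequality, combined with Sobolev and the assumption $\|V_-\|_{N/2}<S$ (which lets us absorb $\int V_- u^2$ into the kinetic term), to bound $I_r|_{S_{r,\Theta}}$ from below by the model function
\[
\psi(t) = \bigl(1-\|V_-\|_{N/2}S^{-1}\bigr)t^2 - \tfrac{C_{N,q}}{q}\Theta^{\frac{2q-N(q-2)}{4}} t^{\frac{N(q-2)}{2}} - \tfrac{\alpha\beta C_{N,p_1}}{p_1}\Theta^{\frac{2p_1-N(p_1-2)}{4}} t^{\frac{N(p_1-2)}{2}}.
\]
Analyzing $\psi$ for $\Theta<\widetilde\Theta_V$, one finds a local minimum at some small $t_0>0$ and a local maximum at $t_*>t_0$ with a strictly positive gap, which produces the desired mountain pass geometry for $I_r|_{S_{r,\Theta}}$. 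The explicit expression for $\widetilde\Theta_V$ and the constant $A_{p_1,q}$ arise from matching the contributions of the two competing monomials at these extrema.

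Next I would set
\[
c_r(\Theta) := \inf_{\gamma\in\Gamma_r}\max_{t\in[0,1]} I_r(\gamma(t)),
\]
where $\Gamma_r$ is the family of paths in $S_{r,\Theta}$ joining a small neighborhood of the local minimum to a point with $I_r<0$. To extract a bounded Palais--Smale sequence at level $c_r(\Theta)$ I would apply Jeanjean's minimax trick to the augmented functional $(s,u)\mapsto I_r(s\star u)$ on $\mathbb{R}\times H_0^1(\Omega_r)$, with $s$ restricted to a range where $s\star u$ still lies in $H_0^1(\Omega_r)$; this yields $\{u_n\}\subset S_{r,\Theta}$ along which the Pohozaev functional
\[
P_r(u_n) := \|\nabla u_n\|_2^2 - \tfrac{1}{2}\!\int \widetilde V\, u_n^2 - \tfrac{N(q-2)}{2q}\!\int|u_n|^q - \tfrac{\beta N}{2}\!\int\!\bigl(f(u_n)u_n - 2F(u_n)\bigr)
\]
vanishes asymptotically. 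The convexity of $\Omega$ is used precisely to control the sign of the Pohozaev boundary term for the limit solution on $\partial\Omega_r$. Boundedness of $\{u_n\}$ in $H_0^1(\Omega_r)$ then follows by combining the energy bound, $P_r(u_n)\to 0$, the structural inequality in $(f_2)$, and the boundedness of $\widetilde V$. Compactness is straightforward: since $\Omega_r$ is bounded and $q<2^*$, after extracting a subsequence $u_n$ converges strongly in $L^q\cap L^{p_1}$ to some $u_{r,\Theta}\in S_{r,\Theta}$, Lagrange multipliers yield $\lambda_{r,\Theta}$, and a Brezis--Lieb argument upgrades this to strong $H_0^1$-convergence. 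Replacing $u_n$ by $|u_n|$ in the minimax class together with the strong maximum principle gives positivity, and the uniform $L^\infty$ bound as $r\to\infty$ follows from Moser iteration, provided $c_r(\Theta)$ stays bounded above uniformly in $r$; this is ensured by building the minimax paths from fixed compactly supported test profiles.

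For part (ii), from the Euler equation one obtains
\[
\lambda_{r,\Theta}\Theta = \int |u_{r,\Theta}|^q + \beta\!\int f(u_{r,\Theta})u_{r,\Theta} - \|\nabla u_{r,\Theta}\|_2^2 - \!\int V u_{r,\Theta}^2,
\]
which combined with the Pohozaev identity, $(f_2)$, and Gagliardo--Nirenberg under the uniform $L^\infty$ control yields a lower bound $\lambda_{r,\Theta}\geq c_0-C(\Theta)$ with $c_0>0$; choosing $\bar\Theta\leq\widetilde\Theta_V$ small enough then forces $\liminf_r\lambda_{r,\Theta}>0$. The main obstacle I foresee is reconciling the Jeanjean dilation $s\star u$ with the Dirichlet condition on $\Omega_r$, since $s\star u$ strictly belongs to $H_0^1(\Omega_{re^{-s}})$ rather than $H_0^1(\Omega_r)$; this is resolved by working throughout with test profiles compactly supported in a fixed ball $B_R\subset\mathbb{R}^N$, which lie in every $H_0^1(\Omega_r)$ for $r\geq R$ and remain there under bounded dilations. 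The same device makes the mountain pass geometry, the level $c_r(\Theta)$, and the multiplier estimates uniform in $r$, which is what feeds into the compactness analysis of the $r\to\infty$ limit alluded to in Remark \ref{R1.1}.
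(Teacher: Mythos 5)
Your overall architecture (mountain pass geometry from a Gagliardo--Nirenberg lower bound, a bounded Palais--Smale sequence, compactness on the bounded domain, positivity via $|u_n|$, blow-up/$L^\infty$ control, and the multiplier estimate) matches the paper in outline, but the central mechanism you propose for producing a \emph{bounded} Palais--Smale sequence does not work here, and this is precisely the step the paper organizes everything else around. You want to run Jeanjean's dilation trick with $(s\star u)(x)=e^{sN/2}u(e^sx)$ on the augmented space $\mathbb{R}\times S_{r,\Theta}$ and conclude that a Pohozaev-type functional $P_r(u_n)$ vanishes asymptotically. On $\Omega_r$ this fails for two reasons. First, the dilation must be applied to \emph{every} element of the minimax paths and, after deformation, to the almost-critical points themselves; restricting the \emph{endpoints} to profiles supported in a fixed ball does not keep the rest of the paths, or the PS sequence, invariant under dilations with $s<0$ (for $s<0$ one has $\Omega_{re^{-s}}\supsetneq\Omega_r$, so $s\star u\notin H_0^1(\Omega_r)$). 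Second, and more fundamentally, on a bounded domain the Pohozaev identity carries the boundary term $\frac{1}{2N}\int_{\partial\Omega_r}|\nabla u|^2(x\cdot\mathbf{n})\,d\sigma$, which your $P_r$ omits; the identification of $\partial_sI_r(s\star u)|_{s=0}$ with the constraint satisfied by solutions of the PDE is exactly what breaks, and the sign of this boundary term (via convexity) is only available for genuine solutions with regularity up to $\partial\Omega_r$, not for Palais--Smale sequences. The paper avoids this entirely: it uses the monotonicity trick of Jeanjean/Borthwick--Chang--Jeanjean--Soave (Proposition \ref{p3.1}) on the perturbed family $J_{r,s}$ (with $s$ multiplying the \emph{whole} nonlinearity, see \eqref{eq5.1}), obtains bounded PS sequences and hence solutions for a.e.\ $s\in[\frac12,1]$ with full compactness on $\Omega_r$, then proves in Lemma \ref{L5.3} a bound on the \emph{solutions} uniform in $s$ and $r$ using the exact Pohozaev identity with the signed boundary term, and finally lets $s\to1$.

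A second, related gap: your claim that boundedness ``follows by combining the energy bound, $P_r(u_n)\to0$, the structural inequality in $(f_2)$, and the boundedness of $\widetilde V$'' ignores that for $\beta>0$ the term $\int(\frac12 f(u)u-F(u))$ enters with the \emph{unfavorable} sign. As the paper points out explicitly at the end of Section 2, a naive use of $(f_2)$ drives the coefficient of $\int_{\Omega_r}|\nabla u|^2$ down to $\frac{N(p_2-2)-4}{4N}<0$ and yields nothing. Lemma \ref{L5.3} circumvents this by keeping the leading coefficient at the supercritical exponent $q$ and estimating the residual $\beta(p_2-q)\int F(u)$ term via $F(\tau)\leq\alpha|\tau|^{p_1}$ and Gagliardo--Nirenberg, so that the negative contribution grows only like $\bigl(\int|\nabla u|^2\bigr)^{N(p_1-2)/4}$ with $N(p_1-2)/4<1$ and is dominated. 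Your proposal needs this argument (or an equivalent) spelled out; without it the boundedness step is not justified even granting $P_r(u_n)\to0$. Finally, a minor point: the uniform $L^\infty$ bound as $r\to\infty$ in the paper comes from a blow-up/Liouville argument (Lemma \ref{L3.5}), not Moser iteration; the latter would require care to keep the constants independent of the growing domains.
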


If $\Omega=\mathbb{R}^N$, $(V_1)$ is significant for obtaining the following results.

\begin{theorem}\label{t1.4}(\textbf{case $\beta > 0$})
Assume $V$ satisfies $(V_0)-(V_1)$. Then problem \eqref{eq1.1} with $\Omega=\mathbb{R}^N$ admits for any $0<\Theta<\Theta_V$, where $\Theta_V$ is as in Theorem \ref{t1.2}, a solution $\left(\lambda_\Theta, u_\Theta\right)$ with $u_\Theta>0, \lambda_\Theta>0$, and $I\left(u_\Theta\right)<0$.
\end{theorem}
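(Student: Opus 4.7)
The plan is to obtain $u_\Theta$ as a subsequential weak limit of the solutions $u_{r,\Theta}$ furnished by Theorem \ref{t1.2} applied on the dilated balls $\Omega_r = B_r$, i.e.\ with $\Omega=B_1$. Since $V$ satisfies $(V_0)$ and $0<\Theta<\Theta_V$, Theorem \ref{t1.2} produces for each $r>r_\Theta$ a positive local-minimum type solution $(\lambda_{r,\Theta},u_{r,\Theta})$ with $I_r(u_{r,\Theta})<0$, $\limsup_r\|u_{r,\Theta}\|_\infty<C_\Theta$, and $\liminf_r\lambda_{r,\Theta}>0$. Extending each $u_{r,\Theta}$ by zero to $\mathbb{R}^N$ and interpolating the $L^2$-constraint with the uniform $L^\infty$-bound yields uniform control in every $L^s(\mathbb{R}^N)$, $s\in[2,\infty]$; testing the equation against $u_{r,\Theta}$ and using $\|V_-\|_{N/2}<S$ together with H\"older and Sobolev gives a uniform $H^1(\mathbb{R}^N)$-bound. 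Passing to a subsequence, $\lambda_{r,\Theta}\to\lambda_\Theta>0$ and $u_{r,\Theta}\rightharpoonup u_\Theta\geq 0$ weakly in $H^1$, strongly in $L^s_{\mathrm{loc}}$ for $2\leq s<2^*$, and a.e.; using $V\in L^{N/2}$ and the $L^\infty$-bound to pass the nonlinear terms to the limit, $u_\Theta$ solves $-\Delta u_\Theta+V u_\Theta+\lambda_\Theta u_\Theta=|u_\Theta|^{q-2}u_\Theta+\beta f(u_\Theta)$ on $\mathbb{R}^N$.

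The crux, and the main obstacle, is to prove $\|u_\Theta\|_2^2=\Theta$. Vanishing ($u_\Theta\equiv 0$) is immediately impossible: dominated convergence applied to $V\in L^{N/2}$ and to the nonlinear terms, together with the equation tested by $u_{r,\Theta}$, would force $\|\nabla u_{r,\Theta}\|_2^2\to -\lambda_\Theta\Theta<0$, a contradiction. To rule out the escape of a positive fraction of mass to spatial infinity I invoke the Pohozaev identity on $B_r$,
\begin{align*}
\frac{N-2}{2}\|\nabla u_{r,\Theta}\|_2^2 &+ \frac{N}{2}\int V u_{r,\Theta}^2 + \frac{1}{2}\int \widetilde V u_{r,\Theta}^2 + \frac{N\lambda_{r,\Theta}}{2}\Theta \\
&+ \frac{1}{2}\int_{\partial B_r}(x\cdot\nu)|\partial_\nu u_{r,\Theta}|^2\,d\sigma = \frac{N}{q}\int u_{r,\Theta}^q + N\beta\int F(u_{r,\Theta}),
\end{align*}
whose boundary term is nonnegative by convexity of $B_r$. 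Subtracting an appropriate multiple of the equation tested with $u_{r,\Theta}$ yields an \emph{a priori} bound on $\int\widetilde V u_{r,\Theta}^2$ in terms of $\lambda_{r,\Theta}\Theta$ and of the nonlinear terms. If a chunk of $L^2$-mass concentrated along balls $B(x_r,\rho|x_r|)$ with $|x_r|\to\infty$, the assumption $(V_1)$ would provide the lower bound $\widetilde V(y)=y\cdot\nabla V(y)\geq c_\tau e^{-\tau|x_r|}$ on $B(x_r,\rho|x_r|)$ for every $\tau>0$; inserting this into the identity above and comparing with $\lambda_{r,\Theta}\Theta$ (following the strategy of Bartsch et al.\ \cite{TBAQ2023}) contradicts the uniform positivity of $\lambda_\Theta$. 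Hence $\{|u_{r,\Theta}|^2\}$ is tight on $\mathbb{R}^N$, $\|u_\Theta\|_2^2=\Theta$, and in particular $u_\Theta\not\equiv 0$.

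Tightness combined with the uniform $L^\infty$-bound upgrades convergence to strong in every $L^s(\mathbb{R}^N)$, $s\in[2,\infty)$; testing the difference of the two PDEs by $u_{r,\Theta}-u_\Theta$ and exploiting $\lambda_\Theta>0$ gives strong $H^1$-convergence, so $I(u_\Theta)=\lim I_r(u_{r,\Theta})\leq 0$, with strict inequality because the local-minimum construction in Theorem \ref{t1.2} yields a quantitative negative upper bound on $I_r(u_{r,\Theta})$ that is preserved in the limit. Positivity of $u_\Theta$ follows from $u_\Theta\geq 0$, $u_\Theta\not\equiv 0$, boundedness of $V$, and the strong maximum principle applied to the limit equation. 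The genuinely delicate point of the whole scheme is the quantitative use of $(V_1)$ in ruling out mass escape in Step 3; the rest is a concentration-compactness argument of essentially standard type.
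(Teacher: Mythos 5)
Note first that the paper does not actually prove Theorem \ref{t1.4}: by Remark \ref{R1.3}(i) the proofs of Theorems \ref{t1.4}--\ref{t1.6} are omitted and deferred to \cite{TBAQ2023}. Your overall skeleton — extend the local--minimum solutions $u_{r,\Theta}$ of Theorem \ref{t1.2} by zero, pass to a weak limit using the uniform $H^1$ and $L^\infty$ bounds and $\liminf_r\lambda_{r,\Theta}>0$, and use $(V_1)$ plus the Pohozaev identity to prevent loss of mass — is exactly the intended route. However, two steps do not hold up as written.

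First, the exclusion of $u_\Theta\equiv 0$ is wrong as stated. Weak convergence $u_{r,\Theta}\rightharpoonup 0$ in $H^1$ does give $\int V u_{r,\Theta}^2\to 0$ (since $V\in L^{N/2}$), but it does \emph{not} force $\int|u_{r,\Theta}|^q\to 0$ or $\int f(u_{r,\Theta})u_{r,\Theta}\to 0$: a translating bump $w(\cdot-y_r)$ with $|y_r|\to\infty$ converges weakly to zero while keeping all these integrals fixed, and there is no dominating function because the mass moves. So testing the equation does not yield $\|\nabla u_{r,\Theta}\|_2^2\to-\lambda_\Theta\Theta$. What your argument actually rules out is vanishing in the concentration--compactness sense ($\sup_z\int_{B(z,1)}u_{r,\Theta}^2\to 0$, which does kill the $L^q$ terms for $q<2^*$); the case $u_\Theta\equiv 0$ with a translating profile must instead be folded into the mass-escape analysis.

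Second, and more seriously, the mass-escape exclusion — which you yourself identify as the crux — is not actually carried out, and the comparison you sketch does not produce a contradiction. From $(V_1)$ you obtain a \emph{lower} bound $\int_{B(x_r,\rho|x_r|)}\widetilde V\,u_{r,\Theta}^2\geq c_\tau e^{-\tau(1+\rho)|x_r|}\,m\to 0$, while the Pohozaev identity gives only an $O(1)$ \emph{upper} bound on $\int\widetilde V\,u_{r,\Theta}^2$; a vanishing lower bound against a bounded upper bound contradicts nothing, and it is not connected to the positivity of $\lambda_\Theta$ in any visible way. The genuine mechanism requires playing the slower-than-exponential decay of $x\cdot\nabla V$ guaranteed by $(V_1)$ (valid for every $\tau>0$) against the exponential decay, at rate governed by $\sqrt{\lambda_\Theta}>0$, of the solution away from its concentration point, inside an identity where these two quantities must balance; choosing $\tau$ small relative to $\sqrt{\lambda_\Theta}$ then yields the contradiction. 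None of this appears in your write-up beyond the phrase ``following the strategy of Bartsch et al.'' Since this single step is essentially the entire mathematical content of the theorem beyond Theorem \ref{t1.2}, deferring it leaves the proof incomplete. The remaining points (monotonicity of $e_{r,\Theta}$ giving $I(u_\Theta)\leq e_{r_\Theta,\Theta}<0$ after strong convergence, and the strong maximum principle for positivity) are fine.
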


\begin{theorem}\label{t1.5}(\textbf{case $\beta > 0$})
Assume $V$ satisfies $(V_0)-(V_1)$. Then \eqref{eq1.1} with $\Omega=\mathbb{R}^N$ admits for $0<\Theta<\bar{\Theta}, \bar{\Theta}>0$ as in Theorem \ref{t1.3} (ii), a solution $(\lambda_\Theta, u_\Theta)$ with $u_\Theta>0, \lambda_\Theta>0$, and $I(u_\Theta)>0$. Moreover, $\lim\limits_{\Theta \rightarrow 0} I(u_\Theta)=\infty$.
\end{theorem}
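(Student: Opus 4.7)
The plan is to recover $u_\Theta$ as a limit of the mountain pass solutions $u_{r,\Theta}$ furnished by Theorem \ref{t1.3} on the expanding family $\Omega_r = B_r$ (i.e.\ take $\Omega = B_1$), and to exploit the decay condition $(V_1)$ to prevent loss of mass at infinity. First I would fix $0 < \Theta < \bar{\Theta}$ and apply Theorem \ref{t1.3} to produce, for each $r$ large, a positive mountain pass solution $(\lambda_{r,\Theta}, u_{r,\Theta})$ with $I_r(u_{r,\Theta}) > 0$. Extending each $u_{r,\Theta}$ by zero to $\mathbb{R}^N$ and using the uniform bounds from Theorem \ref{t1.3}(i)--(ii), namely $\|u_{r,\Theta}\|_\infty \leq C_\Theta$ and $\liminf_{r\to\infty}\lambda_{r,\Theta} > 0$, together with the mass constraint $\|u_{r,\Theta}\|_2^2 = \Theta$ and a uniform $H^1$ bound obtained from standard energy estimates, I extract a subsequence $r_n \to \infty$ along which $u_{r_n,\Theta} \rightharpoonup u_\Theta$ in $H^1(\mathbb{R}^N)$ and $\lambda_{r_n,\Theta} \to \lambda_\Theta > 0$. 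Passing to the limit in the weak formulation then yields that $u_\Theta$ solves \eqref{eq1.1} on $\mathbb{R}^N$.

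The main obstacle is to upgrade this weak limit to strong $L^2$ convergence, so that $\|u_\Theta\|_2^2 = \Theta$ and hence $u_\Theta \not\equiv 0$. I would apply Lions' concentration-compactness to the measures $|u_{r_n,\Theta}|^2\,dx$. Vanishing is excluded because $\lambda_\Theta > 0$ together with the equation, tested against $u_{r_n,\Theta}$, force a positive lower bound on $\|u_{r_n,\Theta}\|_q^q$ uniformly in $n$, which is incompatible with $L^q$-vanishing given the $L^\infty$ bound. Dichotomy with a portion of the mass escaping along translates $y_n \to \infty$ is ruled out by $(V_1)$: writing a localized Pohozaev identity centered at $y_n$ and using the slower-than-exponential lower bound on $x \cdot \nabla V$ provided by $(V_1)$ produces an uncontrollable positive contribution that cannot be balanced by the remaining terms (which decay in $|y_n|$). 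Consequently $u_{r_n,\Theta} \to u_\Theta$ strongly in $L^2(\mathbb{R}^N)$, and then in $H^1(\mathbb{R}^N)$ by a standard argument based on the equation and the $L^\infty$ bound, giving $\|u_\Theta\|_2^2 = \Theta$.

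Positivity of $u_\Theta$ follows by pointwise passage to the limit of the positive $u_{r_n,\Theta}$ combined with the strong maximum principle applied to the limit equation (using $\lambda_\Theta > 0$). The strict positivity of $I(u_\Theta)$ is inherited from a uniform positive lower bound on the mountain pass levels $I_{r_n}(u_{r_n,\Theta})$ built into Theorem \ref{t1.3}(i), which survives the now-strong $H^1$ convergence.

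For the final assertion $\lim_{\Theta \to 0} I(u_\Theta) = \infty$, I would combine the Nehari and Pohozaev identities satisfied by $u_\Theta$ with the Gagliardo--Nirenberg inequality. Since $q > 2 + 4/N$, the estimate $\|u_\Theta\|_q^q \leq C_{N,q}\,\Theta^{(2q - N(q-2))/4}\,\|\nabla u_\Theta\|_2^{N(q-2)/2}$ has gradient exponent strictly greater than $2$; combined with the analogous (lower-order) bound on $\int f(u_\Theta) u_\Theta$ coming from $(f_2)$, this forces $\|\nabla u_\Theta\|_2 \to \infty$ as $\Theta \to 0$. Substituting this back into the energy identity derived from the equation then yields $I(u_\Theta) \to \infty$. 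The decisive step of the argument is thus the compactness via $(V_1)$; the blow-up of the mountain pass level as $\Theta \to 0$ is a scaling consequence.
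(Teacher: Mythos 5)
Your overall strategy --- take $\Omega=B_1$, invoke Theorem \ref{t1.3} on $\Omega_r=B_r$, and pass to the limit $r\to\infty$ using $(V_1)$ to prevent escape of mass --- is exactly the route the paper intends (see Remark \ref{R1.1}); note that the paper itself omits the details for Theorems \ref{t1.4}--\ref{t1.6} and defers to \cite{TBAQ2023} (Remark \ref{R1.3}(i)). Still, two steps need repair. The compactness step is the heart of the proof, and your description of how $(V_1)$ rules out escaping mass (``a localized Pohozaev identity centered at $y_n$ \dots an uncontrollable positive contribution'') is too vague to verify: the actual mechanism is that $\lambda_\Theta>0$ forces exponential decay of the translated limit profiles, while $(V_1)$ says $x\cdot\nabla V$ decays more slowly than every exponential, and converting this tension into a contradiction requires the full translated-profile analysis of \cite{TBAQ2023}, not a single identity at $y_n$. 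Also, the uniform $H^1$ bound you call ``standard'' is precisely Lemma \ref{L5.3}, whose proof is nontrivial (the paper explains in Section 2 why the naive estimate fails); you should cite it rather than rederive it.

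More seriously, the argument for $\lim_{\Theta\to0}I(u_\Theta)=\infty$ has a logical gap. Combining the Nehari and Pohozaev identities with Gagliardo--Nirenberg gives an inequality of the form
$$
c\,\|\nabla u_\Theta\|_2^2\;\le\;C\,\Theta^{a}\,\|\nabla u_\Theta\|_2^{\frac{N(q-2)}{2}}+C'\,\Theta^{a'}\,\|\nabla u_\Theta\|_2^{\frac{N(p_1-2)}{2}},\qquad a,a'>0,
$$
with one gradient exponent $>2$ and one $<2$. This does \emph{not} force $\|\nabla u_\Theta\|_2\to\infty$: it only excludes an intermediate range of values and is perfectly consistent with $\|\nabla u_\Theta\|_2\to0$. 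To discard the small branch you must already know that $u_\Theta$ sits at a high energy level, which is what you are trying to prove, so the argument as stated is circular. The direct (and simpler) route is the lower bound on the mountain pass level in Lemma \ref{L5.1}(iii): $m_{r,1}(\Theta)\ge c\,\Theta^{\frac{N(q-2)-2q}{N(q-2)-4}}$, where the exponent is \emph{negative} because $q<2^*$ makes the numerator negative and $q>2+\frac4N$ makes the denominator positive; hence $I(u_\Theta)=\lim_{r\to\infty}I_r(u_{r,\Theta})=\lim_{r\to\infty}m_{r,1}(\Theta)\to\infty$ as $\Theta\to0$, and the blow-up of $\|\nabla u_\Theta\|_2$ is then a consequence of this, not its cause.
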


\begin{theorem}\label{t1.6}(\textbf{case $\beta \leq 0$})
Assume $V$ satisfies $(V_0)-(V_1)$, and $\|\widetilde{V}_{+}\|_{\frac{N}{2}}<2 S$. Then problem \eqref{eq1.1} with $\Omega=\mathbb{R}^N$ admits for $0<\Theta<\widetilde{\Theta}, \widetilde{\Theta}>0$ as in Theorem \ref{t1.1}, a solution $(\lambda_\Theta, u_\Theta)$ with $u_\Theta>0, \lambda_\Theta>0$, and $I(u_\Theta)>0$. Moreover, $\lim\limits_{\Theta\rightarrow 0} I\left(u_\Theta\right)=\infty$.
\end{theorem}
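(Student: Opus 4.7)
The plan is to realize the $\mathbb{R}^N$-solution as the $r\to\infty$ limit of the mountain pass solutions on the balls $\Omega_r=B_r$ furnished by Theorem~\ref{t1.1}, with hypothesis $(V_1)$ supplying the compactness needed to pass to the limit. Choosing $\Omega=B_1$ in the spirit of Remark~\ref{R1.1}, for each $0<\Theta<\widetilde{\Theta}$ Theorem~\ref{t1.1} produces, for every $r>r_\Theta$, a positive mountain pass solution $(\lambda_{r,\Theta},u_{r,\Theta})$ of \eqref{eq1.1} on $B_r$ with $I_r(u_{r,\Theta})>0$, $\|u_{r,\Theta}\|_\infty<C_\Theta$ (uniformly for $r$ large), and $\liminf_{r\to\infty}\lambda_{r,\Theta}>0$, the latter being precisely where the extra hypothesis $\|\widetilde{V}_+\|_{N/2}<2S$ is used. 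I would extend each $u_{r,\Theta}$ by zero to all of $\mathbb{R}^N$ and use the Pohozaev identity on $B_r$ (whose boundary term has the favourable sign by convexity of $B_r$ about the origin) together with $(V_0)$ to upgrade the $L^\infty$-bound into a uniform bound in $H^1(\mathbb{R}^N)$; along a subsequence then $u_{r,\Theta}\rightharpoonup u_\Theta$ weakly in $H^1(\mathbb{R}^N)$, strongly in $L^s_{\mathrm{loc}}$ for every $s\in[1,2^*)$, and $\lambda_{r,\Theta}\to\lambda_\Theta>0$.

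The hard part, as usual in this type of problem, is to exclude loss of $L^2$-mass at infinity so that $u_\Theta\in S_\Theta$. Following the scheme of \cite{TBAQ2023} I would apply the concentration--compactness principle to the sequence $|u_{r,\Theta}|^2$: vanishing is ruled out because $I_r(u_{r,\Theta})>0$ combined with $\beta\le 0$ forces, via Gagliardo--Nirenberg, a uniform lower bound on $\|u_{r,\Theta}\|_q^q$ localised in a ball of fixed radius; dichotomy is ruled out by $(V_1)$, whose exponential-type positive lower bound on $x\cdot\nabla V$ at infinity, inserted into the Pohozaev identity for any profile drifting to infinity, forces that profile to carry zero gradient energy and hence zero $L^2$-mass. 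Once strong $L^2$-convergence is secured, testing the equation for $u_{r,\Theta}$ against $\varphi\in C_c^\infty(\mathbb{R}^N)$ and passing to the limit identifies $(\lambda_\Theta,u_\Theta)$ as a solution of \eqref{eq1.1} on $\mathbb{R}^N$; the strong maximum principle upgrades $u_\Theta\ge 0$ to $u_\Theta>0$, and $I(u_\Theta)=\lim_{r\to\infty} I_r(u_{r,\Theta})>0$ follows from the strong convergence together with the uniform $L^\infty$-bound.

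For the final assertion $\lim_{\Theta\to 0}I(u_\Theta)=\infty$, I would exploit the mountain pass characterisation of the energy level together with the mass-preserving fibering $s*v(x)=e^{sN/2}v(e^sx)$, under which the leading terms of $I$ become $\tfrac{e^{2s}}{2}\|\nabla v\|_2^2$ and $-\tfrac{e^{sN(q-2)/2}}{q}\|v\|_q^q$. Since $q>2+\tfrac{4}{N}$ we have $N(q-2)/2>2$, and maximising over $s\in\mathbb{R}$ combined with Gagliardo--Nirenberg yields, uniformly in $v\in S_\Theta$, a lower bound of the form $\max_{s\in\mathbb{R}} I(s*v)\ge c_{N,q}\,\Theta^{-\kappa}$ with $\kappa>0$; the $V$-contribution is of lower order by $(V_0)$, and the $F$-contribution is of lower order under the fibering because $(f_2)$ provides $p_1<2+\tfrac{4}{N}$, so that the relevant power $e^{sN(p_1-2)/2}$ is strictly dominated by that of the $L^q$-term at the critical scale. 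This lower bound passes through the $r\to\infty$ limit for each fixed $\Theta$, and therefore $I(u_\Theta)\ge c_{N,q}\,\Theta^{-\kappa}\to\infty$ as $\Theta\to 0$, completing the plan.
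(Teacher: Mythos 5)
Your plan is essentially the route the paper intends: the authors do not actually write out a proof of Theorem \ref{t1.6} — Remark \ref{R1.3}(i) states that the proofs of Theorems \ref{t1.4}--\ref{t1.6} are "very similar to \cite{TBAQ2023}" and omits them — and what you describe (take $\Omega=B_1$, pass the mountain pass solutions $u_{r,\Theta}$ of Theorem \ref{t1.1} to the limit $r\to\infty$, use the uniform $H^1$-bound from the convexity/Pohozaev argument of Lemma \ref{L3.3}, rule out vanishing, and use $(V_1)$ against the uniform exponential decay coming from $\liminf_r\lambda_{r,\Theta}>0$ to kill any mass escaping to infinity) is exactly that scheme. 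Two small imprecisions worth noting. First, vanishing is not excluded by "$I_r(u_{r,\Theta})>0$ plus Gagliardo--Nirenberg" alone: if $\sup_z\int_{B(z,1)}u_{r,\Theta}^2\to0$ then all subcritical nonlinear terms vanish and the constraint identity gives $\int|\nabla u_{r,\Theta}|^2+\int Vu_{r,\Theta}^2=-\lambda_{r,\Theta}\Theta+o(1)<0$, contradicting $(V_0)$ and $\lambda_\Theta>0$ (equivalently, it would force $I_r(u_{r,\Theta})\to-\tfrac12\lambda_\Theta\Theta<0$, contradicting the positive lower bound of Lemma \ref{L3.1}(iii)); it is the positivity of the multiplier, already secured by Lemma \ref{L3.6}, that does the work here. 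Second, for $\lim_{\Theta\to0}I(u_\Theta)=\infty$ you do not need the fibering $s*v$: since $I(u_\Theta)=\lim_r m_{r,1}(\Theta)$ after strong convergence, the explicit lower bound on $m_{r,1}(\Theta)$ in Lemma \ref{L3.1}(iii) is of order $\Theta^{-\frac{2q-N(q-2)}{N(q-2)-4}}$ with positive exponent (as $q<2^*$), which blows up as $\Theta\to0$; your dilation argument reaches the same bound but must be phrased so that it controls the mountain pass level (via the separating sphere $\|\nabla u\|_2^2=\widetilde t$) rather than $\max_s I(s*v)$ for arbitrary $v$.
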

For the Sobolev critical case, that is $q=2^*$, we have the following results.

\begin{theorem}\label{t1.7}(\textbf{case $\beta > 0$})
Assume $V$ satisfies $\left(V_0\right)$, $f$ satisfies $(f_1)-(f_2)$. Set
$$
\Theta_V=\left( \frac{1}{N\alpha\beta C_{N, p_1}}\right)^{\frac{4}{2 p_1-N(p_1-2)}}\left(1-\left\|V_{-}\right\|_{\frac{N}{2}} S^{-1}\right)^{\frac{N}{2}}S^{\frac{N}{2}\cdot\frac{4-N(p_1-2)}{2 p_1-N(p_1-2)}}.
$$
Then the following hold for $0<\Theta<\Theta_V$:

(i) There exists $r_\Theta>0$ such that \eqref{eq7.2} on $\Omega_r$ with $r>r_\Theta$ has a local minimum type solution $\left(\lambda_{r, \Theta}, u_{r, \Theta}\right)$ with $u_{r, \Theta}>0$ in $\Omega_r$ and negative energy $\mathcal{I}_r\left(u_{r, \Theta}\right)<0$.

(ii) There exists $C_\Theta>0$ such that
$$
\limsup\limits_{r \rightarrow \infty} \max\limits_{x \in \Omega_r} u_{r, \Theta}(x)<C_\Theta, \quad \liminf\limits_{r \rightarrow \infty} \lambda_{r, \Theta}>0 .
$$
\end{theorem}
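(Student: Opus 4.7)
The plan is to adapt the local-minimum argument used for Theorem \ref{t1.2} to the Sobolev-critical setting, handling the resulting loss of compactness through a Brezis--Lieb splitting at a level strictly below $\tfrac{1}{N}S^{N/2}$. First I establish a scalar lower bound for $\mathcal{I}_r|_{S_{r,\Theta}}$. Combining $(V_0)$ (which gives $\int V u^2 \ge -\|V_-\|_{N/2}S^{-1}\|\nabla u\|_2^2$), Sobolev's inequality for $\|u\|_{2^*}^{2^*}$, the Gagliardo--Nirenberg inequality for the $L^{p_1}$ norm, and a pointwise bound on $|F(t)|$ in terms of $|t|^{p_1}$ (plus a lower-order $|t|^{p_2}$ contribution) extracted from $(f_2)$, I obtain
$$
\mathcal{I}_r(u) \ge \varphi(\|\nabla u\|_2), \quad \varphi(\rho)=\tfrac{1}{2}\bigl(1-\|V_-\|_{N/2}S^{-1}\bigr)\rho^2 - \tfrac{1}{2^*}S^{-2^*/2}\rho^{2^*} - \beta\alpha C_{N,p_1}\Theta^{\frac{2p_1-N(p_1-2)}{4}}\rho^{\frac{N(p_1-2)}{2}},
$$
with $\frac{N(p_1-2)}{2}<2<2^*$. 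A direct analysis shows that $\varphi$ admits a positive local maximum at some radius $R=R(\Theta)$ if and only if $\Theta<\Theta_V$, with $\Theta_V$ exactly as in the statement. Fix such an $R$ and set $V_r=\{u\in S_{r,\Theta} : \|\nabla u\|_2<R\}$, so that $\inf_{\partial V_r}\mathcal{I}_r\ge\varphi(R)>0$; a fixed compactly supported bump $v$ with $\|v\|_2^2=\Theta$ and small $\|\nabla v\|_2$ lies in $V_r$ for all $r$ large and, because $\tfrac{N(p_1-2)}{2}<2$ and $\beta>0$, satisfies $\mathcal{I}_r(v)\le c_0<0$ for some $c_0$ independent of $r$. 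Hence $m_r:=\inf_{V_r}\mathcal{I}_r$ is uniformly negative.

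The main step, and the principal obstacle, is to show that $m_r$ is attained. Ekeland's variational principle on $\overline{V_r}$ produces a Palais--Smale sequence $(u_n)\subset V_r$ with $\mathcal{I}_r(u_n)\to m_r$ and $(\mathcal{I}_r|_{S_{r,\Theta}})'(u_n)\to 0$; the separation $m_r<0<\varphi(R)$ keeps $(u_n)$ strictly inside $V_r$, so the Lagrange multipliers $\lambda_n$ are well-defined and bounded. Passing to a subsequence, $u_n\rightharpoonup u_{r,\Theta}$ in $H_0^1(\Omega_r)$ and, by the compact embedding $H_0^1(\Omega_r)\hookrightarrow L^s(\Omega_r)$ for every $s<2^*$, strongly in each such $L^s$; in particular $\|u_{r,\Theta}\|_2^2=\Theta$, $\lambda_n\to\lambda_{r,\Theta}$, and $u_{r,\Theta}$ solves the PDE. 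To upgrade to strong $H_0^1$-convergence, set $w_n:=u_n-u_{r,\Theta}$; testing the Palais--Smale identity against $w_n$ and using $w_n\to 0$ strongly in $L^s$ for $s<2^*$ yields $\|\nabla w_n\|_2^2-\|w_n\|_{2^*}^{2^*}\to 0$. Letting $a:=\lim\|\nabla w_n\|_2^2$, Sobolev forces $a=0$ or $a\ge S^{N/2}$. The Brezis--Lieb energy splitting gives
$$
\mathcal{I}_r(u_n)=\mathcal{I}_r(u_{r,\Theta})+\tfrac{1}{2}\|\nabla w_n\|_2^2 - \tfrac{1}{2^*}\|w_n\|_{2^*}^{2^*}+o(1)\longrightarrow \mathcal{I}_r(u_{r,\Theta})+\tfrac{1}{N}a,
$$
so $m_r=\mathcal{I}_r(u_{r,\Theta})+\tfrac{1}{N}a$. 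Since $u_{r,\Theta}\in\overline{V_r}$ and $\mathcal{I}_r\ge\varphi(R)>0>m_r$ on $\partial V_r$, we must have $u_{r,\Theta}\in V_r$ and $\mathcal{I}_r(u_{r,\Theta})\ge m_r$; the only possibility is $a=0$, giving strong convergence and $\mathcal{I}_r(u_{r,\Theta})=m_r<0$.

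Positivity of $u_{r,\Theta}$ is obtained by replacing it with $|u_{r,\Theta}|$ (which preserves both the energy and the constraint) and applying the strong maximum principle to the PDE. The $r$-uniform $L^\infty$ bound $\limsup_{r\to\infty}\max_{\Omega_r}u_{r,\Theta}<C_\Theta$ follows by the same scheme as in Theorem \ref{t1.2}, after an initial Brezis--Kato bootstrap to absorb the Sobolev-critical nonlinearity, using the uniform control on $\mathcal{I}_r(u_{r,\Theta})$ and $\|\nabla u_{r,\Theta}\|_2$. For the Lagrange multiplier, combining the Euler--Lagrange identity with twice the energy identity yields
$$
\lambda_{r,\Theta}\Theta = -2m_r + \tfrac{2}{N}\|u_{r,\Theta}\|_{2^*}^{2^*} + \beta\int_{\Omega_r}\bigl(f(u_{r,\Theta})u_{r,\Theta}-2F(u_{r,\Theta})\bigr)\,dx;
$$
by $(f_2)$ one has $f(t)t-2F(t)\ge(p_2-2)F(t)\ge 0$, and $-2m_r\ge -2c_0>0$ uniformly in $r$, so $\liminf_{r\to\infty}\lambda_{r,\Theta}\ge -2c_0/\Theta>0$, completing the proof.
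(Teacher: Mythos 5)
Your proposal follows essentially the same route as the paper's proof (Lemma \ref{L7.1} combined with Lemma \ref{L3.5}): the same scalar lower bound producing the threshold $\Theta_V$, constrained minimization on the gradient ball via Ekeland, a Brezis--Lieb splitting with the dichotomy $a=0$ or $a\ge S^{N/2}$ ruled out by the strictly negative level, and the same Lagrange-multiplier identity using $f(t)t-2F(t)\ge 0$ together with a uniform negative bound on the infimum. The only cosmetic differences are that you dispose of the nonzero bubble in one step via $m_r=\mathcal{I}_r(u_{r,\Theta})+\tfrac{a}{N}\ge m_r+\tfrac{a}{N}$ rather than the paper's two-stage argument (first nonvanishing of the weak limit, then vanishing of the remainder), and that you invoke a Brezis--Kato bootstrap for the uniform $L^\infty$ bound where the paper reuses the blow-up argument of Lemma \ref{L3.5}.
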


\begin{theorem}\label{t1.8}(\textbf{case $\beta \leq 0$})
Assume $V$ satisfies $\left(V_0\right)$, is of class $C^1$ and $\widetilde{V}$ is bounded, $f$ satisfies $(f_1)-(f_2)$. There hold:

(i) There exists $r_\Theta>0$ such that \eqref{eeq8.1} on $\Omega_r$ with $r>r_\Theta$ has a mountain pass type solution $\left(\lambda_{r, \Theta}, u_{r, \Theta}\right)$ with $u_{r, \Theta}\geq0$ in $\Omega_r$ and positive energy $\mathcal{I}_r\left(u_{r, \Theta}\right)>0$.

(ii) There exists $C_\Theta>0$ such that
$$
\limsup _{r \rightarrow \infty} \max _{x \in \Omega_r} u_{r, \Theta}(x)<C_\Theta .
$$
\end{theorem}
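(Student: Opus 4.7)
The strategy is to combine the Jeanjean scaling trick with Brezis--Nirenberg style bubble estimates, adapted to the constrained problem on the large domain $\Omega_r$. Note first that since $\beta \leq 0$ and assumptions $(f_1),(f_2)$ force $F \geq 0$, the perturbation $-\beta\int F(u)\,dx$ in $\mathcal{I}_r$ is non-negative; thus the only obstruction to compactness is the critical term $\tfrac{1}{2^*}\int|u|^{2^*}dx$. The proof splits into five steps.

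\textbf{Step 1 (Mountain pass geometry on $S_{r,\Theta}$).} By Sobolev, $\int Vu^{2}\,dx \geq -\|V_-\|_{N/2}S^{-1}\|\nabla u\|_{2}^{2}$, so $(V_0)$ gives a coercive quadratic part. By Gagliardo--Nirenberg, $\int F(u)\,dx \lesssim \Theta^{\alpha}\|\nabla u\|_{2}^{N(p_2-2)/2}$ with $N(p_2-2)/2<2$, and $\int |u|^{2^*}dx \leq S^{-2^*/2}\|\nabla u\|_2^{2^*}$. Combining, $\mathcal{I}_r$ is bounded below by a positive constant on some sphere $\{u\in S_{r,\Theta}:\|\nabla u\|_2=\rho_0\}$, while the scaling $s\ast u(x)=e^{sN/2}u(e^{s}x)$ keeps $S_{r,\Theta}$-invariant and drives $\mathcal{I}_r(s\ast u)\to-\infty$ as $s\to+\infty$, since $\int|s\ast u|^{2^*}dx=e^{s\cdot 2}\int|u|^{2^*}dx$ dominates. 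This yields a mountain pass geometry at a level $c_r>0$.

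\textbf{Step 2 (Bounded Palais--Smale sequence with Pohozaev information).} Following Jeanjean, work on $S_{r,\Theta}\times\mathbb{R}$ with $\widetilde{\mathcal{I}}_r(u,s):=\mathcal{I}_r(s\ast u)$, apply the min--max principle, and obtain a PS sequence $(u_n,s_n)$ with $s_n\to 0$ such that both $\partial_u\widetilde{\mathcal{I}}_r\to 0$ and $\partial_s\widetilde{\mathcal{I}}_r\to 0$. The second condition is the Pohozaev-type identity, which, combined with $\widetilde{V}$ bounded (via $\int \widetilde{V}u_n^{2}\,dx$), forces boundedness of $\|\nabla u_n\|_2$. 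Replace $u_n$ by $|u_n|$ using that $F$ is even, to obtain a non-negative PS sequence.

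\textbf{Step 3 (Energy estimate $c_r<\tfrac{1}{N}\bigl(1-\|V_-\|_{N/2}S^{-1}\bigr)^{N/2}S^{N/2}$).} This is the crux. Using the Talenti bubble $U_{\varepsilon,y_0}$ of \eqref{eq1.3} with $y_0$ in the interior of $\Omega_r$ at distance $\gg 1$ from $\partial\Omega_r$ (possible because $r$ is large and $\Omega$ is convex), truncate by a smooth cut-off, then project onto $S_{r,\Theta}$. Plug this path into the min--max characterization of $c_r$; the Brezis--Nirenberg estimate gives $\mathcal{I}_r \leq \tfrac{1}{N}S^{N/2}+o(1)$ with corrections of order $\varepsilon^{N-2}$ from truncation and $\|V_{-}\|_{N/2}$ from the potential. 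The hypothesis $(V_0)$ is exactly what is needed to absorb the potential into an effective Sobolev threshold, and $\beta\leq 0$ ensures the $F$-contribution only lowers the level further. Taking $\varepsilon$ small yields the strict inequality.

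\textbf{Step 4 (Compactness and conclusion).} The bounded PS sequence from Step 2 converges weakly to some $u_{r,\Theta}\geq 0$ in $H_0^1(\Omega_r)$, and the Lagrange multipliers $\lambda_{r,\Theta}=\lambda_n+o(1)$ are bounded. A standard concentration--compactness decomposition, combined with the strict threshold from Step 3, rules out bubbling: any lost mass in $L^{2^*}$ would carry an energy of at least $\tfrac{1}{N}S^{N/2}(1-\|V_-\|_{N/2}S^{-1})^{N/2}$, contradicting $c_r$ being strictly below. Hence $u_n\to u_{r,\Theta}$ strongly, $u_{r,\Theta}$ is a nontrivial critical point with $\mathcal{I}_r(u_{r,\Theta})=c_r>0$, and $u_{r,\Theta}\geq 0$.

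\textbf{Step 5 (Uniform $L^\infty$ bound).} Since $c_r$ is controlled from above independently of $r$ (same bubble test function for large $r$), $\|\nabla u_{r,\Theta}\|_2$ and $\|u_{r,\Theta}\|_{2^*}$ are uniformly bounded. A Moser iteration applied to the equation $-\Delta u_{r,\Theta}=-V u_{r,\Theta}-\lambda_{r,\Theta}u_{r,\Theta}+u_{r,\Theta}^{2^*-1}+\beta f(u_{r,\Theta})$, using that $\beta f(u)/u$ is subcritical and $V$ is bounded, yields $\max_{\Omega_r}u_{r,\Theta}<C_\Theta$ uniformly in $r$.

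The principal obstacle is Step 3: certifying that the mountain pass level on $\Omega_r$ stays strictly below the modified critical Sobolev energy in the presence of the potential $V$, with estimates that are uniform in $r$ so that Step 4 and Step 5 go through for every sufficiently large $r$.
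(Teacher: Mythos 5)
Your outline has the right global shape (mountain pass on $S_{r,\Theta}$, a Brezis--Nirenberg level estimate, concentration--compactness, then a uniform sup bound), but two of your steps would fail as stated. First, Step 2: the Jeanjean fiber construction $\widetilde{\mathcal I}_r(u,s)=\mathcal I_r(s\ast u)$ on $S_{r,\Theta}\times\mathbb R$ is not available on the bounded domain $\Omega_r$, because $s\ast u(x)=e^{sN/2}u(e^sx)$ leaves $H_0^1(\Omega_r)$ for $s<0$, and the condition $\partial_s\widetilde{\mathcal I}_r\to0$ cannot stand in for the Pohozaev identity on a domain with boundary, which carries the term $\frac{1}{2N}\int_{\partial\Omega_r}|\nabla u|^2(x\cdot\mathbf n)\,d\sigma$; controlling the sign of that term is exactly where the convexity of $\Omega$ enters. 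The paper instead uses the monotonicity trick of \cite{LJJ1999} (via Proposition \ref{p3.1}): it perturbs the coefficient of the critical term, obtains genuine constrained critical points of $\mathcal I_{r,s}$ for a.e.\ $s\in[\tfrac12,1]$, and then derives the uniform $H^1$ bound from the actual Pohozaev identity for those solutions on the convex domain (Lemma \ref{L8.4}), before letting $s\to1$ (Lemma \ref{L8.5}). Without this, you have no uniform-in-$s$ (and, more importantly, uniform-in-$r$) gradient bound.

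Second, your threshold bookkeeping in Steps 3--4 is inconsistent. The mountain pass level cannot be strictly below $\frac1N\bigl(1-\|V_-\|_{\frac N2}S^{-1}\bigr)^{N/2}S^{N/2}$: by the crossing argument (Lemma \ref{L8.1}(iii)) every admissible path passes through the sphere where $\mathcal I_{r,s}\ge \mathbf g(\widetilde t)=\frac1N\bigl(1-\|V_-\|_{\frac N2}S^{-1}\bigr)^{N/2}S^{N/2}$, so that quantity is a \emph{lower} bound for $c_r$. Conversely, the energy quantum carried by an escaping bubble is the unmodified $\frac1N S^{N/2}$ (for the vanishing part $v_n=u_n-u$ one has $\int_{\Omega_r}Vv_n^2\,dx\to0$, so the potential does not lower the quantum), not the $V$-modified constant you quote. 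The correct sandwich, which the paper proves, is $\frac1N\bigl(1-\|V_-\|_{\frac N2}S^{-1}\bigr)^{N/2}S^{N/2}\le \widetilde m_{r,s}(\Theta)<\frac{\zeta}{N}S^{N/2}$ with $\zeta=s^{-2/(2^*-2)}\ge1$ (Lemmas \ref{L8.1} and \ref{L8.2}); your version of the strict inequality is false and your version of the compactness alternative proves nothing. Finally, in Step 5 a Moser iteration from a mere $H^1$ bound does not yield an $L^\infty$ bound that is uniform in $r$ when the nonlinearity is critical (the iteration constants degenerate at concentration points); the paper instead runs the blow-up argument of Lemma \ref{L3.5}, rescaling around the maximum point and invoking Liouville-type theorems on $\mathbb R^N$ and on the half-space to exclude concentration.
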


\begin{theorem}\label{t1.9}(\textbf{case $\beta > 0$})
Assume $V$ satisfies $\left(V_0\right)$, is of class $C^1$ and $\widetilde{V}$ is bounded, $f$ satisfies $(f_1)-(f_2)$. Set
$$
\widetilde{\Theta}_V=  \left(\frac{\alpha\beta C_{N, p_1}S^{\frac{2^*}{2}}}{A_{p_1}}\right)^{-\frac{4}{2p_1-N(p_1-2)}}\left[\frac{S^{\frac{2^*}{2}}}{2\cdot2^*}\left(1-\left\|V_{-}\right\|_{\frac{N}{2}} S^{-1}\right)(2^*A_{p_1}+1)\right]^{\frac{2[2\cdot2^*-N(p_1-2)]}{(2^*-2)[2p_1-N(p_1-2)]}}
$$
where
$$
A_{p_1 }=\frac{4(2^*-2)}{N(p_1-2)(4-N(p_1-2))}.
$$
Then the following hold for $0<\Theta<\widetilde{\Theta}_V$:

(i) There exists $\widetilde{r}_\Theta>0$ such that \eqref{eq9.1} in $\Omega_r$ admits for $r>r_\Theta$ a mountain pass type solution $\left(\lambda_{r, \Theta}, u_{r, \Theta}\right)$ with $u_{r, \Theta}\geq0$ in $\Omega_r$ and positive energy $\mathcal{I}_r\left(u_{r, \Theta}\right)>0$.

(ii) There exists $C_\Theta>0$ such that
$$
\limsup\limits_{r \rightarrow \infty} \max\limits_{x \in \Omega_r} u_{r, \Theta}(x)<C_\Theta .
$$
\end{theorem}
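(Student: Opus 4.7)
The plan is to adapt the mountain pass scheme of Theorem~\ref{t1.3} to the critical exponent $q=2^{*}$, paying particular attention to loss of compactness from Aubin--Talenti concentration. The starting point is the natural energy
\[
\mathcal{I}_r(u)=\tfrac{1}{2}\|\nabla u\|_{L^2(\Omega_r)}^2+\tfrac{1}{2}\int_{\Omega_r}V|u|^2-\tfrac{1}{2^{*}}\|u\|_{L^{2^{*}}(\Omega_r)}^{2^{*}}-\beta\int_{\Omega_r}F(u)
\]
on $S_{r,\Theta}$, together with the $L^2$-preserving dilation $s\star u(x)=e^{Ns/2}u(e^{s}x)$ (taken along $s\ge 0$ so that $s\star u$ stays in $H_0^1(\Omega_r)$). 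Combining the Hardy--Sobolev bound on $V_{-}$, the Sobolev inequality for the critical term, and Gagliardo--Nirenberg together with the growth control $f(\tau)\tau\le p_1F(\tau)$ from $(f_2)$, one obtains the envelope
\[
\mathcal{I}_r(u)\ge\tfrac{1}{2}\bigl(1-\|V_{-}\|_{\frac{N}{2}}S^{-1}\bigr)\|\nabla u\|_2^{2}-\tfrac{1}{2^{*}}S^{-2^{*}/2}\|\nabla u\|_2^{2^{*}}-c(\beta,\Theta)\|\nabla u\|_2^{N(p_1-2)/2}
\]
with the crucial ordering $N(p_1-2)/2<2<2^{*}$. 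The threshold $\widetilde{\Theta}_V$ is calibrated exactly so that this one-variable envelope has a strictly positive local maximum; hence each fiber $s\mapsto\mathcal{I}_r(s\star u)$ exhibits a negative local minimum followed by a positive local maximum, and the mountain pass value
\[
c_r=\inf_{\gamma\in\Gamma_r}\max_{t\in[0,1]}\mathcal{I}_r(\gamma(t))
\]
is well-defined and strictly positive, uniformly for large $r$.

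Following Jeanjean I would then lift to the augmented functional $\widetilde{\mathcal{I}}_r(s,u)=\mathcal{I}_r(s\star u)$ on $\mathbb{R}\times S_{r,\Theta}$; a deformation plus Ekeland argument on this space produces a sequence $(u_n)\subset S_{r,\Theta}$ with $\mathcal{I}_r(u_n)\to c_r$, constrained derivative going to $0$, and a Pohozaev-type quantity $P_r(u_n)\to 0$ coming from $\partial_s\widetilde{\mathcal{I}}_r|_{s=0}$. Combined with $(f_2)$ and the $C^1$ regularity of $f$ (as in Remark~\ref{R1.2} and the boundedness argument of Lemma~\ref{L3.2}), the Pohozaev identity pins $\|\nabla u_n\|_2$ inside a bounded interval and extracts a weak limit $u_r\in H_0^1(\Omega_r)$ satisfying the Euler--Lagrange equation with some multiplier $\lambda_{r,\Theta}\in\mathbb{R}$.

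The main obstacle is the critical-level estimate
\[
c_r<c_r^{*}:=\tfrac{1}{N}\bigl(1-\|V_{-}\|_{\frac{N}{2}}S^{-1}\bigr)^{N/2}S^{N/2},
\]
needed to rule out concentration. To establish it I plan to test along a path $s\mapsto s\star(\chi U_{\varepsilon,y})$, with a cut-off $\chi$ chosen (especially when $N\in\{3,4\}$) so that $\chi U_{\varepsilon,y}$ is compactly supported in $\Omega_r$ and has the prescribed $L^2$-mass $\Theta$, and then expand $\mathcal{I}_r$ as $\varepsilon\to 0$. The decisive contribution is $-\beta\int F(\chi U_{\varepsilon,y})$: since $\beta>0$ and $p_1$ is $L^2$-subcritical, this term beats the error coming from $V$ and forces the strict inequality, and the explicit form of $\widetilde{\Theta}_V$ (and of $A_{p_1}$) in the statement is precisely what is needed to make the Step~1 geometry compatible with this test-function bound. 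Once $c_r<c_r^{*}$ is in hand, Lions' concentration--compactness principle combined with $P_r(u_n)\to 0$ forces $u_n\to u_r$ strongly in $H_0^1(\Omega_r)$, so that $u_r\in S_{r,\Theta}$ and $\mathcal{I}_r(u_r)=c_r>0$; non-negativity follows by replacing $u_r$ with $|u_r|$ and invoking the maximum principle, while the uniform bound $\limsup_{r\to\infty}\max_{\Omega_r}u_{r,\Theta}<C_\Theta$ comes from a standard Brezis--Kato/Moser iteration exploiting the boundedness of $V$ and of $f'$ granted by $(f_1)$--$(f_2)$.
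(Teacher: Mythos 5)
Your overall architecture (mountain pass geometry calibrated by $\widetilde{\Theta}_V$, Aubin--Talenti test functions to control the level, compactness below a critical threshold, Pohozaev-based a priori bounds, Liouville/Moser for the $L^\infty$ estimate) is close in spirit to the paper's, but two of your steps would fail as written. First, the mechanism you propose for producing a bounded Palais--Smale sequence does not work on the bounded domain $\Omega_r$. The dilation $s\star u(x)=e^{Ns/2}u(e^{s}x)$ maps $H_0^1(\Omega_r)$ into itself only for $s\ge 0$ (it shrinks supports), so the augmented functional $\widetilde{\mathcal{I}}_r(s,u)=\mathcal{I}_r(s\star u)$ lives on a half-line in $s$; the Jeanjean-type deformation argument that yields $P_r(u_n)\to 0$ needs the scaling to act in a two-sided neighbourhood, and a one-sided derivative at $s=0$ is not forced to vanish at a mountain pass point. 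Moreover the Pohozaev identity on $\Omega_r$ carries the boundary term $\frac{1}{2N}\int_{\partial\Omega_r}|\nabla u|^2(x\cdot\mathbf{n})\,d\sigma$, which the interior dilation does not see. The paper avoids this entirely: it perturbs the functional itself ($\mathcal{J}_{r,s}$ with $s\in[\frac12,1]$ multiplying the nonlinear terms) and invokes the monotonicity trick of Proposition \ref{p3.1}, which outputs bounded Palais--Smale sequences for a.e.\ $s$ directly; the uniform-in-$(s,r)$ gradient bound is then proved only for genuine solutions (Lemma \ref{L9.4}), where the true Pohozaev identity is available and the boundary term has a sign because $\Omega_r$ is convex.

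Second, your compactness threshold is miscalibrated. You propose $c_r^{*}=\frac{1}{N}\bigl(1-\|V_{-}\|_{\frac{N}{2}}S^{-1}\bigr)^{N/2}S^{N/2}$, which is strictly smaller than $\frac{1}{N}S^{N/2}$ whenever $V_{-}\not\equiv 0$. But the leading term of the energy of a concentrating cut-off bubble is exactly $\frac{1}{N}S^{N/2}$ (the contribution of $V$ vanishes as $\varepsilon\to 0$ since $V\in L^{N/2}$), and every correction, including the favourable one $-\beta\int F$, is $o(1)$; hence the test-function expansion can never yield $c_r<c_r^{*}$ with your $c_r^{*}$, only $c_r<\frac{1}{N}S^{N/2}$ (more precisely $<\frac{\zeta}{N}S^{N/2}$ with $\zeta=s^{-2/(2^{*}-2)}\ge 1$ in the perturbed setting, which is what Lemma \ref{L9.2} asserts and what the Br\'ezis--Lieb splitting in Lemmas \ref{L8.3} and \ref{L9.3} actually requires). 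The quantity you chose plays the opposite role in the paper: it is the shape of the \emph{lower} bound on the mountain pass level, not the upper compactness threshold. Relatedly, $\widetilde{\Theta}_V$ in the paper is determined purely by requiring the lower envelope $\widehat{g}_1$ to have a positive interior maximum (Lemma \ref{L9.1}), not by the bubble estimate. Your Br\'ezis--Kato/Moser route to the uniform $L^\infty$ bound is a legitimate alternative to the paper's blow-up plus Liouville argument (Lemma \ref{L3.5}), but the two gaps above must be repaired before the proof stands.
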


For the $L^2$-critical case, that is $p_1=2+\frac{4}{N}$ or $q=2+\frac{4}{N}$, we have the following results.

\begin{theorem}\label{t1.10}(\textbf{case $\beta > 0$ and $p_1=2+\frac{4}{N}$})
Assume $V$ satisfies $(\widetilde{V_0})$, $f$ satisfies $(f_1)$ and $(\widetilde{f_2})$. Set
$$
\widetilde{\Theta}_V=\left[\frac{N(q-p_2)-4}{N\alpha\beta(q-p_2)C_N}\right]^{\frac{N}{2}}.
$$
Then the following hold for $0<\Theta<\widetilde{\Theta}_V$:

(i) There exists $\widetilde{r}_\Theta>0$ such that \eqref{eq1.1} in $\Omega_r$ admits for $r>r_\Theta$ a mountain pass type solution $\left(\lambda_{r, \Theta}, u_{r, \Theta}\right)$ with $u_{r, \Theta}>0$ in $\Omega_r$ and positive energy $I_r\left(u_{r, \Theta}\right)>0$. Moreover, there exists $C_\Theta>0$ such that
$$
\limsup\limits_{r \rightarrow \infty} \max\limits_{x \in \Omega_r} u_{r, \Theta}(x)<C_\Theta .
$$

(ii) There exists $0<\bar{\Theta} \leq \widetilde{\Theta}_V$ such that
$$
\liminf\limits_{r \rightarrow \infty} \lambda_{r, \Theta}>0 \quad \text { for any } 0<\Theta \leq \bar{\Theta}.
$$
\end{theorem}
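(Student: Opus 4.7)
The plan is to adapt the mountain pass strategy the authors developed for Theorem \ref{t1.3} to the $L^2$-critical scaling $p_1=2+\frac{4}{N}$. I work with the energy functional
$$
I_r(u)=\tfrac12\int_{\Omega_r}|\nabla u|^2+\tfrac12\int_{\Omega_r}V u^2-\tfrac1q\int_{\Omega_r}|u|^q-\beta\int_{\Omega_r}F(u)
$$
on the constraint $S_{r,\Theta}$, together with the $L^2$-preserving fiber $s\ast u(x)=e^{sN/2}u(e^sx)$. Under this fiber the kinetic and $L^{p_1}$ terms scale identically like $e^{2s}$, while $\int|u|^q$ scales as $e^{sN(q-2)/2}$ with exponent strictly greater than $2$ since $q>2+\tfrac{4}{N}$. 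This is exactly the structural feature used to get a mountain pass geometry, provided the $L^{p_1}$ contribution does not swallow the kinetic term at small scales.

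First I would verify the MP geometry. Using $(\widetilde{f_2})$ together with oddness of $f$ one obtains $F(\tau)\le \alpha|\tau|^{p_1}$, so by the Gagliardo--Nirenberg inequality on $\mathbb{R}^N$ at the $L^2$-critical exponent,
$$
\beta\int_{\Omega_r}F(u)\le \alpha\beta C_N\Theta^{2/N}\|\nabla u\|_2^2 .
$$
Combining this with $(\widetilde{V_0})$ and the GN bound on $\|u\|_q^q$, the assumption $\Theta<\widetilde{\Theta}_V$ is calibrated so that $I_r(u)$ is strictly positive on a small geodesic sphere of $S_{r,\Theta}$ (the coefficient of $\|\nabla u\|_2^2$ staying positive after collecting the $V_-$ and $L^{p_1}$ losses, which is the role of the correction factor $\frac{N(q-p_2)-4}{N(q-p_2)}$ appearing in $\widetilde{V_0}$), while $I_r(s\ast u)\to-\infty$ as $s\to\infty$. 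Choosing a path through $s\ast u$ for $s$ large gives the mountain pass level $c_{r,\Theta}>0$; its uniform boundedness as $r\to\infty$ follows by anchoring a test path at a fixed compactly supported $u$, once $r$ is large enough for $\mathrm{supp}\, u\subset \Omega_r$.

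Next I would produce a bounded Palais--Smale sequence via Jeanjean's augmented functional $\widetilde{I}_r:\mathbb{R}\times S_{r,\Theta}\to\mathbb{R}$, $\widetilde{I}_r(s,u)=I_r(s\ast u)$, whose MP value coincides with $c_{r,\Theta}$. The $s$-derivative gives an approximate Pohozaev identity; together with $\widetilde{V}$ bounded and the $C^1$ regularity of $f$ (used to control $\widetilde{F}(u)=\tfrac12 f(u)u-F(u)$), this forces $\|\nabla u_n\|_2$ bounded. Since $\Omega_r$ is bounded the embedding $H_0^1(\Omega_r)\hookrightarrow L^t(\Omega_r)$ is compact for $2\le t<2^*$, so $(u_n)$ converges strongly to some $u_{r,\Theta}\in S_{r,\Theta}$ with Lagrange multiplier $\lambda_{r,\Theta}$. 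Positivity of $u_{r,\Theta}$ follows by replacing $u_n$ with $|u_n|$ (the functional is even). The uniform $L^\infty$-bound then comes from a Moser/Brezis--Kato bootstrap applied to the equation, using that $c_{r,\Theta}$ is uniformly bounded in $r$ to control the initial $L^{2^*}$-norm.

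Finally, for (ii) I would combine the Pohozaev identity on $\Omega_r$ (whose boundary term has a favorable sign by convexity of $\Omega$ and the Hopf-type argument from \cite{TBAQ2023}) with testing the equation against $u_{r,\Theta}$ to express $\lambda_{r,\Theta}\Theta$ as a linear combination involving $\int|u|^q$, $\int f(u)u$, $\int V u^2$, and $\int\widetilde{V}u^2$. Using $(\widetilde{f_2})$, $(\widetilde{V_0})$, the uniform $L^\infty$ and $L^2$ bounds, and GN at the critical exponent, one extracts $\liminf_{r\to\infty}\lambda_{r,\Theta}>0$ provided $\Theta$ is reduced below a further threshold $\bar\Theta\le\widetilde{\Theta}_V$. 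The main obstacle is precisely at the MP geometry/PS boundedness step: because $F$ and $|\nabla u|^2$ share the same scaling under $s\ast u$, the whole argument lives on the razor's edge set by $\widetilde{\Theta}_V$, and one must track the GN constants through both the MP barrier estimate and the Pohozaev inequality to see that the same smallness condition does the job everywhere.
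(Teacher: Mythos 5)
Your mountain-pass geometry is calibrated exactly as in the paper's Lemma \ref{L10.1}: the $L^2$-critical term is absorbed into the kinetic term via $F(\tau)\le\alpha|\tau|^{p_1}$ and the critical Gagliardo--Nirenberg inequality, leaving a coefficient $\tfrac12\bigl(1-\|V_-\|_{\frac N2}S^{-1}-2\alpha\beta C_N\Theta^{\frac 2N}\bigr)$ whose positivity is what $(\widetilde{V_0})$ and $\Theta<\widetilde{\Theta}_V$ buy; your treatment of the Lagrange multiplier in (ii) and the uniform bound on the mountain-pass level are likewise consistent with Lemmas \ref{L5.4} and \ref{L10.1}(iii). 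The genuine gap is in your mechanism for producing a \emph{bounded} Palais--Smale sequence. You invoke Jeanjean's augmented functional $\widetilde I_r(s,u)=I_r(s\ast u)$ with $s\ast u(x)=e^{sN/2}u(e^sx)$, but this fiber map does not act on $S_{r,\Theta}\subset H_0^1(\Omega_r)$: for $s<0$ the support of $s\ast u$ is $e^{-s}\Omega_r\not\subset\Omega_r$, so $\widetilde I_r$ is not defined on $\mathbb{R}\times S_{r,\Theta}$, and even for $s\ge 0$ the identity ``$\partial_s\widetilde I_r=0$ at a critical point $\Rightarrow$ Pohozaev'' fails on a bounded domain because of the boundary term $\int_{\partial\Omega_r}|\nabla u|^2(x\cdot\mathbf{n})\,d\sigma$. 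This is precisely why the paper does not use the fiber map: it applies the monotonicity trick of Proposition \ref{p3.1} to the family $J_{r,s}$ in which $s\in[\tfrac12,1]$ multiplies the nonlinearity, obtains constrained critical points for almost every $s$, and only \emph{then} derives the uniform gradient bound from the exact Pohozaev identity satisfied by those solutions (Lemma \ref{L10.2}), where convexity of $\Omega_r$ gives the favorable sign of the boundary term and the smallness $\Theta<\widetilde{\Theta}_V$ keeps the coefficient of $\int_{\Omega_r}|\nabla u|^2$ positive despite the critical scaling $N(p_1-2)/4=1$; the solution of the original problem is recovered by letting $s\to1^-$.

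Two smaller discrepancies. First, your $L^\infty$ bound via Moser/Brezis--Kato is a legitimate alternative to the paper's Lemma \ref{L3.5}, but the paper's route is a blow-up argument combined with Liouville theorems on $\mathbb{R}^N$ and on half-spaces, which is also what delivers the later information that maxima cannot escape to the boundary; if you replace it by pure iteration you still need the Liouville step elsewhere in part (ii). Second, positivity of $u_{r,\Theta}$ is not obtained simply by ``replacing $u_n$ by $|u_n|$'' inside a generic PS sequence: the paper has to rework the deformation inside the proof of Proposition \ref{p3.1}, replacing the optimizing paths $\gamma_n$ by $|\gamma_n|$ and checking that the two quantitative properties of those paths survive, before the maximum principle upgrades $u_{r,\Theta}\ge 0$ to $u_{r,\Theta}>0$. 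These are repairable, but the Palais--Smale boundedness step as you propose it would not go through on $\Omega_r$.
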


\begin{theorem}\label{t1.11}(\textbf{case $\beta \leq 0$ and $p_1=2+\frac{4}{N}$})
Assume $V$ satisfies $\left(V_0\right)$, is of class $C^1$ and $\widetilde{V}$ is bounded, $f$ satisfies $(f_1)$ and $(\widetilde{f_2})$. Set
$$
\widehat{\Theta}_V=\left[\frac{(N-2)q-2N}{2N\alpha\beta(q-p_2)C_N}\right]^{\frac{N}{2}}.
$$
Then the following hold for $0<\Theta<\widehat{\Theta}_V$:

(i) There exists $r_\Theta>0$ such that \eqref{eq1.1} on $\Omega_r$ with $r>r_\Theta$ has a mountain pass type solution $\left(\lambda_{r, \Theta}, u_{r, \Theta}\right)$ with $u_{r, \Theta}>0$ in $\Omega_r$ and positive energy $I_r\left(u_{r, \Theta}\right)>0$. Moreover, there exists $C_\Theta>0$ such that
$$
\limsup _{r \rightarrow \infty} \max _{x \in \Omega_r} u_{r, \Theta}(x)<C_\Theta .
$$

(ii) If in addition $\|\widetilde{V}_{+}\|_{\frac{N}{2}}<2 S$, then there exists $\widetilde{\Theta}>0$ such that
$$
\liminf _{r \rightarrow \infty} \lambda_{r, \Theta}>0 \text { for any } 0<\Theta<\widetilde{\Theta} .
$$
\end{theorem}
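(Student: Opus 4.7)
The approach closely parallels the proof of Theorem~\ref{t1.1} for the Sobolev-subcritical case with $\be\le0$, with changes needed to accommodate the fact that $f$ now has $L^2$-critical growth at the upper endpoint $p_1=2+\tfrac{4}{N}$ (hypothesis $(\widetilde{f_2})$). Introduce the energy
\[
I_r(u)=\frac12\int_{\Om_r}|\na u|^2+\frac12\int_{\Om_r}V u^2-\frac1q\int_{\Om_r}|u|^q-\be\int_{\Om_r}F(u)
\]
on $S_{r,\Theta}$, together with the $L^2$-preserving fiber action $s*u(x)=e^{sN/2}u(e^sx)$ and the augmented functional $\widetilde I_r(s,u):=I_r(s*u)$ on $\R\times S_{r,\Theta}$. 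Since $\be\le0$ and $F\ge0$, the $F$-piece contributes non-negatively to $I_r$. Along the fiber the kinetic term and the $F$-term both scale like $e^{2s}$ (because $p_1$ is mass-critical), whereas $\tfrac1q\int|u|^q$ scales like $e^{sN(q-2)/2}$ with $N(q-2)/2>2$; thus $\widetilde I_r(s,u)\to-\infty$ as $s\to+\infty$ and tends to zero as $s\to-\infty$. The mass-critical Gagliardo--Nirenberg inequality $\|u\|_{p_1}^{p_1}\le C_N\Theta^{2/N}\|\na u\|_2^2$ shows that the smallness $\Theta<\widehat\Theta_V$ keeps the combined kinetic-minus-$F$ coefficient strictly positive, preserving a well of $I_r$ near zero and producing mountain pass geometry with level $c_r>0$, uniformly bounded in $r$.

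Next I apply Jeanjean's augmented-functional device \cite{LJJ1997} to $\widetilde I_r$, exactly as in Theorem~\ref{t1.1}, to produce a sequence $(u_n)\subset S_{r,\Theta}$ with $I_r(u_n)\to c_r$, $I_r|_{S_{r,\Theta}}'(u_n)\to0$ and the asymptotic fiber-Pohozaev identity obtained by differentiating $\widetilde I_r(\cdot,u_n)$ at $s=0$. Combining this identity with $I_r(u_n)=c_r+o(1)$, after elimination of $\|u_n\|_q^q$, yields an inequality of the shape
\[
\frac{N(q-2)-4}{4}\,\|\na u_n\|_2^2 \le C\bigl(\|\widetilde V\|_\infty\Theta+\|V_-\|_{N/2}S^{-1}\|\na u_n\|_2^2+c_r\bigr)+\frac{N|\be|(q-p_2)}{2}\int F(u_n)+o(1),
\]
where the last term is controlled by $|\be|(q-p_2)C_N\Theta^{2/N}\|\na u_n\|_2^2$ via Gagliardo--Nirenberg and the upper bound $f(\tau)\tau\ge p_2F(\tau)$ from $(\widetilde{f_2})$. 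The condition $\Theta<\widehat\Theta_V$ is the sharp quantitative smallness that makes the leading coefficient of $\|\na u_n\|_2^2$ stay strictly positive after absorption, giving boundedness of the Palais--Smale sequence.

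For fixed $r$, $\Om_r$ is a bounded smooth convex domain and the embeddings $H_0^1(\Om_r)\hookrightarrow L^q,L^{p_1}$ are compact. A Brezis--Lieb argument with the Lagrange multiplier rule supplies a weak limit $u_{r,\Theta}\in S_{r,\Theta}$ and a multiplier $\la_{r,\Theta}\in\R$ satisfying \eqref{eq1.1} with $I_r(u_{r,\Theta})=c_r>0$; non-triviality is automatic from $c_r>0$. Replacing $u_{r,\Theta}$ by $|u_{r,\Theta}|$ (possible because $f$ is odd) and invoking the strong maximum principle gives $u_{r,\Theta}>0$. The uniform supremum bound follows from the Moser iteration used in \cite{TBAQ2023} and in Theorem~\ref{t1.1}(i): the nonlinearity obeys $|f(s)|\le C|s|^{p_1-1}+|s|^{q-1}$ with $p_1,q<2^*$, and the $H^1$-bound from the preceding step, together with boundedness of $\widetilde V$, makes the iteration work independently of $r$. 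For part (ii), pairing the equation with $u_{r,\Theta}$ and with the classical Pohozaev identity on $\Om_r$ (whose boundary contribution has the favourable sign by convexity of $\Om$, cf.\ \cite{TBAQ2023}) expresses $\la_{r,\Theta}\Theta$ as a linear combination of $\|\na u_{r,\Theta}\|_2^2$, $\int(V+\tfrac12\widetilde V)u_{r,\Theta}^2$, $\|u_{r,\Theta}\|_q^q$ and $\be\int f(u_{r,\Theta})u_{r,\Theta}$; inserting $(V_0)$, the hypothesis $\|\widetilde V_+\|_{N/2}<2S$, the Sobolev inequality and the mass-critical Gagliardo--Nirenberg control of the $F$-term, a quantitative estimate analogous to the one in Theorem~\ref{t1.1}(ii) produces a threshold $\widetilde\Theta\in(0,\widehat\Theta_V]$ such that $\la_{r,\Theta}\ge c_0>0$ uniformly for large $r$ as soon as $0<\Theta<\widetilde\Theta$.

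The main obstacle is the bounded Palais--Smale step. In the mass-critical regime $p_1=2+\tfrac4N$ the $F$-contribution scales exactly like the kinetic energy along the $L^2$-preserving fiber, so the two compete on equal footing and $F$ cannot be dropped. The explicit constant $\widehat\Theta_V$ prescribed by the theorem is the sharp smallness threshold tuned to make the Pohozaev-plus-Gagliardo--Nirenberg absorption strict; tracking it carefully through the chain of inequalities, while simultaneously keeping $c_r$ uniformly bounded in $r$ (which is needed both for the Lagrange multiplier step and for the $r$-independent $L^\infty$-bound), is the main technical burden.
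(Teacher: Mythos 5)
The central device in your plan --- the dilation $s*u(x)=e^{sN/2}u(e^sx)$, the augmented functional $\widetilde I_r(s,u)=I_r(s*u)$ on $\R\times S_{r,\Theta}$, and the ``asymptotic fiber-Pohozaev identity'' for the Palais--Smale sequence --- is not available here, because Theorem \ref{t1.11} is set on the bounded domains $\Omega_r$. For $s<0$ the dilation enlarges the support, so $s*u\notin H_0^1(\Omega_r)$ and the fiber is only a half-line; moreover a Palais--Smale sequence on a bounded domain satisfies no Pohozaev-type identity free of boundary terms, so the constraint you want to extract from $\partial_s\widetilde I_r(\cdot,u_n)$ at $s=0$ cannot be produced this way. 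This is precisely why the paper (following \cite{TBAQ2023}) proceeds differently: it perturbs the functional by a parameter $s\in[\frac12,1]$ in front of the $|u|^q$-term only (the functional $I_{r,s}$ in \eqref{eq3.1}), invokes Proposition \ref{p3.1} to obtain a \emph{bounded} Palais--Smale sequence for almost every $s$, gets genuine solutions $(\la_{r,s},u_{r,s})$ by compactness of the subcritical embeddings, and only then applies the Pohozaev identity \emph{for solutions} on $\Omega_r$ --- including the boundary term $\frac{1}{2N}\int_{\partial\Omega_r}|\na u|^2(x\cdot\mathbf n)\,d\sigma$, whose sign is controlled by convexity --- to bound $\|\na u_{r,s}\|_2^2$ uniformly in $s$ and $r$ (Lemma \ref{L3.3}) and pass $s\to1$. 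Your proposal is missing the mechanism that actually produces bounded Palais--Smale sequences in this setting.

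Two further points. First, you assign $\widehat{\Theta}_V$ the wrong role: when $\be\le0$ the uniform gradient bound of Lemma \ref{L3.3} requires no smallness of $\Theta$ at all (the $F$-term has a favourable sign and is absorbed into the $|u|^q$-term via $(\widetilde{f_2})$, the leading coefficient being $\frac{N(q-2)-4}{4N}>0$ since $q>2+\frac4N$); the threshold $\widehat{\Theta}_V$ enters only in the Lagrange-multiplier step (Lemma \ref{L10.4}), where in the mass-critical case the $F$-contribution $-\frac{\be(q-p_2)\al C_N}{q}\Theta^{2/N}\|\na u_\Theta\|_2^2$ has the same order as $\frac{(N-2)q-2N}{2Nq}\|\na u_\Theta\|_2^2$ and the combined coefficient must remain negative. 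Second, replacing the solution by $|u_{r,\Theta}|$ at the end is not legitimate, since the absolute value of a critical point need not be a critical point; the paper instead replaces the min-max paths $\gamma_n$ by $|\gamma_n|$ inside the proof of Proposition \ref{p3.1} (see Lemma \ref{L3.2}) to produce a nonnegative Palais--Smale sequence, and positivity then follows from the strong maximum principle. (Also, the uniform $L^\infty$ bound is obtained by the blow-up/Liouville argument of Lemma \ref{L3.5} rather than Moser iteration, though that is a difference of method rather than a gap.)
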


\begin{theorem}\label{t1.12}(\textbf{case $\beta > 0$ and $q =2+\frac{4}{N}$})
Assume $V$ satisfies $(\widehat{V_0})$, $f$ satisfies $(f_1)-(f_2)$ and set
$$
\Theta_V=\left(\frac{N+2}{NC_N}\right)^{\frac{N}{2}}.
$$
Then the following hold for $0<\Theta<\Theta_V$:

(i) There exists $r_\Theta>0$ such that \eqref{eq1.1} on $\Omega_r$ with $r>r_\Theta$ has a global minimum type solution $\left(\lambda_{r, \Theta}, u_{r, \Theta}\right)$ with $u_{r, \Theta}>0$ in $\Omega_r$ and negative energy $I_r\left(u_{r, \Theta}\right)<0$.

(ii) There exists $C_\Theta>0$ such that
$$
\limsup\limits_{r \rightarrow \infty} \max\limits_{x \in \Omega_r} u_{r, \Theta}(x)<C_\Theta, \quad \liminf\limits_{r \rightarrow \infty} \lambda_{r, \Theta}>0 .
$$
\end{theorem}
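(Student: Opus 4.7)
My plan is to prove Theorem~\ref{t1.12} by direct minimization of the constrained energy functional
\begin{equation*}
I_r(u)=\frac{1}{2}\int_{\Omega_r}|\nabla u|^2\,dx+\frac{1}{2}\int_{\Omega_r}V u^2\,dx-\frac{1}{q}\int_{\Omega_r}|u|^q\,dx-\beta\int_{\Omega_r}F(u)\,dx
\end{equation*}
on $S_{r,\Theta}$. Since $q=2+4/N$ is the $L^2$-critical exponent while $f$ is $L^2$-subcritical by $(f_2)$, a global minimum is what one should expect when the mass is strictly subcritical. For $u\in S_{r,\Theta}$, the Gagliardo--Nirenberg inequality with the exact matching $N(q-2)/2=2$ gives $\|u\|_q^q\leq C_N\Theta^{2/N}\|\nabla u\|_2^2$, while H\"older together with Sobolev yields $\int V u^2\geq -\|V_-\|_{N/2}S^{-1}\|\nabla u\|_2^2$. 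Hypothesis $(\widehat{V_0})$ combined with $\Theta<\Theta_V=\bigl((N+2)/(NC_N)\bigr)^{N/2}$ makes $\mu:=1-\|V_-\|_{N/2}S^{-1}-NC_N\Theta^{2/N}/(N+2)$ strictly positive. The monotonicity consequences of $(f_2)$ give $|F(s)|\leq C(|s|^{p_1}+|s|^{p_2})$, so a further use of Gagliardo--Nirenberg controls $\int F(u)$ by $\|\nabla u\|_2^{\sigma_i}$ with $\sigma_i=N(p_i-2)/2<2$. Thus $I_r(u)\geq \frac{\mu}{2}\|\nabla u\|_2^2-C\beta(\|\nabla u\|_2^{\sigma_1}+\|\nabla u\|_2^{\sigma_2})$, so $I_r$ is coercive and bounded below on $S_{r,\Theta}$, uniformly in $r$.

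Next I would show that $c_r:=\inf_{S_{r,\Theta}}I_r<0$ uniformly for all large $r$. Fix $\phi\in C_c^\infty(\Omega\setminus\{0\})$ with $\|\phi\|_2^2=\Theta$ and set $\phi_\tau(x)=\tau^{-N/2}\phi(x/\tau)$; by convexity of $\Omega$ with $0\in\Omega$ one has $\tau\Omega\subset r\Omega$ for $1\leq\tau\leq r$, hence $\phi_\tau\in S_{r,\Theta}$. With $q=2+4/N$ the critical cancellation gives $\frac{1}{2}\|\nabla\phi_\tau\|_2^2-\frac{1}{q}\|\phi_\tau\|_q^q=O(\tau^{-2})$; since $\mathrm{supp}\,\phi$ is separated from the origin and $V\in L^{N/2}(\mathbb{R}^N)$, a change of variables followed by H\"older gives $|\int V\phi_\tau^2|=o(\tau^{-2})$ as $\tau\to\infty$; and the monotonicity of $F(s)/|s|^{p_1}$ produces $\int F(\phi_\tau)\gtrsim \tau^{-N(p_1-2)/2}$ with $N(p_1-2)/2<2$. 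Hence the $F$-contribution dominates and $I_r(\phi_\tau)\leq -\delta<0$ for some $\delta>0$ independent of $r$, once $\tau$ is fixed large. A minimizing sequence is then $H^1$-bounded by the coercivity estimate, and because $\Omega_r$ is bounded and $q,p_1,p_2<2^*$ the compact embedding $H_0^1(\Omega_r)\hookrightarrow L^p(\Omega_r)$ combined with weak lower semi-continuity produces a minimizer $u_{r,\Theta}\in S_{r,\Theta}$ with $I_r(u_{r,\Theta})=c_r\leq-\delta$. Replacing by $|u_{r,\Theta}|$ preserves the energy (using that $F$ is even and $|\nabla|u||=|\nabla u|$), so Lagrange multipliers, elliptic regularity and the strong maximum principle yield $u_{r,\Theta}>0$ in $\Omega_r$ together with $\lambda_{r,\Theta}\in\mathbb{R}$.

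Finally, testing the equation with $u_{r,\Theta}$ and subtracting $2c_r=2I_r(u_{r,\Theta})$ gives
\begin{equation*}
\lambda_{r,\Theta}\Theta=\frac{q-2}{q}\|u_{r,\Theta}\|_q^q+\beta\int_{\Omega_r}\bigl(f(u_{r,\Theta})u_{r,\Theta}-2F(u_{r,\Theta})\bigr)\,dx-2c_r,
\end{equation*}
where the first term is nonnegative since $q>2$ and the integral is nonnegative since $f(s)s\geq p_2 F(s)\geq 2F(s)$ by $(f_2)$ ($p_2>2$); hence $\lambda_{r,\Theta}\Theta\geq-2c_r\geq 2\delta>0$ and $\liminf_{r\to\infty}\lambda_{r,\Theta}>0$ follows. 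For the uniform $L^\infty$ estimate, the uniform $H^1$-bound yields a uniform $L^{2^*}$-bound by Sobolev; since $V$, $\lambda_{r,\Theta}$, and the nonlinearity $|u|^{q-2}u+\beta f(u)$ all have uniform-in-$r$ bounds with subcritical growth at $2^*$, a standard Moser iteration on the PDE produces $\limsup_{r\to\infty}\max_{\Omega_r}u_{r,\Theta}<C_\Theta$. The most delicate point of the plan is securing the \emph{uniform-in-$r$} negativity $c_r\leq-\delta<0$, since this underlies both the nontriviality of the minimizer and the quantitative lower bound on $\lambda_{r,\Theta}$; the scaling argument only works because $\mathrm{supp}\,\phi$ is kept away from the origin, so that the $V$-contribution decays strictly faster than the negative $F$-contribution along $\phi_\tau$.
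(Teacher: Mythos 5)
Your proposal is correct and follows the same overall route as the paper: the paper likewise absorbs the $L^2$-critical term $\frac{1}{q}\|u\|_q^q$ into the gradient term via the Gagliardo--Nirenberg inequality, using $(\widehat{V_0})$ and $\Theta<\Theta_V$ to make the coefficient $1-\|V_-\|_{\frac N2}S^{-1}-\frac{NC_N\Theta^{2/N}}{N+2}$ positive, so that $I_r\geq h_1(\|\nabla u\|_2)$ with $h_1(t)=\frac{\mu}{2}t^2-c\,t^{N(p_1-2)/2}$ coercive and negative only for small $t$; it then runs the minimization scheme of Lemma \ref{L4.1} (negativity of the infimum via a scaled profile on which the $L^2$-subcritical term $\beta F$ dominates the $O(r^{-2})$ kinetic and potential contributions, Ekeland plus compactness, and the identity $\lambda_r\Theta>-2e_{r,\Theta}$ with $e_{r,\Theta}$ nonincreasing in $r$ to get $\liminf_r\lambda_{r,\Theta}>0$). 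Two of your choices differ in detail but not in substance. First, your test function is a bump supported away from the origin, which lets you show the $V$-contribution is $o(\tau^{-2})$; the paper simply uses the rescaled principal Dirichlet eigenfunction and bounds $\int Vu_r^2$ by $\|V\|_{\frac N2}S^{-1}\|\nabla u_r\|_2^2=O(r^{-2})$, which is enough since the $F$-term decays like $r^{-N(p_2-2)/2}\gg r^{-2}$ anyway; your extra care is harmless but unnecessary. Second, for the uniform $L^\infty$ bound you propose Moser iteration, whereas the paper uses the blow-up/Liouville argument of Lemma \ref{L3.5}; since $q=2+\frac4N<2^*$ and both $\|u_{r,\Theta}\|_{H^1}$ and $|\lambda_{r,\Theta}|$ are bounded uniformly in $r$ (the latter by testing the equation with $u_{r,\Theta}$), Moser iteration with the $r$-independent Sobolev constant of $H_0^1(\Omega_r)\hookrightarrow L^{2^*}$ does yield the claim, so this is a legitimate, arguably more elementary, substitute; the paper's Liouville route has the advantage of extending unchanged to the Sobolev-critical theorems.
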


\begin{remark}\label{R1.3}
(i) Theorems \ref{t1.1}-\ref{t1.9} are valid if $2=p_2<p_1<2+\frac{4}{N}$ in $(f_2)$. Moreover, the proof of Theorems \ref{t1.4}-\ref{t1.6} is very similar to \cite{TBAQ2023}, so we omit it in this paper.

(ii) Our conclusion also applies to $p_1=p_2=2+\frac{4}{N}$ if $2+\frac{4}{N}<q<2^*$, such as $f(u)=|u|^{\frac{4}{N}}u$. Therefore, our results cover certain conclusions in \cite{NSJDE2020}.
\end{remark}
\begin{remark}\label{R1.4}
Theorems \ref{t1.2} and \ref{t1.7}(resp. Theorems \ref{t1.3} and \ref{t1.9}) both require some limitations on $\Theta_V$(resp. $\widetilde{\Theta}_V$), although their values are different, they all stem from changes in the geometric structure of the energy functional. In addition, there are still some unknown results for the Sobolev critical case, that is, $\liminf\limits_{r \rightarrow \infty} \lambda_{r, \Theta}>0$ may not necessarily hold when $\beta > 0$ or $\beta\leq0$. In fact, the methods and techniques in Theorem \ref{t1.2}(or Theorem \ref{t1.3}) cannot be applied to the Sobolev critical case since $\frac{(N-2) q-2 N}{2 N q}=0$, thus $\lambda_\Theta>0$ cannot be obtained $0<\Theta\leq\bar{\Theta}$.
\end{remark}

\begin{remark}\label{R1.5}
In this paper, whether in subcritical or critical situations, the monotonicity trick in \cite{LJJ1999} is one of the keys to get the conclusion. Proposition \ref{p3.1} does not ensure the existence of a mountain pass solution for the original problem obtained when $s=1$. However, it gives the existence of a sequence $s_n\rightarrow1^{-}$, with a corresponding sequence of
mountain pass critical points $u_{r,s_n}\in$ of $I_{r,s_n}$, constrained on $S_{r, \Theta}$. We aim to show that $u_{r,s_n}$ strongly converges to a constrained critical point of $I_r$. For this purpose, it is sufficient to prove that $u_{r,s_n}$ is bounded in $H_0^1(\Omega_r)$, thanks to Proposition 3.1 in \cite{SD2018}.
\end{remark}

The structure of this paper is arranged as follows. In section 2, we provide some ideas in the proof of main theorems. In the following three sections, we aim to prove the first three conclusions. After that, we consider the Sobolev critical case. Finally, we consider the $L^2$-critical case and give some comments.

\section{Preliminary}
Consider the problem
\begin{equation} \label{eq2.1}
 \left\{\aligned
& -\Delta u+V(x)u+\lambda u=|u|^{q-2}u+\beta f(u), &x\in \Omega_r, \\
& \int_{\Omega_r} |u|^2dx=\Theta,u\in H_0^1(\Omega_r), &x\in \Omega_r,
\endaligned
\right.
\end{equation}
where $N \geq 3$, $2+\frac{4}{N}\leq q\leq2^*=\frac{2 N}{N-2}$, the mass $\Theta>0$ and the parameter $\beta \in \mathbb{R}$ are prescribed. The frequency $\lambda$ is unknown and to be determined. The energy functional $I_r: H_0^1(\Omega_r) \rightarrow \mathbb{R}$ is defined by
\begin{equation} \label{eq2.2}
I_r(u)=\frac{1}{2} \int_{\Omega_r}|\nabla u|^2 d x+\frac{1}{2} \int_{\Omega_r} V(x) u^2 d x-\frac{1}{q} \int_{\Omega_r}|u|^q d x-\beta \int_{\Omega_r}F(u) d x
\end{equation}
and the mass constraint manifold is defined by
\begin{equation} \label{eq2.3}
S_{r, \Theta}=\left\{u \in H_0^1(\Omega_r):\|u\|_2^2=\Theta\right\}.
\end{equation}
If $\Omega=\mathbb{R}^N$, the energy functional $I: H_0^1\left(\Omega_r\right) \rightarrow \mathbb{R}$ is defined by
\begin{equation} \label{eq2.4}
I(u)=\frac{1}{2} \int_{\mathbb{R}^N}|\nabla u|^2 d x+\frac{1}{2} \int_{\mathbb{R}^N} V(x) u^2 d x-\frac{1}{q} \int_{\mathbb{R}^N}|u|^q d x-\beta \int_{\mathbb{R}^N}F(u) d x
\end{equation}
and the mass constraint manifold is defined by
\begin{equation} \label{eq2.5}
S_{\Theta}=\left\{u \in H_0^1(\mathbb{R}^N):\|u\|_2^2=\Theta\right\}.
\end{equation}

The proof idea of Theorem \ref{t1.1} is as follows. In order to find a mountain pass type solution $(\lambda_{r, \Theta}, u_{r, \Theta})$, we first need to analyze the geometric structure of the energy functional corresponding to the equation \eqref{eq3.1}. Note that $\beta>0$, there exist $0<T_\Theta<t_0$ such that $h\left(t_0\right)=0, h(t)<0$ for any $t>t_0, h(t)>0$ for any $0<t<t_0$ and $h\left(T_\Theta\right)=\max\limits_{t \in \mathbb{R}^{+}} h(t)$ in Lemma \ref{L3.1}. This indicates that the energy functional $I_{r,s}$ has a global maximum. Next, we obtain the bounded Palais-Smale sequence by using \cite[Theorem 1]{{JBXC2022}} and get a solution $\left(\lambda_{r, s}, u_{r, s}\right)$ for \eqref{eq3.1}. Finally, we consider Lagrange multiplier and establish an a priori estimate for the solutions of \eqref{eq1.1}. Theorem \ref{t1.2} is relatively simple because the energy functional has a local minimum, which can be proved using the method of constrained minimization. The proof of Theorem \ref{t1.3} is similar to Theorem \ref{t1.1}, but the geometric structures of the two cases are significantly different and require refined estimate of energy.

Note that, there are some differences between the proof of Lemma \ref{L5.3} and Lemma \ref{L3.3}, and we cannot directly use the method of Lemma \ref{L3.3}, even if $q$ can be reduced to $p_2$ according to condition $(f_2)$ and $p_2<2+\frac{4}{N}<q<2^*$. More precisely, it then follows from $\beta >0$ and $(f_2)$ that
\begin{eqnarray*}
&&\frac{1}{N} \int_{\Omega_r}|\nabla u|^2 d  x-\frac{1}{2 N} \int_{\partial \Omega_r}|\nabla u|^2(x \cdot \mathbf{n}) d \sigma-\frac{1}{2 N} \int_{\Omega_r}(\nabla V \cdot x) u^2 d x \\
& =&\frac{(q-2) s}{2 q} \int_{\Omega_r}|u|^q d x+ s\int_{\Omega_r}(\frac{\beta }{2  }f(u)u-\beta F(u))d x \\
& \geq& \frac{p_2-2}{2}\left(\frac{1}{2} \int_{\Omega_r}|\nabla u|^2 d x+\frac{1}{2} \int_{\Omega_r} V u^2 d x-m_{r, s}(\Theta)\right) .
\end{eqnarray*}
Consequently, we have
 \begin{eqnarray*}
\frac{p_2-2}{2} m_{r, s}(\Theta) &\geq & \frac{p_2-2}{2}\left(\frac{1}{2} \int_{\Omega_r}|\nabla u|^2 d x+\frac{1}{2} \int_{\Omega_r} V u^2 d x\right)-\frac{1}{N} \int_{\Omega_r}|\nabla u|^2 d x \\
&& +\frac{1}{2 N} \int_{\partial \Omega_r}|\nabla u|^2(x \cdot \mathbf{n}) d \sigma+\frac{1}{2 N} \int_{\Omega_r}(\nabla V \cdot x) u^2 d x \\
&\geq & \frac{N(p_2-2)-4}{4 N} \int_{\Omega_r}|\nabla u|^2 d x-\Theta\left(\frac{1}{2 N}\|\nabla V \cdot x\|_{\infty}+\frac{p_2-2}{4}\|V\|_{\infty}\right).
 \end{eqnarray*}
However, this method is useless because $\frac{N(p_2-2)-4}{4 N}<0$, we cannot obtain that $\int_{\Omega_r}|\nabla u|^2 d x$ is uniformly bounded in $s$ and $r$.

\section{Proof of Theorem \ref{t1.1}}
 In this section, we assume $\beta \leq 0$ and the assumptions of Theorem \ref{t1.1} hold. In order to obtain a bounded Palais-Smale sequence, we will use the monotonicity trick inspired by \cite{LJJ1999}. For $\frac{1}{2} \leq s \leq 1$, we define the functional $I_{r, s}: S_{r, \Theta} \rightarrow \mathbb{R}$ by
\begin{equation}\label{eq3.1}
  I_{r, s}(u)=\frac{1}{2} \int_{\Omega_r}|\nabla u|^2 d x+\frac{1}{2} \int_{\Omega_r} V  u^2 d x-\frac{s}{q} \int_{\Omega_r}|u|^q d x-\beta \int_{\Omega_r}F(u) d x .
\end{equation}
Note that if $u \in S_{r, \Theta}$ is a critical point of $I_{r, s}$, then there exists $\lambda \in \mathbb{R}$ such that $(\lambda, u)$ is a solution of the equation
\begin{equation}\label{eq3.2}
 \left\{\aligned
& -\Delta u+V u+\lambda u=s|u|^{q-2}u+\beta f(u), &x\in \Omega_r, \\
& \int_{\Omega_r} |u|^2dx=\Theta,u\in H_0^1(\Omega_r), &x\in \Omega_r.
\endaligned
\right.
\end{equation}

\begin{lemma}\label{L3.1}
For any $\Theta>0$, there exist $r_\Theta>0$ and $u^0, u^1 \in S_{r_\Theta, \Theta}$ such that

(i) $I_{r, s}(u^1) \leq 0$ for any $r>r_\Theta$ and $s \in\left[\frac{1}{2}, 1\right]$,
$$
\left\|\nabla u^0\right\|_2^2<\left[\frac{2 q}{N(q-2) C_{N, q}}\left(1-\left\|V_{-}\right\|_{\frac{N}{2}} S^{-1}\right) \Theta^{\frac{q(N-2)-2 N}{4}}\right]^{\frac{4}{N(q-2)-4}}<\left\|\nabla u^1\right\|_2^2
$$
and
$$
I_{r, s}\left(u^0\right)<\frac{(N(q-2)-4)\left(1-\left\|V_{-}\right\|_{\frac{N}{2}} S^{-1}\right)}{2 N(q-2)}\left[\frac{2 q\left(1-\left\|V_{-}\right\|_{\frac{N}{2}} S^{-1}\right)}{N(q-2) C_{N, q} \Theta^{\frac{2 q-N(q-2)}{4}}}\right]^{\frac{4}{N(q-2)-4}} .
$$

(ii) If $u \in S_{r, \Theta}$ satisfies
$$
\|\nabla u\|_2^2=\left[\frac{2 q}{N(q-2) C_{N, q}}\left(1-\left\|V_{-}\right\|_{\frac{N}{2}} S^{-1}\right) \Theta^{\frac{q(N-2)-2 N}{4}}\right]^{\frac{4}{N(q-2)-4}},
$$
then there holds
$$
I_{r, s}(u) \geq \frac{(N(q-2)-4)\left(1-\left\|V_{-}\right\|_{\frac{N}{2}} S^{-1}\right)}{2 N(q-2)}\left[\frac{2 q\left(1-\left\|V_{-}\right\|_{\frac{N}{2}} S^{-1}\right)}{N(q-2) C_{N, q} \Theta^{\frac{2 q-N(q-2)}{4}}}\right]^{\frac{4}{N(q-2)-4}} .
$$
(iii) Set
$$
m_{r, s}(\Theta)=\inf _{\gamma \in \Gamma_{r, \Theta}} \sup _{t \in[0,1]} I_{r, s}(\gamma(t))
$$
with
$$
\Gamma_{r, \Theta}=\left\{\gamma \in C\left([0,1], S_{r, \Theta}\right): \gamma(0)=u^0, \gamma(1)=u^1\right\} .
$$
Then
$$
\frac{(N(q-2)-4)\left(1-\left\|V_{-}\right\|_{\frac{N}{2}} S^{-1}\right)}{2 N(q-2)}\left[\frac{2 q\left(1-\left\|V_{-}\right\|_{\frac{N}{2}} S^{-1}\right)}{N(q-2) C_{N, q} \Theta^{\frac{2 q-N(q-2)}{4}}}\right]^{\frac{4}{N(q-2)-4}} \leq m_{r, s}(\Theta) \leq h(T_\Theta),
$$
where $h(T_\Theta)=\max\limits_{t \in \mathbb{R}^{+}} h(t)$, the function $h: \mathbb{R}^{+} \rightarrow \mathbb{R}$ being defined by
$$
h(t)=\frac{1}{2}\left(1+\|V\|_{\frac{N}{2}} S^{-1}\right) t^2 \theta \Theta-\alpha\beta C_{N, p_1} \Theta^{\frac{p_1}{2}} \theta^{\frac{N(p_1-2)}{4}} t^{\frac{N(p_1-2)}{2}}-\frac{1}{2 q} \Theta^{\frac{q}{2}}|\Omega|^{\frac{2-q}{2}} t^{\frac{N(q-2)}{2}} .
$$
Here $\theta$ is the principal eigenvalue of $-\Delta$ with Dirichlet boundary conditions in $\Omega$, and $|\Omega|$ is the volume of $\Omega$.
\end{lemma}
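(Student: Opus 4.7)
The three parts of the lemma are tightly coupled: part (ii) fixes the critical radius $t^\ast$ at which a Gagliardo-Nirenberg lower bound on $I_{r,s}$ attains its maximum; part (i) exhibits functions $u^0,u^1$ on either side of this barrier; and part (iii) combines them via a mountain-pass geometry. The plan is therefore to prove (ii) first, use it to guide the construction in (i), and finally compare a suitable scaling path against $h(t)$ for (iii).

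For (ii), the strategy is to bound each term of \eqref{eq3.1} purely in terms of $\|\nabla u\|_2^2$ and $\Theta$. Writing $t=\|\nabla u\|_2^2$, the Sobolev inequality gives $\int V u^2\ge -\|V_-\|_{N/2}S^{-1}t$, and the Gagliardo-Nirenberg inequality gives $\|u\|_q^q\le C_{N,q}\Theta^{(2q-N(q-2))/4}t^{N(q-2)/4}$. Since $\beta\le 0$ and $F\ge 0$ (an easy consequence of $f(\tau)\tau\ge p_2 F(\tau)$ with $F(0)=0$), the nonlinear term $-\beta\int F(u)$ is non-negative and may be dropped. The resulting lower bound is $I_{r,s}(u)\ge g(t)$ with
$$
g(t)=\tfrac12\bigl(1-\|V_-\|_{N/2}S^{-1}\bigr)t-\tfrac{1}{q}C_{N,q}\Theta^{(2q-N(q-2))/4}t^{N(q-2)/4}.
$$
A direct computation shows $g'=0$ exactly at the $t^\ast$ appearing in (ii), and the standard identity $g(t^\ast)=\frac{N(q-2)-4}{2N(q-2)}(1-\|V_-\|_{N/2}S^{-1})\,t^\ast$ recovers the announced lower bound verbatim.

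For (i), let $\phi_1$ denote the first Dirichlet eigenfunction of $-\Delta$ on $\Omega$, normalized by $\|\phi_1\|_2=1$, and set $\psi=\sqrt{\Theta}\,\phi_1$, extended by zero to $\Omega_r$. Consider the $L^2$-preserving scaling $\psi_t(x)=t^{N/2}\psi(tx)$, whose support $t^{-1}\Omega$ lies in $\Omega_r$ provided $t\ge 1/r$; then $\|\nabla \psi_t\|_2^2=t^2\theta\Theta$ and $\|\psi_t\|_q^q=t^{N(q-2)/2}\|\psi\|_q^q$. Since $N(q-2)/2>2$, the $|u|^q$-term eventually dominates, so $I_{r,s}(\psi_t)\to -\infty$ as $t\to\infty$, giving some $t_1$ with $I_{r,s}(\psi_{t_1})\le 0$ and $\|\nabla\psi_{t_1}\|_2^2>t^\ast$; set $u^1=\psi_{t_1}$. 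Conversely, choosing $t_0=t_0(r)$ so small that $t_0^2\theta\Theta<t^\ast$ (feasible once $r$ is large enough) and using $F(\tau)\le C|\tau|^{p_1}$ on compact sets together with the Gagliardo-Nirenberg inequality, one checks that $I_{r,s}(\psi_{t_0})$ can be made strictly smaller than the critical level $g(t^\ast)$ in (ii); set $u^0=\psi_{t_0}$.

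For (iii), the lower bound is immediate: every path $\gamma\in\Gamma_{r,\Theta}$ joins a point with $\|\nabla u\|_2^2<t^\ast$ to one with $\|\nabla u\|_2^2>t^\ast$, so by continuity it meets the sphere $\{\|\nabla u\|_2^2=t^\ast\}$, where (ii) yields $I_{r,s}\ge g(t^\ast)$. For the upper bound, take the linear homotopy $\gamma(\sigma)=\psi_{(1-\sigma)t_0+\sigma t_1}$ inside $\Omega_r$. Bound each ingredient of $I_{r,s}(\psi_t)$: $|\int V\psi_t^2|\le\|V\|_{N/2}S^{-1}t^2\theta\Theta$ by Hölder and Sobolev; $-\frac{s}{q}\|\psi_t\|_q^q\le -\frac{1}{2q}\Theta^{q/2}|\Omega|^{(2-q)/2}t^{N(q-2)/2}$ by the reverse Hölder inequality $\|\psi\|_q\ge \|\psi\|_2|\Omega|^{-(q-2)/(2q)}$ combined with $s\ge 1/2$; and, using that $F(\tau)/|\tau|^{p_1}$ is non-increasing (from $f(\tau)\tau\le p_1F(\tau)$), $\int F(\psi_t)\le \alpha\|\psi_t\|_{p_1}^{p_1}\le \alpha C_{N,p_1}\Theta^{p_1/2}\theta^{N(p_1-2)/4}t^{N(p_1-2)/2}$, which gives the middle term of $h$ after multiplying by $-\beta\ge 0$. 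Summing these three bounds produces $I_{r,s}(\psi_t)\le h(t)$ for $t\in[t_0,t_1]$, hence $m_{r,s}(\Theta)\le \sup_t h(t)=h(T_\Theta)$. The main obstacle in this plan is the upper-bound step in (iii): one must carefully extract the sharp Gagliardo-Nirenberg-type bound on $\int F(\psi_t)$ from the abstract hypothesis $(f_2)$ and confirm that the three separate estimates assemble, with the correct powers of $t$, $\theta$, $\Theta$ and $|\Omega|$, into the precise expression $h(t)$ stated in the lemma, uniformly in $r$ large and $s\in[1/2,1]$.
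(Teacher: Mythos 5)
Your proposal is correct and follows essentially the same route as the paper: the same rescaled first Dirichlet eigenfunction $v_t(x)=t^{N/2}v_1(tx)$ as test family, the same Gagliardo--Nirenberg/Sobolev lower bound $I_{r,s}(u)\ge g(\|\nabla u\|_2^2)$ (dropping $-\beta\int F\ge 0$) whose maximizer $\tilde t$ defines the barrier in (ii), and the same linear scaling path compared against $h(t)$ for the upper bound in (iii). The only difference is cosmetic ordering (you prove (ii) first), and your use of $F(\tau)\le\alpha|\tau|^{p_1}$ mirrors the paper's own estimate \eqref{eq3.5}.
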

\begin{proof}
(i) Clearly, the set $S_{r, \Theta}$ is path connected. Since $v_1 \in S_{1, \Theta}$ be the positive eigenfunction associated to $\theta$ and note that $\theta$ is the principal eigenvalue of $-\Delta$, then
\begin{equation}\label{eq3.3}
  \int_{\Omega}\left|\nabla v_1\right|^2 d x=\theta \Theta.
\end{equation}
By the H\"older inequality, we know that
\begin{eqnarray*}
  \Theta=\int_{\Omega}|v_1(x)|^2dx  \leq\left(\int_{\Omega}|v_1(x)|^{q} dx\right)^{\frac{2}{q}}\cdot|\Omega|^{\frac{q-2}{q}},
\end{eqnarray*}
which implies
\begin{equation}\label{eq3.4}
  \int_{\Omega}|v_1(x)|^{q} dx\geq\Theta^{\frac{q}{2}}\cdot|\Omega|^{\frac{2-q}{2}}.
\end{equation}
According to $\left(f_2\right)$, there exists a constant $\alpha>0$ such that
\begin{equation}\label{eq3.5}
  F(\tau)\leq\alpha\tau^{p_1}.
\end{equation}
For $x \in \Omega_{\frac{1}{t}}$ and $t>0$, define $v_t(x):=t^{\frac{N}{2}} v_1(t x)$. Using \eqref{eq3.3}, \eqref{eq3.4}, \eqref{eq3.5} and $\frac{1}{2}\leq s\leq1$, it holds
\begin{eqnarray}\label{eq3.6}
&&I_{\frac{1}{t}, s}\left(v_t\right)\nonumber\\
&\leq&\frac{1}{2} \int_{\Omega_r}|\nabla v_t|^2 d x+\frac{1}{2} \int_{\Omega_r} V  v_t^2 d x-\frac{1}{2q} \int_{\Omega_r}|v_t|^q d x- \alpha\beta  \int_{\Omega_r}|v_t|^{p_1} d x\nonumber\\
&\leq&\frac{1}{2}\left(1+\|V\|_{\frac{N}{2}} S^{-1}\right) \int_{\Omega_r}|\nabla v_t|^2 d x-\alpha\beta C_{N, p_1} \Theta^{\frac{2 p_1-N(p_1-2)}{4}}\left( \int_{\Omega_r}\left|\nabla v_t\right|^2 d x\right)^{\frac{N(p_1-2)}{4}}\nonumber\\
&&-\frac{1}{2q} \int_{\Omega_r}|v_t|^q d x\nonumber\\
&\leq&\frac{1}{2}\left(1+\|V\|_{\frac{N}{2}} S^{-1}\right) t^2 \int_{\Omega}\left|\nabla v_1\right|^2 d x-\alpha\beta C_{N, p_1} \Theta^{\frac{2 p_1-N(p_1-2)}{4}}\left(t^2 \int_{\Omega}\left|\nabla v_1\right|^2 d x\right)^{\frac{N(p_1-2)}{4}} \nonumber\\
&&-\frac{1}{2q} t^{\frac{N(q-2)}{2}} \int_{\Omega}\left|v_1\right|^q d x \nonumber\\
&\leq & \frac{1}{2}\left(1+\|V\|_{\frac{N}{2}} S^{-1}\right) t^2 \theta\Theta-\alpha\beta C_{N, p_1} \Theta^{\frac{p_1}{2}}\theta^{\frac{N(p_1-2)}{4}}t^{\frac{N(p_1-2)}{2}}-\frac{1}{2q} t^{\frac{N(q-2)}{2}} \Theta^{\frac{q}{2}}\cdot|\Omega|^{\frac{2-q}{2}} \nonumber\\
&=: & h(t) .
\end{eqnarray}
Note that since $2<p_1<2+\frac{4}{N}<q<2^*$ and $\beta\leq0$ there exist $0<T_\Theta<t_0$ such that $h\left(t_0\right)=0, h(t)<0$ for any $t>t_0, h(t)>0$ for any $0<t<t_0$ and $h\left(T_\Theta\right)=\max\limits_{t \in \mathbb{R}^{+}} h(t)$. As a consequence, there holds
\begin{equation}\label{eq3.7}
  I_{r, s}\left(v_{t_0}\right)=I_{\frac{1}{t_0}, s}\left(v_{t_0}\right) \leq h\left(t_0\right)=0
\end{equation}
for any $r \geq \frac{1}{t_0}$ and $s \in\left[\frac{1}{2}, 1\right]$. Moreover, there exists $0<t_1<T_\Theta$ such that
\begin{equation}\label{eq3.8}
 h(t)<\frac{(N(q-2)-4)\left(1-\left\|V_{-}\right\|_{\frac{N}{2}} S^{-1}\right)}{2 N(q-2)}\left[\frac{2 q\left(1-\left\|V_{-}\right\|_{\frac{N}{2}} S^{-1}\right)}{N(q-2) C_{N, q} \Theta^{\frac{2 q-N(q-2)}{4}}}\right]^{\frac{4}{N(q-2)-4}}
\end{equation}
for $t \in\left[0, t_1\right]$. On the other hand, it follows from the Gagliardo-Nirenberg inequality and the H\"older inequality that
\begin{eqnarray}\label{eq3.9}
I_{r, s}(u)
&\geq&\frac{1}{2} \int_{\Omega_r}|\nabla u|^2 d x+\frac{1}{2} \int_{\Omega_r} V u^2 d x-\frac{1}{q} \int_{\Omega_r}|u|^q d x\nonumber\\
&\geq& \frac{\left(1-\left\|V_{-}\right\|_{\frac{N}{2}} S^{-1}\right)}{2} \int_{\Omega_r}|\nabla u|^2 d x-\frac{C_{N, q} \Theta^{\frac{2 q-N(q-2)}{4}}}{q}\left(\int_{\Omega_r}|\nabla u|^2 d x\right)^{\frac{N(q-2)}{4}}.
\end{eqnarray}
Define
$$
g(t):=\frac{1}{2}\left(1-\left\|V_{-}\right\|_{\frac{N}{2}} S^{-1}\right) t-\frac{C_{N, q} \Theta^{\frac{2 q-N(q-2)}{4}}}{q} t^{\frac{N(q-2)}{4}}
$$
and
$$
\widetilde{t}=\left[\frac{2 q}{N(q-2) C_{N, q}}\left(1-\left\|V_{-}\right\|_{\frac{N}{2}} S^{-1}\right) \Theta^{\frac{q(N-2)-2 N}{4}}\right]^{\frac{4}{N(q-2)-4}},
$$
it is easy to see that $g$ is increasing on $(0, \widetilde{t})$ and decreasing on $(\widetilde{t}, \infty)$, and
$$
g(\widetilde{t})=\frac{(N(q-2)-4)\left(1-\left\|V_{-}\right\|_{\frac{N}{2}} S^{-1}\right)}{2 N(q-2)}\left[\frac{2 q\left(1-\left\|V_{-}\right\|_{\frac{N}{2}} S^{-1}\right)}{N(q-2) C_{N, q} \Theta^{\frac{2 q-N(q-2)}{4}}}\right]^{\frac{4}{N(q-2)-4}} .
$$
For $r \geq \widetilde{r}_\Theta:=\max \left\{\frac{1}{t_1}, \sqrt{\frac{2 \theta \Theta}{\tilde{t}}}\right\}$, we have $v_{\frac{1}{\tilde{r}_\Theta}} \in S_{r, \Theta}$ and
\begin{eqnarray}\label{eq3.10}
\|\nabla v_{\frac{1}{\widetilde{r}_\Theta}}\|_2^2&=&\left(\frac{1}{\widetilde{r}_\Theta}\right)^2\left\|\nabla v_1\right\|_2^2\nonumber\\
&<&\left[\frac{2 q}{N(q-2) C_{N, q}}\left(1-\left\|V_{-}\right\|_{\frac{N}{2}} S^{-1}\right) \Theta^{\frac{q(N-2)-2 N}{4}}\right]^{\frac{4}{N(q-2)-4}} .
\end{eqnarray}
Moreover, there holds
\begin{equation}\label{eq3.11}
 I_{\widetilde{r}_\Theta, s}\left(v_{\frac{1}{\widetilde{r}_\Theta}}\right) \leq h\left(\frac{1}{\widetilde{r}_\Theta}\right) \leq h\left(t_1\right) .
\end{equation}
Setting $u^0=v_{\frac{1}{\tilde{r}_\Theta}}, u^1=v_{t_0}$ and
\begin{equation}\label{eq3.12}
  r_\Theta=\max \left\{\frac{1}{t_0}, \widetilde{r}_\Theta\right\}.
\end{equation}
Combining \eqref{eq3.7}, \eqref{eq3.8}, \eqref{eq3.10} and \eqref{eq3.11}, (i) holds.

(ii) By \eqref{eq3.9} and a direct calculation, (ii) holds.

(iii) Since $I_{r, s}\left(u^1\right) \leq 0$ for any $\gamma \in \Gamma_{r, \Theta}$, we have
$$
\|\nabla \gamma(0)\|_2^2<\tilde{t}<\|\nabla \gamma(1)\|_2^2.
$$
It then follows from \eqref{eq3.9} that
$$
\begin{aligned}
\max\limits_{t \in[0,1]} I_{r, s}(\gamma(t)) & \geq g(\widetilde{t}) \\
& =\frac{(N(q-2)-4)\left(1-\left\|V_{-}\right\|_{\frac{N}{2}} S^{-1}\right)}{2 N(q-2)}\left[\frac{2 q\left(1-\left\|V_{-}\right\|_{\frac{N}{2}} S^{-1}\right)}{N(q-2) C_{N, q} \Theta^{\frac{2 q-N(q-2)}{4}}}\right]^{\frac{4}{N(q-2)-4}}
\end{aligned}
$$
for any $\gamma \in \Gamma_{r, \Theta}$, hence the first inequality in (iii) holds. Now we define a path $\gamma \in \Gamma_{r, \Theta}$ by
$$
\gamma(\tau)(x)=\left(\tau t_0+(1-\tau) \frac{1}{\widetilde{r}_\Theta}\right)^{\frac{N}{2}} v_1\left(\left(\tau t_0+(1-\tau) \frac{1}{\widetilde{r}_\Theta}\right) x\right)
$$
for $\tau \in[0,1]$ and $x \in \Omega_r$. Then by \eqref{eq3.6} we have $m_{r, s}(\Theta) \leq h(T_\Theta)$, where $h(T_\Theta)=\max\limits_{t \in \mathbb{R}^{+}} h(t)$. Note that $T_\Theta$ is independent of $r$ and $s$.
\end{proof}

By using Lemma \ref{L3.1}, the energy functional $I_{r, s}$ possesses the mountain pass geometry. To obtain bounded Palais-Smale sequence, we recall a proposition from \cite{{JBXC2022},{XCLJ2022}}.

\begin{proposition}\label{p3.1}
{\em(see \cite[Theorem 1]{{JBXC2022}})} Let $(E,\langle\cdot, \cdot\rangle)$ and $(H,(\cdot, \cdot))$ be two infinite-dimensional Hilbert spaces and assume there are continuous injections
$$
E \hookrightarrow H \hookrightarrow E^{\prime} .
$$
Let
$$
\|u\|^2=\langle u, u\rangle, \quad|u|^2=(u, u) \quad \text { for } u \in E,
$$
and
$$
S_\mu=\left\{u \in E:|u|^2=\mu\right\}, \quad T_u S_\mu=\{v \in E:(u, v)=0\} \quad \text { for } \mu \in(0,+\infty) .
$$
Let $I \subset(0,+\infty)$ be an interval and consider a family of $C^2$ functionals $\Phi_\rho: E \rightarrow \mathbb{R}$ of the form
$$
\Phi_\rho(u)=A(u)-\rho B(u), \quad \text { for } \rho \in I,
$$
with $B(u) \geq 0$ for every $u \in E$, and
\begin{equation}\label{eq3.13}
  A(u) \rightarrow+\infty \quad \text { or } \quad B(u) \rightarrow+\infty \quad \text { as } u \in E \text { and }\|u\| \rightarrow+\infty.
\end{equation}
Suppose moreover that $\Phi_\rho^{\prime}$ and $\Phi_\rho^{\prime \prime}$ are $\tau$-H\"older continuous, $\tau \in(0,1]$, on bounded sets in the following sense: for every $R>0$ there exists $M=M(R)>0$ such that
\begin{equation}\label{eq3.14}
\left\|\Phi_\rho^{\prime}(u)-\Phi_\rho^{\prime}(v)\right\| \leq M\|u-v\|^\tau \quad \text { and } \quad\left\|\Phi_\rho^{\prime \prime}(u)-\Phi_\rho^{\prime \prime}(v)\right\| \leq M\|u-v\|^\tau
\end{equation}
for every $u, v \in B(0, R)$. Finally, suppose that there exist $w_1, w_2 \in S_\mu$ independent of $\rho$ such that
$$
c_\rho:=\inf _{\gamma \in \Gamma} \max _{t \in[0,1]} \Phi_\rho(\gamma(t))>\max \left\{\Phi_\rho\left(w_1\right), \Phi_\rho\left(w_2\right)\right\} \quad \text { for all } \rho \in I,
$$
where
$$
\Gamma=\left\{\gamma \in C\left([0,1], S_\mu\right): \gamma(0)=w_1, \gamma(1)=w_2\right\} .
$$
Then for almost every $\rho \in I$, there exists a sequence $\left\{u_n\right\} \subset S_\mu$ such that

(i) $\Phi_\rho\left(u_n\right) \rightarrow c_\rho$,

(ii) $\left.\Phi_\rho^{\prime}\right|_{S_\mu}\left(u_n\right) \rightarrow 0$,

(iii) $\left\{u_n\right\}$ is bounded in $E$.
\end{proposition}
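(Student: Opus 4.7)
The plan is to follow the Jeanjean monotonicity trick from \cite{LJJ1999}, adapted to the constrained manifold $S_\mu$. The guiding idea is that since $B\ge 0$, the map $\rho\mapsto c_\rho$ is monotone, hence differentiable almost everywhere, and at each point of differentiability one can manufacture a bounded Palais--Smale sequence by reading off information from the derivative. First I would observe that for $\rho_1<\rho_2$ in $I$ and any $u\in E$, $\Phi_{\rho_1}(u)-\Phi_{\rho_2}(u)=(\rho_2-\rho_1)B(u)\ge 0$, so that $\Phi_{\rho_1}\ge\Phi_{\rho_2}$ pointwise. Taking maxima along any path $\gamma\in\Gamma$ and then the infimum over $\Gamma$ yields $c_{\rho_1}\ge c_{\rho_2}$, so $\rho\mapsto c_\rho$ is non-increasing on $I$ and hence differentiable on a set $I_0\subset I$ of full measure.

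Next, fix $\rho\in I_0$ and pick a sequence $\rho_n\nearrow\rho$. For each $n$, choose a near-optimal path $\gamma_n\in\Gamma$ with $\max_t\Phi_{\rho_n}(\gamma_n(t))\le c_{\rho_n}+(\rho-\rho_n)$, and let $t_n\in[0,1]$ realize (up to $o(1)$) the maximum of $\Phi_\rho\circ\gamma_n$. The identity
$$
\Phi_{\rho_n}(\gamma_n(t_n))=\Phi_\rho(\gamma_n(t_n))+(\rho-\rho_n)\,B(\gamma_n(t_n)),
$$
together with $\Phi_\rho(\gamma_n(t_n))\ge c_\rho-o(1)$ and $\Phi_{\rho_n}(\gamma_n(t_n))\le c_{\rho_n}+(\rho-\rho_n)$, gives after dividing by $\rho-\rho_n$ the bound
$$
B(\gamma_n(t_n))\le \frac{c_{\rho_n}-c_\rho}{\rho-\rho_n}+1\le |c'_\rho|+2
$$
for $n$ large. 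Then $A(\gamma_n(t_n))=\Phi_\rho(\gamma_n(t_n))+\rho B(\gamma_n(t_n))$ is bounded, and \eqref{eq3.13} forces $\|\gamma_n(t_n)\|\le R$ for some constant $R$ independent of $n$.

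To produce the Palais--Smale sequence, I would argue by contradiction: suppose no sequence satisfying (i)--(iii) exists at level $c_\rho$. Then there exist $\delta>0$ and a neighbourhood
$$
\mathcal{N}=\{u\in S_\mu:\ |\Phi_\rho(u)-c_\rho|<\delta,\ \|u\|\le R+1\}
$$
on which $\|\Phi_\rho'|_{S_\mu}(u)\|\ge\delta$. Using the Hölder estimates \eqref{eq3.14} (the first gives a locally Lipschitz pseudo-gradient tangent to $T_u S_\mu$ via a partition of unity, and the second controls the flow's regularity), I would construct a deformation $\eta_s:S_\mu\to S_\mu$ obtained by following the pseudo-gradient flow localised to $\mathcal{N}$, fixing the endpoints $w_1,w_2$ (which is possible thanks to the strict inequality $c_\rho>\max\{\Phi_\rho(w_1),\Phi_\rho(w_2)\}$). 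A quantitative deformation lemma then yields a new path $\tilde\gamma_n=\eta_1\circ\gamma_n\in\Gamma$ with $\max_t\Phi_\rho(\tilde\gamma_n(t))\le c_\rho-\delta^2/4$, contradicting the definition of $c_\rho$ once we pass back to a corresponding near-optimal path for $\Phi_{\rho_n}$ and use the monotonicity $\Phi_\rho\le\Phi_{\rho_n}$.

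\textbf{Main obstacle.} The delicate step is the third one: the pseudo-gradient flow must remain on the manifold $S_\mu$, must stay inside the ball of radius $R+1$ (so that \eqref{eq3.14} applies uniformly), and must not move the endpoints. Tangency to $S_\mu$ and a priori flow estimates are precisely where both Hölder hypotheses enter; keeping the endpoints fixed relies on the strict separation from the mountain pass level. Once these localisations are arranged, the standard deformation/Ekeland machinery closes the argument and produces the desired bounded sequence $\{u_n\}\subset S_\mu$ satisfying (i)--(iii).
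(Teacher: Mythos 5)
This proposition is not proved in the paper at all: it is quoted verbatim from \cite[Theorem 1]{JBXC2022} and used as a black box, so there is no in-paper argument to compare against. Your sketch is, however, essentially the correct strategy for the conclusions (i)--(iii) as stated: it is the Jeanjean monotonicity trick adapted to the sphere $S_\mu$ --- monotonicity of $\rho\mapsto c_\rho$ from $B\ge 0$, a.e.\ differentiability, extraction of a uniformly bounded ``high part'' of near-optimal paths for $\rho_n\nearrow\rho$ via the quotient $B(\gamma_n(t))\le\frac{c_{\rho_n}-c_\rho}{\rho-\rho_n}+2$ combined with the coercivity alternative \eqref{eq3.13}, and a localized quantitative deformation on $S_\mu$ to contradict the definition of $c_\rho$. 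Two remarks. First, a cosmetic point: you derive the norm bound only at a near-maximizer $t_n$, whereas the deformation step requires the bound on the entire set $\{t:\Phi_\rho(\gamma_n(t))\ge c_\rho-(\rho-\rho_n)\}$; the same computation gives this uniformly, so nothing breaks, but it should be stated that way, since the contradiction rests on every high point of the path lying in the ball where the pseudo-gradient estimate holds. Second, your argument never uses the $C^2$ regularity or the H\"older continuity of $\Phi_\rho''$ in \eqref{eq3.14}. That is not an error for the statement as transcribed here, but it signals that the cited theorem actually proves more than (i)--(iii): in \cite{JBXC2022} the Palais--Smale sequence also carries approximate Morse-index information, and it is for that second-order refinement that the hypotheses on $\Phi_\rho''$ are needed and that the genuine proof is substantially more delicate than the classical first-order deformation argument you outline. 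For the purposes of this paper, where only a bounded constrained Palais--Smale sequence is extracted, your sketch captures the relevant mechanism.
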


\begin{lemma}\label{L3.2}
For any $\Theta>0$, let $r>r_\Theta$, where $r_\Theta$ is defined in Lemma \ref{L3.1}. Then problem \eqref{eq3.1} has a solution $\left(\lambda_{r, s}, u_{r, s}\right)$ for almost every $s \in\left[\frac{1}{2}, 1\right]$. Moreover, $u_{r, s} \geq 0$ and $I_{r, s}\left(u_{r, s}\right)=m_{r, s}(\Theta)$.
\end{lemma}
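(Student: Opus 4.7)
The plan is to apply Proposition \ref{p3.1} to the family $\{I_{r,s}\}_{s\in[\frac{1}{2},1]}$ on $E = H_0^1(\Omega_r)$ with $H = L^2(\Omega_r)$ and mass $\mu = \Theta$. Decompose
\[
I_{r,s}(u) = A(u) - s\, B(u), \quad A(u) = \frac{1}{2}\|\nabla u\|_2^2 + \frac{1}{2}\int_{\Omega_r} V u^2\, dx - \beta \int_{\Omega_r} F(u)\, dx, \quad B(u) = \frac{1}{q}\int_{\Omega_r} |u|^q\, dx.
\]
Because $\beta \leq 0$ and $F \geq 0$ (the latter following from $f$ odd together with $f(\tau)\tau \geq p_2 F(\tau)$), one has $B \geq 0$. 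The coercivity \eqref{eq3.13} on $S_{r,\Theta}$ is a consequence of $(V_0)$: there $A(u) \geq \frac{1}{2}(1 - \|V_-\|_{\frac{N}{2}} S^{-1}) \|\nabla u\|_2^2 \to +\infty$ as $\|u\|_{H_0^1} \to \infty$. The $C^2$ regularity together with the local Hölder bound \eqref{eq3.14} follows from $f \in C^1$ in $(f_1)$ combined with the polynomial growth $|f(\tau)| \leq C(|\tau| + |\tau|^{p_1-1})$ extracted from $(f_2)$, using $p_1 < 2 + 4/N < 2^*$. The strict mountain pass geometry with $w_1 = u^0$ and $w_2 = u^1$ is precisely Lemma \ref{L3.1}(i)--(iii).

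Proposition \ref{p3.1} then yields, for almost every $s \in [\frac{1}{2},1]$, a bounded Palais--Smale sequence $\{u_n\} \subset S_{r,\Theta}$ at level $m_{r,s}(\Theta)$. Since $\Omega_r$ is bounded and $q, p_1 < 2^*$, the Rellich--Kondrachov theorem supplies, up to a subsequence, $u_{r,s} \in H_0^1(\Omega_r)$ with $u_n \rightharpoonup u_{r,s}$ weakly in $H_0^1$ and strongly in $L^p$ for every $p \in [2, 2^*)$; in particular $\|u_{r,s}\|_2^2 = \Theta$, so $u_{r,s} \in S_{r,\Theta}$. The constrained Palais--Smale condition translates into the existence of Lagrange multipliers $\lambda_n \in \mathbb{R}$ with $I_{r,s}'(u_n) + \lambda_n u_n \to 0$ in $H^{-1}(\Omega_r)$. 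Testing against $u_n$ and using the boundedness of $\{u_n\}$ in $H_0^1$ together with the growth of $f$ gives that $\{\lambda_n\}$ is bounded, so $\lambda_n \to \lambda_{r,s} \in \mathbb{R}$ along a further subsequence.

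Passing to the weak limit in $I_{r,s}'(u_n) + \lambda_n u_n \to 0$ and invoking the strong $L^q$, $L^{p_1}$, $L^2$ convergence produces a solution $(\lambda_{r,s}, u_{r,s})$ of \eqref{eq3.2}. To upgrade to strong $H_0^1$ convergence, test the difference $[I_{r,s}'(u_n) + \lambda_n u_n] - [I_{r,s}'(u_{r,s}) + \lambda_{r,s} u_{r,s}]$ against $u_n - u_{r,s}$: the nonlinear, potential, and Lagrange terms all vanish in the limit by the compact embeddings, leaving $\|\nabla(u_n - u_{r,s})\|_2^2 \to 0$. Strong $H_0^1$ convergence then forces $I_{r,s}(u_{r,s}) = m_{r,s}(\Theta)$.

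Finally, for the sign $u_{r,s} \geq 0$, one exploits the invariance of $I_{r,s}$ under $u \mapsto |u|$: since $F$ is even and $\|\nabla |u|\|_2^2 \leq \|\nabla u\|_2^2$, one has $I_{r,s}(|u|) \leq I_{r,s}(u)$ on $S_{r,\Theta}$. As $u^0, u^1 \geq 0$, every path $\gamma \in \Gamma_{r,\Theta}$ is dominated by the admissible path $|\gamma|$, so $m_{r,s}(\Theta)$ is attained along nonnegative paths and the Palais--Smale sequence delivered by Proposition \ref{p3.1} can be chosen nonnegative, yielding a nonnegative limit $u_{r,s}$. The main technical obstacle is verifying the second-order Hölder hypothesis \eqref{eq3.14} for the term $\beta \int F(u)$ under general $f$; this is exactly what forces the strengthening of $(f_1)$ from continuity to $f \in C^1$, as highlighted in Remark \ref{R1.2}.
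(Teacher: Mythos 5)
Your proof is correct and follows essentially the same route as the paper: the same splitting $A(u)-sB(u)$ fed into Proposition \ref{p3.1} (with the coercivity of $A$ coming from $\beta\le 0$, $F\ge 0$ and $(V_0)$), compactness from Rellich--Kondrachov on the bounded domain plus the Lagrange multiplier identity, and nonnegativity via replacing paths by their absolute values using $\|\nabla |u|\|_2\le\|\nabla u\|_2$ and the evenness of $F$. The only point where the paper is more explicit is the final step, where it re-runs the internal argument of Proposition \ref{p3.1} (monotonicity of $s\mapsto m_{r,s}$, a.e.\ differentiability, and the paths $\widetilde{\gamma}_n=|\gamma_n|$ inheriting the two localization properties) to justify that the Palais--Smale sequence itself can be taken nonnegative, a detail you assert but do not spell out.
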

\begin{proof}
By Proposition \ref{p3.1}, it follows that
$$
A(u)=\frac{1}{2} \int_{\Omega_r}|\nabla u|^2 d x+\frac{1}{2} \int_{\Omega_r} V(x) u^2 d x-\beta \int_{\Omega_r}F(u) d x \quad \text { and } \quad B(u)=\frac{1}{q} \int_{\Omega_r}|u|^q d x.
$$
Note that the assumptions in Proposition \ref{p3.1} hold due to $\beta \leq 0$ and Lemma \ref{L3.1}. Hence, for almost every $s \in\left[\frac{1}{2}, 1\right]$, there exists a bounded Palais-Smale sequence $\left\{u_n\right\}$ satisfying
$$
I_{r, s}\left(u_n\right) \rightarrow m_{r, s}(\Theta) \quad \text { and }\left.\quad I_{r, s}^{\prime}\left(u_n\right)\right|_{T_{u_n} S_{r, \Theta}} \rightarrow 0,
$$
where $T_{u_n} S_{r, \Theta}$ denotes the tangent space of $S_{r, \Theta}$ at $u_n$. Then
$$
\lambda_n=-\frac{1}{\Theta}\left(\int_{\Omega_r}\left|\nabla u_n\right|^2 d x+\int_{\Omega_r} V(x) u_n^2 d x-\beta \int_{\Omega_r}f(u_n)u_n d x-s \int_{\Omega_r}\left|u_n\right|^q d x\right)
$$
is bounded and
\begin{equation}\label{eq3.15}
  I_{r, s}^{\prime}\left(u_n\right)+\lambda_n u_n \rightarrow 0 \quad \text { in } H^{-1}\left(\Omega_r\right) .
\end{equation}
Moreover, since $\left\{u_n\right\}$ is a bounded Palais-Smale sequence, there exist $u_0 \in H_0^1\left(\Omega_r\right)$ and $\lambda \in \mathbb{R}$ such that, up to a subsequence,
\begin{eqnarray*}
\lambda_n &\rightarrow& \lambda \ \text { in }\ \mathbb{R},\\
u_n &\rightharpoonup& u_0 \  \text { in }\ H_0^1(\Omega_r),\\
 u_n &\rightarrow& u_0 \  \text { in }\ L^t(\Omega_r) \text { for all } 2 \leq t<2^*,
\end{eqnarray*}
where $u_0$ satisfies
$$
\left\{\begin{array}{l}
-\Delta u_0+V u_0+\lambda u_0=s\left|u_0\right|^{q-2} u_0+\beta f(u_0) \quad \text { in } \Omega_r \\
u_0 \in H_0^1\left(\Omega_r\right), \quad \int_{\Omega_r}\left|u_0\right|^2 d x=\Theta.
\end{array}\right.
$$
Using \eqref{eq3.15}, we have
$$
I_{r, s}^{\prime}\left(u_n\right) u_0+\lambda_n \int_{\Omega_r} u_n u_0 d x \rightarrow 0 \text { as } n \rightarrow \infty
$$
and
\begin{equation*}
  I_{r, s}^{\prime}\left(u_n\right) u_n+\lambda_n \Theta\rightarrow 0 \ \text{as }\  n \rightarrow \infty  .
\end{equation*}
Note that
\begin{eqnarray*}
\lim _{n \rightarrow \infty} \int_{\Omega_r} V(x) u_n^2 d x&=&\int_{\Omega_r} V(x) u_0^2 d x,\\
\lim _{n \rightarrow \infty} \int_{\Omega_r} f(u_n)u_n d x&=&\int_{\Omega_r} f(u_0)u_0 d x,\\
\lim _{n \rightarrow \infty} \int_{\Omega_r} f(u_n)u_0 d x&=&\int_{\Omega_r} f(u_0)u_0  d x,
\end{eqnarray*}
so we get $u_n \rightarrow u_0$ in $H_0^1(\Omega_r)$, hence $I_{r, s}(u_0)=m_{r, s}(\Theta)$.

Now, we show that $u_{r, s} \geq 0$. In order to obtain it, we only need to modify the proof of Proposition \ref{p3.1}. In fact, for almost every $s \in\left[\frac{1}{2}, 1\right]$, the derivative $m_{r, s}^{\prime}$ with respect to $s$ is well defined since the function $s \mapsto m_{r, s}$ is nonincreasing, where $m_{r, s}$ denotes $m_{r, s}(\Theta)$ for fixed $\Theta$. Let $s$ be such that $m_{r, s}^{\prime}$ exists and $\left\{s_n\right\} \subset\left[\frac{1}{2}, 1\right]$ be a monotone increasing sequence converging to s. Similar to the proof of Proposition \ref{p3.1}, there exist $\left\{\gamma_n\right\} \subset \Gamma_{r, \Theta}$ and $K=K\left(m_{r, s}^{\prime}\right)$ such that:

(i) if $I_{r, s}\left(\gamma_n(t)\right) \geq m_{r, s}-\left(2-m_{r, s}^{\prime}\right)\left(s-s_n\right)$, then $\int_{\Omega_r}\left|\nabla \gamma_n(t)\right|^2 d x \leq K$.

(ii) $\max\limits_{t \in[0,1]} I_{r, s}\left(\gamma_n(t)\right) \leq m_{r, s}-\left(2-m_{r, s}^{\prime}\right)\left(s-s_n\right)$.

Letting $\widetilde{\gamma}_n(t)=\left|\gamma_n(t)\right|$ for any $t \in[0,1]$, it follows that $\left\{\widetilde{\gamma}_n\right\} \subset \Gamma_{r, \Theta}$. Observe that $\left\|\nabla|u|\|_2^2 \leq\right.$ $\|\nabla u\|_2^2$ for any $u \in H^1(\mathbb{R}^N)$. Now we have:

(I) if $I_{r, s}\left(\widetilde{\gamma}_n(t)\right) \geq m_{r, s}-\left(2-m_{r, s}^{\prime}\right)\left(s-s_n\right)$, then $I_{r, s}\left(\gamma_n(t)\right) \geq m_{r, s}-\left(2-m_{r, s}^{\prime}\right)\left(s-s_n\right)$. By (i), there holds $\int_{\Omega_r}\left|\nabla \gamma_n(t)\right|^2 d x \leq K$, and hence $\int_{\Omega_r}\left|\nabla \widetilde{\gamma}_n(t)\right|^2 d x \leq K$. Thus (i) also holds for $\widetilde{\gamma}_n$.

(II) $\max\limits_{t \in[0,1]} I_{r, s}\left(\widetilde{\gamma}_n(t)\right) \leq \max\limits_{t \in[0,1]} I_{r, s}\left(\gamma_n(t)\right) \leq m_{r, s}-\left(2-m_{r, s}^{\prime}\right)\left(s-s_n\right)$.

By replacing $\gamma_n$ with $\widetilde{\gamma}_n$ in the proof of Proposition \ref{p3.1}, we obtain a nonnegative bounded Palais-Smale sequence $\left\{u_n\right\}$. Consequently, there exists a nonnegative normalized solution to \eqref{eq3.1} for almost every $s \in\left[\frac{1}{2}, 1\right]$ as above.
\end{proof}

In order to obtain a solution of \eqref{eq1.1}, we need to prove a uniform estimate for the solutions of \eqref{eq3.1} established in Lemma \ref{L3.2}.

\begin{lemma}\label{L3.3}
If $(\lambda_{r, s}, u_{r, s}) \in \mathbb{R} \times S_{r, \Theta}$ is a solution of \eqref{eq3.1} established in Lemma \ref{L3.2} for some $r$ and $s$, then
$$
\int_{\Omega_r}|\nabla u|^2 d x \leq \frac{4 N}{N(q-2)-4}\left[\frac{q-2}{2} h(T_\Theta)+\Theta\left(\frac{1}{2 N}\|\widetilde{V}\|_{\infty}+\frac{q-2}{4}\|V\|_{\infty}\right)\right],
$$
where the constant $h(T_\Theta)$ is defined in (iii) of Lemma \ref{L3.1}  and is independent of $r$ and $s$.
\end{lemma}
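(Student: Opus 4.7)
The strategy is to combine a Pohozaev-type identity for \eqref{eq3.2} with the energy level information $I_{r,s}(u_{r,s})=m_{r,s}(\Theta)$, and to exploit the convexity of $\Omega$ (with $0\in\Omega$) to discard the boundary integral that arises. Testing \eqref{eq3.2} against $x\cdot\nabla u_{r,s}$, integrating by parts on the smooth bounded domain $\Omega_r$, and using the standard test against $u_{r,s}$ to eliminate the Lagrange multiplier $\lambda_{r,s}$, I obtain precisely the identity already displayed in the Preliminary section (writing $u=u_{r,s}$):
\[
\frac{1}{N}\int_{\Omega_r}|\nabla u|^2\,dx - \frac{1}{2N}\int_{\partial\Omega_r}|\nabla u|^2(x\cdot\mathbf{n})\,d\sigma - \frac{1}{2N}\int_{\Omega_r}\widetilde V u^2\,dx = \frac{s(q-2)}{2q}\int_{\Omega_r}|u|^q\,dx + \beta\int_{\Omega_r}\Bigl(\tfrac12 f(u)u - F(u)\Bigr)\,dx.
\]

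Unlike the unsuccessful attempt sketched at the end of the Preliminary (which tried to replace $q$ by $p_2$ via $(f_2)$), I will keep the exponent $q$ intact on the right-hand side. From $I_{r,s}(u)=m_{r,s}(\Theta)$ I read off
\[
\frac{s}{q}\int_{\Omega_r}|u|^q\,dx = \frac{1}{2}\int_{\Omega_r}|\nabla u|^2\,dx + \frac{1}{2}\int_{\Omega_r}Vu^2\,dx - \beta\int_{\Omega_r}F(u)\,dx - m_{r,s}(\Theta).
\]
Substituting this into the Pohozaev identity, collecting $\int|\nabla u|^2$ terms, and noting that $\tfrac{1}{N}-\tfrac{q-2}{4}=-\tfrac{N(q-2)-4}{4N}<0$ since $q>2+\tfrac{4}{N}$, multiplication by $-1$ gives
\[
\frac{N(q-2)-4}{4N}\int_{\Omega_r}|\nabla u|^2\,dx = -\frac{1}{2N}\int_{\partial\Omega_r}|\nabla u|^2(x\cdot\mathbf{n})\,d\sigma - \frac{1}{2N}\int_{\Omega_r}\widetilde V u^2\,dx - \frac{q-2}{4}\int_{\Omega_r}Vu^2\,dx + \frac{q-2}{2}m_{r,s}(\Theta) - \beta\int_{\Omega_r}\Bigl(\tfrac12 f(u)u - \tfrac{q}{2}F(u)\Bigr)\,dx.
\]
I now estimate each right-hand term: (a) convexity of $\Omega$ with $0\in\Omega$ yields $x\cdot\mathbf{n}\ge 0$ on $\partial\Omega_r$, so the boundary integral is $\le 0$ and can be dropped; (b) the two $V$-terms are controlled by the mass constraint together with $\|V\|_\infty$ and $\|\widetilde V\|_\infty$; (c) Lemma \ref{L3.1}(iii) provides $m_{r,s}(\Theta)\le h(T_\Theta)$, with $h(T_\Theta)$ independent of $r$ and $s$; (d) because $(f_2)$ combined with $p_1<2+\tfrac{4}{N}<q$ gives $f(u)u\le p_1 F(u)\le qF(u)$, while $-\beta\ge 0$ and $F(u)\ge 0$, the nonlinear term is also $\le 0$. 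Assembling these estimates and multiplying through by $\tfrac{4N}{N(q-2)-4}$ produces the claimed inequality.

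The step I expect to be the main obstacle is the sign of the nonlinear contribution: in its raw Pohozaev form $\beta\int(\tfrac12 f(u)u-F(u))$ has an indeterminate sign, since $(f_2)$ only compares $f(u)u$ with $p_2 F$ and $p_1 F$, not with $2F$. The key trick is the shift engineered by the substitution step: recombining this term with the $-\tfrac{(q-2)\beta}{2}\int F(u)$ produced from the energy identity converts it into $-\beta\int(\tfrac12 f(u)u-\tfrac{q}{2}F(u))$, which has a definite sign precisely because $f(u)u\le p_1 F(u)<qF(u)$ is now available. Once this sign is secured, the remainder is routine algebra; the fact that $h(T_\Theta)$ is independent of $r$ and $s$ will moreover be essential for the later passage $s\to 1^-$ alluded to in Remark \ref{R1.5}.
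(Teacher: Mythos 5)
Your argument is correct and is essentially the paper's own proof: the paper likewise subtracts the Pohozaev identity from the identity obtained by testing with $u$, uses convexity of $\Omega_r$ to discard the boundary term, bounds the potential terms by $\Theta\|V\|_\infty$ and $\Theta\|\widetilde V\|_\infty$, and invokes $m_{r,s}(\Theta)\le h(T_\Theta)$ from Lemma \ref{L3.1}(iii). The ``key trick'' you describe --- recombining $\beta\int(\tfrac12 f(u)u-F(u))$ with the $F$-term produced by substituting the energy identity so as to obtain $-\beta\int(\tfrac12 f(u)u-\tfrac{q}{2}F(u))\le 0$ --- is algebraically identical to the paper's step $\tfrac{\beta}{2}f(u)u-\beta F(u)\ge\tfrac{\beta(q-2)}{2}F(u)$ (both reduce to $f(u)u\le p_1F(u)\le qF(u)$ together with $\beta\le 0$ and $F\ge 0$), so the two proofs coincide up to the order of the algebra.
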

\begin{proof} For simplicity, we denote $(\lambda_{r, s}, u_{r, s})$ as $(\lambda, u)$ in this lemma. Since $u$ is a solution of \eqref{eq3.1}, we have
\begin{equation}\label{eq3.16}
  \int_{\Omega_r}|\nabla u|^2 d x+\int_{\Omega_r} V(x) u^2 d x=s \int_{\Omega_r}|u|^q d x+\beta \int_{\Omega_r}f(u)u d x-\lambda \int_{\Omega_r}|u|^2 d x .
\end{equation}
The Pohozaev identity implies
\begin{eqnarray*}
&&\frac{N-2}{2 N} \int_{\Omega_r}|\nabla u|^2 d x+\frac{1}{2 N} \int_{\partial \Omega_r}|\nabla u|^2(x \cdot \mathbf{n}) d \sigma+\frac{1}{2 N} \int_{\Omega_r}\widetilde{V}(x) u^2dx+\frac{1}{2} \int_{\Omega_r} V u^2 d x \\
&=&-\frac{\lambda}{2} \int_{\Omega_r}|u|^2 d x+\frac{s}{q} \int_{\Omega_r}|u|^q d x+\beta \int_{\Omega_r}F(u) d x,
\end{eqnarray*}
where $\mathbf{n}$ denotes the outward unit normal vector on $\partial \Omega_r$. It then follows from $\beta \leq 0$ and $(f_2)$ that
\begin{eqnarray*}
&&\frac{1}{N} \int_{\Omega_r}|\nabla u|^2 d  x-\frac{1}{2 N} \int_{\partial \Omega_r}|\nabla u|^2(x \cdot \mathbf{n}) d \sigma-\frac{1}{2 N} \int_{\Omega_r}(\nabla V \cdot x) u^2 d x \\
& =&\frac{(q-2) s}{2 q} \int_{\Omega_r}|u|^q d x+ \int_{\Omega_r}(\frac{\beta }{2  }f(u)u-\beta F(u))d x \\
& \geq& \frac{(q-2) s}{2 q} \int_{\Omega_r}|u|^q d x+  \frac{\beta (q-2) }{2  }  \int_{\Omega_r}F(u) d x \\
& =&\frac{q-2}{2}\left(\frac{1}{2} \int_{\Omega_r}|\nabla u|^2 d x+\frac{1}{2} \int_{\Omega_r} V u^2 d x-m_{r, s}(\Theta)\right) .
\end{eqnarray*}
Consequently, we have
 \begin{eqnarray*}
\frac{q-2}{2} m_{r, s}(\Theta) &\geq & \frac{q-2}{2}\left(\frac{1}{2} \int_{\Omega_r}|\nabla u|^2 d x+\frac{1}{2} \int_{\Omega_r} V u^2 d x\right)-\frac{1}{N} \int_{\Omega_r}|\nabla u|^2 d x \\
&& +\frac{1}{2 N} \int_{\partial \Omega_r}|\nabla u|^2(x \cdot \mathbf{n}) d \sigma+\frac{1}{2 N} \int_{\Omega_r}(\nabla V \cdot x) u^2 d x \\
&\geq & \frac{N(q-2)-4}{4 N} \int_{\Omega_r}|\nabla u|^2 d x-\Theta\left(\frac{1}{2 N}\|\nabla V \cdot x\|_{\infty}+\frac{q-2}{4}\|V\|_{\infty}\right),
 \end{eqnarray*}
where the last inequality holds since $x \cdot \mathbf{n}(x) \geq 0$ for any $x \in \partial \Omega_r$ due to the convexity of $\Omega_r$. Using Lemma \ref{L3.1}, we have
\begin{eqnarray*}
 \frac{N(q-2)-4}{4 N} \int_{\Omega_r}|\nabla u|^2 d x-\Theta\left(\frac{1}{2 N}\|\nabla V \cdot x\|_{\infty}+\frac{q-2}{4}\|V\|_{\infty}\right)
 &\leq&\frac{q-2}{2}h(T_\Theta),
\end{eqnarray*}
which implies
\begin{equation*}
  \int_{\Omega_r}|\nabla u|^2 d x \leq \frac{4 N}{N(q-2)-4}\left[\frac{q-2}{2} h(T_\Theta)+\Theta\left(\frac{1}{2 N}\|\widetilde{V}\|_{\infty}+\frac{q-2}{4}\|V\|_{\infty}\right)\right].
\end{equation*}
This completes the proof of lemma.
\end{proof}

Now, we obtain a solution of \eqref{eq1.1} by letting $s \rightarrow 1$.

\begin{lemma}\label{L3.4}
For every $\Theta>0$, problem \eqref{eq1.1} has a solution $\left(\lambda_r, u_r\right)$ provided $r>r_\Theta$ where $r_\Theta$ is as in Lemma \ref{L3.1}. Moreover, $u_r \geq 0$ in $\Omega_r$.
\end{lemma}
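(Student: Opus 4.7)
The plan is to combine Lemma \ref{L3.2} and Lemma \ref{L3.3} to produce a sequence $s_n \uparrow 1$ of admissible parameters with associated solutions $(\lambda_{r,s_n}, u_{r,s_n})$ that is bounded in $\mathbb{R}\times H_0^1(\Omega_r)$, and then to pass to the limit. First, since the set of $s\in[\tfrac12,1]$ for which Lemma \ref{L3.2} supplies a solution has full measure, I pick any sequence $s_n \uparrow 1$ inside it and write $u_n := u_{r,s_n}\ge0$, $\lambda_n := \lambda_{r,s_n}$. Each $u_n \in S_{r,\Theta}$ solves \eqref{eq3.2} with $s=s_n$, satisfies $I_{r,s_n}(u_n)=m_{r,s_n}(\Theta)$, and by Lemma \ref{L3.1}(iii) the energies are uniformly bounded by $h(T_\Theta)$, which is independent of $s$ and $r$.

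Next, I would invoke Lemma \ref{L3.3}: its bound on $\|\nabla u_n\|_2^2$ is phrased in terms of the quantities $h(T_\Theta)$, $\|\widetilde V\|_\infty$, $\|V\|_\infty$, $\Theta$, which do not depend on $n$. Together with the mass constraint $\|u_n\|_2^2=\Theta$, this yields a uniform $H_0^1(\Omega_r)$-bound. Testing the equation \eqref{eq3.2} against $u_n$ and using $(f_2)$ together with the Gagliardo--Nirenberg and Hölder inequalities to control $\int|u_n|^q$ and $\int f(u_n)u_n$ by powers of $\|\nabla u_n\|_2$ shows that the Lagrange multipliers $\{\lambda_n\}$ are bounded as well. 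Hence, up to a subsequence, $\lambda_n\to\lambda_r$ in $\mathbb{R}$ and $u_n \rightharpoonup u_r$ in $H_0^1(\Omega_r)$.

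Now the crucial ingredient is that $\Omega_r$ is bounded, so the Rellich--Kondrachov embedding $H_0^1(\Omega_r)\hookrightarrow L^t(\Omega_r)$ is compact for every $t\in[2,2^*)$. Since we are in the Sobolev subcritical setting of Theorem \ref{t1.1} with $q<2^*$ and $p_1<2+\tfrac{4}{N}<2^*$, we obtain $u_n\to u_r$ strongly in $L^2$, $L^q$ and $L^{p_1}$. In particular the mass $\|u_r\|_2^2=\Theta$ is preserved, so $u_r\in S_{r,\Theta}$, and $u_r\ge0$ a.e.\ as the pointwise limit of nonnegative functions. Passing to the limit term by term in the weak formulation of \eqref{eq3.2} (using the growth control on $f$ inherited from $(f_1)$--$(f_2)$ via $C^1$ regularity) gives
\begin{equation*}
-\Delta u_r + V u_r + \lambda_r u_r = |u_r|^{q-2} u_r + \beta f(u_r) \quad\text{in } \Omega_r,
\end{equation*}
in the sense of distributions, i.e.\ $(\lambda_r, u_r)$ solves \eqref{eq1.1} on $\Omega_r$.

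The step I expect to be the most delicate is justifying that the weak limit actually yields a genuine solution rather than a subsolution or a function failing the constraint. The mass-preservation step is in fact automatic here thanks to compactness on the bounded domain $\Omega_r$, which is the payoff of working on $\Omega_r$ rather than $\mathbb{R}^N$ at this stage; the harder point is that one only controls $\|I_{r,s_n}'|_{T_{u_n}S_{r,\Theta}}\|\to 0$ along the particular sequence from Proposition \ref{p3.1}, so the passage to the limit must be done via the equation \eqref{eq3.2} together with the uniform bound from Lemma \ref{L3.3}, rather than by invoking Palais--Smale at $s=1$ directly. (This is precisely the mechanism noted in Remark \ref{R1.5}, where Proposition 3.1 of \cite{SD2018} is cited for exactly this type of strong-convergence argument once boundedness is known.) Strong convergence $u_n\to u_r$ in $H_0^1(\Omega_r)$ then follows by testing \eqref{eq3.2} against $u_n$ and against $u_r$, subtracting, and using the already-established strong $L^q, L^{p_1}, L^2$ convergences to conclude $\|\nabla u_n\|_2^2\to\|\nabla u_r\|_2^2$.
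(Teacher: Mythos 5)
Your proposal is correct and follows essentially the same route as the paper: the authors likewise take the solutions from Lemma \ref{L3.2} for a sequence $s_n\to 1^-$, use the uniform gradient bound of Lemma \ref{L3.3}, and then repeat the compactness argument of Lemma \ref{L3.2} (boundedness of the multipliers, weak $H_0^1$ and strong $L^t$ convergence on the bounded domain, passage to the limit in the equation, and upgrade to strong $H_0^1$ convergence) to obtain a nonnegative solution of \eqref{eq1.1}. Your elaboration of the step the paper compresses into ``by an argument similar to that in Lemma \ref{L3.2}'' is accurate, including the observation that one works with the exact equation \eqref{eq3.2} rather than a Palais--Smale condition at $s=1$.
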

\begin{proof}
By using Lemma  \ref{L3.2}, there is a nonnegative solution $(\lambda_{r, s}, u_{r, s})$ to \eqref{eq3.1} for almost every $s \in\left[\frac{1}{2}, 1\right]$. In view of Lemma  \ref{L3.3}, $\left\{u_{r, s}\right\}$ is bounded. By an argument similar to that in Lemma  \ref{L3.2}, there exist $u_r \in S_{r, \Theta}$ and $\lambda_r$ such that, going if necessary to a subsequence,
$$
\lambda_{r, s} \rightarrow \lambda_r \quad \text { and } \quad u_{r, s} \rightarrow u_r \quad \text { in } H_0^1\left(\Omega_r\right) \quad \text { as } s \rightarrow 1 .
$$
Hence $u_r$ is a nonnegative solution of problem \eqref{eq1.1}.
\end{proof}

Next, we will consider the Lagrange multiplier. we first establish an a priori estimate for the solutions of \eqref{eq1.1}.

\begin{lemma}\label{L3.5}
If $\left\{\left(\lambda_r, u_r\right)\right\}$ is a family of nonnegative solutions of \eqref{eq1.1} such that $\left\|u_r\right\|_{H^1} \leq$ $C$ with $C>0$ independent of $r$, then $\limsup\limits_{r \rightarrow \infty}\left\|u_r\right\|_{\infty}<\infty$.
\end{lemma}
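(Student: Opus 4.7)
The plan is a standard elliptic regularity bootstrap (Brezis--Kato followed by $W^{2,p}$-estimates), executed carefully so that every constant is independent of $r$. The first step is to establish $\sup_r |\lambda_r| < \infty$. Testing \eqref{eq1.1} against $u_r$ yields
\[
\lambda_r\Theta = \int_{\Omega_r}|u_r|^q\,dx + \beta\int_{\Omega_r}f(u_r)u_r\,dx - \int_{\Omega_r}|\nabla u_r|^2\,dx - \int_{\Omega_r}V u_r^2\,dx.
\]
The uniform $H^1$-bound, combined with the Sobolev embedding $H^1\hookrightarrow L^{2^*}$, the Gagliardo--Nirenberg inequality, $(V_0)$, and the growth $|f(t)t| \leq C(|t|^{p_2}+|t|^{p_1})$ obtained by integrating $(f_2)$, controls each term on the right-hand side uniformly in $r$.

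With $\lambda_r$ bounded, I rewrite the equation in Brezis--Kato form
\[
-\Delta u_r = W_r(x)\, u_r \quad\text{in } \Omega_r, \qquad W_r(x) := -V(x) - \lambda_r + |u_r|^{q-2} + \beta\,\frac{f(u_r)}{u_r}\chi_{\{u_r>0\}}.
\]
Since $V \in L^{N/2}(\mathbb{R}^N)$, $|\lambda_r| \leq C$, $u_r \in L^{2^*}(\Omega_r)$ uniformly, $q-2 < 2^*-2$ gives $|u_r|^{q-2}\in L^{2^*/(q-2)}$ with $2^*/(q-2)>N/2$, and $(f_2)$ with $p_1 < 2+4/N < 2^*$ yields the analogous control on $f(u_r)/u_r$, we may decompose $W_r = W_r^{(1)}+W_r^{(2)}$ with $\|W_r^{(1)}\|_{L^{N/2}(\mathbb{R}^N)}+\|W_r^{(2)}\|_{L^\infty(\mathbb{R}^N)} \leq C$ uniformly in $r$ (extending $u_r$ by zero outside $\Omega_r$). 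The classical Brezis--Kato iteration then gives $u_r \in L^p(\Omega_r)$ for every $p<\infty$ with bound depending only on $p$, the above uniform data, and the Sobolev constant on $\mathbb{R}^N$, hence independent of $r$.

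Finally, once $u_r$ is uniformly bounded in $L^p(\Omega_r)$ for sufficiently large $p$, the right-hand side $|u_r|^{q-2}u_r+\beta f(u_r)-Vu_r-\lambda_r u_r$ of \eqref{eq1.1} lies uniformly in $L^s(\Omega_r)$ for some $s>N/2$. Applying $W^{2,s}$-elliptic estimates with the Dirichlet condition $u_r|_{\partial\Omega_r}=0$, followed by the Sobolev embedding $W^{2,s}\hookrightarrow L^\infty$, yields the desired $\limsup_{r\to\infty}\|u_r\|_\infty<\infty$. The principal obstacle is ensuring that the Calderón--Zygmund and boundary-flattening constants for $W^{2,s}$-regularity do not degenerate as $r\to\infty$; this is exactly where smoothness and convexity of $\Omega$ enter, since $\partial\Omega_r = r\,\partial\Omega$ becomes flatter and more regular as $r\to\infty$, so local boundary-straightening charts have $C^2$-norms uniformly bounded in $r$, and the resulting regularity constants remain controlled.
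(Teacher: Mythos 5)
Your argument is correct for the setting in which Lemma \ref{L3.5} is stated (the Sobolev subcritical range $2+\tfrac{4}{N}<q<2^*$ of Section 3), but it follows a genuinely different route from the paper. The paper argues by contradiction with a blow-up: assuming $M_r=\max_{\Omega_r}u_r\to\infty$, it rescales $v_r(x)=M_r^{-1}u_r(x_r+\tau_r x)$ with $\tau_r=M_r^{(2-q)/2}$, identifies the limit domain as $\mathbb{R}^N$ or a half-space (here the geometry of $\Omega_r$ enters through the distance of the maximum point to $\partial\Omega_r$), passes to the limit equation $-\Delta v=|v|^{q-2}v$, and invokes Liouville theorems to contradict $v(0)=1$. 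You instead first bound $\lambda_r$ by testing with $u_r$ (exactly as the paper does inside its own proof), then run a Brezis--Kato iteration followed by $W^{2,s}$-estimates. Your two delicate points are handled correctly: the cut-off decomposition $W_r=W_r\chi_{\{|W_r|>K\}}+W_r\chi_{\{|W_r|\le K\}}$ has uniformly small $L^{N/2}$-part because $|u_r|^{q-2}$ and $|u_r|^{p_i-2}$ are uniformly bounded in $L^{\sigma}$ for some $\sigma>N/2$ (this is where $q<2^*$ and $p_1<2+\tfrac4N$ are used), and the boundary regularity constants for $\Omega_r=r\Omega$ are indeed uniform since the local graph functions scale as $r\phi(\cdot/r)$ (only smoothness of $\Omega$ is needed here, not convexity --- convexity is used elsewhere, in the Pohozaev boundary term of Lemma \ref{L3.3}). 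Your method buys a quantitative, Liouville-free bound valid for the whole family (not just as $r\to\infty$) and does not use $u_r\ge 0$; the paper's blow-up is softer and shorter but is purely a contradiction argument. One caveat worth recording: the paper also invokes Lemma \ref{L3.5} in the Sobolev critical case $q=2^*$ (Theorems \ref{t1.7}--\ref{t1.9}); there your Brezis--Kato step loses uniformity in $r$, since smallness of $\||u_r|^{2^*-2}\chi_{\{u_r>K\}}\|_{N/2}$ requires uniform integrability of $|u_r|^{2^*}$, which the $H^1$-bound does not provide, so your proof (as written) covers only the subcritical range for which the lemma is stated in Section 3.
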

\begin{proof}
Using the regularity theory of elliptic partial differential equations, we know that $u_r \in C(\Omega_r)$. Assume to the contrary that there exist a sequence, for simplicity denoted by $\left\{u_r\right\}$, and $x_r \in \Omega_r$ such that
$$
M_r:=\max _{x \in \Omega_r} u_r(x)=u_r\left(x_r\right) \rightarrow \infty \quad \text { as } r \rightarrow \infty.
$$
Suppose without loss of generality that, up to a subsequence, $\lim\limits_{r \rightarrow \infty} \frac{x_r}{\left|x_r\right|}=(1,0, \ldots, 0)$. Set
$$
v_r(x)=\frac{u_r\left(x_r+\tau_r x\right)}{M_r} \text { for } x \in \Sigma^r:=\left\{x \in \mathbb{R}^N: x_r+\tau_r x \in \Omega_r\right\},
$$
where $\tau_r=M_r^{\frac{2-q}{2}}$. Then $\tau_r \rightarrow 0$ as $r \rightarrow \infty$, $\left\|v_r\right\|_{L^{\infty}\left(\Sigma^r\right)} \leq 1$, and $v_r$ satisfies
\begin{equation}\label{eq3.17}
-\Delta v_r+\tau_r^2 V\left(x_r+\tau_r x\right) v_r+\tau_r^2 \lambda_r v_r=\left|v_r\right|^{q-2} v_r+\beta M_r^{1-q}f( M_rv_r) \quad \text { in } \Sigma^r .
\end{equation}
In fact, since $u_r$ is a nonnegative solution of \eqref{eq1.1}, we obtain
\begin{eqnarray*}
&&-\Delta u_r\left(x_r+\tau_r x\right)+ V\left(x_r+\tau_r x\right) u_r\left(x_r+\tau_r x\right)+ \lambda_r u_r\left(x_r+\tau_r x\right)\\
&=&\left|u_r\left(x_r+\tau_r x\right)\right|^{q-2} u_r\left(x_r+\tau_r x\right)+\beta f( u_r\left(x_r+\tau_r x\right)) \quad \text { in } \Omega_r ,
\end{eqnarray*}
then by a direct calculation and the definition of $v_r(x)$, $\tau_r$, we know that \eqref{eq3.17} holds.
In view of \eqref{eq1.1}, the Gagliardo-Nirenberg inequality and $\left\|u_r\right\|_{H^1} \leq C$ with $C$ independent of $r$, we infer that the sequence $\left\{\lambda_r\right\}$ is bounded. It then follows from the regularity theory of elliptic partial differential equations and the Arzela-Ascoli theorem that there exists $v$ such that, up to a subsequence
$$
v_r \rightarrow v \quad \text { in } H_0^1(\Sigma) \quad \text { and } \quad v_r \rightarrow v \quad \text { in } C_{l o c}^\beta(\Sigma) \text { for some } \beta \in(0,1),
$$
where $\Sigma:=\lim\limits_{r \rightarrow \infty} \Sigma^r$.

Similar to the proof of \cite[Lemma 2.7]{{TBAQ2023}}, we have
 \begin{equation*}
   \liminf _{r \rightarrow \infty} \frac{\operatorname{dist}\left(x_r, \partial \Omega_r\right)}{\tau_r}=\liminf _{r \rightarrow \infty} \frac{\left|y_r-x_r\right|}{\tau_r} \geq d>0,
 \end{equation*}
where $y_r \in \partial \Omega_r$ is such that $\operatorname{dist}\left(x_r, \partial \Omega_r\right)=\left|y_r-x_r\right|$ for any large $r$. As a result, by letting $r \rightarrow \infty$ in \eqref{eq3.17}, we obtain that $v \in H_0^1(\Sigma)$ is a nonnegative solution of
$$
-\Delta v=|v|^{q-2} v \quad \text { in } \Sigma,
$$
where
$$
\Sigma= \begin{cases}\mathbb{R}^N & \text { if } \liminf\limits_{r \rightarrow \infty} \frac{\operatorname{dist}\left(x_r, \partial \Omega_r\right)}{\tau_r}=\infty, \\ \left\{x \in \mathbb{R}^N: x_1>-d\right\} & \text { if } \liminf\limits_{r \rightarrow \infty} \frac{\operatorname{dist}\left(x_r, \partial \Omega_r\right)}{\tau_r}>0 .\end{cases}
$$
It then follows from the Liouville theorems (see \cite{MEPL1982}) that $v=0$ in $H_0^1(\Sigma)$, which contradicts $v(0)=\lim\limits_{r \rightarrow \infty} v_r(0)=1$.
\end{proof}

Clearly, the proof of Lemma \ref{L3.5} does not depend on $\beta$.

\begin{lemma}\label{L3.6}
Let $\left(\lambda_{r, \Theta}, u_{r, \Theta}\right)$ be the solution of \eqref{eq1.1} from Lemma \ref{L3.4}. If $\|\widetilde{V}_{+}\|_{\frac{N}{2}} < 2 S$, then there exists $\bar{\Theta}>0$ such that
$$
\liminf\limits_{r \rightarrow \infty} \lambda_{r, \Theta}>0 \quad \text { for } 0<\Theta<\bar{\Theta} .
$$
\end{lemma}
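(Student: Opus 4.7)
The plan is to combine the Euler--Lagrange equation, the Pohozaev identity, and the upper energy bound $I_r(u_{r,\Theta}) = m_r(\Theta) \leq h(T_\Theta)$ from Lemma~\ref{L3.1}(iii) into an explicit lower bound on $\lambda_{r,\Theta}$. Writing $u = u_{r,\Theta}$ and $\lambda = \lambda_{r,\Theta}$, I will first eliminate $\int_{\Omega_r}|u|^q\,dx$ between the equation (tested against $u$) and the energy identity, which yields
\[
\lambda\Theta = \frac{q-2}{2}\int_{\Omega_r}|\nabla u|^2\,dx + \frac{q-2}{2}\int_{\Omega_r}Vu^2\,dx + \beta\int_{\Omega_r}\bigl(f(u)u - qF(u)\bigr)\,dx - q\,m_r(\Theta).
\]
Condition $(f_2)$ together with $p_1 < 2 + \tfrac{4}{N} < q$ gives $f(u)u \leq p_1 F(u) < qF(u)$, so $\beta(f(u)u - qF(u)) \geq 0$ when $\beta \leq 0$. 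Combined with $\int_{\Omega_r}Vu^2 \geq -\|V_-\|_{\frac{N}{2}}S^{-1}\int_{\Omega_r}|\nabla u|^2$ from $(V_0)$ and $m_r(\Theta) \leq h(T_\Theta)$, this produces the intermediate bound
\[
\lambda\Theta \geq \frac{q-2}{2}\bigl(1 - \|V_-\|_{\frac{N}{2}}S^{-1}\bigr)\int_{\Omega_r}|\nabla u|^2\,dx \,-\, q\,h(T_\Theta). \qquad (\star)
\]

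Next, I will produce a matching lower bound on $\int_{\Omega_r}|\nabla u|^2\,dx$, and this is the step where $\|\widetilde{V}_+\|_{\frac{N}{2}} < 2S$ will enter. The Pohozaev--equation identity already derived in the proof of Lemma~\ref{L3.3} reads
\[
\tfrac{1}{N}\int_{\Omega_r}|\nabla u|^2\,dx = \tfrac{1}{2N}\int_{\partial\Omega_r}|\nabla u|^2(x\cdot\mathbf{n})\,d\sigma + \tfrac{1}{2N}\int_{\Omega_r}\widetilde{V}u^2\,dx + \tfrac{q-2}{2q}\int_{\Omega_r}|u|^q\,dx + \beta\int_{\Omega_r}\bigl(\tfrac{1}{2}f(u)u - F(u)\bigr)\,dx,
\]
whose last term is $\leq 0$ (from $\beta \leq 0$ and $\tfrac{1}{2}f(u)u - F(u) \geq \tfrac{p_2-2}{2}F(u) \geq 0$). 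Using $\int_{\Omega_r}\widetilde{V}u^2 \leq \|\widetilde{V}_+\|_{\frac{N}{2}}S^{-1}\int_{\Omega_r}|\nabla u|^2$ and rearranging gives
\[
\frac{2 - \|\widetilde{V}_+\|_{\frac{N}{2}}S^{-1}}{2N}\int_{\Omega_r}|\nabla u|^2\,dx \leq \frac{q-2}{2q}\int_{\Omega_r}|u|^q\,dx + \frac{1}{2N}\int_{\partial\Omega_r}|\nabla u|^2(x\cdot\mathbf{n})\,d\sigma,
\]
and the hypothesis makes the left-hand coefficient strictly positive. Applying the Gagliardo--Nirenberg inequality to $\int|u|^q$ and treating the boundary integral as $o_r(1)$ (see the obstacle below), I will extract
\[
\liminf_{r \to \infty}\int_{\Omega_r}|\nabla u_{r,\Theta}|^2\,dx \geq \left[\frac{q\bigl(2-\|\widetilde{V}_+\|_{\frac{N}{2}}S^{-1}\bigr)}{N(q-2)C_{N,q}}\right]^{\frac{4}{N(q-2)-4}}\Theta^{-\frac{2q-N(q-2)}{N(q-2)-4}} =: \underline{y}(\Theta).
\]

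Finally, feeding $\underline{y}(\Theta)$ into $(\star)$ gives
\[
\liminf_{r \to \infty}\lambda_{r,\Theta}\,\Theta \geq \tfrac{q-2}{2}\bigl(1-\|V_-\|_{\frac{N}{2}}S^{-1}\bigr)\underline{y}(\Theta) - q\,h(T_\Theta).
\]
A short scaling check shows that both $\underline{y}(\Theta)$ and $h(T_\Theta)$ are of order $\Theta^{-(2N-q(N-2))/(N(q-2)-4)}$ as $\Theta \to 0^+$, so the question reduces to an inequality between explicit constants. Using the closed form for $h(T_\Theta)$ in Lemma~\ref{L3.1}(iii), this inequality can be made to hold for all $0 < \Theta < \widetilde{\Theta}$ with $\widetilde{\Theta} > 0$ chosen small enough, yielding $\liminf_{r \to \infty}\lambda_{r,\Theta} > 0$. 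The principal obstacle I anticipate is the justification that $\int_{\partial\Omega_r}|\nabla u_{r,\Theta}|^2(x\cdot\mathbf{n})\,d\sigma \to 0$ as $r \to \infty$: the convexity of $\Omega_r$ supplies only the correct sign of this integral, whereas genuine vanishing will require a rescaling/Liouville-type argument near $\partial\Omega_r$ patterned on that of Lemma~\ref{L3.5}, combined with the uniform $L^\infty$-bound to show that $u_{r,\Theta}$ (and hence $|\nabla u_{r,\Theta}|$) stays uniformly away from $\partial\Omega_r$.
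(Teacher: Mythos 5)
Your overall strategy (equation tested with $u$, Pohozaev identity, Gagliardo--Nirenberg, and the hypothesis $\|\widetilde{V}_{+}\|_{\frac{N}{2}}<2S$ entering through a lower bound on the gradient) shares ingredients with the paper, but as written it has two genuine gaps. The first is the boundary term. You correctly flag $\int_{\partial\Omega_r}|\nabla u_{r,\Theta}|^2(x\cdot\mathbf{n})\,d\sigma$ as the obstacle, but the fix you sketch does not close it. Convexity gives only the sign of this term, which is what makes the \emph{upper} bound of Lemma \ref{L3.3} work; here the term sits on the same side as $\int|u|^q$, so the inequality $c\|\nabla u\|_2^2\le B_r+C\Theta^{\cdots}\|\nabla u\|_2^{N(q-2)/2}$ gives no lower bound on $\|\nabla u\|_2^2$ unless $B_r$ is actually small. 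A uniform $L^\infty$ bound plus a Liouville argument shows at best that $u_{r,\Theta}$ is locally small near $\partial\Omega_r$, whereas the integral carries the weight $x\cdot\mathbf{n}\sim r$ over a surface of measure $\sim r^{N-1}$; one would need quantitative (exponential) decay of $|\nabla u_{r,\Theta}|$ up to the boundary, which is not available at this stage and would essentially presuppose control of $\lambda_{r,\Theta}$. The paper avoids the issue structurally: it passes to the limit $r\to\infty$ first (using the uniform $H^1$ bound of Lemma \ref{L3.3}), obtains a limit equation on $\mathbb{R}^N$ --- or, in the vanishing case $u_\Theta=0$, on a translated domain after excluding concentration near $\partial\Omega_r$ via the half-space Liouville theorem --- and only then applies the Pohozaev identity, which on $\mathbb{R}^N$ has no boundary term. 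This is also why the paper needs the preliminary claim $\liminf_{r\to\infty}\max_{\Omega_r}u_{r,\Theta}>0$ and the dichotomy $u_\Theta=0$ versus $u_\Theta\ne0$, which your finite-$r$ scheme skips.

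The second gap is the final comparison, and it is fatal even if the boundary term were handled. As you yourself observe, $\underline{y}(\Theta)$ and $h(T_\Theta)$ carry the \emph{same} power $\Theta^{-(2N-q(N-2))/(N(q-2)-4)}$, so shrinking $\Theta$ cannot make $\frac{q-2}{2}\bigl(1-\|V_-\|_{\frac{N}{2}}S^{-1}\bigr)\underline{y}(\Theta)-q\,h(T_\Theta)$ positive: everything reduces to a fixed inequality between two constants. The constant in $h(T_\Theta)$ comes from the non-optimal test path through the first Dirichlet eigenfunction and involves $\theta$ and $|\Omega|$; it exceeds the mountain-pass \emph{lower} bound $g(\widetilde{t})$ by a domain-dependent factor, and there is no reason the required inequality holds (a comparison of $\frac{q-2}{2}\underline{y}$ with $q\,g(\widetilde{t})$ does go through using $q<2^*$, but $m_{r,1}(\Theta)=I_r(u_{r,\Theta})$ is only bounded above by $h(T_\Theta)$, not by $g(\widetilde{t})$). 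The paper's computation sidesteps the mountain-pass level entirely at this point: combining $\frac1q$ times the tested equation with the Pohozaev identity eliminates $\int|u|^q$ and yields $\bigl(\frac1q-\frac12\bigr)\lambda_\Theta\int u_\Theta^2\le\frac{(N-2)q-2N}{2Nq}\int|\nabla u_\Theta|^2+O(\Theta)+\cdots$, where the gradient coefficient is strictly negative because $q<2^*$; the lower bound \eqref{eq3.30} (which is exactly where $\|\widetilde{V}_{+}\|_{\frac{N}{2}}<2S$ enters, as in your plan) then sends the right-hand side to $-\infty$ as $\Theta\to0$ and forces $\lambda_\Theta>0$. You should adopt that combination of identities rather than the one that retains $m_{r,1}(\Theta)$.
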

\begin{proof}
Let $\left(\lambda_{r, \Theta}, u_{r, \Theta}\right)$ be the solution of \eqref{eq1.1} established in Theorem \ref{L3.4}. By the regularity theory of elliptic partial differential equations, we have $u_{r, \Theta} \in C\left(\Omega_r\right)$. Using Lemma \ref{L3.5}, it holds
$$
\limsup _{r \rightarrow \infty} \max\limits_{\Omega_r} u_{r, \Theta}<\infty .
$$
Setting
$$
Q(\Theta)=\liminf\limits_{r \rightarrow \infty} \max\limits_{\Omega_r} u_{r, \Theta},
$$
we claim that there is $\Theta_1>0$ such that $Q(\Theta)>0$ for any $0<\Theta<\Theta_1$. Assume to the contrary that there exists a sequence $\left\{\Theta_k\right\}$ tending to 0 as $k \rightarrow \infty$ such that $Q\left(\Theta_k\right)=0$ for any $k$, that is,
\begin{equation}\label{eq3.20}
  \liminf\limits_{r \rightarrow \infty} \max\limits_{\Omega_r} u_{r, \Theta_k}=0 \quad \text {for any}\ k.
\end{equation}
As a consequence of (iii) in Lemma \ref{L3.1}, for any $r>r_{\Theta_k}$, we have
\begin{equation}\label{eq3.21}
I_r\left(u_{r, \Theta_k}\right)=m_{r, 1}\left(\Theta_k\right) \rightarrow \infty \quad \text { as } k \rightarrow \infty .
\end{equation}
For any given $k$, it follows from \eqref{eq3.20} and $u_{r, \Theta_k} \in S_{r, \Theta_k}$ that, up to a subsequence,
\begin{equation}\label{eq3.22}
\int_{\Omega_r}\left|u_{r, \Theta_k}\right|^s d x=\int_{\Omega_r}\left|u_{r, \Theta_k}\right|^{s-2}\left|u_{r, \Theta_k}\right|^2 d x \leq\left|\max _{\Omega_r} u_{r, \Theta_k}\right|^{s-2} \Theta_k \rightarrow 0 \quad \text { as } r \rightarrow \infty
\end{equation}
for any $s>2$. Hence, for any given large $k$, there exists $\bar{r}_k>r_{\Theta_k}$ such that
$$
\left|\frac{1}{q} \int_{\Omega_r} |u_{r, \Theta_k}|^q d x+\beta \int_{\Omega_r}f(u_{r, \Theta_k}) d x \right|<\frac{m_{r, 1}\left(\Theta_k\right)}{2} \text { for any } r \geq \bar{r}_k .
$$
In view of \eqref{eq3.21} and $I_r\left(u_{r, \Theta_k}\right)=m_{r, 1}\left(\Theta_k\right)$, we further have
\begin{equation}\label{eq3.23}
\int_{\Omega_r}\left|\nabla u_{r, \Theta_k}\right|^2 d x+\int_{\Omega_r} V(x) u_{r, \Theta_k}^2 d x \geq \frac{m_{r, 1}\left(\Theta_k\right)}{2} \text { for any large } k \text { and } r \geq \bar{r}_k .
\end{equation}
It follows from \eqref{eq3.20}, \eqref{eq3.22} and \eqref{eq3.23} that there exists $r_k \geq \bar{r}_k$ with $r_k \rightarrow \infty$ as $k \rightarrow \infty$ such that
\begin{equation}\label{eq3.24}
   \lim _{k \rightarrow \infty} \max _{\Omega_{r_k}} u_{r_k, \Theta_k}=0,
\end{equation}
\begin{equation}\label{eq3.25}
   \int_{\Omega_{r_k}}\left|u_{r_k, \Theta_k}\right|^s d x \leq\left|\max _{\Omega_{r_k}} u_{r_k, \Theta_k}\right|^{s-2} \Theta_k \rightarrow 0 \text { as } k \rightarrow \infty \text { for any } s>2
\end{equation}
and
\begin{equation}\label{eq3.26}
\int_{\Omega_{r_k}}\left|\nabla u_{r_k, \Theta_k}\right|^2 d x+\int_{\Omega_r} V u_{r_k, \Theta_k}^2 d x \rightarrow \infty \quad \text { as } k \rightarrow \infty .
\end{equation}
By \eqref{eq1.1}, \eqref{eq3.25} and \eqref{eq3.26}, we have
\begin{equation}\label{eq3.27}
\lambda_{r_k,\Theta_k} \rightarrow-\infty \quad \text { as } k \rightarrow \infty .
\end{equation}
Now \eqref{eq1.1} implies
\begin{eqnarray*}
-\Delta u_{r_k, \Theta_k}+V(x)u_{r_k, \Theta_k}+\lambda_{r_k,\Theta_k} u_{r_k, \Theta_k}=|u_{r_k, \Theta_k}|^{q-2}u_{r_k, \Theta_k}+\beta f(u_{r_k, \Theta_k}),
\end{eqnarray*}
so
$$
-\Delta u_{r_k, \Theta_k}+\left(\|V\|_{\infty}+\frac{\lambda_{r_k, \Theta_k}}{2}\right) u_{r_k, \Theta_k} \geq-\frac{\lambda_{r_k, \Theta_k}}{2}u_{r_k, \Theta_k}+\left|u_{r_k, \Theta_k}\right|^{q-2}u_{r_k, \Theta_k}+\beta f(u_{r_k, \Theta_k}) .
$$
Using \eqref{eq3.27} and \eqref{eq3.24}, it follows that
$$
-\Delta u_{r_k, \Theta_k}+\left(\|V\|_{\infty}+\frac{\lambda_{r_k, \Theta_k}}{2}\right) u_{r_k, \Theta_k} \geq 0
$$
for large $k$. Let $\theta_{r_k}$ be the principal eigenvalue of $-\Delta$ with Dirichlet boundary condition in $\Omega_{r_k}$, and $v_{r_k}>0$ be the corresponding normalized eigenfunction. It follows that
$$
\left(\theta_{r_k}+\|V\|_{\infty}+\frac{\lambda_{r_k, \Theta_k}}{2}\right) \int_{\Omega_{r_k}} u_{r_k, \Theta_k} v_{r_k} d x \geq 0.
$$
Since $\int_{\Omega_{r_k}} u_{r_k, \Theta_k} v_{r_k} d x>0$, we have
$$
\theta_{r_k}+\|V\|_{\infty}+\frac{\lambda_{r_k, \Theta_k}}{2} \geq 0,
$$
which contradicts \eqref{eq3.27} for large $k$. Hence the claim holds, that is, there exists $\Theta_1>0$ such that
\begin{equation}\label{eqq3.28}
  Q(\Theta)=\liminf\limits_{r \rightarrow \infty} \max\limits_{\Omega_r} u_{r, \Theta}>0
\end{equation}
for any $0<\Theta<\Theta_1$.

We consider $H^1(\Omega_r)$ as a subspace of $H^1(\mathbb{R}^N)$ for any $r>0$. It follows from Lemma \ref{L3.3} that the set of solutions $\left\{u_{r, \Theta}: r>r_\Theta\right\}$ established in Lemma \ref{L3.4} is bounded in $H^1(\mathbb{R}^N)$, so there exist $u_\Theta \in H^1(\mathbb{R}^N)$ and $\lambda_\Theta \in \mathbb{R}$ such that up to a subsequence:
\begin{eqnarray*}
  \lambda_{r, \Theta} &\rightarrow& \lambda_\Theta,\\
u_{r, \Theta} &\rightharpoonup& u_\Theta \ \text {in}\ H^1(\mathbb{R}^N),\\
u_{r, \Theta} &\rightarrow& u_\Theta \ \text{in}\ L_{l o c}^k(\mathbb{R}^N) \ \text{for all}\ 2 \leq k<2^*,\\
u_{r, \Theta} &\rightarrow& u_\Theta \ \text {a.e. in}\ \mathbb{R}^N
\end{eqnarray*}
and $u_\Theta$ is a solution of the equation
\begin{equation*}
  -\Delta u+V(x)u+\lambda_\Theta u=|u|^{q-2} u+\beta f(u) \ \text {in}\ \mathbb{R}^N.
\end{equation*}
Hence,
\begin{equation}\label{eq3.28}
\int_{\mathbb{R}^N}\left|\nabla u_\Theta\right|^2 d x+\int_{\mathbb{R}^N} V(x) u_\Theta^2 d x+\lambda_\Theta \int_{\mathbb{R}^N} u_\Theta^2 d x=\int_{\mathbb{R}^N}\left|u_\Theta\right|^q d x+\beta \int_{\mathbb{R}^N}f(u_\Theta)u_\Theta d x
\end{equation}
and the Pohozaev identity gives
\begin{eqnarray}\label{eq3.29}
& &\frac{N-2}{2 N} \int_{\mathbb{R}^N}\left|\nabla u_\Theta\right|^2 d x+\frac{1}{2 N} \int_{\mathbb{R}^N} \widetilde{V} u_\Theta^2dx+\frac{1}{2} \int_{\mathbb{R}^N} V(x) u_\Theta^2 d x+\frac{\lambda_\Theta}{2} \int_{\mathbb{R}^N} u_\Theta^2 d x \nonumber\\
& =&\frac{1}{q} \int_{\mathbb{R}^N}\left|u_\Theta\right|^q d x+\beta \int_{\mathbb{R}^N}F(u_\Theta) d x .
\end{eqnarray}
It follows from \eqref{eq3.28}, \eqref{eq3.29}, $(f_2)$, the Gagliardo-Nirenberg inequality and the fact $\beta\leq0$ that
\begin{eqnarray*}
  & &\frac{1}{ N} \int_{\mathbb{R}^N}\left|\nabla u_\Theta\right|^2 d x+\frac{1}{2 N} \int_{\mathbb{R}^N} \widetilde{V}(x) u_\Theta^2dx  \nonumber\\
  &=&\left(\frac{1}{2}-\frac{1}{q}\right) \int_{\mathbb{R}^N}\left|u_\Theta\right|^q d x+\frac{\beta}{2} \int_{\mathbb{R}^N}(f(u_\Theta)u_\Theta-2F(u_\Theta)) d x  \nonumber\\
  &\leq& \frac{C_{N, q}(q-2)}{2 q}\left(\int_{\mathbb{R}^N} u_\Theta^2 d x\right)^{\frac{2 q-N(q-2)}{4}}\left(\int_{\mathbb{R}^N}\left|\nabla u_\Theta\right|^2 d x\right)^{\frac{N(q-2)}{4}}.
\end{eqnarray*}
By using the H\"older inequality, we have
\begin{equation*}
  \left(\frac{1}{N}-\frac{\|\widetilde{V}_{+}\|_{\frac{N}{2}} S^{-1}}{2 N}\right) \int_{\mathbb{R}^N}\left|\nabla u_\Theta\right|^2 d x\leq\frac{1}{ N} \int_{\mathbb{R}^N}\left|\nabla u_\Theta\right|^2 d x+\frac{1}{2 N} \int_{\mathbb{R}^N} \widetilde{V}(x) u_\Theta^2dx.
\end{equation*}
Therefore,
\begin{eqnarray*}
   &&\left(\frac{1}{N}-\frac{\|\widetilde{V}_{+}\|_{\frac{N}{2}} S^{-1}}{2 N}\right) \int_{\mathbb{R}^N}\left|\nabla u_\Theta\right|^2 d x\nonumber\\
   &\leq&\frac{C_{N, q}(q-2)}{2 q}\left(\int_{\mathbb{R}^N} u_\Theta^2 d x\right)^{\frac{2 q-N(q-2)}{4}}\left(\int_{\mathbb{R}^N}\left|\nabla u_\Theta\right|^2 d x\right)^{\frac{N(q-2)}{4}}.
\end{eqnarray*}
If $u_\Theta \neq 0$, Using $\|\widetilde{V}_{+}\|_{\frac{N}{2}} < 2 S$, we obtain that
\begin{equation}\label{eq3.30}
  \int_{\mathbb{R}^N}\left|\nabla u_\Theta\right|^2 d x \geq\left[\frac{q\left(2-\|\widetilde{V}_{+}\|_{\frac{N}{2}} S^{-1}\right)}{N C_{N, q}(q-2)}\right]^{\frac{4}{N(q-2)-4}} \Theta^{\frac{q(N-2)-2 N}{N(q-2)-4}} .
\end{equation}
Next, it follows from \eqref{eq3.5}, \eqref{eq3.28}, \eqref{eq3.29}, \eqref{eq3.30}, $(f_2)$ and $2+\frac{4}{N}<q<2^*$ that
\begin{eqnarray*}
 \left(\frac{1}{q}-\frac{1}{ 2}\right)\lambda_\Theta \int_{\mathbb{R}^N} u_\Theta^2 d x&=& \left(\frac{N-2}{2N}-\frac{1}{q}\right)\int_{\mathbb{R}^N}\left|\nabla u_\Theta\right|^2 d x+\frac{1}{2 N} \int_{\mathbb{R}^N} \widetilde{V}(x) u_\Theta^2 d x \\
&&+\left(\frac{1}{2  }-\frac{1}{q}\right) \int_{\mathbb{R}^N} V(x) u_\Theta^2 d x-\frac{\beta}{q} \int_{\mathbb{R}^N}\left(qF(u_\Theta)-f(u_\Theta)u_\Theta)\right)d x \\
& \leq& \frac{(N-2) q-2 N}{2 N q} \int_{\mathbb{R}^N}\left|\nabla u_\Theta\right|^2 d x+\frac{\|\widetilde{V}\|_{\infty}}{2 N} \Theta+\frac{(q-2)\|V\|_{\infty}}{2 q} \Theta \\
&&-\frac{\beta(q-p_2)\alpha}{q}C_{N,p_1}   \Theta^{ \frac{2 p_1-N(p_1-2)}{4}}\left(\int_{\mathbb{R}^N}\left|\nabla u_\Theta\right|^2 d x\right)^{\frac{N(p_1-2)}{4}} \\
& \rightarrow&-\infty \ \text { as } \Theta \rightarrow 0,
\end{eqnarray*}
since $\frac{(N-2) q-2 N}{2 N q}<0$. Therefore, if $u_\Theta \neq 0$ for $\Theta>0$ small there exists $\Theta_0>0$ such that $\lambda_\Theta>0$ for $0<\Theta<\Theta_0$.

In order to complete the proof, we consider the case that there is a sequence $\Theta_k \rightarrow 0$ such that $u_{\Theta_k}=0$ for any $k$. Assume without loss of generality that $u_\Theta=0$ for any $\Theta \in(0, \Theta_1)$. Let $x_{r, \Theta} \in \Omega_r$ be such that $u_{r, \Theta}\left(x_{r, \Theta}\right)=\max\limits_{\Omega_r} u_{r, \Theta}$. In view of \eqref{eqq3.28}, there holds $\left|x_{r, \Theta}\right| \rightarrow \infty$
as $r \rightarrow \infty$. Otherwise, there exists $x_0 \in \mathbb{R}$ such that, up to a subsequence, $x_{r, \Theta} \rightarrow x_0$, and hence $u_\Theta(x_0) \geq d_\Theta>0$. This contradicts $u_\Theta=0$. We claim that $\operatorname{dist}(x_{r, \Theta}, \partial \Omega_r) \rightarrow \infty$ as $r \rightarrow \infty$. Arguing by contradiction we assume that $\liminf\limits_{r \rightarrow \infty} \operatorname{dist}(x_{r, \Theta}, \partial \Omega_r)=l<\infty$. It follows from \eqref{eqq3.28} that $l>0$. Let $w_r(x)=u_{r, \Theta}(x+x_{r, \Theta})$ for any $x \in \Sigma^r:=\{x \in \mathbb{R}^N: x+x_{r, \Theta} \in \Omega_r\}$. Then $w_r$ is bounded in $H^1(\mathbb{R}^N)$, and there is $w \in H^1(\mathbb{R}^N)$ such that $w_r \rightharpoonup w$ as $r \rightarrow \infty$. By the regularity theory of elliptic partial equations and $\liminf\limits_{r \rightarrow \infty} u_{r, \Theta}\left(x_{r, \Theta}\right)>d_\Theta>0$, we infer that $w(0) \geq d_\Theta>0$. Assume without loss of the generality that, up to a subsequence,
$$
\lim\limits_{r \rightarrow \infty} \frac{x_{r, \Theta}}{\left|x_{r, \Theta}\right|}=e_1.
$$
Setting
$$
\Sigma=\left\{x \in \mathbb{R}^N: x \cdot e_1<l\right\}=\left\{x \in \mathbb{R}^N: x_1<l\right\},
$$
we have $\phi(\cdot-x_{r, \Theta}) \in C_c^{\infty}(\Omega_r)$ for any $\phi \in C_c^{\infty}(\Sigma)$ and $r$ large enough. It then follows that
\begin{eqnarray}\label{eq3.32}
&&\int_{\Omega_r} \nabla u_{r, \Theta} \nabla \phi\left(\cdot-x_{r, \Theta}\right) d x+\int_{\Omega_r} V u_{r, \Theta} \phi\left(\cdot-x_{r, \Theta}\right) d x+\lambda_{r, \Theta} \int_{\Omega_r} u_{r, \Theta} \phi\left(\cdot-x_{r, \Theta}\right) d x \nonumber\\
&=&\int_{\Omega_r}\left|u_{r, \Theta}\right|^{q-2} u_{r, \Theta} \phi\left(\cdot-x_{r, \Theta}\right) d x+\beta \int_{\Omega_r}f(u_{r, \Theta}) \phi\left(\cdot-x_{r, \Theta}\right) d x .
\end{eqnarray}
Since $\left|x_{r, \Theta}\right| \rightarrow \infty$ as $r \rightarrow \infty$, it holds
\begin{eqnarray}\label{eq3.33}
\left|\int_{\Omega_r} V u_{r, \Theta} \phi\left(\cdot-x_{r, \Theta}\right) d x\right| & \leq& \int_{\text {Supp } \phi}\left|V\left(\cdot+x_{r, \Theta}\right) w_r \phi\right| d x \nonumber\\
& \leq&\left\|w_r\right\|_{2^*}\|\phi\|_{2^*}\left(\int_{\text {Supp } \phi}\left|V\left(\cdot+x_{r, \Theta}\right)\right|^{\frac{N}{2}} d x\right)^{\frac{2}{N}} \nonumber\\
& \leq&\left\|w_r\right\|_{2^*}\|\phi\|_{2^*}\left(\int_{\mathbb{R}^N \backslash B_{\frac{|x_{r, \Theta}|}{2}}}|V|^{\frac{N}{2}} d x\right)^{\frac{2}{N}}\nonumber\\
& \rightarrow& 0 \text { as } r \rightarrow \infty .
\end{eqnarray}
Letting $r \rightarrow \infty$ in \eqref{eq3.32}, we obtain for $\phi \in C_c^{\infty}(\Sigma)$:
$$
\int_{\Sigma} \nabla w \cdot \nabla \phi d x+\lambda_\Theta \int_{\Sigma} w \phi d x=\int_{\Sigma}|w|^{q-2} w \phi d x+\beta \int_{\Sigma}f(w) \phi d x .
$$
Thus $w \in H_0^1(\Sigma)$ is a weak solution of the equation
\begin{equation}\label{eq3.34}
 -\Delta w+\lambda_\Theta w=|w|^{q-2} w+\beta f(w) \quad \text { in } \Sigma .
\end{equation}
Hence we obtain a nontrivial nonnegative solution of \eqref{eq3.34} on a half space which is impossible by the Liouville theorem (see \cite{MEPL1982}). This proves that dist $\left(x_{r, \Theta}, \partial \Omega_r\right) \rightarrow \infty$ as $r \rightarrow \infty$. A similar argument as above shows that \eqref{eq3.34} holds for $\Sigma=\mathbb{R}^N$. Now we argue as in the case $u_\Theta \neq 0$ above that there exists $\Theta_2$ such that $\lambda_\Theta>0$ for any $0<\Theta<\Theta_2$.

Setting $\bar{\Theta}=\min \left\{\Theta_0, \Theta_1, \Theta_2\right\}$, the proof is complete.
\end{proof}
\noindent\textbf{Proof of Theorem \ref{t1.1}} The proof is an immediate consequence of Lemmas \ref{L3.4}, \ref{L3.5} and \ref{L3.6}.

\section{Proof of Theorem \ref{t1.2}}

In this section, we assume that the assumptions of Theorem \ref{t1.2} hold.
Since $\beta>0$,
\begin{eqnarray*}
I_r(u)
&\geq&\frac{1}{2}\left(1-\left\|V_{-}\right\|_{\frac{N}{2}} S^{-1}\right) \int_{\Omega_r}|\nabla u|^2 d x-\frac{C_{N, q} \Theta^{\frac{2 q-N(q-2)}{4}}}{q}\left(\int_{\Omega_r}|\nabla u|^2 d x\right)^{\frac{N(q-2)}{4}}\\
&&-\alpha\beta C_{N, p_1} \Theta^{\frac{2 p_1-N(p_1-2)}{4}}\left(\int_{\Omega_r}|\nabla u|^2 d x\right)^{\frac{N(p_1-2)}{4}}\\
&=&h_1(t),
\end{eqnarray*}
where
\begin{eqnarray*}
h_1(t)&:=&\frac{1}{2}\left(1-\left\|V_{-}\right\|_{\frac{N}{2}} S^{-1}\right) t^2-\frac{C_{N, q} \Theta^{\frac{2 q-N(q-2)}{4}}}{q}t^{\frac{N(q-2)}{2}}-\alpha\beta C_{N, p_1} \Theta^{\frac{2 p_1-N(p_1-2)}{4}}t^{\frac{N(p_1-2)}{2}}\\
&=&t^{\frac{N(p_1-2)}{2}}\left[\frac{1}{2}\left(1-\left\|V_{-}\right\|_{\frac{N}{2}} S^{-1}\right) t^{\frac{4-N(p_1-2)}{2}}-\frac{C_{N, q} \Theta^{\frac{2 q-N(q-2)}{4}}}{q}t^{\frac{N(q-p_1)}{2}}\right]\\
&&-\alpha\beta C_{N, p_1} \Theta^{\frac{2 p_1-N(p_1-2)}{4}}t^{\frac{N(p_1-2)}{2}}.
\end{eqnarray*}
Consider
$$
\psi(t):=\frac{1}{2}\left(1-\left\|V_{-}\right\|_{\frac{N}{2}} S^{-1}\right) t^{\frac{4-N(p_1-2)}{2}}-\frac{C_{N, q} \Theta^{\frac{2 q-N(q-2)}{4}}}{q}t^{\frac{N(q-p_1)}{2}}.
$$
Note that $\psi$ admits a unique maximum at
$$
\bar{t}=\left[\frac{q(4-N(p_1-2))\left(1-\|V_{-}\|_{\frac{N}{2}} S^{-1}\right)}{ 2N(q-p_1) C_{N, q}}\right]^{\frac{2}{N(q-2)-4}} \Theta^{\frac{N(q-2)-2 q}{2(N(q-2)-4)}} .
$$
By a direct calculation, we obtain
\begin{eqnarray*}
\psi(\bar{t})=\left[\frac{1-\left\|V_{-}\right\|_{\frac{N}{2}} S^{-1}}{2N(q-p_1)}\right]^{\frac{N(q-p_1)}{N(q-2)-4}}\left[\frac{q(4-N(p_1-2))}{C_{N, q}}\right]^{\frac{4-N(p_1-2)}{N(q-2)-4}}\left[N(q-2)-4\right].
\end{eqnarray*}
Hence,
$$
\psi(\bar{t})>\alpha\beta C_{N, p_1}\Theta^{\frac{2 p_1-N(p_1-2)}{4}}
$$
as long as
\begin{equation*}
  \Theta_V=\left[\frac{1-\left\|V_{-}\right\|_{\frac{N}{2}} S^{-1}}{2N(q-p_1)}\right]^{\frac{N}{2}}\left[\frac{q(4-N(p_1-2))}{C_{N, q}}\right]^{\frac{4-N(p_1-2)}{2 (q-p_1)}}\left[\frac{N(q-2)-4}{\alpha\beta C_{N, p_1}}\right]^{\frac{N(q-2)-4}{2 (q-p_1)}}.
\end{equation*}
Now, let $0<\Theta<\Theta_V$ be fixed, we obtain
\begin{equation}\label{eq4.1}
  \psi(\bar{t})>\alpha\beta C_{N, p_1}\Theta^{\frac{2 p_1-N(p_1-2)}{4}}
\end{equation}
and $h_1(\bar{t})>0$. In view of $2<p_1<2+\frac{4}{N}<q<2^*$ and \eqref{eq4.1}, there exist $0<R_1<T_\Theta<R_2$ such that $h_1(t)<0$ for $0<t<R_1$ and for $t>R_2, h_1(t)>0$ for $R_1<t<R_2$, and $h_1\left(T_\Theta\right)=\max\limits_{t \in \mathbb{R}^{+}} h_1(t)>0$.

Define
$$
\mathcal{V}_{r, \Theta}=\left\{u \in S_{r, \Theta}:\|\nabla u\|_2^2 \leq T_\Theta^2\right\} .
$$
Let $\theta$ be the principal eigenvalue of operator $-\Delta$ with Dirichlet boundary condition in $\Omega$, and let $|\Omega|$ be the volume of $\Omega$.
\begin{lemma}\label{L4.1}

(i) If $r<\frac{\sqrt{C \Theta}}{T_\Theta}$, then $\mathcal{V}_{r, \Theta}=\emptyset$.

(ii) If
$$
r> \max \left\{\frac{\sqrt{C\Theta}}{T_\Theta},\left(\frac{ \theta\left(1+\|V\|_{\frac{N}{2}} S^{-1}\right)}{2 \alpha_1\beta} \Theta^{\frac{2-p_2}{2}}|\Omega|^{\frac{p_2-2}{2}}\right)^{\frac{2}{N(p_2-2)+4}}\right\}
$$
then $\mathcal{V}_{r, \Theta} \neq \emptyset$ and
$$
e_{r, \Theta}:=\inf _{u \in \mathcal{V}_{r, \Theta}} I_r(u)<0
$$
is attained at some interior point $u_r>0$ of $\mathcal{V}_{r, \Theta}$. As a consequence, there exists a Lagrange multiplier $\lambda_r \in \mathbb{R}$ such that $\left(\lambda_r, u_r\right)$ is a solution of \eqref{eq2.1}. Moreover $\liminf\limits_{r \rightarrow \infty} \lambda_r>0$ holds true.
\end{lemma}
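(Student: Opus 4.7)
Part (i) is a direct consequence of the Poincar\'e inequality. Let $v_1>0$ be the principal Dirichlet eigenfunction of $-\Delta$ on $\Omega$ with eigenvalue $\theta$; by rescaling, the principal Dirichlet eigenvalue on $\Omega_r$ equals $\theta/r^2$, so every $u\in S_{r,\Theta}$ satisfies $\|\nabla u\|_2^2\ge(\theta/r^2)\Theta$. Imposing also $\|\nabla u\|_2^2\le T_\Theta^2$ forces $r\ge\sqrt{\theta\Theta}/T_\Theta$, identifying the constant in (i) as $C=\theta$.

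For (ii), the first task is to produce an element of $\mathcal V_{r,\Theta}$ on which $I_r$ is strictly negative. I would use the rescaled eigenfunction $v_t(x)=t^{N/2}v_1(tx)\in H_0^1(\Omega_{1/t})$, which satisfies $\|v_t\|_2^2=\Theta$ and $\|\nabla v_t\|_2^2=t^2\theta\Theta$. Selecting $t$ with $1/r\le t\le T_\Theta/\sqrt{\theta\Theta}$ guarantees $v_t\in\mathcal V_{r,\Theta}$ under the first lower bound on $r$. For the sign of the energy, I would combine the majorant $\tfrac12\|\nabla v_t\|_2^2+\tfrac12\int Vv_t^2\le\tfrac12\bigl(1+\|V\|_{N/2}S^{-1}\bigr)\theta\Theta\, t^2$ with the pointwise minorant $F(\tau)\ge\alpha_1|\tau|^{p_2}$ for small $|\tau|$ (obtained by integrating $p_2F(\tau)\le f(\tau)\tau$ away from the origin), and with the H\"older estimate $\|v_1\|_{p_2}^{p_2}\ge\Theta^{p_2/2}|\Omega|^{(2-p_2)/2}$. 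The resulting negative contribution scales like $-\alpha_1\beta\,\Theta^{p_2/2}|\Omega|^{(2-p_2)/2}\theta^{N(p_2-2)/4}t^{N(p_2-2)/2}$; since $N(p_2-2)/2<2$ by $p_2<2+4/N$, this term dominates the positive $t^2$ term for small $t$, forcing $I_r(v_t)<0$ precisely when $r$ exceeds the second threshold in (ii). In particular $e_{r,\Theta}<0$.

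Attainment follows from the direct method: a minimizing sequence $\{u_n\}\subset\mathcal V_{r,\Theta}$ is bounded in $H_0^1(\Omega_r)$, and the Rellich--Kondrachov theorem on the bounded domain $\Omega_r$ yields, along a subsequence, $u_n\rightharpoonup u_r$ in $H_0^1(\Omega_r)$ and $u_n\to u_r$ strongly in $L^p(\Omega_r)$ for every $p\in[1,2^*)$. Strong convergence handles the mass constraint and the $L^q$ and $F$ terms of $I_r$, while weak lower semicontinuity of the gradient norm gives $u_r\in\mathcal V_{r,\Theta}$ and $I_r(u_r)=e_{r,\Theta}$. The boundary inequality $I_r(u)\ge h_1(T_\Theta)>0>e_{r,\Theta}$ on $\{\|\nabla u\|_2=T_\Theta\}$ places $u_r$ in the interior of $\mathcal V_{r,\Theta}$, so the Lagrange multiplier rule on $S_{r,\Theta}$ yields $\lambda_r\in\mathbb R$ with $(\lambda_r,u_r)$ solving \eqref{eq2.1}. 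Replacing $u_r$ by $|u_r|$ preserves $I_r$ and $\|u_r\|_2$ (since $F$ is even by $(f_1)$), and the strong maximum principle then yields $u_r>0$ in $\Omega_r$.

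The delicate step is the uniform lower bound $\liminf_{r\to\infty}\lambda_r>0$. Testing the equation with $u_r$ and subtracting $2I_r(u_r)=2e_{r,\Theta}$ gives
\begin{equation*}
\lambda_r\Theta=-2e_{r,\Theta}+\left(1-\tfrac{2}{q}\right)\int_{\Omega_r}|u_r|^q\,dx+\beta\int_{\Omega_r}\bigl(f(u_r)u_r-2F(u_r)\bigr)\,dx.
\end{equation*}
The last two terms are nonnegative: $q>2$ handles the first, while $(f_2)$ with $p_2>2$ and $F\ge 0$ gives $f(u_r)u_r\ge p_2F(u_r)\ge 2F(u_r)$. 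Hence $\lambda_r\Theta\ge-2e_{r,\Theta}$. The required uniformity rests on the crucial observation that, for a fixed admissible $t$ from the second paragraph, the competitor $v_t$ has \emph{fixed} compact support, so $I_r(v_t)=I_{1/t}(v_t)$ is independent of $r$ as soon as $r\ge 1/t$; consequently $e_{r,\Theta}\le I_{1/t}(v_t)=:-c_\Theta<0$ uniformly in $r$, and $\liminf_{r\to\infty}\lambda_r\ge 2c_\Theta/\Theta>0$. The principal obstacle is precisely this calibration: the two lower bounds on $r$ in (ii) must be balanced so that a single $v_t$ simultaneously lies in $\mathcal V_{r,\Theta}$ and achieves strictly negative energy with an $r$-independent margin.
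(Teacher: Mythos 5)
Your proposal is correct and follows essentially the same route as the paper: the rescaled principal eigenfunction as the negative-energy competitor, compactness on the bounded domain for attainment, the boundary estimate $I_r(u)\ge h_1(T_\Theta)>0$ to place the minimizer in the interior, and the identity $\lambda_r\Theta> -2e_{r,\Theta}$ combined with an $r$-independent negative upper bound on $e_{r,\Theta}$ (the paper gets this from monotonicity of $e_{r,\Theta}$ in $r$, you from a fixed compactly supported competitor --- the same observation). The only blemish is the spurious factor $\theta^{N(p_2-2)/4}$ in your ``scales like'' expression for the negative term: with the H\"older bound $\|v_1\|_{p_2}^{p_2}\ge\Theta^{p_2/2}|\Omega|^{(2-p_2)/2}$ and the scaling $\|v_t\|_{p_2}^{p_2}=t^{N(p_2-2)/2}\|v_1\|_{p_2}^{p_2}$ no power of $\theta$ appears, and dropping it is what yields the precise threshold for $r$ stated in (ii).
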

\begin{proof}
(i) The Poincar$\mathrm{\acute{e}}$ inequality implies there exists a positive constant $C$(only depend on $\Omega$) such that
$$
\int_{\Omega_r}|\nabla u|^2 d x=\frac{1}{r^2}\int_{\Omega}|\nabla u|^2 d x \geq \frac{C}{r^2}\int_{\Omega}| u|^2 d x  =\frac{C \Theta}{r^2}
$$
for any $u \in S_{r, \Theta}$. Since $T_\Theta$ is independent of $r$, there holds $\mathcal{V}_{r, \Theta}=\emptyset$ if and only if $r<\frac{\sqrt{C \Theta}}{T_\Theta}$.

(ii) Let $v_1 \in S_{1, \Theta}$ be the positive normalized eigenfunction corresponding to $\theta$. Setting
\begin{equation}\label{eq4.2}
  r_\Theta=\max \left\{\frac{\sqrt{C\Theta}}{T_\Theta},\left[\frac{ \theta\left(1+\|V\|_{\frac{N}{2}} S^{-1}\right)}{2 \alpha_1\beta} \Theta^{\frac{2-p_2}{2}}|\Omega|^{\frac{p_2-2}{2}}\right]^{\frac{2}{N(p_2-2)+4}}\right\}.
\end{equation}
Now, we construct for $r>r_\Theta$ a function $u_r \in S_{r, \Theta}$ such that $u_r \in \mathcal{V}_{r, \Theta}$ and $I_r\left(u_r\right)<0$. Clearly,
$$
\int_{\Omega}\left|\nabla v_1\right|^2 d x=\theta \Theta,\ \Theta=\int_{\Omega}\left|v_1\right|^2 d x \leq\left(\int_{\Omega}\left|v_1\right|^{p_2} d x\right)^{\frac{2}{p_2}}|\Omega|^{\frac{p_2-2}{p_2}} .
$$
Define $u_r \in S_{r, \Theta}$ by $u_r(x)=r^{-\frac{N}{2}} v_1\left(r^{-1} x\right)$ for $x \in \Omega_r$. Then
\begin{equation}\label{eq4.3}
  \int_{\Omega_r}\left|\nabla u_r\right|^2 d x=r^{-2} \theta \Theta \quad \text { and } \quad \int_{\Omega_r}\left|u_r\right|^{p_2} d x \geq r^{\frac{N(2-p_2)}{2}} \Theta^{\frac{p_2}{2}}|\Omega|^{\frac{2-p_2}{2}}.
\end{equation}
According to $\left(f_2\right)$, there exists a constant $\alpha_1>0$ such that
\begin{equation}\label{eq4.4}
  F(\tau)\geq\alpha_1\tau^{p_2}.
\end{equation}
By \eqref{eq4.2}, \eqref{eq4.3}, \eqref{eq4.4}, $2<p_2<2+\frac{4}{N}$ and a direct calculation we have $u_r \in \mathcal{V}_{r, \Theta}$ and
\begin{eqnarray*}
I_r\left(u_r\right)
& \leq&\frac{1}{2}\left(1+\|V\|_{\frac{N}{2}} S^{-1}\right) r^{-2} \theta \Theta-\alpha_1\beta r^{\frac{N(2-p_2)}{2}} \Theta^{\frac{p_2}{2}}|\Omega|^{\frac{2-p_2}{2}} \\
&<& 0 .
\end{eqnarray*}
It then follows from the Gagliardo-Nirenberg inequality that
\begin{eqnarray}\label{eq4.5}
I_r\left(u_r\right)
&\geq& \frac{1}{2}\left( 1-\|V_{-}\|_{\frac{N}{2}} S^{-1}\right) \int_{\Omega_r}|\nabla u|^2 d x-C_{N, p_1} \beta \Theta^{\frac{2 p_1-N(p_1-2)}{4}}\left(\int_{\Omega_r}|\nabla u|^2 d x\right)^{\frac{N(p_1-2)}{4}} \nonumber\\
& &-\frac{C_{N, q}}{q} \Theta^{\frac{2 q-N(q-2)}{4}}\left(\int_{\Omega_r}|\nabla u|^2 d x\right)^{\frac{N(q-2)}{4}} .
\end{eqnarray}
As a consequence $I_r$ is bounded from below in $\mathcal{V}_{r, \Theta}$. By the Ekeland principle there exists a sequence $\left\{u_{n, r}\right\} \subset \mathcal{V}_{r, \Theta}$ such that
$$
I_r(u_{n, r}) \rightarrow \inf\limits_{u \in \mathcal{V}_{r, \Theta}} I_r(u),\  I_r^{\prime}(u_{n, r})|_{T_{u_n, r} S_{r, \Theta}} \rightarrow 0 \ \text {as}\ n \rightarrow \infty
$$
Consequently there exists $u_r \in H_0^1(\Omega_r)$ such that $u_{n, r} \rightharpoonup u_r$ in $H_0^1(\Omega_r)$ and
\begin{equation*}
  u_{n, r} \rightarrow u_r \ \text{in}\ L^k(\Omega_r) \ \text{for all}\ 2 \leq k<2^*.
\end{equation*}
Moreover,
$
\left\|\nabla u_r\right\|_2^2 \leq \liminf\limits_{n \rightarrow \infty}\left\|\nabla u_{n, r}\right\|_2^2 \leq T_\Theta^2,
$
that is, $u_r \in \mathcal{V}_{r, \Theta}$. Note that
$$
\int_{\Omega_r} V u_{n, r}^2 d x \rightarrow \int_{\Omega_r} V u_r^2 d x \ \text {as}\ n \rightarrow \infty,
$$
hence
$$
e_{r, \Theta} \leq I_r(u_r) \leq \liminf\limits_{n \rightarrow \infty} I_r(u_{n, r})=e_{r, \Theta}.
$$
It follows that $u_{n, r} \rightarrow u_r$ in $H_0^1\left(\Omega_r\right)$, so $I_r(u_r)<0$. Therefore $u$ is an interior point of $\mathcal{V}_{r, \Theta}$ because $I_r(u) \geq h_1(T_\Theta)>0$ for any $u \in \partial \mathcal{V}_{r, \Theta}$ by \eqref{eq4.5}. The Lagrange multiplier theorem implies that there exists $\lambda_r \in \mathbb{R}$ such that $(\lambda_r, u_r)$ is a solution of \eqref{eq2.1}. Moreover,
\begin{eqnarray}\label{eq4.6}
\lambda_r \Theta & =&\int_{\Omega_r}\left|u_r\right|^q d x+\beta \int_{\Omega_r}f(u_r)u_r d x-\int_{\Omega_r}\left|\nabla u_r\right|^2 d x-\int_{\Omega_r} V u_r^2 d x \nonumber\\
& =&\int_{\Omega_r}\left|u_r\right|^q d x+\beta \int_{\Omega_r}f(u_r)u_r d x-\frac{2}{q} \int_{\Omega_r}|u_r|^q d x-2\beta \int_{\Omega_r}F(u_r) d x-2 I_r(u_r) \nonumber\\
& >&-2 I_r(u_r)=-2 e_{r, \Theta} .
\end{eqnarray}
It follows from the definition of $e_{r, \Theta}$ that $e_{r, \Theta}$ is nonincreasing with respect to $r$. Hence, $e_{r, \Theta} \leq e_{r_\Theta, \Theta}<0$ for any $r>r_\Theta$ and $0<\Theta<\Theta_V$. In view of \eqref{eq4.6}, we have $\liminf\limits_{r \rightarrow \infty} \lambda_r>0$. Finally, the strong maximum principle implies $u_r>0$.
\end{proof}

\noindent\textbf{Proof of Theorem \ref{t1.2}} The proof is a direct consequence of Lemma \ref{L4.1} and Lemma \ref{L3.5}.

\section{Proof of Theorem \ref{t1.3}}

In this subsection we assume that the assumptions of Theorem \ref{t1.3} hold. For $s \in\left[\frac{1}{2}, 1\right]$, $\beta>0$, we define the functional $J_{r, s}: S_{r, \Theta} \rightarrow \mathbb{R}$ by
$$
J_{r, s}(u)=\frac{1}{2} \int_{\Omega_r}|\nabla u|^2 d x+\frac{1}{2} \int_{\Omega_r} V u^2 d x-s\left(\frac{1}{q} \int_{\Omega_r}|u|^q d x+\beta \int_{\Omega_r}F(u) d x\right) .
$$
Note that if $u \in S_{r, \Theta}$ is a critical point of $J_{r, s}$ then there exists $\lambda \in \mathbb{R}$ such that $(\lambda, u)$ is a solution of the problem
\begin{equation}\label{eq5.1}
 \left\{\aligned
& -\Delta u+V u+\lambda u=s|u|^{q-2}u+s\beta f(u), &x\in \Omega_r, \\
& \int_{\Omega_r} |u|^2dx=\Theta,u\in H_0^1(\Omega_r), &x\in \Omega_r,
\endaligned
\right.
\end{equation}
\begin{lemma}\label{L5.1}
For $0<\Theta<\widetilde{\Theta}_V$ where $\widetilde{\Theta}_V$ is defined in Theorem \ref{t1.3}, there exist $\widetilde{r}_\Theta>0$ and $u^0, u^1 \in S_{r_\Theta, \Theta}$ such that

(i) For $r>\widetilde{r}_\Theta$ and $s \in\left[\frac{1}{2}, 1\right]$ we have $J_{r, s}\left(u^1\right) \leq 0$ and
$$
J_{r, s}\left(u^0\right)<\frac{N(q-2)-4}{4}\left(\frac{2\left(1-\left\|V_{-}\right\|_{\frac{N}{2}} S\right)}{N(q-2)}\right)^{\frac{N(q-2)}{N(q-2)-4}} A^{\frac{4}{4-N(q-2)}} \Theta^{\frac{N(q-2)-2 q}{N(q-2)-4}},
$$
where
$$
A=\left(\frac{C_{N,q}(q-2)(N(q-2)-4)}{q(p_1-2)(4-N(p_1-2))} +\frac{C_{N, q}}{q}\right).
$$
Moreover,
$$
\left\|\nabla u^0\right\|_2^2<\left[\frac{2\left(1-\left\|V_{-}\right\|_{\frac{N}{2}} S^{-1}\right)}{N(q-2) A}\right]^{\frac{4}{N(q-2)-4}} \Theta^{\frac{N(q-2)-2 q}{N(q-2)-4}}
$$
and
$$
\left\|\nabla u^1\right\|_2^2>\left[\frac{2\left(1-\left\|V_{-}\right\|_{\frac{N}{2}} S^{-1}\right)}{N(q-2) A}\right]^{\frac{4}{N(q-2)-4}} \Theta^{\frac{N(q-2)-2 q}{N(q-2)-4}}.
$$

(ii) If $u \in S_{r, \Theta}$ satisfies
$$
\|\nabla u\|_2^2=\left[\frac{2\left(1-\left\|V_{-}\right\|_{\frac{N}{2}} S^{-1}\right)}{N(q-2) A}\right]^{\frac{4}{N(q-2)-4}} \Theta^{\frac{N(q-2)-2 q}{N(q-2)-4}},
$$
then there holds
$$
J_{r, s}(u) \geq \frac{N(q-2)-4}{4}\left[\frac{2\left(1-\left\|V_{-}\right\|_{\frac{N}{2}} S\right)}{N(q-2)}\right]^{\frac{N(q-2)}{N(q-2)-4}} A^{\frac{4}{4-N(q-2)}} \Theta^{\frac{N(q-2)-2 q}{N(q-2)-4}}.
$$

(iii) Let
$$
m_{r, s}(\Theta)=\inf _{\gamma \in \Gamma_{r, \Theta}} \sup\limits_{t \in[0,1]} J_{r, s}(\gamma(t)),
$$
where
$$
\Gamma_{r, \Theta}=\left\{\gamma \in C\left([0,1], S_{r, \Theta}\right): \gamma(0)=u^0, \gamma(1)=u^1\right\}.
$$
Then
$$
m_{r, s}(\Theta) \geq \frac{N(q-2)-4}{4}\left[\frac{2\left(1-\left\|V_{-}\right\|_{\frac{N}{2}} S\right)}{N(q-2)}\right]^{\frac{N(q-2)}{N(q-2)-4}} A^{\frac{4}{4-N(q-2)}} \Theta^{\frac{N(q-2)-2 q}{N(q-2)-4}}
$$
and
\begin{equation*}
  m_{r, s}(\Theta) \leq \frac{N(q-2)-4}{2}\left(\frac{\theta\left(1+\|V\|_{\frac{N}{2}} S^{-1}\right)}{N(q-2)}\right)^{\frac{N(q-2)}{N(q-2)-4}}(4 q)^{\frac{4}{N(q-2)-4}}|\Omega|^{\frac{2(q-2)}{N(q-2)-4}} \Theta^{\frac{N(q-2)-2 q}{N(q-2)-4}}.
\end{equation*}
where $\theta$ is the principal eigenvalue of $-\Delta$ with Dirichlet boundary condition in $\Omega$.
\end{lemma}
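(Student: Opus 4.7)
The proof follows the template of Lemma \ref{L3.1}, but with $\beta>0$ both nonlinear terms $-\frac{s}{q}\int|u|^q\,dx$ and $-s\beta\int F(u)\,dx$ point downward, so the lower bound in (ii) has to combine them. My plan is to prove (ii) first — it is the analytic core — and then deduce (i) and (iii) from explicit test functions and the mountain-pass geometry.

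For (ii), the strategy is: control the potential term by $\|V_-\|_{N/2}<S$ and Sobolev, control $\int|u|^q\,dx$ by Gagliardo--Nirenberg, and control $\int F(u)\,dx$ via the pointwise bound $F(\tau)\le\alpha\tau^{p_1}$ from $(f_2)$ together with Gagliardo--Nirenberg. Writing $t=\|\nabla u\|_2$, this gives the estimate
\[
J_{r,s}(u)\ge\tfrac12\!\left(1-\|V_-\|_{N/2}S^{-1}\right)t^2-\tfrac{C_{N,q}}{q}\Theta^{\frac{2q-N(q-2)}{4}}t^{\frac{N(q-2)}{2}}-\alpha\beta C_{N,p_1}\Theta^{\frac{2p_1-N(p_1-2)}{4}}t^{\frac{N(p_1-2)}{2}},
\]
uniformly in $s\in[\tfrac12,1]$. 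At the distinguished value $t_*$ prescribed in (ii), the restriction $\Theta<\widetilde\Theta_V$ is exactly what is needed to absorb the $p_1$-term into the $q$-term, producing a clean two-term lower bound of the form $\tfrac12(1-\|V_-\|_{N/2}S^{-1})t^2-A\,\Theta^{(2q-N(q-2))/4}t^{N(q-2)/2}$. A direct maximisation in $t$ yields a critical point exactly at $t=t_*$, whose critical value matches the constant claimed in (ii).

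For (i), I use the scaling $v_t(x):=t^{N/2}v_1(tx)$ with $v_1\in S_{1,\Theta}$ the positive principal Dirichlet eigenfunction of $-\Delta$ on $\Omega$, so $v_t\in S_{1/t,\Theta}\subset S_{r,\Theta}$ for $r\ge 1/t$. Plugging into $J_{r,s}$, using H\"older and Sobolev, and dropping the nonpositive $-s\beta\int F(v_t)\,dx$ term yields
\[
J_{r,s}(v_t)\le\tfrac12(1+\|V\|_{N/2}S^{-1})\theta\Theta\,t^2-\tfrac{s}{q}\Theta^{q/2}|\Omega|^{(2-q)/2}t^{N(q-2)/2}=:\tilde h(t),
\]
which tends to $-\infty$ as $t\to\infty$ since $q>2+4/N$. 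Pick $t_0$ large with $\tilde h(t_0)\le0$ and set $u^1:=v_{t_0}$; pick $t_1>0$ small so that $\tilde h$ is strictly below the threshold in (ii) on $[0,t_1]$, then choose $\widetilde r_\Theta$ large enough that both $\widetilde r_\Theta^{-2}\theta\Theta<t_*^2$ and $1/\widetilde r_\Theta<t_1$, and set $u^0:=v_{1/\widetilde r_\Theta}$. After replacing $\widetilde r_\Theta$ by $\max\{1/t_0,\widetilde r_\Theta\}$, both properties in (i) hold for all $r>\widetilde r_\Theta$ and $s\in[\tfrac12,1]$.

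For (iii), the lower bound is immediate from (ii) and the intermediate value theorem, since $\|\nabla u^0\|_2<t_*<\|\nabla u^1\|_2$ forces every $\gamma\in\Gamma_{r,\Theta}$ to cross $\{\|\nabla u\|_2=t_*\}$. For the upper bound I use the explicit path $\gamma(\tau)(x)=\sigma(\tau)^{N/2}v_1(\sigma(\tau)x)$ with $\sigma(\tau):=\tau t_0+(1-\tau)/\widetilde r_\Theta$, so $\gamma(0)=u^0$, $\gamma(1)=u^1$, and the previous paragraph yields $J_{r,s}(\gamma(\tau))\le\tilde h(\sigma(\tau))$. Using $s\ge\tfrac12$ as the worst case and maximising $\tilde h$ over $\sigma\in\mathbb{R}^+$ produces the stated closed-form upper bound, independent of $r$ and $s$. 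The step I expect to be the main obstacle is the absorption used in (ii): one must verify, by substituting the explicit $t_*$, that $\Theta<\widetilde\Theta_V$ is precisely the numerical condition guaranteeing
\[
\alpha\beta C_{N,p_1}\Theta^{\frac{2p_1-N(p_1-2)}{4}}t_*^{\frac{N(p_1-2)}{2}}\le\tfrac{C_{N,q}A_{p_1,q}}{q}\Theta^{\frac{2q-N(q-2)}{4}}t_*^{\frac{N(q-2)}{2}};
\]
this is careful bookkeeping of several exponents, but it is the single place where the particular form of $\widetilde\Theta_V$ in Theorem \ref{t1.3} is actively used.
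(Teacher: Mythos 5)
Your proposal is correct and follows essentially the same route as the paper: the scaled eigenfunctions $v_t=t^{N/2}v_1(t\cdot)$ for the endpoints and the upper bound, the Gagliardo--Nirenberg/Sobolev lower bound in $\|\nabla u\|_2$, and the absorption of the $\beta$-term into the $q$-term under $\Theta<\widetilde{\Theta}_V$ (the paper carries out this absorption on the whole ray $t\geq t_2$ via a concavity analysis of $g_1$, whereas you only check it at the single value $t_*$, which is equivalent and suffices since only the sphere $\|\nabla u\|_2^2=t_*$ is ever used). The one point both you and the paper leave implicit is that $\|\nabla u^1\|_2^2=t_0^2\theta\Theta$ actually exceeds $t_*$, which requires enlarging $t_0$ if necessary (harmless since $h_2<0$ beyond its zero).
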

\begin{proof}
Let $v_1 \in S_{1, \Theta}$ be the positive normalized eigenfunction of $-\Delta$ with Dirichlet boundary condition in $\Omega$ associated to $\theta$, then we have
\begin{equation}\label{eq5.2}
  \int_{\Omega}\left|\nabla v_1\right|^2 d x=\theta \Theta.
\end{equation}
By the H\"older inequality, we know
\begin{equation}\label{eq5.3}
  \int_{\Omega}|v_1(x)|^{p_2} dx\geq\Theta^{\frac{p_2}{2}}\cdot|\Omega|^{\frac{2-p_2}{2}}.
\end{equation}
Setting $v_t(x)=t^{\frac{N}{2}} v_1(t x)$ for $x \in B_{\frac{1}{t}}$ and $t>0$. Using \eqref{eq4.4}, \eqref{eq5.2}, \eqref{eq5.3} and $\frac{1}{2}\leq s\leq1$, we get
\begin{eqnarray}\label{eq5.4}
J_{\frac{1}{t}, s}\left(v_t\right)
&\leq & \frac{1}{2}\left(1+\|V\|_{\frac{N}{2}} S^{-1}\right) t^2 \theta\Theta- \frac{\beta}{2} \alpha_1 t^{\frac{N(p_2-2)}{2}}   \int_{\Omega}|v_1|^{p_2} d x-\frac{1}{2q} t^{\frac{N(q-2)}{2}} \Theta^{\frac{q}{2}}\cdot|\Omega|^{\frac{2-q}{2}} \nonumber\\
&\leq: & h_2(t) .
\end{eqnarray}
where
\begin{equation*}
  h_2(t)=\frac{1}{2}\left(1+\|V\|_{\frac{N}{2}} S^{-1}\right) t^2 \theta\Theta-\frac{1}{2q} t^{\frac{N(q-2)}{2}} \Theta^{\frac{q}{2}}\cdot|\Omega|^{\frac{2-q}{2}}
\end{equation*}
A simple computation shows that $h_2(t_0)=0$ for
$$
t_0:=\left[\left(1+\|V\|_{\frac{N}{2}} S^{-1}\right) q \theta \Theta^{\frac{2-q}{2}}|\Omega|^{\frac{q-2}{2}}\right]^{\frac{2}{N(q-2)-4}}
$$
and $h_2(t)<0$ for any $t>t_0, h_2(t)>0$ for any $0<t<t_0$. Moreover, $h_2(t)$ achieves its maximum at
$$
t_\Theta=\left[\frac{4 q\left(1+\|V\|_{\frac{N}{2}} S^{-1}\right) \theta}{N(q-2)} \Theta^{\frac{2-q}{2}}|\Omega|^{\frac{q-2}{2}}\right]^{\frac{2}{N(q-2)-4}} .
$$
This implies
\begin{equation}\label{eq5.5}
  J_{r, s}(v_{t_0})=J_{\frac{1}{t_0}, s}(v_{t_0}) \leq h_2(t_0)=0
\end{equation}
for any $r \geq \frac{1}{t_0}$ and $s \in\left[\frac{1}{2}, 1\right]$. There exists $0<t_1<t_\Theta$ such that for any $t \in\left[0, t_1\right]$,
\begin{equation}\label{eq5.6}
h_2(t)<\frac{N(q-2)-4}{4}\left(\frac{2\left(1-\left\|V_{-}\right\|_{\frac{N}{2}} S\right)}{N(q-2)}\right)^{\frac{N(q-2)}{N(q-2)-4}} A^{\frac{4}{4-N(q-2)}} \Theta^{\frac{N(q-2)-2 q}{N(q-2)-4}} .
\end{equation}
On the other hand, it follows from \eqref{eq3.5}, the Gagliardo-Nirenberg inequality and the H\"older inequality that
\begin{eqnarray}\label{eq5.7}
J_{r, s}(u)
&\geq&\frac{1}{2}\left(1-\left\|V_{-}\right\|_{\frac{N}{2}} S^{-1}\right) \int_{\Omega_r}|\nabla u|^2 d x-\frac{C_{N, q} \Theta^{\frac{2 q-N(q-2)}{4}}}{q}\left(\int_{\Omega_r}|\nabla u|^2 d x\right)^{\frac{N(q-2)}{4}}\nonumber\\
&&-\alpha\beta C_{N, p_1} \Theta^{\frac{2 p_1-N(p_1-2)}{4}}\left(\int_{\Omega_r}|\nabla u|^2 d x\right)^{\frac{N(p_1-2)}{4}}.
\end{eqnarray}
Define
\begin{eqnarray*}
g_1(t)&:=&\frac{1}{2}\left(1-\left\|V_{-}\right\|_{\frac{N}{2}} S^{-1}\right) t-\frac{C_{N, q} \Theta^{\frac{2 q-N(q-2)}{4}}}{q} t^{\frac{N(q-2)}{4}}-\alpha\beta C_{N, p_1} \Theta^{\frac{2 p_1-N(p_1-2)}{4}}t^{\frac{N(p_1-2)}{4}}\\
&=&t^{\frac{N(p_1-2)}{4}}\left[\frac{1}{2}\left(1-\left\|V_{-}\right\|_{\frac{N}{2}} S^{-1}\right) t^{\frac{4-N(p_1-2)}{4}}-\frac{C_{N, q} \Theta^{\frac{2 q-N(q-2)}{4}}}{q}t^{\frac{N(q-p_1)}{4}}\right]\\
&&-\alpha\beta C_{N, p_1} \Theta^{\frac{2 p_1-N(p_1-2)}{4}}t^{\frac{N(p_1-2)}{4}}
\end{eqnarray*}
In view of $2<p<2+\frac{2}{N}<q<2^*$ and the definition of $\widetilde{\Theta}_V$, there exist $0<l_1<l_M<l_2$ such that $ g_1(t)<0$ for any $0<t<l_1$ and $t>l_2, g_1(t)>0$ for $l_1<t<l_2$ and $g_1\left(l_M\right)=\max\limits_{t \in \mathbb{R}^{+}} g_1(t)>0$. Let
$$
t_2=\left(\frac{\alpha\beta q C_{N, p_1}(p_1-2)(4-N(p_1-2))}{C_{N, q}(q-2)(N(q-2)-4)}\right)^{\frac{4}{N(q-p_1)}} \Theta^{\frac{N-2}{N}} .
$$
Then by a direct calculation, we have $g_1^{\prime \prime}(t) \leq 0$ if and only if $t \geq t_2$. Hence
$$
\max\limits_{t \in \mathbb{R}^{+}} g_1(t)=\max _{t \in\left[t_2, \infty\right)} g_1(t).
$$
Note that for any $t \geq t_2$,
\begin{eqnarray}\label{eq5.8}
g_1(t) & =&\frac{1}{2}\left(1-\left\|V_{-}\right\|_{\frac{N}{2}} S^{-1}\right) t-\frac{C_{N, q} \Theta^{\frac{2 q-N(q-2)}{4}}}{q} t^{\frac{N(q-2)}{4}}-\alpha\beta C_{N, p_1} \Theta^{\frac{2 p_1-N(p_1-2)}{4}}t^{\frac{N(p_1-2)}{4}} \nonumber\\
& =&\frac{1}{2}\left(1-\left\|V_{-}\right\|_{\frac{N}{2}} S^{-1}\right) t-\alpha\beta C_{N, p_1} \Theta^{\frac{(q-p_1)(N-2)}{4}}\cdot\Theta^{\frac{2 q-N(q-2)}{4}}t^{\frac{N(p_1-2)}{4}}\nonumber\\
&&-\frac{C_{N, q} \Theta^{\frac{2 q-N(q-2)}{4}}}{q} t^{\frac{N(q-2)}{4}} \nonumber\\
&\geq& \frac{1}{2}\left(1-\left\|V_{-}\right\|_{\frac{N}{2}} S^{-1}\right) t-\left(\frac{C_{N,q}(q-2)(N(q-2)-4)}{q(p_1-2)(4-N(p_1-2))} +\frac{C_{N, q}}{q}\right) \Theta^{\frac{2 q-N(q-2)}{4}} \nonumber\\
&&\cdot t^{\frac{N(q-2)}{4}}\nonumber\\
& =:& g_2(t) .
\end{eqnarray}

Now, we will determine the value of $\widetilde{\Theta}_V$. In fact, $g_1\left(l_M\right)=\max\limits_{t \in \mathbb{R}^{+}} g_1(t)>0$ as long as $g_2(t_2)>0$, that is,
\begin{eqnarray*}
g_2(t_2)
&=&\frac{1}{2}\left(1-\left\|V_{-}\right\|_{\frac{N}{2}} S^{-1}\right)\left(\frac{\alpha\beta q C_{N, p_1}}{C_{N, q}A_{p_1,q}}\right)^{\frac{4}{N(q-p_1)}} \Theta^{\frac{N-2}{N}}-\left(\frac{C_{N,q}}{q}A_{p_1,q}+\frac{C_{N, q}}{q}\right)\cdot\Theta\\
&&\cdot\left(\frac{\alpha\beta q C_{N, p_1}}{C_{N, q}A_{p_1,q}}\right)^{\frac{q-2}{q-p_1}}\\
&>&0,
\end{eqnarray*}
where
$$
A_{p_1, q}=\frac{(q-2)(N(q-2)-4)}{(p_1-2)(4-N(p_1-2))}.
$$
Hence, we take
\begin{equation*}
 \widetilde{\Theta}_V= \frac{1}{2}\left(1-\left\|V_{-}\right\|_{\frac{N}{2}} S^{-1}\right)^{\frac{N}{2}}\left(\frac{C_{N,q}}{q}A_{p_1,q}+\frac{C_{N, q}}{q}\right)^{-\frac{N}{2}}\left(\frac{\alpha\beta q C_{N, p_1}}{C_{N, q}A_{p_1,q}}\right)^{\frac{N(q-2)-4}{2N(q-p_1)}}.
\end{equation*}
Let
$$
A=\left(\frac{C_{N,q}(q-2)(N(q-2)-4)}{q(p_1-2)(4-N(p_1-2))} +\frac{C_{N, q}}{q}\right)
$$
and
$$
t_g=\left[\frac{2\left(1-\left\|V_{-}\right\|_{\frac{N}{2}} S^{-1}\right)}{N(q-2) A}\right]^{\frac{4}{N(q-2)-4}} \Theta^{\frac{N(q-2)-2 q}{N(q-2)-4}},
$$
so that $t_g>t_2$ by the definition of $\widetilde{\Theta}_V, \max\limits_{t \in\left[t_2, \infty\right)} g_2(t)=g_2(t_g)$ and
\begin{eqnarray*}
\max\limits_{t \in \mathbb{R}^{+}} g_1(t) & \geq& \max\limits_{t \in\left[t_2, \infty\right)} g_2(t) \\
& =&\frac{(N(q-2)-4)}{4}\left[\frac{2\left(1-\left\|V_{-}\right\|_{\frac{N}{2}} S\right)}{N(q-2)}\right]^{\frac{N(q-2)}{N(q-2)-4}} A^{\frac{4}{4-N(q-2)}} \Theta^{\frac{N(q-2)-2 q}{N(q-2)-4}} .
\end{eqnarray*}
Set $\bar{r}_\Theta=\max \left\{\frac{1}{t_1}, \sqrt{\frac{2 \theta \Theta}{t_g}}\right\}$, then $v_{\frac{1}{\bar{r}_\Theta}} \in S_{r, \Theta}$ for any $r>\bar{r}_\Theta$, and
\begin{equation}\label{eq5.9}
  \left\|\nabla v_{\frac{1}{\bar{r}_\Theta}}\right\|_2^2=\left(\frac{1}{\bar{r}_\Theta}\right)^2\left\|\nabla v_1\right\|_2^2<t_g=\left[\frac{2\left(1-\left\|V_{-}\right\|_{\frac{N}{2}} S^{-1}\right)}{N(q-2) A}\right]^{\frac{4}{N(q-2)-4}} \Theta^{\frac{N(q-2)-2 q}{N(q-2)-4}}.
\end{equation}
Moreover,
\begin{equation}\label{eq5.10}
J_{\bar{r}_\Theta, s}\left(v_{\frac{1}{\bar{r}_\Theta}}\right) \leq h_2\left(\frac{1}{\bar{r}_\Theta}\right) \leq h_2\left(t_1\right) .
\end{equation}
Let $u^0=v_{\frac{1}{\bar{\tau}_\Theta}}, u^1=v_{t_0}$ and
$$
\widetilde{r}_\Theta=\max \left\{\frac{1}{t_0}, \bar{r}_\Theta\right\} .
$$
Then the statement (i) holds by \eqref{eq5.5}, \eqref{eq5.6}, \eqref{eq5.9}, \eqref{eq5.10}.

(ii) holds by \eqref{eq5.8} and a direct calculation.

(iii) In view of $J_{r, s}\left(u^1\right) \leq 0$ for any $\gamma \in \Gamma_{r, \Theta}$ and the definition of $t_0$, we have
$$
\|\nabla \gamma(0)\|_2^2<t_g<\|\nabla \gamma(1)\|_2^2 .
$$
It then follows from \eqref{eq5.8} that
\begin{eqnarray*}
\max\limits_{t \in[0,1]} J_{r, s}(\gamma(t)) & \geq& g_2\left(t_g\right) \\
& =&\frac{N(q-2)-4}{4}\left[\frac{2\left(1-\left\|V_{-}\right\|_{\frac{N}{2}} S^{-1}\right)}{N(q-2)}\right]^{\frac{N(q-2)}{N(q-2)-4}} A^{\frac{4}{4-N(q-2)}} \Theta^{\frac{N(q-2)-2 q}{N(q-2)-4}}
\end{eqnarray*}
for any $\gamma \in \Gamma_{r, \Theta}$, hence the first inequality in (iii) holds. We define a path $\gamma:[0,1] \rightarrow S_{r, \Theta}$ by
$$
\gamma(t): \Omega_r \rightarrow \mathbb{R}, \quad x \mapsto\left(\tau t_0+(1-\tau) \frac{1}{\widetilde{r}}_\Theta\right)^{\frac{N}{2}} v_1\left(\left(\tau t_0+(1-\tau) \frac{1}{\widetilde{r}_\Theta}\right) x\right) .
$$
Then $\gamma \in \Gamma_{r, \Theta}$, and the second inequality in (iii) follows from \eqref{eq5.4}.
\end{proof}
\begin{lemma}\label{L5.2}
Assume $0<\Theta<\widetilde{\Theta}_V$ where $\widetilde{\Theta}_V$ is given in Theorem \ref{t1.3}. Let $r>\widetilde{r}_\Theta$, where $\widetilde{r}_\Theta$ is defined in Lemma \ref{L5.1}. Then problem \eqref{eq5.1} admits a solution $\left(\lambda_{r, s}, u_{r, s}\right)$ for almost every $s \in\left[\frac{1}{2}, 1\right]$. Moreover, there hold $u_{r, s}>0$ and $J_{r, s}\left(u_{r, s}\right)=m_{r, s}(\Theta)$.
\end{lemma}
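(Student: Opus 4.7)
The plan is to mirror the proof of Lemma \ref{L3.2}, adjusted for $\beta > 0$ so that both nonlinear terms can be absorbed into the $B$ functional. First I would apply Proposition \ref{p3.1} with $\rho = s$, $I = [\tfrac12, 1]$, $E = H_0^1(\Omega_r)$, $H = L^2(\Omega_r)$, and
\begin{equation*}
A(u) = \tfrac12 \int_{\Omega_r}|\nabla u|^2\,dx + \tfrac12 \int_{\Omega_r} V u^2\,dx,\qquad B(u) = \tfrac{1}{q}\int_{\Omega_r}|u|^q\,dx + \beta \int_{\Omega_r} F(u)\,dx.
\end{equation*}
Then $B \geq 0$ since $\beta > 0$ and $F \geq 0$ (a consequence of $(f_2)$ combined with $(f_1)$), while $\|V_-\|_{\frac{N}{2}} < S$ yields $A(u) \geq \tfrac12\bigl(1-\|V_-\|_{\frac{N}{2}}S^{-1}\bigr)\|\nabla u\|_2^2$, which forces \eqref{eq3.13} because on $S_{r,\Theta}$ the $H_0^1$-norm is equivalent to the $L^2$-norm of the gradient. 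The H\"older continuity \eqref{eq3.14} of $\Phi_s'$ and $\Phi_s''$ on bounded sets is immediate from $f \in C^1$. The mountain pass geometry, i.e.\ $m_{r,s}(\Theta) > \max\{J_{r,s}(u^0), J_{r,s}(u^1)\}$ for all $s \in [\tfrac12,1]$, is supplied by Lemma \ref{L5.1}. Proposition \ref{p3.1} then delivers, for a.e.\ $s \in [\tfrac12, 1]$, a bounded Palais-Smale sequence $\{u_n\} \subset S_{r,\Theta}$ with $J_{r,s}(u_n) \to m_{r,s}(\Theta)$ and $J_{r,s}'|_{T_{u_n}S_{r,\Theta}}(u_n) \to 0$.

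Next I would extract the Lagrange multiplier
\begin{equation*}
\lambda_n = -\tfrac{1}{\Theta}\Bigl(\|\nabla u_n\|_2^2 + \int_{\Omega_r} V u_n^2\,dx - s\|u_n\|_q^q - s\beta \int_{\Omega_r} f(u_n)u_n\,dx\Bigr),
\end{equation*}
which is bounded thanks to the boundedness of $\{u_n\}$ and the subcritical growth from $(f_2)$, and deduce $J_{r,s}'(u_n) + \lambda_n u_n \to 0$ in $H^{-1}(\Omega_r)$. Along a subsequence $\lambda_n \to \lambda$, $u_n \rightharpoonup u_0$ in $H_0^1(\Omega_r)$, and since $\Omega_r$ is bounded and $q < 2^*$ the compact Sobolev embedding yields $u_n \to u_0$ strongly in $L^t(\Omega_r)$ for every $2 \leq t < 2^*$; hence $\int V u_n^2 \to \int V u_0^2$, $\int f(u_n)u_n \to \int f(u_0)u_0$, and the nonlinear terms $|u_n|^{q-2}u_n$, $f(u_n)$ converge strongly in $H^{-1}(\Omega_r)$. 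Testing the Euler-Lagrange relation against $u_n-u_0$ then gives $\|\nabla u_n\|_2 \to \|\nabla u_0\|_2$, so $u_n \to u_0$ in $H_0^1(\Omega_r)$ and $u_0 \in S_{r,\Theta}$ solves \eqref{eq5.1} with multiplier $\lambda$ at level $J_{r,s}(u_0) = m_{r,s}(\Theta)$.

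For the sign I would recopy the truncation argument from the end of the proof of Lemma \ref{L3.2}. Running the proof of Proposition \ref{p3.1} produces, at each $s$ where $m_{r,s}'$ exists, a monotone sequence $s_n \uparrow s$ and paths $\gamma_n \in \Gamma_{r,\Theta}$ satisfying the bookkeeping properties (i)--(ii) of Lemma \ref{L3.2}; replacing $\gamma_n$ by $\widetilde{\gamma}_n := |\gamma_n|$ preserves both, since $F$ is even, $|\cdot|^q$ is even, and $\|\nabla|u|\|_2 \leq \|\nabla u\|_2$ gives $J_{r,s}(|u|) \leq J_{r,s}(u)$ on $S_{r,\Theta}$. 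Feeding $\widetilde{\gamma}_n$ into the deformation step produces a \emph{nonnegative} bounded Palais-Smale sequence whose limit $u_{r,s}$ is therefore nonnegative. As $m_{r,s}(\Theta) > 0$ by Lemma \ref{L5.1} (iii), $u_{r,s} \not\equiv 0$, and the strong maximum principle applied to the linear equation satisfied by $u_{r,s}$ upgrades this to $u_{r,s} > 0$ in $\Omega_r$.

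The step I expect to be most delicate is the truncation, which requires intervening inside the proof of Proposition \ref{p3.1} rather than invoking it as a black box: the inequality $J_{r,s}(|u|) \leq J_{r,s}(u)$ rests on $\beta > 0$ combined with $F \geq 0$, so the argument is not a mechanical transcription of the $\beta \leq 0$ treatment in Section 3 (where a different splitting of $A$ and $B$ is forced). Boundedness of the Palais-Smale sequence is handed to us directly by Proposition \ref{p3.1}, so no Pohozaev identity enters at this stage; that enters only in the companion lemma used to pass $s \to 1^-$.
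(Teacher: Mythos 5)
Your proposal is correct and matches the paper's intended argument: the paper's own proof of this lemma is just the one line ``similar to Lemma \ref{L3.2}'', and your modified splitting --- putting both nonlinear terms into $B(u)=\frac1q\int_{\Omega_r}|u|^q\,dx+\beta\int_{\Omega_r}F(u)\,dx$, which is nonnegative because $\beta>0$ and $F\geq 0$ --- is exactly the adaptation the paper makes explicit for the analogous $\beta>0$ Sobolev-critical case in Section 9, and the rest (compactness on the bounded domain, the $|\gamma_n|$ truncation, the strong maximum principle to upgrade $u_{r,s}\geq 0$ to $u_{r,s}>0$) is the standard transcription of Lemma \ref{L3.2}. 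One minor inconsistency: your closing remark that the inequality $J_{r,s}(|u|)\leq J_{r,s}(u)$ ``rests on $\beta>0$ combined with $F\geq 0$'' contradicts your own earlier (and correct) derivation, which uses only the evenness of $F$ and of $|\cdot|^q$ together with $\|\nabla|u|\|_2\leq\|\nabla u\|_2$, and therefore holds for either sign of $\beta$.
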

\begin{proof}
The proof is similar to the Lemma \ref{L3.2}. We omit it here.
\end{proof}

\begin{lemma}\label{L5.3}
For fixed $\Theta>0$ the set of solutions $u \in S_{r, \Theta}$ of \eqref{eq5.1} is bounded uniformly in $s$ and $r$.
\end{lemma}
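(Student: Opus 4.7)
The plan is to adapt the scheme of Lemma \ref{L3.3}, combining the Pohozaev identity with the tested equation to eliminate the Lagrange multiplier $\lambda_{r,s}$, but with an extra step that overcomes the wrong-sign difficulty caused by $\beta>0$. Fix a solution $(\lambda,u)=(\lambda_{r,s},u_{r,s})$ of \eqref{eq5.1}; elliptic regularity gives classical smoothness, so testing \eqref{eq5.1} with $u$ and applying the Pohozaev identity both make sense. Subtracting the Pohozaev identity from one half of the Nehari identity cancels the $\lambda\Theta$ and $\frac12\int Vu^2$ contributions and produces
\begin{equation*}
\tfrac{1}{N}\|\nabla u\|_2^2 = \tfrac{1}{2N}\int_{\partial\Omega_r}|\nabla u|^2(x\cdot\mathbf{n})\,d\sigma + \tfrac{1}{2N}\int_{\Omega_r}\widetilde{V}u^2\,dx + \tfrac{s(q-2)}{2q}\|u\|_q^q + s\beta\int_{\Omega_r}\bigl(\tfrac{1}{2}f(u)u-F(u)\bigr)\,dx.
\end{equation*}

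Next, I would substitute the energy relation $J_{r,s}(u)=m_{r,s}(\Theta)$ provided by Lemma \ref{L5.2}, which reads $\frac{s}{q}\|u\|_q^q+s\beta\int F(u)=\frac12\|\nabla u\|_2^2+\frac12\int Vu^2-m_{r,s}(\Theta)$, to express $\frac{s(q-2)}{2q}\|u\|_q^q$ in terms of $\|\nabla u\|_2^2$, $\int Vu^2$, $s\beta\int F(u)$ and $m_{r,s}(\Theta)$. After this substitution the two $s\beta\int F(u)$ contributions combine with $s\beta\int\frac12 f(u)u$ to give $\frac{s\beta}{2}\int(f(u)u-qF(u))$, and rearranging yields the identity
\begin{equation*}
\tfrac{N(q-2)-4}{4N}\|\nabla u\|_2^2 = \tfrac{q-2}{2}m_{r,s}(\Theta) - \tfrac{1}{2N}\int_{\partial\Omega_r}|\nabla u|^2(x\cdot\mathbf{n})\,d\sigma - \tfrac{1}{2N}\int\widetilde{V}u^2 - \tfrac{q-2}{4}\int Vu^2 - \tfrac{s\beta}{2}\int(f(u)u-qF(u))\,dx.
\end{equation*}
I would then drop the nonnegative boundary integral by convexity of $\Omega_r$, control the $V$ and $\widetilde V$ terms by $\|V\|_\infty\Theta$ and $\|\widetilde{V}\|_\infty\Theta$, and invoke Lemma \ref{L5.1}(iii) for a uniform upper bound on $m_{r,s}(\Theta)$ independent of $s\in[\tfrac12,1]$ and $r>\widetilde r_\Theta$.

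The delicate term is $-\frac{s\beta}{2}\int(f(u)u-qF(u))$, which is non-negative (since $(f_2)$ and $p_1<2+\tfrac4N<q$ give $f(u)u-qF(u)\in[(p_2-q)F(u),(p_1-q)F(u)]\le0$ when $F\ge 0$), so it cannot simply be discarded. It is bounded above by $\frac{\beta(q-p_2)}{2}\int F(u)$, and \eqref{eq3.5} together with the Gagliardo--Nirenberg inequality gives
\begin{equation*}
-\tfrac{s\beta}{2}\int(f(u)u-qF(u))\,dx \le \tfrac{\alpha\beta(q-p_2)C_{N,p_1}}{2}\,\Theta^{\frac{2p_1-N(p_1-2)}{4}}\,\|\nabla u\|_2^{\frac{N(p_1-2)}{2}}.
\end{equation*}
Collecting everything produces an inequality of the form $K_1\|\nabla u\|_2^2\le K_3+K_2\|\nabla u\|_2^{N(p_1-2)/2}$ with positive constants $K_1=\frac{N(q-2)-4}{4N}$ and $K_2,K_3$ independent of $s$ and $r$.

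The decisive point, and the reason the argument succeeds where the naive reduction to $p_2$ described in the Preliminary section fails (that reduction produces the wrong-signed coefficient $\frac{N(p_2-2)-4}{4N}<0$), is that the exponent $\tfrac{N(p_1-2)}{2}$ is strictly less than $2$ because $p_1<2+\tfrac4N$. A Young inequality with $\varepsilon$ chosen so small that $\varepsilon K_2<K_1/2$ absorbs $K_2\|\nabla u\|_2^{N(p_1-2)/2}$ into the left side, yielding a uniform bound on $\|\nabla u\|_2^2$, which together with $\|u\|_2^2=\Theta$ gives uniform boundedness of $u$ in $H_0^1(\Omega_r)$. The principal obstacle is genuinely the sign of the Nehari remainder produced by $\beta>0$, and the proof hinges on splitting this remainder into a piece controlled by the uniformly bounded mountain-pass energy $m_{r,s}(\Theta)$ and a piece that is strictly subquadratic in $\|\nabla u\|_2$ thanks to the $L^2$-subcritical growth imposed on $f$.
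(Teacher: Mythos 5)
Your argument is correct and follows essentially the same route as the paper: combine the Nehari and Pohozaev identities to eliminate $\lambda$, substitute the energy level $m_{r,s}(\Theta)$ from Lemma \ref{L5.2}, bound the remaining term $-\frac{s\beta}{2}\int(f(u)u-qF(u))\,dx$ by $\frac{\alpha\beta(q-p_2)}{2}C_{N,p_1}\Theta^{\frac{2p_1-N(p_1-2)}{4}}\|\nabla u\|_2^{\frac{N(p_1-2)}{2}}$ via $(f_2)$, \eqref{eq3.5} and Gagliardo--Nirenberg, and absorb this subquadratic term against the positive coefficient $\frac{N(q-2)-4}{4N}$ using the uniform upper bound on $m_{r,s}(\Theta)$ from Lemma \ref{L5.1}(iii). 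Your write-up is in fact cleaner than the paper's, whose final display appears to carry a typo (it shows $\frac{N(p_2-2)-4}{4N}$ and $\frac{p_2-2}{4}\|V\|_{\infty}$ where $\frac{N(q-2)-4}{4N}$ and $\frac{q-2}{4}\|V\|_{\infty}$ are clearly intended); your intermediate identity displays the correct coefficients explicitly.
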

\begin{proof}
Since $u$ is a solution of \eqref{eq5.1}, we have
\begin{equation*}
  \int_{\Omega_r}|\nabla u|^2 d x+\int_{\Omega_r} V u^2 d x=s \int_{\Omega_r}|u|^q d x+s\beta \int_{\Omega_r}f(u)u d x-\lambda \int_{\Omega_r}|u|^2 d x .
\end{equation*}
The Pohozaev identity implies
\begin{eqnarray*}
&&\frac{N-2}{2 N} \int_{\Omega_r}|\nabla u|^2 d x+\frac{1}{2 N} \int_{\partial \Omega_r}|\nabla u|^2(x \cdot \mathbf{n}) d \sigma+\frac{1}{2 N} \int_{\Omega_r}\widetilde{V}(x) u^2+\frac{1}{2} \int_{\Omega_r} V u^2 d x \\
&=&-\frac{\lambda}{2} \int_{\Omega_r}|u|^2 d x+\frac{s}{q} \int_{\Omega_r}|u|^q d x+s\beta \int_{\Omega_r}F(u) d x.
\end{eqnarray*}
It then follows from $\beta >0$ and $(f_2)$ that
\begin{eqnarray*}
&&\frac{1}{N} \int_{\Omega_r}|\nabla u|^2 d  x-\frac{1}{2 N} \int_{\partial \Omega_r}|\nabla u|^2(x \cdot \mathbf{n}) d \sigma-\frac{1}{2 N} \int_{\Omega_r}(\nabla V \cdot x) u^2 d x \\
& =&\frac{(q-2) s}{2 q} \int_{\Omega_r}|u|^q d x+ s\int_{\Omega_r}(\frac{\beta }{2  }f(u)u-\beta F(u))d x \\
& \geq&\frac{q-2}{2}\left(\frac{1}{2} \int_{\Omega_r}|\nabla u|^2 d x+\frac{1}{2} \int_{\Omega_r} V u^2 d x-m_{r, s}(\Theta)\right)+  s\frac{\beta (p_2-q) }{2  }  \int_{\Omega_r}F(u) d x .
\end{eqnarray*}
Using Gagliardo-Nirenberg inequality, \eqref{eq3.5} and (iii) in Lemma \ref{L5.1}, we have
 \begin{eqnarray*}
\frac{q-2}{2} m_{r, s}(\Theta)
&\geq & \frac{N(p_2-2)-4}{4 N} \int_{\Omega_r}|\nabla u|^2 d x-\Theta\left(\frac{1}{2 N}\|\nabla V \cdot x\|_{\infty}+\frac{p_2-2}{4}\|V\|_{\infty}\right)\\
&&+\frac{s\alpha\beta (p_2-q) }{2  }C_{N, p_1} \Theta^{\frac{2 p_1-N(p_1-2)}{4}}\left(\int_{\Omega_r}|\nabla u|^2 d x\right)^{\frac{N(p_1-2)}{4}}.
 \end{eqnarray*}
Since $2<p_1<2+\frac{4}{N}$, we can bound $\int_{\Omega_r}|\nabla u|^2 d x$ uniformly in $s$ and $r$.
\end{proof}

\begin{lemma}\label{L5.4}
Assume $0<\Theta<\widetilde{\Theta}_V$, where $\widetilde{\Theta}_V$ is given in Theorem \ref{t1.3}, and let $r>\widetilde{r}_\Theta$, where $\widetilde{r}_\Theta$ is defined in Lemma \ref{L5.1}. Then the following hold:

(i) Equation \eqref{eq2.1} admits a solution $\left(\lambda_{r, \Theta}, u_{r, \Theta}\right)$ for every $r>\widetilde{r}_\Theta$ such that $u_{r, \Theta}>0$ in $\Omega_r$.

(ii) There is $0<\bar{\Theta} \leq \widetilde{\Theta}_V$ such that
$$
\liminf\limits_{r \rightarrow \infty} \lambda_{r, \Theta}>0 \ \text {for any}\ 0<\Theta<\bar{\Theta} .
$$
\end{lemma}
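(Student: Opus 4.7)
The plan is to establish Lemma \ref{L5.4} by combining a limiting argument as $s \to 1^-$ (for part (i)) with a Pohozaev and principal-eigenvalue analysis in the spirit of Lemma \ref{L3.6} (for part (ii)), the main novelty being the adaptation to the sign $\beta > 0$. For part (i), I fix $r > \widetilde{r}_\Theta$ and pick a sequence $s_n \to 1^-$ along which Lemma \ref{L5.2} produces positive solutions $u_{r,s_n} \in S_{r,\Theta}$ of \eqref{eq5.1} with Lagrange multipliers $\lambda_{r,s_n}$ and $J_{r,s_n}(u_{r,s_n}) = m_{r,s_n}(\Theta)$. Lemma \ref{L5.3} gives the bound $\|u_{r,s_n}\|_{H_0^1} \leq C$ uniformly in $n$, and the Lagrange identity
\[
\lambda_{r,s_n}\Theta = s_n\|u_{r,s_n}\|_q^q + s_n\beta\int_{\Omega_r} f(u_{r,s_n})u_{r,s_n}\,dx - \|\nabla u_{r,s_n}\|_2^2 - \int_{\Omega_r}V u_{r,s_n}^2\,dx
\]
then shows that $\{\lambda_{r,s_n}\}$ is bounded. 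Extracting subsequences, $\lambda_{r,s_n} \to \lambda_{r,\Theta}$ and $u_{r,s_n} \rightharpoonup u_{r,\Theta}$ in $H_0^1(\Omega_r)$, strongly in every $L^k(\Omega_r)$ with $2 \leq k < 2^*$. Testing \eqref{eq5.1} against $u_{r,s_n} - u_{r,\Theta}$ and using the compact Sobolev embedding on $\Omega_r$ upgrades this to strong $H_0^1$-convergence exactly as in Lemma \ref{L3.2}, so the limit $(\lambda_{r,\Theta}, u_{r,\Theta}) \in \mathbb{R} \times S_{r,\Theta}$ solves \eqref{eq2.1}; positivity $u_{r,\Theta} > 0$ follows from the nonnegativity of $u_{r,s_n}$, the constraint $\|u_{r,\Theta}\|_2^2 = \Theta$, and the strong maximum principle.

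For part (ii), following the blueprint of Lemma \ref{L3.6}, I first show $Q(\Theta) := \liminf_{r\to\infty}\max_{\Omega_r} u_{r,\Theta} > 0$ for $\Theta$ small by contradiction: if $Q(\Theta_k) = 0$ along some $\Theta_k \to 0^+$, then $\|u_{r_k,\Theta_k}\|_s \to 0$ for every $s > 2$ along suitable $r_k \to \infty$, whereas $I_{r_k}(u_{r_k,\Theta_k}) = m_{r_k,1}(\Theta_k) \to \infty$ by the lower bound in Lemma \ref{L5.1}(iii) (whose power of $\Theta$ is negative since $q < 2^*$), forcing $\lambda_{r_k,\Theta_k} \to -\infty$ and contradicting the principal-eigenvalue comparison $\theta_{r_k} + \|V\|_\infty + \lambda_{r_k,\Theta_k}/2 \geq 0$. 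Using Lemma \ref{L5.3} one extracts weak limits $u_{r,\Theta} \rightharpoonup u_\Theta$ in $H^1(\mathbb{R}^N)$ and $\lambda_{r,\Theta} \to \lambda_\Theta$: either $u_\Theta$ solves \eqref{eq1.1} on $\mathbb{R}^N$ directly, or a translation by the maximum points $x_{r,\Theta}$ with $|x_{r,\Theta}| \to \infty$, combined with the decay of $V$ at infinity as in \eqref{eq3.33} and Liouville's theorem ruling out half-space solutions, yields such a solution. Combining the equation for $u_\Theta$ with its Pohozaev identity gives
\[
\Bigl(\frac{1}{q}-\frac{1}{2}\Bigr)\lambda_\Theta\Theta = \frac{(N-2)q-2N}{2Nq}\|\nabla u_\Theta\|_2^2 + \frac{1}{2N}\int \widetilde{V}u_\Theta^2\,dx + \Bigl(\frac{1}{2}-\frac{1}{q}\Bigr)\int V u_\Theta^2\,dx - \frac{\beta}{q}\int (qF(u_\Theta) - f(u_\Theta)u_\Theta)\,dx,
\]
in which the last term is nonpositive since $\beta > 0$ and $qF(u) \geq p_1 F(u) \geq f(u)u$ by $(f_2)$, while the Pohozaev identity combined with $\beta(f(u)u - 2F(u)) \geq 0$ and Gagliardo--Nirenberg yields $\|\nabla u_\Theta\|_2^2 \geq c\,\Theta^{(q(N-2)-2N)/(N(q-2)-4)} \to \infty$ as $\Theta \to 0^+$. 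Since $(N-2)q - 2N < 0$ and $1/q - 1/2 < 0$, this forces $\lambda_\Theta \to +\infty$; applying this along every subsequence in $r$ yields $\liminf_{r\to\infty}\lambda_{r,\Theta} > 0$ for all $0 < \Theta \leq \bar{\Theta}$, with $\bar{\Theta} \leq \widetilde{\Theta}_V$ chosen accordingly.

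\textbf{The main obstacle} compared with Lemma \ref{L3.6} is that $\beta > 0$ flips the sign of the $\beta$-contributions produced by the Pohozaev combination; both inequalities in $(f_2)$ must therefore be exploited---the upper bound $f(u)u \leq p_1 F(u)$ to keep the Pohozaev correction controllable when extracting the lower bound on $\|\nabla u_\Theta\|_2^2$, and $p_2 F(u) \leq f(u)u$ (equivalently $qF - f(u)u \geq (q-p_1)F \geq 0$) to secure the correct sign of the $\beta$-term in the $\lambda_\Theta$-identity. Balancing the Gagliardo--Nirenberg contribution of order $\Theta^{(2p_1-N(p_1-2))/4}$ against the $\Theta^{(q(N-2)-2N)/(N(q-2)-4)}$ blow-up of the gradient term as $\Theta \to 0^+$ is the technical heart of the argument, and it is what pins down the admissible size of $\bar{\Theta}$.
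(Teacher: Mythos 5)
Your part (i) is fine and coincides with the paper's proof, which simply repeats the limiting argument of Lemma \ref{L3.4}. The gap is in part (ii), at the step where you claim that ``the Pohozaev identity combined with $\beta(f(u)u-2F(u))\ge 0$ and Gagliardo--Nirenberg yields $\|\nabla u_\Theta\|_2^2\ge c\,\Theta^{(q(N-2)-2N)/(N(q-2)-4)}$''. That deduction works in Lemma \ref{L3.6} precisely because there $\beta\le 0$, so the term $\frac{\beta}{2}\int(f(u)u-2F(u))$ is \emph{nonpositive} and can be discarded from the right-hand side of
$$
\frac1N\int|\nabla u_\Theta|^2\,dx+\frac{1}{2N}\int\widetilde V u_\Theta^2\,dx =\Bigl(\frac12-\frac1q\Bigr)\int|u_\Theta|^q\,dx+\frac{\beta}{2}\int\bigl(f(u_\Theta)u_\Theta-2F(u_\Theta)\bigr)\,dx,
$$
leaving only the $q$-term, whose Gagliardo--Nirenberg exponent $N(q-2)/4>1$ forces the gradient to be large. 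With $\beta>0$ that same term is nonnegative; estimating it via $(f_2)$, \eqref{eq3.5} and Gagliardo--Nirenberg adds a contribution $C_2\Theta^{(2p_1-N(p_1-2))/4}\,t^{N(p_1-2)/4}$ to the right-hand side, where $t=\|\nabla u_\Theta\|_2^2$ and the exponent $N(p_1-2)/4$ is \emph{less than} $1$. The resulting inequality $c\,t\le C_1\Theta^{a}t^{N(q-2)/4}+C_2\Theta^{b}t^{N(p_1-2)/4}+C_3\Theta$ is satisfied by arbitrarily small $t$ (its right-hand side does not vanish as $t\to0^+$), so no lower bound on $\|\nabla u_\Theta\|_2^2$ follows and your mechanism for concluding $\lambda_\Theta\to+\infty$ collapses. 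This is exactly the sign obstruction you flag as ``the technical heart'' but do not resolve.

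The paper circumvents this entirely. It argues by contradiction ($\lambda_{\Theta_n}\le 0$ along some $\Theta_n\to0$) and runs a concentration-compactness dichotomy rather than your $Q(\Theta)>0$ analysis. Whenever a nontrivial limit profile $v$ of the potential-free equation $-\Delta v+\lambda v=|v|^{q-2}v+\beta f(v)$ appears after translation, the Pohozaev combination takes the form
$$
\frac{\lambda}{N}\int|v|^2\,dx=\frac{\beta(N-2)}{2N}\int\Bigl[\frac{2N}{N-2}F(v)-f(v)v\Bigr]dx+\frac{2N-q(N-2)}{2Nq}\int|v|^q\,dx,
$$
whose right-hand side is strictly positive for $\beta>0$ because $f(v)v\le p_1F(v)\le 2^*F(v)$ and $q<2^*$; hence $\lambda>0$ at once, with no gradient lower bound and no smallness of $\Theta$ needed. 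In the remaining case (strong $L^t$ convergence to a nontrivial $u_{\Theta_n}$) the blow-up $\|\nabla u_{\Theta_n}\|_2^2\to\infty$ is extracted from the mountain-pass level $m_{r,1}(\Theta_n)\to\infty$ of Lemma \ref{L5.1}(iii) together with the boundedness of $\lambda_{\Theta_n}$, not from the Pohozaev identity. To repair your argument you would need to replace the Pohozaev/Gagliardo--Nirenberg step by one of these two mechanisms.
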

\begin{proof}
The proof of $(i)$ is similar to that of Lemma \ref{L3.4}, we omit it. As be consider $H_0^1\left(\Omega_r\right)$ as a subspace of $H^1(\mathbb{R}^N)$ for every $r>0$. In view of Lemma \ref{L5.3}, there are $\lambda_\Theta$ and $u_\Theta \in H^1(\mathbb{R}^N)$ such that, up to a subsequence,
$$
u_{r, \Theta} \rightharpoonup u_\Theta \ \text {in}\ H^1(\mathbb{R}^N)\ \text {and}\  \lim _{r \rightarrow \infty} \lambda_{r, \Theta} \rightarrow \lambda_\Theta .
$$
Arguing by contradiction, we assume that $\lambda_{\Theta_n} \leq 0$ for some sequence $\Theta_n \rightarrow 0$. Let $\theta_r$ be the principal eigenvalue of $-\Delta$ with Dirichlet boundary condition in $\Omega_r$ and let $v_r>0$ be the corresponding normalized eigenfunction. Testing \eqref{eq2.1} with $v_r$, it holds
$$
\left(\theta_r+\lambda_{r, \Theta_n}\right) \int_{\Omega_r} u_{r, \Theta_n} v_r d x+\int_{\Omega_r} V u_{r, \Theta_n} v_r d x \geq 0 .
$$
In view of $\int_{\Omega_r} u_{r,\Theta_n} v_r d x>0$ and $\theta_r=r^{-2} \theta_1$, there holds
$$
\max\limits_{x \in \mathbb{R}^N} V+\lambda_{r,\Theta_n}+r^{-2} \theta_1 \geq 0 .
$$
Hence there exists $C>0$ independent of $n$ such that $\left|\lambda_{\Theta_n}\right| \leq C$ for any $n$.

\textbf{Case 1} There is subsequence denoted still by $\left\{\Theta_n\right\}$ such that $u_{\Theta_n}=0$. We first claim that there exists $d_n>0$ for any $n$ such that
\begin{equation}\label{eq5.11}
  \liminf\limits_{r \rightarrow \infty} \sup\limits_{z \in \mathbb{R}^N} \int_{B(z, 1)} u_{r, \Theta_n}^2 d x \geq d_n .
\end{equation}
Otherwise, the concentration compactness principle implies for every $n$ that
$$
u_{r, \Theta_n} \rightarrow 0 \text { in } L^t(\mathbb{R}^N) \quad \text { as } r \rightarrow \infty, \quad \text { for all } 2<t<2^* .
$$
By the diagonal principle, \eqref{eq2.1} and $\left|\lambda_{r, \Theta_n}\right| \leq 2 C$ for large $r$, there exists $r_n \rightarrow \infty$ such that
$$
\int_{\Omega_r}\left|\nabla u_{r_n, \Theta_n}\right|^2 d x \leq C
$$
for some $C$ independent of $n$, contradicting (iii) in Lemma \ref{L5.1} for large $n$. As a consequence \eqref{eq5.11} holds, and there is $z_{r, \Theta_n} \in \Omega_r$ with $\left|z_{r, \Theta_n}\right| \rightarrow \infty$ such that
$$
\int_{B\left(z_{r, \Theta_n}, 1\right)} u_{r, \Theta_n}^2 d x \geq \frac{d_n}{2} .
$$
Moreover, $\operatorname{dist}\left(z_{r, \Theta_n}, \partial \Omega_r\right) \rightarrow \infty$ as $r \rightarrow \infty$ by an argument similar to that in Lemma \ref{L3.6}. Now, for $n$ fixed let $v_r(x)=u_{r, \Theta_n}\left(x+z_{r, \Theta_n}\right)$ for $x \in \Sigma^r:=\left\{x \in \mathbb{R}^N: x+z_{r, \Theta_n} \in \Omega_r\right\}$. It follows from Lemma \ref{L5.3} that there is $v \in H^1(\mathbb{R}^N)$ with $v \neq 0$ such that $v_r \rightharpoonup v$. Observe that for every $\phi \in C_c^{\infty}\left(\mathbb{R}^N\right)$ there is $r$ large such that $\phi\left(\cdot-z_{r, \Theta_n}\right) \in C_c^{\infty}\left(\Omega_r\right)$ due to $\operatorname{dist}\left(z_{r, \Theta_n}, \partial \Omega_r\right) \rightarrow \infty$ as $r \rightarrow \infty$. It follows that
\begin{eqnarray}\label{eq5.12}
&&\int_{\Omega_r} \nabla u_{r, \Theta_n} \nabla \phi\left(\cdot-z_{r, \Theta_n}\right) d x+\int_{\Omega_r} V u_{r, \Theta_n} \phi\left(\cdot-z_{r, \Theta_n}\right) d x+\lambda_{r, \Theta_n} \int_{\Omega_r} u_{r, \Theta_n} \phi\left(\cdot-z_{r, \Theta_n}\right) d x \nonumber\\
&=&\int_{\Omega_r}\left|u_{r, \Theta_n}\right|^{q-2} u_{r, \Theta_n} \phi\left(\cdot-z_{r, \Theta_n}\right) d x+\beta \int_{\Omega_r}f(u_{r, \Theta_n}) \phi\left(\cdot-z_{r, \Theta_n}\right) d x .
\end{eqnarray}
Using $|z_{r,\Theta_n}|\rightarrow\infty$ as $r\rightarrow\infty$, it follows that
\begin{eqnarray*}
\left|\int_{\Omega_r} V u_{r, \Theta_n} \phi\left(\cdot-z_{r, \Theta_n}\right) d x\right| & \leq& \int_{S u p p \phi}\left|V\left(\cdot+z_{r, \Theta_n}\right) v_r \phi\right| d x \\
& \leq&\left\|v_r\right\|_{2^*}\|\phi\|_{2^*}\left(\int_{\mathbb{R}^N \backslash B_{\frac{|z_{r, \Theta_n }|}{2}}}|V|^{\frac{N}{2}} d x\right)^{\frac{2}{N}}  \\
& \rightarrow& 0 \quad \text { as } r \rightarrow \infty.
\end{eqnarray*}
Letting $r \rightarrow \infty$ in \eqref{eq5.12}, we get for every $\phi \in C_c^{\infty}(\mathbb{R}^N)$ :
$$
\int_{\mathbb{R}^N} \nabla v \cdot \nabla \phi d x+\lambda_{\Theta_n} \int_{\mathbb{R}^N} v \phi d x=\int_{\mathbb{R}^N}|v|^{q-2} v \phi d x+\beta \int_{\mathbb{R}^N}f(v) \phi d x.
$$
Therefore $v \in H^1(\mathbb{R}^N)$ is a weak solution of the equation
$$
-\Delta v+\lambda_{\Theta_n} v=\beta f(v)+|v|^{q-2} v \quad \text { in } \mathbb{R}^N
$$
and
$$
\int_{\mathbb{R}^N}|\nabla v|^2 d x+\lambda_{\Theta_n} \int_{\mathbb{R}^N}|v|^2 d x=\beta \int_{\mathbb{R}^N}f(v)v d x+\int_{\mathbb{R}^N}|v|^q d x .
$$
The Pohozaev identity implies
$$
\frac{N-2}{2 N} \int_{\mathbb{R}^N}|\nabla v|^2 d x+\frac{\lambda_{\Theta_n}}{2} \int_{\mathbb{R}^N}|v|^2 d x=\beta \int_{\mathbb{R}^N}F(v) d x+\frac{1}{q} \int_{\mathbb{R}^N}|v|^q d x,
$$
hence
\begin{eqnarray}\label{eq5.13}
&&\frac{\lambda_{\Theta_n}}{N} \int_{\mathbb{R}^N}|v|^2 d x\nonumber\\
&=&\frac{\beta(N-2)}{2 N } \int_{\mathbb{R}^N}\left[\frac{2N}{N-2}F(v)-f(v)v\right] d x+\frac{2 N-q(N-2)}{2 N q} \int_{\mathbb{R}^N}|v|^q d x\nonumber\\
&\geq&\frac{\beta(N-2)}{2 N }\left(\frac{2N}{N-2}-p_1\right) \int_{\mathbb{R}^N}F(v) d x+\frac{2 N-q(N-2)}{2 N q} \int_{\mathbb{R}^N}|v|^q d x.
\end{eqnarray}
We have $\lambda_{\Theta_n}>0$ because of $2<p_1<2+\frac{4}{N}<q<2^*$, which is a contradiction.

\textbf{Case 2} $u_{\Theta_n} \neq 0$ for $n$ large. Note that $u_{\Theta_n}$ satisfies
\begin{equation}\label{eq5.14}
  -\Delta u_{\Theta_n}+V u_{\Theta_n}+\lambda_{\Theta_n} u_{\Theta_n}=\beta f(u_{\Theta_n})+\left|u_{\Theta_n}\right|^{q-2} u_{\Theta_n} .
\end{equation}
If $v_{r, \Theta_n}:=u_{r, \Theta_n}-u_{\Theta_n}$ satisfies
\begin{equation}\label{eq5.15}
  \limsup\limits_{r \rightarrow \infty} \max\limits_{z \in \mathbb{R}^N} \int_{B(z, 1)} v_{r, \Theta_n}^2 d x=0,
\end{equation}
then the concentration compactness principle implies $u_{r, \Theta_n} \rightarrow u_{\Theta_n}$ in $L^t(\mathbb{R}^N)$ for any $2<t<2^*$. It then follows from \eqref{eq2.1} and \eqref{eq5.14} that
\begin{eqnarray*}
\int_{\Omega_r}\left|\nabla u_{r, \Theta_n}\right|^2 d x+\Theta_n \lambda_{r, \Theta_n} & =&\beta \int_{\Omega_r}f(u_{r, \Theta_n})u_{r, \Theta_n} d x+\int_{\Omega_r}\left|u_{r, \Theta_n}\right|^q d x-\int_{\Omega_r} V u_{r, \Theta_n}^2 d x \\
& \rightarrow& \beta \int_{\mathbb{R}^N}f(u_{\Theta_n})u_{r, \Theta_n} d x+\int_{\mathbb{R}^N}\left|u_{\Theta_n}\right|^q d x-\int_{\mathbb{R}^N} V u_{\Theta_n}^2 d x \\
& =&\int_{\mathbb{R}^N}\left|\nabla u_{\Theta_n}\right|^2 d x+\lambda_{\Theta_n} \int_{\mathbb{R}^N} u_{\Theta_n}^2 d x.
\end{eqnarray*}
Using $\lambda_{r, \Theta_n} \rightarrow \lambda_{\Theta_n}$ as $r \rightarrow \infty$, we further have
\begin{equation}\label{eq5.16}
\int_{\Omega_r}\left|\nabla u_{r, \Theta_n}\right|^2 d x+\Theta_n \lambda_{\Theta_n} \rightarrow \int_{\mathbb{R}^N}\left|\nabla u_{\Theta_n}\right|^2 d x+\lambda_{\Theta_n} \int_{\mathbb{R}^N} u_{\Theta_n}^2 d x \quad \text { as } r \rightarrow \infty .
\end{equation}
Using \eqref{eq5.16}, (iii) in Lemma \ref{L5.1} and $\left|\lambda_{\Theta_n}\right| \leq C$ for large $n$, there holds
$$
\int_{\mathbb{R}^N}\left|\nabla u_{\Theta_n}\right|^2 d x \rightarrow \infty \quad \text { as } n \rightarrow \infty .
$$
By \eqref{eq5.14} and the Pohozaev identity
\begin{eqnarray*}
& &\frac{N-2}{2 N} \int_{\mathbb{R}^N}\left|\nabla u_{\Theta_n}\right|^2 d x+\frac{1}{2 N} \int_{\mathbb{R}^N} \widetilde{V} u_{\Theta_n}^2dx+\frac{1}{2} \int_{\mathbb{R}^N} V(x) u_{\Theta_n}^2 d x+\frac{\lambda_\Theta}{2} \int_{\mathbb{R}^N} u_{\Theta_n}^2 d x \nonumber\\
& =&\frac{1}{q} \int_{\mathbb{R}^N}\left|u_{\Theta_n}\right|^q d x+\beta \int_{\mathbb{R}^N}F(u_{\Theta_n}) d x .
\end{eqnarray*}
It holds that
\begin{eqnarray*}
0 &\leq & \frac{(2-q) \lambda_{\Theta_n}}{2 q} \int_{\mathbb{R}^N} u_{\Theta_n}^2 d x \\
&\leq & \frac{(N-2) q-2 N}{2 N q} \int_{\mathbb{R}^N}\left|\nabla u_{\Theta_n}\right|^2 d x+\frac{\|\widetilde{V}\|_{\infty}}{2 N} \Theta_n+\frac{(q-2)\|V\|_{\infty}}{2 q} \Theta_n \\
&\rightarrow & -\infty  \text { as } n \rightarrow \infty .
\end{eqnarray*}
Therefore \eqref{eq5.15}  cannot occur. Consequently there exist $d_n>0$ and $z_{r, \Theta_n} \in \Omega_r$ with $\left|z_{r, \Theta_n}\right| \rightarrow \infty$ as $r \rightarrow \infty$ such that
$$
\int_{B\left(z_{r, \Theta_n}, 1\right)} v_{r, \Theta_n}^2 d x>d_n.
$$
Then $\widetilde{v}_{r, \Theta_n}:=v_{r, \Theta_n}\left(\cdot+z_{r, \Theta_n}\right) \rightharpoonup \widetilde{v}_{\Theta_n} \neq 0$, and $\widetilde{v}_{\Theta_n}$ is a nonnegative solution of
$$
-\Delta v+\lambda_{\Theta_n} v=\beta f(v) v+|v|^{q-2} v \quad \text { in } \mathbb{R}^N .
$$
In fact, we have $\liminf\limits_{r \rightarrow \infty} \operatorname{dist}\left(z_{r, \Theta_n}, \partial \Omega_r\right)=\infty$ by the Liouville theorem on the half space. It follows from an argument similar to that of \eqref{eq5.13} that $\lambda_{\Theta_n}>0$ for large $n$, which is a contradiction.
\end{proof}

\noindent\textbf{Proof of Theorem \ref{t1.3}} The proof is a direct consequence of Lemma \ref{L5.4} and  Lemma \ref{L3.5}.

\section{Proof of Theorem \ref{t1.7}}

In this section we assume that the assumptions of Theorem \ref{t1.7} hold. Define the functional $\mathcal{I}_{r}: S_{r, \Theta} \rightarrow \mathbb{R}$ by
\begin{equation}\label{eq7.1}
  \mathcal{I}_{r}(u)=\frac{1}{2} \int_{\Omega_r}|\nabla u|^2 d x+\frac{1}{2} \int_{\Omega_r} V  u^2 d x-\frac{1}{2^*} \int_{\Omega_r}|u|^{2^*} d x-\beta \int_{\Omega_r}F(u) d x .
\end{equation}
Note that if $u \in S_{r, \Theta}$ is a critical point of $\mathcal{I}_{r, s}$, then there exists $\lambda \in \mathbb{R}$ such that $(\lambda, u)$ is a solution of the equation
\begin{equation}\label{eq7.2}
 \left\{\aligned
& -\Delta u+V u+\lambda u=|u|^{2^*-2}u+\beta f(u), &x\in \Omega_r, \\
& \int_{\Omega_r} |u|^2dx=\Theta,u\in H_0^1(\Omega_r), &x\in \Omega_r.
\endaligned
\right.
\end{equation}
Since $\beta>0$,
\begin{eqnarray*}
\mathcal{I}_r(u)
&\geq&\frac{1}{2}\left(1-\left\|V_{-}\right\|_{\frac{N}{2}} S^{-1}\right) \int_{\Omega_r}|\nabla u|^2 d x-\frac{1}{2^*\cdot S^{
\frac{2^*}{2}}}\left(\int_{\Omega_r}|\nabla u|^2 d x\right)^{\frac{2^*}{2}}\\
&&-\alpha\beta C_{N, p_1} \Theta^{\frac{2 p_1-N(p_1-2)}{4}}\left(\int_{\Omega_r}|\nabla u|^2 d x\right)^{\frac{N(p_1-2)}{4}}\\
&=&\widetilde{h_1}(t),
\end{eqnarray*}
where
\begin{eqnarray*}
\widetilde{h_1}(t)&:=&\frac{1}{2}\left(1-\left\|V_{-}\right\|_{\frac{N}{2}} S^{-1}\right) t^2-\frac{1}{2^*\cdot S^{
\frac{2^*}{2}}}t^{2^*}-\alpha\beta C_{N, p_1} \Theta^{\frac{2 p_1-N(p_1-2)}{4}}t^{\frac{N(p_1-2)}{2}}.
\end{eqnarray*}
Consider
$$
\widehat{\psi}(t):=\frac{1}{2}\left(1-\left\|V_{-}\right\|_{\frac{N}{2}} S^{-1}\right) t^2-\frac{1}{2^*\cdot S^{\frac{2^*}{2}}}t^{2^*}.
$$
Note that $\widehat{\psi}$ admits a unique maximum at
$$
\widehat{t}=\left[\left(1-\|V_{-}\|_{\frac{N}{2}} S^{-1}\right)S^{\frac{2^*}{2}}\right]^{\frac{1}{2^*-2}} .
$$
By a direct calculation, we obtain
\begin{eqnarray*}
\widehat{\psi}(\widehat{t})
=\frac{1}{N}\left(1-\left\|V_{-}\right\|_{\frac{N}{2}} S^{-1}\right)^{\frac{N}{2}}S^{\frac{N}{2}}.
\end{eqnarray*}
Hence,
$$
\widehat{\psi}(\widehat{t})>\alpha\beta C_{N, p_1} \Theta^{\frac{2 p_1-N(p_1-2)}{4}}\widehat{t}^{\frac{N(p_1-2)}{2}}
$$
as long as
\begin{equation*}
  \Theta_V=\left( \frac{1}{N\alpha\beta C_{N, p_1}}\right)^{\frac{4}{2 p_1-N(p_1-2)}}\left(1-\left\|V_{-}\right\|_{\frac{N}{2}} S^{-1}\right)^{\frac{N}{2}}S^{\frac{N}{2}\cdot\frac{4-N(p_1-2)}{2 p_1-N(p_1-2)}}.
\end{equation*}
Now, let $0<\Theta<\Theta_V$ be fixed, we obtain
\begin{equation*}
\widehat{\psi}(\widehat{t})>\alpha\beta C_{N, p_1} \Theta^{\frac{2 p_1-N(p_1-2)}{4}}\widehat{t}^{\frac{N(p_1-2)}{2}}
\end{equation*}
and $\widetilde{h}_1(\widehat{t})>0$. In view of $2<p_1<2+\frac{4}{N}<2^*$, there exist $0<\widetilde{R_1}<\widetilde{T_\Theta}<\widetilde{R_2}$ such that $\widetilde{h}_1(t)<0$ for $0<t<\widetilde{R_1}$ and for $t>\widetilde{R_2}, \widetilde{h}_1(t)>0$ for $\widetilde{R_1}<t<\widetilde{R_2}$, and $\widetilde{h}_1(\widetilde{T_\Theta})=\max\limits_{t \in \mathbb{R}^{+}} \widetilde{h}_1(t)>0$.

Define
$$
\widetilde{\mathcal{V}}_{r, \Theta}=\left\{u \in S_{r, \Theta}:\|\nabla u\|_2^2 \leq \widetilde{T_\Theta}^2\right\} .
$$
Let $\theta$ be the principal eigenvalue of operator $-\Delta$ with Dirichlet boundary condition in $\Omega$, and let $|\Omega|$ be the volume of $\Omega$.
\begin{lemma}\label{L7.1}

(i) If $r<\frac{\sqrt{C \Theta}}{\widetilde{T_\Theta}}$, then $\widetilde{\mathcal{V}}_{r, \Theta}=\emptyset$.

(ii) If
$$
r> \max \left\{\frac{\sqrt{C\Theta}}{\widetilde{T_\Theta}},\left(\frac{ \theta\left(1+\|V\|_{\frac{N}{2}} S^{-1}\right)}{2 \alpha_1\beta} \Theta^{\frac{2-p_2}{2}}|\Omega|^{\frac{p_2-2}{2}}\right)^{\frac{2}{N(p_2-2)+4}}\right\}
$$
then $\widetilde{\mathcal{V}}_{r, \Theta} \neq \emptyset$ and
$$
\widetilde{e}_{r, \Theta}:=\inf _{u \in \widetilde{\mathcal{V}}_{r, \Theta}} \mathcal{I}_r(u)<0
$$
is attained at some interior point $u_r>0$ of $\widetilde{\mathcal{V}}_{r, \Theta}$. As a consequence, there exists a Lagrange multiplier $\lambda_r \in \mathbb{R}$ such that $\left(\lambda_r, u_r\right)$ is a solution of \eqref{eq7.1}. Moreover $\liminf\limits_{r \rightarrow \infty} \lambda_r>0$ holds true.
\end{lemma}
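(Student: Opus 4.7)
The plan is to mirror the structure of Lemma \ref{L4.1}, replacing the subcritical term $|u|^q$ with the Sobolev critical term $|u|^{2^*}$ and substituting the Gagliardo--Nirenberg bound on the critical term with the Sobolev inequality $\|u\|_{2^*}^{2^*}\leq S^{-2^*/2}\|\nabla u\|_2^{2^*}$. Part (i) is immediate from the Poincar\'e inequality on $\Omega_r$: if $u\in S_{r,\Theta}$ then $\|\nabla u\|_2^2\geq C\Theta/r^2$, so $r<\sqrt{C\Theta}/\widetilde{T}_\Theta$ forces $\|\nabla u\|_2^2>\widetilde{T}_\Theta^{\,2}$, hence $\widetilde{\mathcal{V}}_{r,\Theta}=\emptyset$.

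For part (ii), I would first construct an explicit element of $\widetilde{\mathcal{V}}_{r,\Theta}$ with negative energy exactly as in Lemma \ref{L4.1}: starting from the positive normalized Dirichlet eigenfunction $v_1\in S_{1,\Theta}$ associated with $\theta$, I set $u_r(x)=r^{-N/2}v_1(r^{-1}x)$. Using $(f_2)$ in the form $F(\tau)\geq\alpha_1\tau^{p_2}$, the H\"older inequality, and $p_2<2+\tfrac{4}{N}$, the lower-order term $-\alpha_1\beta r^{N(2-p_2)/2}\Theta^{p_2/2}|\Omega|^{(2-p_2)/2}$ dominates for $r$ large, giving $\mathcal{I}_r(u_r)<0$ and $u_r\in\widetilde{\mathcal{V}}_{r,\Theta}$ as soon as $r$ exceeds the stated threshold. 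Combined with the computation preceding the lemma, which shows $\mathcal{I}_r(u)\geq\widetilde{h}_1(\|\nabla u\|_2)\geq\widetilde{h}_1(\widetilde{T}_\Theta)>0$ whenever $\|\nabla u\|_2^2=\widetilde{T}_\Theta^{\,2}$, this proves that $\widetilde{e}_{r,\Theta}<0$ and that any minimizer must be an interior point of $\widetilde{\mathcal{V}}_{r,\Theta}$.

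The main obstacle is the loss of compactness at the critical exponent: the embedding $H_0^1(\Omega_r)\hookrightarrow L^{2^*}(\Omega_r)$ is \emph{not} compact, so a minimizing sequence $\{u_n\}$ obtained from Ekeland's principle, which satisfies $u_n\rightharpoonup u_r$ in $H_0^1(\Omega_r)$, need not converge strongly. I would handle this with a concentration-compactness / Brezis--Lieb argument. Writing $w_n=u_n-u_r$, the Brezis--Lieb lemma gives $\|u_n\|_{2^*}^{2^*}=\|u_r\|_{2^*}^{2^*}+\|w_n\|_{2^*}^{2^*}+o(1)$ and $\|\nabla u_n\|_2^2=\|\nabla u_r\|_2^2+\|\nabla w_n\|_2^2+o(1)$, while the subcritical terms involving $V$ and $F$ pass to the limit by Rellich compactness. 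If $\|w_n\|_{2^*}^{2^*}\to\ell>0$, then along a subsequence $\|\nabla w_n\|_2^2\to L>0$ with $\ell\leq S^{-2^*/2}L^{2^*/2}$, and the energy splits as $\mathcal{I}_r(u_n)=\mathcal{I}_r(u_r)+\tfrac{1}{2}L-\tfrac{1}{2^*}\ell+o(1)$. Since the function $t\mapsto\tfrac{1}{2}t-\tfrac{1}{2^*}S^{-2^*/2}t^{2^*/2}$ is strictly positive on a neighborhood of $0$ as long as $L$ remains below the critical threshold, and since $\widetilde{e}_{r,\Theta}<0<\tfrac{1}{N}S^{N/2}$, any nontrivial concentration would force $\mathcal{I}_r(u_n)$ above $\widetilde{e}_{r,\Theta}$, a contradiction. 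Hence $w_n\to 0$ strongly and $u_r$ is the desired minimizer.

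Once the existence of $u_r$ is established, the Lagrange multiplier $\lambda_r\in\mathbb{R}$ is produced by the smooth constraint $S_{r,\Theta}$, yielding a solution $(\lambda_r,u_r)$ of \eqref{eq7.2}, and $u_r>0$ follows from the strong maximum principle after replacing $u_r$ by $|u_r|$ in the minimization (which does not increase the Dirichlet energy). Finally, to obtain $\liminf_{r\to\infty}\lambda_r>0$, I proceed exactly as in the end of Lemma \ref{L4.1}: testing the equation with $u_r$ and using $F(\tau)\leq\alpha\tau^{p_1}$ together with $f(\tau)\tau\geq p_2 F(\tau)$ yields
\[
\lambda_r\Theta\;\geq\;-2\mathcal{I}_r(u_r)+\Big(1-\tfrac{2}{2^*}\Big)\!\int_{\Omega_r}|u_r|^{2^*}dx+\beta\!\int_{\Omega_r}\bigl(f(u_r)u_r-2F(u_r)\bigr)dx\;\geq\;-2\widetilde{e}_{r,\Theta},
\]
and the monotonicity $r\mapsto\widetilde{e}_{r,\Theta}$ is nonincreasing, so $\widetilde{e}_{r,\Theta}\leq\widetilde{e}_{r_\Theta,\Theta}<0$ uniformly, giving a uniform positive lower bound on $\lambda_r$.
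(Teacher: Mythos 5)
Your proposal is correct and follows essentially the same route as the paper's proof: part (i) via Poincar\'e, the test function $u_r(x)=r^{-N/2}v_1(r^{-1}x)$ with the lower bound $F(\tau)\geq\alpha_1\tau^{p_2}$ to get $\widetilde{e}_{r,\Theta}<0$, Ekeland plus a Brezis--Lieb decomposition to recover compactness, and the identity $\lambda_r\Theta=\frac{2^*-2}{2^*}\int|u_r|^{2^*}+\beta\int(f(u_r)u_r-2F(u_r))-2\mathcal{I}_r(u_r)>-2\widetilde{e}_{r,\Theta}$ together with the monotonicity of $r\mapsto\widetilde{e}_{r,\Theta}$ for the sign of the multiplier.

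The one place where you diverge is the exclusion of concentration, and there your argument is left slightly loose. The paper uses the almost-criticality of the sequence, $\langle\mathcal{I}_r'(u_n),u_n\rangle\to0$, to get $\|v_n\|_{2^*}^{2^*}=\|\nabla v_n\|_2^2+o(1)$ for the remainder, whence the Sobolev inequality forces the dichotomy $\ell=0$ or $\ell\geq S^{N/2}$, and the second alternative is killed by the energy level. You instead use only the Sobolev inequality $\ell\leq S^{-2^*/2}L^{2^*/2}$ and the splitting $\mathcal{I}_r(u_n)=\mathcal{I}_r(u_r)+\frac12L-\frac{1}{2^*}\ell+o(1)\geq\widetilde{e}_{r,\Theta}+g(L)+o(1)$ with $g(t)=\frac12t-\frac{1}{2^*}S^{-2^*/2}t^{2^*/2}$; this yields a contradiction only if $g(L)>0$, which holds precisely for $0<L<(2^*/2)^{2/(2^*-2)}S^{N/2}$. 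Your phrase ``as long as $L$ remains below the critical threshold'' is exactly the point that must be checked, and it does hold here, but you should say why: the constraint $u_n\in\widetilde{\mathcal{V}}_{r,\Theta}$ gives $L\leq\widetilde{T}_\Theta^{\,2}$, and since $\widetilde{h}_1=\widehat{\psi}-(\text{a positive increasing term})$ one has $\widehat{\psi}'(\widetilde{T}_\Theta)>0$, hence $\widetilde{T}_\Theta<\widehat{t}$ and
\begin{equation*}
L\;\leq\;\widetilde{T}_\Theta^{\,2}\;<\;\widehat{t}^{\,2}\;=\;\bigl(1-\|V_-\|_{\frac N2}S^{-1}\bigr)^{\frac{N-2}{2}}S^{\frac N2}\;\leq\;S^{\frac N2}\;<\;\Bigl(\tfrac{2^*}{2}\Bigr)^{\frac{2}{2^*-2}}S^{\frac N2}.
\end{equation*}
With this line inserted your argument closes, and it is arguably a bit more elementary than the paper's since it never invokes the Euler--Lagrange information of the minimizing sequence at this stage. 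Two cosmetic points: the first ``$\geq$'' in your final display for $\lambda_r\Theta$ is in fact an equality, and the conclusion $u_r>0$ requires (as you note) replacing the minimizing sequence by its absolute values before applying the strong maximum principle.
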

\begin{proof}
(i)The proof is similar to the Lemma \ref{L4.1}. (ii) Let $v_1 \in S_{1, \Theta}$ be the positive normalized eigenfunction corresponding to $\theta$. Setting
\begin{equation}\label{eq7.3}
  r_\Theta=\max \left\{\frac{\sqrt{C\Theta}}{\widetilde{T_\Theta}},\left(\frac{ \theta\left(1+\|V\|_{\frac{N}{2}} S^{-1}\right)}{2 \alpha_1\beta} \Theta^{\frac{2-p_2}{2}}|\Omega|^{\frac{p_2-2}{2}}\right)^{\frac{2}{N(p_2-2)+4}}\right\}.
\end{equation}
Now, we construct for $r>r_\Theta$ a function $u_r \in S_{r, \Theta}$ such that $u_r \in \widetilde{\mathcal{V}}_{r, \Theta}$ and $\mathcal{I}_r\left(u_r\right)<0$.
By \eqref{eq7.3}, \eqref{eq4.3}, \eqref{eq4.4}, $2<p_2<2+\frac{4}{N}$ and a direct calculation, we have $u_r \in \widetilde{\mathcal{V}}_{r, \Theta}$ and
\begin{eqnarray*}
\mathcal{I}_r\left(u_r\right)
&\leq&\frac{1}{2} \int_{\Omega_r}\left|\nabla u_r\right|^2 d x+\frac{1}{2} \int_{\Omega_r} V u_r^2 d x-\frac{1}{2^*} \int_{\Omega_r}\left|u_r\right|^{2^*} d x-\alpha_1\beta \int_{\Omega_r}\left|u_r\right|^{p_2} d x \\
& \leq&\frac{1}{2}\left(1+\|V\|_{\frac{N}{2}} S^{-1}\right) r^{-2} \theta \Theta-\alpha_1\beta r^{\frac{N(2-p_2)}{2}} \Theta^{\frac{p_2}{2}}|\Omega|^{\frac{2-p_2}{2}} \\
&<& 0 .
\end{eqnarray*}
It then follows from the Gagliardo-Nirenberg inequality that
\begin{eqnarray}\label{eq7.6}
\mathcal{I}_r\left(u_r\right)
&\geq& \frac{1}{2}\left( 1-\|V_{-}\|_{\frac{N}{2}} S^{-1}\right) \int_{\Omega_r}|\nabla u|^2 d x-C_{N, p_1} \beta \Theta^{\frac{2 p_1-N(p_1-2)}{4}}\left(\int_{\Omega_r}|\nabla u|^2 d x\right)^{\frac{N(p_1-2)}{4}} \nonumber\\
& &-\frac{1}{2^*\cdot S^{
\frac{2^*}{2}}}\left(\int_{\Omega_r}|\nabla u|^2 d x\right)^{\frac{2^*}{2}} .
\end{eqnarray}
As a consequence $\mathcal{I}_r$ is bounded from below in $\widetilde{\mathcal{V}}_{r, \Theta}$. By the Ekeland principle there exists a sequence $\left\{u_{n, r}\right\} \subset \widetilde{\mathcal{V}}_{r, \Theta}$ such that
\begin{equation}\label{eq7.7}
\mathcal{I}_r(u_{n, r}) \rightarrow \inf\limits_{u \in \widetilde{\mathcal{V}}_{r, \Theta}} \mathcal{I}_r(u),\  \mathcal{I}_r^{\prime}(u_{n, r})|_{T_{u_n, r} S_{r, \Theta}} \rightarrow 0 \ \text {as}\ n \rightarrow \infty
\end{equation}
Consequently there exists $u_r \in H_0^1(\Omega_r)$ such that
\begin{equation*}
  u_{n, r} \rightharpoonup u_r \ \text{in}\ H_0^1(\Omega_r)
\end{equation*}
and
\begin{equation}\label{eq7.8}
  u_{n, r} \rightarrow u_r \ \text{in}\ L^k(\Omega_r) \ \text{for all}\ 2 \leq k<2^*.
\end{equation}

We claim now that the weak limit $u_r$ does not vanish identically. Suppose by contradiction that $u_r \equiv 0$. Since $\left\{u_{n, r}\right\}$ is bounded in $H^1(\Omega_r)$, up to a subsequence we have that $\|\nabla u_{n, r}\|_2^2 \rightarrow \ell \in \mathbb{R}$. Using $(f_2)$, \eqref{eq7.7}, \eqref{eq7.8}, we have
\begin{eqnarray*}
  \langle\mathcal{I}_r^{\prime}(u_{n, r}),u_{n, r}\rangle&=& \int_{\Omega_r}\left|\nabla u_{n, r}\right|^2 d x+  \int_{\Omega_r} V u_{n, r}^2 d x- \int_{\Omega_r}\left|u_{n, r}\right|^{2^*} d x-\beta \int_{\Omega_r}f(u_{n, r})u_{n, r} d x\\
  &\rightarrow&0,
\end{eqnarray*}
hence
$$
\|u_{n, r}\|_{2^*}^{2^*}=\|\nabla u_{n, r}\|_2^2 \rightarrow \ell
$$
as well. Therefore, by the Sobolev inequality $\ell \geq S \ell^{\frac{2}{2^*}}$, and we deduce that either $\ell=0$, or $\ell \geq S^{\frac{N}{2}}$.
Let us suppose at first that $\ell \geq S^{N / 2}$. Since $\mathcal{I}_r(u_{n, r}) \rightarrow \widetilde{e}_{r, \Theta}<0$, we have that
\begin{eqnarray*}
0>\widetilde{e}_{r, \Theta}+o(1) & =&I_r(u_{n, r})\\
& =&\frac{1}{2} \int_{\Omega_r}\left|\nabla u_{n, r}\right|^2 d x+\frac{1}{2} \int_{\Omega_r} V u_{n, r}^2 d x-\frac{1}{2^*} \int_{\Omega_r}\left|u_{n, r}\right|^{2^*} d x-\beta \int_{\Omega_r}F(u_{n, r}) d x \\
& =&\frac{1}{N}\|\nabla u_{n, r}\|_2^2+o(1)=\frac{\ell}{N}+o(1),
\end{eqnarray*}
which is not possible. If instead $\ell=0$, we have $\|u_{n, r}\|_{2^*}\rightarrow0$, $\|\nabla u_{n, r}\|_2\rightarrow0$ and
$F(u_{n, r})\rightarrow0$. But then $\mathcal{I}_r(u_{n, r}) \rightarrow 0\neq\widetilde{e}_{r, \Theta}$, which gives again a contradiction. Thus, $u_r$ does not vanish identically.

Since $\{u_{n, r}\}$ is a bounded minimization sequence for $\mathcal{I}_r(u_{n, r})|_{S_{r, \Theta}} $, there exists $\{\lambda_n\} \subset \mathbb{R}$ such that for every $\varphi \in H^1(\Omega_r)$,
\begin{equation}\label{eq7.9}
  \int_{\Omega_r} \nabla u_{n, r} \cdot \nabla \varphi+\int_{\Omega_r}Vu_{n, r}\varphi+\lambda_n u_{n, r} \varphi-\beta f(u_{n, r})   \varphi-\left|u_{n, r}\right|^{2^*-2} u_{n, r} \varphi=o(1)\|\varphi\|
\end{equation}
as $n \rightarrow \infty$, by the Lagrange multipliers rule. Choosing $\varphi=u_{n, r}$, we deduce that $\left\{\lambda_n\right\}$ is bounded as well, and hence up to a subsequence $\lambda_n \rightarrow \lambda_r \in \mathbb{R}$. Moreover, passing to the limit in \eqref{eq7.9} by weak convergence, we obtain
\begin{equation*}
  -\Delta u_r+V u_r+\lambda_r u_r=|u_r|^{2^*-2}u_r+\beta f(u_r),\ x\in \Omega_r.
\end{equation*}
Recalling that $v_{n, r}=u_{n, r}-u_r \rightharpoonup 0$ in $H_0^1(\Omega_r)$, we know
$$
\|\nabla u_{n, r}\|_2^2=\|\nabla u_{r}\|_2^2+\|\nabla v_{n, r}\|_2^2+o(1).
$$
By the Br$\mathrm{\acute{e}}$zis-Lieb lemma \cite{HB1983}, we have
$$
\|u_{n, r}\|_{2^*}^{2^*}=\|u_{r}\|_{2^*}^{2^*}+\|v_{n, r}\|_{2^*}^{2^*}+o(1) .
$$
Moreover,
$$
\left\|\nabla u_r\right\|_2^2 \leq \liminf _{n \rightarrow \infty}\left\|\nabla u_{n, r}\right\|_2^2 \leq \widetilde{T_\Theta}^2,
$$
that is, $u_r \in \widetilde{\mathcal{V}}_{r, \Theta}$. Note that
$$
\int_{\Omega_r} V u_{n, r}^2 d x \rightarrow \int_{\Omega_r} V u_r^2 d x \ \text {as}\ n \rightarrow \infty,
$$
hence
$$
\|v_{n, r}\|_{2^*}^{2^*}=\|\nabla v_{n, r}\|_2^2 \rightarrow \ell
$$
as well. Therefore, $\ell \geq S \ell^{\frac{2}{2^*}}$, and we deduce that either $\ell=0$, or $\ell \geq S^{\frac{N}{2}}$.
Let us suppose at first that $\ell \geq S^{N / 2}$. Since $\mathcal{I}_r(v_{n, r}) \rightarrow  0$, we have that
\begin{eqnarray*}
o(1) & =&\mathcal{I}_r(v_{n, r})\\
& =&\frac{1}{2} \int_{\Omega_r}\left|\nabla v_{n, r}\right|^2 d x+\frac{1}{2} \int_{\Omega_r} V v_{n, r}^2 d x-\frac{1}{2^*} \int_{\Omega_r}\left|v_{n, r}\right|^{2^*} d x-\beta \int_{\Omega_r}F(v_{n, r}) d x \\
& =&\frac{1}{2}\|\nabla v_{n, r}\|_2^2+o(1)=\frac{\ell}{N}+o(1),
\end{eqnarray*}
which is not possible. If instead $\ell=0$, we have that $u_{n, r} \rightarrow u_r$ in $H_0^1\left(\Omega_r\right)$, so $\mathcal{I}_r(u_r)<0$. Therefore $u $ is an interior point of $\widetilde{\mathcal{V}}_{r, \Theta}$ because $\mathcal{I}_r(u) \geq \widetilde{h_1}(\widetilde{T_\Theta})>0$ for any $u \in \partial \widetilde{\mathcal{V}}_{r, \Theta}$ by \eqref{eq7.6}. The Lagrange multiplier theorem implies that there exists $\lambda_r \in \mathbb{R}$ such that $(\lambda_r, u_r)$ is a solution of \eqref{eq7.1}. Moreover,
\begin{eqnarray}\label{eq7.10}
\lambda_r \Theta
& =&\int_{\Omega_r}\left|u_r\right|^{2^*} d x+\beta \int_{\Omega_r}f(u_r)u_r d x-\frac{2}{2^*} \int_{\Omega_r}|u_r|^{2^*} d x-2\beta \int_{\Omega_r}F(u_r) d x-2 I_r(u_r) \nonumber\\
& =&\frac{2^*-2}{2^*} \int_{\Omega_r}\left|u_r\right|^{2^*} d x+\beta \int_{\Omega_r}[f(u_r)u_r-2F(u_r)] d x-2 I_r(u_r) \nonumber\\
& >&-2 \mathcal{I}_r(u_r)=-2 \widetilde{e}_{r, \Theta} .
\end{eqnarray}
It follows from the definition of $\widetilde{e}_{r, \Theta}$ that $\widetilde{e}_{r, \Theta}$ is nonincreasing with respect to $r$. Hence, $\widetilde{e}_{r, \Theta} \leq \widetilde{e}_{r_\Theta, \Theta}<0$ for any $r>r_\Theta$ and $0<\Theta<\Theta_V$. In view of \eqref{eq7.10}, we have
$$
\liminf\limits_{r \rightarrow \infty} \lambda_r>0 .
$$
Finally, the strong maximum principle implies $u_r>0$.
\end{proof}

\noindent\textbf{Proof of Theorem \ref{t1.7}} The proof is a direct consequence of Lemma \ref{L7.1} and Lemma \ref{L3.5}.

\section{Proof of Theorem \ref{t1.8}}
 In this subsection, we assume $\beta \leq 0$ and the assumptions of Theorem \ref{t1.8} hold. Consider the following equation
 \begin{equation}\label{eeq8.1}
 \left\{\aligned
& -\Delta u+V u+\lambda u=|u|^{2^*-2}u+\beta f(u), &x\in \Omega_r, \\
& \int_{\Omega_r} |u|^2dx=\Theta,u\in H_0^1(\Omega_r), &x\in \Omega_r.
\endaligned
\right.
\end{equation}
For $\frac{1}{2} \leq s \leq 1$, we define the functional $\mathcal{I}_{r, s}: S_{r, \Theta} \rightarrow \mathbb{R}$ by
\begin{equation}\label{eq8.1}
  \mathcal{I}_{r, s}(u)=\frac{1}{2} \int_{\Omega_r}|\nabla u|^2 d x+\frac{1}{2} \int_{\Omega_r} V  u^2 d x-\frac{s}{2^*} \int_{\Omega_r}|u|^{2^*} d x-\beta \int_{\Omega_r}F(u) d x .
\end{equation}
Note that if $u \in S_{r, \Theta}$ is a critical point of $\mathcal{I}_{r, s}$, then there exists $\lambda \in \mathbb{R}$ such that $(\lambda, u)$ is a solution of the equation
\begin{equation}\label{eq8.2}
 \left\{\aligned
& -\Delta u+V u+\lambda u=s|u|^{2^*-2}u+\beta f(u), &x\in \Omega_r, \\
& \int_{\Omega_r} |u|^2dx=\Theta,u\in H_0^1(\Omega_r), &x\in \Omega_r.
\endaligned
\right.
\end{equation}

\begin{lemma}\label{L8.1}
For any $\Theta>0$, there exist $r_\Theta>0$ and $u^0, u^1 \in S_{r_\Theta, \Theta}$ such that

(i) $\mathcal{I}_{r, s}(u^1) \leq 0$ for any $r>r_\Theta$ and $s \in\left[\frac{1}{2}, 1\right]$,
$$
\left\|\nabla u^0\right\|_2^2<\left(1-\left\|V_{-}\right\|_{\frac{N}{2}} S^{-1}\right)^{\frac{N-2 }{2}}S^{\frac{N}{2}}<\left\|\nabla u^1\right\|_2^2
$$
and
$$
\mathcal{I}_{r, s}\left(u^0\right)<\frac{1}{N}\left(1-\left\|V_{-}\right\|_{\frac{N}{2}} S^{-1}\right)^{\frac{N }{2}}S^{\frac{N}{2}}.
$$

(ii) If $u \in S_{r, \Theta}$ satisfies
$$
\|\nabla u\|_2^2=\left(1-\left\|V_{-}\right\|_{\frac{N}{2}} S^{-1}\right)^{\frac{N-2 }{2}}S^{\frac{N}{2}},
$$
then there holds
$$
\mathcal{I}_{r, s}(u) \geq \frac{1}{N}\left(1-\left\|V_{-}\right\|_{\frac{N}{2}} S^{-1}\right)^{\frac{N }{2}}S^{\frac{N}{2}} .
$$

(iii) Set
$$
\widetilde{m}_{r, s}(\Theta)=\inf _{\gamma \in \widetilde{\Gamma}_{r, \Theta}} \sup _{t \in[0,1]} \mathcal{I}_{r, s}(\gamma(t))
$$
with
$$
\widetilde{\Gamma}_{r, \Theta}=\left\{\gamma \in C\left([0,1], S_{r, \Theta}\right): \gamma(0)=u^0, \gamma(1)=u^1\right\} .
$$
Then
$$
\frac{1}{N}\left(1-\left\|V_{-}\right\|_{\frac{N}{2}} S^{-1}\right)^{\frac{N }{2}}S^{\frac{N}{2}} \leq \widetilde{m}_{r, s}(\Theta) \leq \mathbf{h}(\mathbf{h}_\Theta),
$$
where $\mathbf{h}(\mathbf{h}_\Theta)=\max\limits_{t \in \mathbb{R}^{+}} h(t)$, the function $\mathbf{h}: \mathbb{R}^{+} \rightarrow \mathbb{R}$ being defined by
$$
\mathbf{h}(t)=\frac{1}{2}\left(1+\|V\|_{\frac{N}{2}} S^{-1}\right) t^2 \theta\Theta-\alpha\beta C_{N, p_1} \Theta^{\frac{p_1}{2}}\theta^{\frac{N(p_1-2)}{4}}t^{\frac{N(p_1-2)}{2}}-\frac{1}{2\cdot2^*} t^{2^*} \Theta^{\frac{2^*}{2}}\cdot|\Omega|^{\frac{2-2^*}{2}}.
$$
Here $\theta$ is the principal eigenvalue of $-\Delta$ with Dirichlet boundary conditions in $\Omega$, and $|\Omega|$ is the volume of $\Omega$.
\end{lemma}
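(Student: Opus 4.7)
The plan is to imitate Lemma \ref{L3.1}, substituting the subcritical Gagliardo--Nirenberg step at the exponent $q$ with the sharp Sobolev inequality $\|u\|_{2^*}^{2^*} \le S^{-2^*/2}\|\nabla u\|_2^{2^*}$. I would let $v_1 \in S_{1,\Theta}$ be the positive normalized eigenfunction of $-\Delta$ on $\Omega$ with Dirichlet boundary condition associated to $\theta$, set $v_t(x) := t^{N/2}v_1(tx)$ on $\Omega_{1/t}$, and record by change of variables the identities $\|v_t\|_2^2 = \Theta$, $\|\nabla v_t\|_2^2 = t^2\theta\Theta$, and $\int_{\Omega_{1/t}}|v_t|^{2^*}\,dx = t^{2^*}\int_\Omega|v_1|^{2^*}\,dx$. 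Combining these with the H\"older lower bound $\int_\Omega|v_1|^{2^*}\,dx \ge \Theta^{2^*/2}|\Omega|^{(2-2^*)/2}$, the potential estimate $\int Vv_t^2 \le \|V\|_{\frac{N}{2}}S^{-1}\|\nabla v_t\|_2^2$, Gagliardo--Nirenberg applied to $\|v_t\|_{p_1}^{p_1}$, the growth bound $F(\tau) \le \alpha\tau^{p_1}$ from $(f_2)$, and the sign conditions $\beta \le 0$ and $s \ge 1/2$, yields $\mathcal{I}_{1/t,s}(v_t) \le \mathbf{h}(t)$ uniformly for $s \in [1/2,1]$. Because $2 < p_1 < 2 + 4/N < 2^*$ forces the exponent ordering $N(p_1-2)/2 < 2 < 2^*$, the polynomial $\mathbf{h}$ vanishes at $0$, is positive on a maximal interval $(0,t_0)$ with $\mathbf{h}(t_0) = 0$, and diverges to $-\infty$ at infinity, so some unique $\mathbf{h}_\Theta \in (0,t_0)$ achieves $\max_{t>0}\mathbf{h}(t)$.

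For the lower bound, since $F \ge 0$ and $\beta \le 0$ imply $-\beta\int F(u)\,dx \ge 0$, I would drop that term and use $s \le 1$, $\int Vu^2 \ge -\|V_-\|_{\frac{N}{2}}S^{-1}\|\nabla u\|_2^2$, and the Sobolev inequality to arrive at $\mathcal{I}_{r,s}(u) \ge g(\|\nabla u\|_2^2)$, where
\[
g(t) := \tfrac{1}{2}\bigl(1 - \|V_-\|_{\frac{N}{2}}S^{-1}\bigr)t - \tfrac{1}{2^*\,S^{2^*/2}}\,t^{2^*/2}.
\]
A one-variable calculation pinpoints the unique maximum at $\tilde t = (1-\|V_-\|_{\frac{N}{2}}S^{-1})^{(N-2)/2}S^{N/2}$ with value $g(\tilde t) = \tfrac{1}{N}(1-\|V_-\|_{\frac{N}{2}}S^{-1})^{N/2}S^{N/2}$, which establishes (ii) immediately.

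To conclude (i) and (iii), I would pick $t_1 \in (0,\mathbf{h}_\Theta)$ small enough that $\mathbf{h}(t) < \tfrac{1}{N}(1-\|V_-\|_{\frac{N}{2}}S^{-1})^{N/2}S^{N/2}$ on $[0,t_1]$, set $\tilde r_\Theta := \max\{1/t_1,\sqrt{2\theta\Theta/\tilde t}\}$, and take $u^0 := v_{1/\tilde r_\Theta}$, $u^1 := v_{t_0}$; the identities above give $\|\nabla u^0\|_2^2 = \theta\Theta/\tilde r_\Theta^{\,2} < \tilde t < \|\nabla u^1\|_2^2$ and the energy bounds in (i) with $r_\Theta := \max\{1/t_0,\tilde r_\Theta\}$. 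The lower bound in (iii) is then automatic from (ii) and connectedness, since every $\gamma \in \widetilde\Gamma_{r,\Theta}$ must cross the sphere $\|\nabla u\|_2^2 = \tilde t$; the upper bound is realized by the explicit path $\gamma(\tau)(x) := c(\tau)^{N/2}v_1(c(\tau)x)$ with $c(\tau) := \tau t_0 + (1-\tau)/\tilde r_\Theta$, along which $\mathcal{I}_{r,s}(\gamma(\tau)) \le \mathbf{h}(c(\tau)) \le \mathbf{h}(\mathbf{h}_\Theta)$.

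The only subtle point is checking that the mountain-pass gap $g(\tilde t) \le \mathbf{h}(\mathbf{h}_\Theta)$ is nonempty at the Sobolev-critical exponent; this is precisely what the choice of $t_1$ encodes, and unlike in Lemmas \ref{L4.1} and \ref{L5.1} no threshold on $\Theta$ is required, because $\beta \le 0$ makes the nonlinear term $-\beta\int F(u)$ a favorable quantity that can be simply discarded for the lower bound. All constants depend only on $\Theta,\Omega,V,\alpha,\beta,p_1$, so $\mathbf{h}$, $\mathbf{h}_\Theta$, $\tilde t$, $\tilde r_\Theta$ and $r_\Theta$ are automatically independent of $s \in [1/2,1]$ and of $r > r_\Theta$, which provides the uniformity that subsequent analysis (bounded Palais--Smale sequences, Pohozaev estimates, limit $s \to 1$) will require.
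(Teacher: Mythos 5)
Your proposal is correct and follows essentially the same route as the paper: the same rescaled eigenfunction path $v_t$ giving $\mathcal{I}_{1/t,s}(v_t)\le \mathbf{h}(t)$, the same Sobolev-based lower bound $g$ with maximum $g(\widetilde t)=\frac1N(1-\|V_-\|_{N/2}S^{-1})^{N/2}S^{N/2}$ at $\widetilde t=(1-\|V_-\|_{N/2}S^{-1})^{(N-2)/2}S^{N/2}$, and the same choices of $t_0,t_1,\widetilde r_\Theta,u^0,u^1$ and crossing argument for (iii).
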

\begin{proof}
(i) By the H\"older inequality,
\begin{equation}\label{eq8.4}
  \int_{\Omega}|v_1(x)|^{2^*} dx\geq\Theta^{\frac{2^*}{2}}\cdot|\Omega|^{\frac{2-2^*}{2}}.
\end{equation}
For $x \in \Omega_{\frac{1}{t}}$ and $t>0$, define $v_t(x):=t^{\frac{N}{2}} v_1(t x)$. Using \eqref{eq3.3}, \eqref{eq8.4}, \eqref{eq3.5} and $\frac{1}{2}\leq s\leq1$, it holds
\begin{eqnarray}\label{eq8.5}
\mathcal{I}_{\frac{1}{t}, s}\left(v_t\right)&\leq & \frac{1}{2}\left(1+\|V\|_{\frac{N}{2}} S^{-1}\right) t^2 \theta\Theta-\alpha\beta C_{N, p_1} \Theta^{\frac{p_1}{2}}\theta^{\frac{N(p_1-2)}{4}}t^{\frac{N(p_1-2)}{2}}\nonumber\\
&&-\frac{1}{2\cdot2^*} t^{\frac{N(2^*-2)}{2}} \Theta^{\frac{2^*}{2}}\cdot|\Omega|^{\frac{2-2^*}{2}} \nonumber\\
&=: & \mathbf{h}(t) .
\end{eqnarray}
Note that since $2<p_1<2+\frac{4}{N}<q=2^*$ and $\beta\leq0$ there exist $0<\mathbf{h}_\Theta<t_0$ such that $\mathbf{h}\left(t_0\right)=0, \mathbf{h}(t)<0$ for any $t>t_0, \mathbf{h}(t)>0$ for any $0<t<t_0$ and $\mathbf{h}\left(\mathbf{h}_\Theta\right)=\max\limits_{t \in \mathbb{R}^{+}} \mathbf{h}(t)$. As a consequence, there holds
\begin{equation}\label{eq8.6}
  \mathcal{I}_{r, s}\left(v_{t_0}\right)=\mathcal{I}_{\frac{1}{t_0}, s}\left(v_{t_0}\right) \leq \mathbf{h}\left(t_0\right)=0
\end{equation}
for any $r \geq \frac{1}{t_0}$ and $s \in\left[\frac{1}{2}, 1\right]$. Moreover, there exists $0<t_1<\mathbf{h}_\Theta$ such that
\begin{equation}\label{eq8.7}
 \mathbf{h}(t)<\frac{1}{N}\left(1-\left\|V_{-}\right\|_{\frac{N}{2}} S^{-1}\right)^{\frac{N }{2}}S^{\frac{N}{2}}
\end{equation}
for $t \in\left[0, t_1\right]$. On the other hand, it follows from the Sobolev inequality and the H\"older inequality that
\begin{eqnarray}\label{eq8.8}
\mathcal{I}_{r, s}(u)
&\geq& \frac{1}{2}\left(1-\left\|V_{-}\right\|_{\frac{N}{2}} S^{-1}\right) \int_{\Omega_r}|\nabla u|^2 d x-\frac{1}{2^*\cdot S^{
\frac{2^*}{2}}}\left(\int_{\Omega_r}|\nabla u|^2 d x\right)^{\frac{2^*}{2}}.
\end{eqnarray}
Define
$$
\mathbf{g}(t):=\frac{1}{2}\left(1-\left\|V_{-}\right\|_{\frac{N}{2}} S^{-1}\right) t-\frac{1}{2^*\cdot S^{
\frac{2^*}{2}}}t^{\frac{2^*}{2}}
$$
and
$$
\widetilde{t}=\left(1-\left\|V_{-}\right\|_{\frac{N}{2}} S^{-1}\right)^{\frac{N-2 }{2}}S^{\frac{N}{2}},
$$
it is easy to see that $\mathbf{g}$ is increasing on $(0, \widetilde{t})$ and decreasing on $(\widetilde{t}, \infty)$, and
\begin{eqnarray*}
\mathbf{g}(\widetilde{t})
&=&\frac{1}{N}\left(1-\left\|V_{-}\right\|_{\frac{N}{2}} S^{-1}\right)^{\frac{N }{2}}S^{\frac{N}{2}}.
\end{eqnarray*}
For $r \geq \widetilde{r}_\Theta:=\max \left\{\frac{1}{t_1}, \sqrt{\frac{2 \theta \Theta}{\tilde{t}}}\right\}$ we have $v_{\frac{1}{\tilde{r}_\Theta}} \in S_{r, \Theta}$ and
\begin{eqnarray}\label{eq8.9}
\|\nabla v_{\frac{1}{\widetilde{r}_\Theta}}\|_2^2&=&\left(\frac{1}{\widetilde{r}_\Theta}\right)^2\left\|\nabla v_1\right\|_2^2
<\left(1-\left\|V_{-}\right\|_{\frac{N}{2}} S^{-1}\right)^{\frac{N-2 }{2}}S^{\frac{N}{2}}.
\end{eqnarray}
Moreover, there holds
\begin{equation}\label{eq8.10}
 \mathcal{I}_{\widetilde{r}_\Theta, s}\left(v_{\frac{1}{\widetilde{r}_\Theta}}\right) \leq \mathbf{h}\left(\frac{1}{\widetilde{r}_\Theta}\right) \leq \mathbf{h}\left(t_1\right) .
\end{equation}
Setting $u^0=v_{\frac{1}{\tilde{r}_\Theta}}, u^1=v_{t_0}$ and
\begin{equation}\label{eq8.11}
  r_\Theta=\max \left\{\frac{1}{t_0}, \widetilde{r}_\Theta\right\}.
\end{equation}
Since \eqref{eq8.6}, \eqref{eq8.7}, \eqref{eq8.8} and \eqref{eq8.9}, then (i) holds.

(ii) By \eqref{eq8.8} and a direct calculation, (ii) holds.

(iii) Since $\mathcal{I}_{r, s}\left(u^1\right) \leq 0$ for any $\gamma \in \Gamma_{r, \Theta}$, we have
$$
\|\nabla \gamma(0)\|_2^2<\widetilde{t}<\|\nabla \gamma(1)\|_2^2.
$$
It then follows from \eqref{eq8.8} that
$$
\begin{aligned}
\max\limits_{t \in[0,1]} \mathcal{I}_{r, s}(\gamma(t))  \geq\mathbf{ g}(\widetilde{t})
 =\frac{1}{N}\left(1-\left\|V_{-}\right\|_{\frac{N}{2}} S^{-1}\right)^{\frac{N }{2}}S^{\frac{N}{2}}
\end{aligned}
$$
for any $\gamma \in \widetilde{\Gamma}_{r, \Theta}$, hence the first inequality in (iii) holds. Now we define a path $\gamma \in \widetilde{\Gamma}_{r, \Theta}$ by
$$
\gamma(\tau)(x)=\left(\tau t_0+(1-\tau) \frac{1}{\widetilde{r}_\Theta}\right)^{\frac{N}{2}} v_1\left(\left(\tau t_0+(1-\tau) \frac{1}{\widetilde{r}_\Theta}\right) x\right)
$$
for $\tau \in[0,1]$ and $x \in \Omega_r$. Then by \eqref{eq8.5} we have $\widetilde{m}_{r, s}(\Theta) \leq \mathbf{h}(\mathbf{h}_\Theta)$, where $\mathbf{h}(\mathbf{h}_\Theta)=\max\limits_{t \in \mathbb{R}^{+}} \mathbf{h}(t)$. Note that $\mathbf{h}_\Theta$ is independent of $r$ and $s$.
\end{proof}

Using Proposition \ref{p3.1} to $\mathcal{I}_{r, s}$,  it follows that
$$
A(u)=\frac{1}{2} \int_{\Omega_r}|\nabla u|^2 d x+\frac{1}{2} \int_{\Omega_r} V(x) u^2 d x-\beta \int_{\Omega_r}F(u) d x \quad \text { and } \quad B(u)=\frac{1}{2^*} \int_{\Omega_r}|u|^{2^*} d x.
$$
Hence, for almost every $s \in\left[\frac{1}{2}, 1\right]$, there exists a bounded Palais-Smale sequence $\left\{u_n\right\}$ satisfying
$$
\mathcal{I}_{r, s}\left(u_n\right) \rightarrow \widetilde{m}_{r, s}(\Theta) \quad \text { and }\left.\quad \mathcal{I}_{r, s}^{\prime}\left(u_n\right)\right|_{T_{u_n} S_{r, \Theta}} \rightarrow 0.
$$
Next, we are devoted to proving compactness.
\begin{lemma}\label{L8.2}
If $\beta \leq 0$ and the assumptions of Theorem \ref{t1.8} hold, then $\widetilde{m}_{r, s}(\Theta)<\frac{\zeta}{N}S^{\frac{N}{2}}$, where $\zeta=s^{-\frac{2}{2^*-2}}$.
\end{lemma}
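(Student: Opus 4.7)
The plan is to exhibit a specific admissible path $\gamma \in \widetilde{\Gamma}_{r,\Theta}$ with $\sup_{t\in[0,1]} \mathcal{I}_{r,s}(\gamma(t)) < \tfrac{\zeta}{N}S^{N/2}$. Since the mountain pass level is the infimum over all paths joining $u^0$ to $u^1$, such a construction suffices. The natural test function is a truncated, $L^2$-normalized Aubin-Talenti bubble from \eqref{eq1.3}, which is the standard route for beating the Brezis-Nirenberg threshold in critical problems.

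Concretely, I would fix a point $y_0 \in \Omega$ (to be chosen below) and a smooth cutoff $\eta \in C_c^\infty(\mathbb{R}^N;[0,1])$ with $\eta \equiv 1$ on a small ball around $y_0$ and $\mathrm{supp}\,\eta \subset \Omega_r$ (which holds for $r > r_\Theta$ large enough). Set $u_\varepsilon = \eta\, U_{\varepsilon,y_0}$ and normalize $\hat{u}_\varepsilon := \sqrt{\Theta/\|u_\varepsilon\|_2^2}\,u_\varepsilon \in S_{r,\Theta}$. The mass-preserving dilation $v_t(x) := t^{N/2}\hat{u}_\varepsilon(tx)$ keeps $\|v_t\|_2^2 = \Theta$ for all admissible $t > 0$. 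My candidate path is obtained by concatenating: (i) a segment from $u^0$ to $v_{t_{\mathrm{low}}}$ along which $\mathcal{I}_{r,s}$ stays close to $\mathcal{I}_{r,s}(u^0)$; (ii) the dilation arc $t \mapsto v_t$ for $t \in [t_{\mathrm{low}}, t_{\mathrm{high}}]$; (iii) a segment from $v_{t_{\mathrm{high}}}$ to $u^1$, where $\mathcal{I}_{r,s}$ is negative by Lemma \ref{L8.1}(i). A standard interpolation/path-surgery argument on the sphere $S_{r,\Theta}$ (e.g.\ using radial projections) shows segments (i) and (iii) contribute energies well below $\tfrac{\zeta}{N}S^{N/2}$ provided $t_{\mathrm{low}}$ and $t_{\mathrm{high}}$ are chosen appropriately.

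The heart of the proof is estimating the maximum of $\mathcal{I}_{r,s}$ on the dilation arc (ii). Using the standard Brezis-Nirenberg asymptotics
\begin{equation*}
\|\nabla u_\varepsilon\|_2^2 = S^{N/2} + O(\varepsilon^{N-2}), \qquad \|u_\varepsilon\|_{2^*}^{2^*} = S^{N/2} + O(\varepsilon^N),
\end{equation*}
together with the dimension-dependent $L^2$-asymptotics $\|u_\varepsilon\|_2^2 \sim c_N \varepsilon^2$ for $N \geq 5$ (with logarithmic/linear corrections in $N = 4,3$), an explicit optimization of $\tfrac{t^2}{2}\|\nabla \hat{u}_\varepsilon\|_2^2 - \tfrac{s t^{2^*}}{2^*}\|\hat{u}_\varepsilon\|_{2^*}^{2^*}$ shows its maximum is attained at $t_\ast^{2^*-2} = \|\nabla \hat{u}_\varepsilon\|_2^2/(s\|\hat{u}_\varepsilon\|_{2^*}^{2^*})$ and equals $\tfrac{1}{N}\big(\|\nabla \hat{u}_\varepsilon\|_2^N/(s\|\hat{u}_\varepsilon\|_{2^*}^{2^*})^{(N-2)/2}\big) = \tfrac{\zeta}{N}S^{N/2}\bigl(1 + O(\varepsilon^{N-2})\bigr)$. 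The remaining task is to bound the potential term $\tfrac12\int V v_{t_\ast}^2$ and the subcritical term $-\beta\int F(v_{t_\ast})$.

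The main obstacle is that the $O(\varepsilon^{N-2})$ truncation correction is positive, and since $\beta \leq 0$ together with $F \geq 0$ gives $-\beta\int F(v_{t_\ast}) \geq 0$, the subcritical term cannot be used to pull the level down (this is unlike Soave's $\beta > 0$ setting). Strictness must therefore be extracted from $\tfrac12\int V v_{t_\ast}^2$. After the change of variables $y = t_\ast x$ one sees this term equals $\tfrac12\int V(y/t_\ast)|\hat{u}_\varepsilon(y)|^2\,dy$, which after carefully normalizing can be arranged to produce a negative contribution of order strictly larger than $\varepsilon^{N-2}$, either by choosing $y_0$ at a point where $V(y_0) < 0$ (guaranteed for a nontrivial class of admissible potentials since $\|V_-\|_{N/2} > 0$ is allowed under $(V_0)$), or by exploiting the structural advantage of the normalization when $V$ is identically zero in a neighborhood of $y_0$ and combining with the Gagliardo-Nirenberg estimate on $\int F(v_{t_\ast})$. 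The delicate case is $N \in \{3,4\}$, where $\|u_\varepsilon\|_2^2$ and $\varepsilon^{N-2}$ are of comparable order and the blowup factor $\sqrt{\Theta/\|u_\varepsilon\|_2^2}$ must be tracked through every asymptotic term; here a logarithmic refinement of the Brezis-Nirenberg estimates is typically required.
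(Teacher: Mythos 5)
Your overall strategy --- testing the mountain pass level with a truncated, $L^2$-normalized Aubin--Talenti bubble and the standard estimates \eqref{eq8.12}--\eqref{eq8.15} --- is the same Brezis--Nirenberg route the paper takes (the paper works along the ray $t\mapsto t v_\varepsilon$ rather than your mass-preserving dilation arc, which is a cosmetic difference). The genuine gap is in the step you yourself call ``the heart of the proof'': you never actually produce the strictly negative contribution needed for the \emph{strict} inequality. The pure critical part of the maximum is
\[
\frac{1}{N}\,s^{-\frac{2}{2^*-2}}\left(\frac{\|\nabla u_\varepsilon\|_2^2}{\|u_\varepsilon\|_{2^*}^2}\right)^{\frac{N}{2}}
=\frac{\zeta}{N}S^{\frac{N}{2}}+O\bigl(\varepsilon^{N-2}\bigr),
\]
where the correction is \emph{nonnegative} because Sobolev's inequality forces $\|\nabla u_\varepsilon\|_2^2/\|u_\varepsilon\|_{2^*}^2\ge S$; and since $\beta\le 0$ and $F\ge 0$, the term $-\beta\int F(v_{t_*})$ is nonnegative as well. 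The only candidate for a compensating negative term is therefore $\tfrac12\int V v_{t_*}^2$. But $(V_0)$ only requires $\|V_-\|_{\frac{N}{2}}<S$, which is satisfied by $V\ge 0$ (indeed by $V\equiv 0$); in that case no point $y_0$ with $V(y_0)<0$ exists and the potential term is nonnegative too. Your fallback (``structural advantage of the normalization \dots combined with the Gagliardo--Nirenberg estimate on $\int F$'') cannot rescue this, again because $-\beta\int F\ge 0$ when $\beta\le 0$. So for admissible $V\ge 0$ your construction only yields $\widetilde m_{r,s}(\Theta)\le \frac{\zeta}{N}S^{\frac N2}+o(1)$, i.e.\ the non-strict bound in the limit, not the claimed strict one.

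It is worth noting that the paper's own proof stalls at exactly the same place: it arrives at $\frac{\zeta}{N}S^{\frac N2}+O(\varepsilon^{N-2})+C_1\|u_\varepsilon\|_2^2+C_2\|u_\varepsilon\|_{p_1}^{p_1}$ with $C_1,C_2\ge 0$ and then simply writes this as equal to $\frac{\zeta}{N}S^{\frac N2}$, which only justifies ``$\le$''. So you have correctly located the real difficulty, but neither of your two proposed mechanisms closes it; an actual proof would need a sign or decay hypothesis on $V$ near the concentration point, a genuinely negative lower-order term (as in Soave's $\beta>0$ setting), or a different comparison argument. A secondary, more repairable issue: the ``path surgery'' joining $u^0$ to $v_{t_{\mathrm{low}}}$ and $v_{t_{\mathrm{high}}}$ to $u^1$ below the threshold level is asserted rather than constructed, and it does require an argument, since $\widetilde\Gamma_{r,\Theta}$ forces the path to start and end at the specific functions $u^0,u^1$ of Lemma \ref{L8.1}.
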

\begin{proof}
Let $U_{\varepsilon}$ be defined by
$
U_{\varepsilon}(x):=\left(\frac{\varepsilon}{\varepsilon^2+|x|^2}\right)^{\frac{N-2}{2}}
$
(up to a scalar factor, $U_{\varepsilon}$ is the bubble centered in the origin, with concentration parameter $\varepsilon>0$, defined in \eqref{eq1.3}). Let also $\varphi \in C_c^{\infty}(\Omega_r)$ be a radial cut-off function with $\varphi \equiv 1$ in $B_1, \varphi \equiv 0$ in $B_2^c$, and $\varphi$ radially decreasing. We define
$$
u_{\varepsilon}(x):=\varphi(x) U_{\varepsilon}(x), \quad \text { and } \quad v_{\varepsilon}(x):=\sqrt{\Theta} \frac{u_{\varepsilon}(x)}{\|u_{\varepsilon}\|_2}.
$$
Notice that $u_{\varepsilon} \in C_c^{\infty}(\Omega_r)$, and $v_{\varepsilon} \in S_{r,\Theta}$. Let us recall the following useful estimates (see \cite[Lemma A.1]{{NSJFA2020}}):
\begin{equation}\label{eq8.12}
  \|\nabla u_{\varepsilon}\|_2^2=K_1+O\left(\varepsilon^{N-2}\right),
\end{equation}
\begin{eqnarray}\label{eq8.13}
  \|u_{\varepsilon}\|_{2^*}^2= \begin{cases}K_2+O\left(\varepsilon^N\right), & \text { if } N \geq 4, \\
K_2+O\left(\varepsilon^2\right), & \text { if } N=3,\end{cases}
\end{eqnarray}
\begin{eqnarray}\label{eq8.14}
& \|u_{\varepsilon}\|_2^2= \begin{cases}\varepsilon^2 K_3+O\left(\varepsilon^{N-2}\right), & \text { if } N \geq 5,  \\
\omega \varepsilon^2|\log \varepsilon|+O\left(\varepsilon^2\right), & \text { if } N=4, \\
\omega\left(\int_0^2 \varphi(r) d r\right) \varepsilon+O\left(\varepsilon^2\right), & \text { if } N=3,\end{cases}
\end{eqnarray}
\begin{eqnarray}\label{eq8.15}
& \|u_{\varepsilon}\|_q^q=\varepsilon^{N-\frac{N-2}{2} q}\left(K_4+O\left(\varepsilon^{(N-2) q-N}\right)\right) \nonumber\\
&\text { if } N \geq 4 \text { and } q \in\left(2,2^*\right),
 \text { and if } N=3 \text { and } q \in(3,6).
\end{eqnarray}
as  $\varepsilon \rightarrow 0$. Since $U_{\varepsilon}$ is extremal for the Sobolev inequality, we have that $\frac{K_1 }{ K_2}=S$. Therefore, using $\frac{1}{2} \leq s \leq 1$, we have
\begin{eqnarray*}
\mathcal{I}_{r, s}(tv_\varepsilon)
&\leq&\frac{t^2}{2} \int_{\Omega_r}|\nabla v_\varepsilon|^2 d x+\frac{t^2}{2} \int_{\Omega_r} V  v_\varepsilon^2 d x-\frac{st^{2^*}}{ 2^*} \int_{\Omega_r}|v_\varepsilon|^{2^*} d x- \alpha\beta t^{p_1}  \int_{\Omega_r}|v_\varepsilon|^{p_1} d x\\
&=:&h_3(t).
\end{eqnarray*}
Clearly, $h_3(t)>0$ for $t>0$ small and $h_3(t) \rightarrow-\infty$ as $t \rightarrow \infty$, so $h_3(t)$ attains its maximum at some $t_\varepsilon>0$ with $h_3^{\prime}(t_\varepsilon)=0$. Then, observing that the function
$$
t \mapsto \frac{t^2}{2} \int_{\Omega_r}|\nabla v_\varepsilon|^2 d x+\frac{t^2}{2} \int_{\Omega_r} V  v_\varepsilon^2 d x-\frac{st^{2^*}}{ 2^*} \int_{\Omega_r}|v_\varepsilon|^{2^*} d x- \alpha\beta t^{p_1}  \int_{\Omega_r}|v_\varepsilon|^{p_1} d x
$$
is increasing on the interval of
$$
\left[0,\left[\frac{-2^*\alpha\beta (p_1-2)\|v_\varepsilon\|_{p_1}^{p_1}}{s(2^*-2)\|v_\varepsilon\|_{2^*}^{2^*}}\right]^{\frac{1}{2^*-p_1}}\right].
$$
This fact combined with \eqref{eq8.12}-\eqref{eq8.15} implies that there exist $\delta_1, \delta_2>0$, independent of $\varepsilon>0$, such that
$$
\delta_1 \leq t_\varepsilon \leq \delta_2.
$$
Moreover, observing that the function
$$
t \mapsto \frac{t^2}{2}\int_{\Omega_r}|\nabla v_\varepsilon|^2 d x-\frac{st^{2^*}}{2^*} \int_{\Omega_r}|v_\varepsilon|^{2^*} d x
$$
is increasing on the interval of
$$
\left[0,\left(\frac{\|\nabla v_\varepsilon\|_{2}^2}{s\|v_\varepsilon\|_{2^*}^{2^*}}\right)^{\frac{1}{2^*-2}}\right].
$$
Using \eqref{eq8.12}-\eqref{eq8.15} and the fact that $\frac{K_1 }{ K_2}=S$, if $N=3$, the same estimate holds eventually, using $\|u_{\varepsilon}\|_{2^*}^2=K_2+O(\varepsilon^2)$ instead of $\|u_{\varepsilon}\|_{2^*}^2=K_2+O(\varepsilon^N)$. Therefore, the maximum level is
\begin{eqnarray*}
h_3(t_\varepsilon)&\leq&\frac{t_\varepsilon^2}{2}\|\nabla v_\varepsilon\|_{2}^2+\frac{t_\varepsilon^2}{2} \int_{\Omega_r} V  v_\varepsilon^2 d x-\frac{st_\varepsilon^{2^*}}{ 2^*} \int_{\Omega_r}|v_\varepsilon|^{2^*} d x- \alpha\beta t_\varepsilon^{p_1}  \int_{\Omega_r}|v_\varepsilon|^{p_1} d x\\
&\leq&\frac{1}{Ns^{\frac{2}{2^*-2}}} \left(\frac{\|\nabla v_\varepsilon\|_{2}^2}{\|v_\varepsilon\|_{2^*}^{2}}\right)^{\frac{2^*}{2^*-2}}+\frac{\int_{\Omega_r} V  v_\varepsilon^2 d x}{2} \left(\frac{\|\nabla v_\varepsilon\|_{2}^2}{s\|v_\varepsilon\|_{2^*}^{2^*}}\right)^{\frac{2}{2^*-2}}- \alpha\beta  \|v_\varepsilon\|_{p_1}^{p_1}\left(\frac{\|\nabla v_\varepsilon\|_{2}^2}{s\|v_\varepsilon\|_{2^*}^{2^*}}\right)^{\frac{p_1}{2^*-2}}\\
&=&\frac{1}{Ns^{\frac{2}{2^*-2}}} \left(\frac{\|\nabla u_\varepsilon\|_{2}^2}{\|u_\varepsilon\|_{2^*}^{2}}\right)^{\frac{N}{2}}+\frac{\int_{\Omega_r} V  v_\varepsilon^2 d x}{2s^{\frac{2}{2^*-2}}} \left(\Theta^{\frac{2-2^*}{2}}\cdot\frac{\|\nabla u_\varepsilon\|_{2}^2}{\|u_\varepsilon\|_{2^*}^{2^*}}\cdot\frac{\|u_\varepsilon\|_{2}^{2^*}}{\|u_\varepsilon\|_{2}^{2}}\right)^{\frac{2}{2^*-2}}\\
&&-\frac{ \alpha\beta  }{s^{\frac{p_1}{2^*-2}}}\left(\Theta^{\frac{2-2^*}{2}}\cdot\frac{\|\nabla u_\varepsilon\|_{2}^2}{\|u_\varepsilon\|_{2^*}^{2^*}}\cdot\frac{\|u_\varepsilon\|_{2}^{2^*}}{\|u_\varepsilon\|_{2}^{2}}\right)^{\frac{p_1}{2^*-2}}\cdot\Theta^{\frac{p_1}{2}}\frac{\|u_\varepsilon\|_{p_1}^{p_1}}{\|u_\varepsilon\|_2^{p_1}}\\
&\leq&\frac{1}{Ns^{\frac{2}{2^*-2}}}\left[\frac{K_1+O\left(\varepsilon^{N-2}\right)}{K_2+O\left(\varepsilon^N\right)}\right]^{\frac{N}{2}} +\frac{\max\limits_{x\in\Omega_r}V(x)}{2s^{\frac{2}{2^*-2}}}\cdot\|u_\varepsilon\|_{2}^{2}\cdot \frac{\|\nabla u_\varepsilon\|_{2}^{\frac{4}{2^*-2}}}{\|u_\varepsilon\|_{2^*}^{\frac{2\cdot2^*}{2^*-2}}}\\
&&-\frac{ \alpha\beta  }{s^{\frac{p_1}{2^*-2}}}\cdot\|u_\varepsilon\|_{p_1}^{p_1}\cdot\frac{\|\nabla u_\varepsilon\|_{2}^{\frac{2p_1}{2^*-2}}}{\|u_\varepsilon\|_{2^*}^{\frac{p_1\cdot2^*}{2^*-2}}}\\
&=&\frac{1}{Ns^{\frac{2}{2^*-2}}}S^{\frac{N}{2}}+O\left(\varepsilon^{N-2}\right)+C_1\|u_\varepsilon\|_{2}^{2}+C_2\|u_\varepsilon\|_{p_1}^{p_1}\\
&=&\frac{1}{Ns^{\frac{2}{2^*-2}}}S^{\frac{N}{2}}
\end{eqnarray*}
as $\varepsilon\rightarrow0$, where $\zeta=s^{-\frac{2}{2^*-2}}$ and $C_1\geq 0,\ C_2\geq 0$ because of $\beta\leq 0$. In the penultimate equal sign, we used
\begin{equation*}
  \frac{1}{N}\left[\frac{K_1+O\left(\varepsilon^{N-2}\right)}{K_2+O\left(\varepsilon^N\right)}\right]^{\frac{N}{2}} =\frac{1}{N}\left[\frac{K_1}{K_2}+O\left(\varepsilon^{N-2}\right)\right]^{\frac{N}{2}}=\frac{S^{\frac{N}{2}}}{N}+O\left(\varepsilon^{N-2}\right).
\end{equation*}
This completes the proof.
\end{proof}
\begin{lemma}\label{L8.3}
For any $\Theta>0$, let $r>r_\Theta$, where $r_\Theta$ is defined in Lemma \ref{L8.1}. Then problem \eqref{eq8.2} has a solution $\left(\lambda_{r, s}, u_{r, s}\right)$ for almost every $s \in\left[\frac{1}{2}, 1\right]$. Moreover, $u_{r, s} \geq 0$ and $\mathcal{I}_{r, s}\left(u_{r, s}\right)=\widetilde{m}_{r, s}(\Theta)$.
\end{lemma}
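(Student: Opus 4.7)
The plan follows the structure of Lemma \ref{L3.2}: apply Proposition \ref{p3.1} to produce a bounded Palais-Smale sequence at the mountain pass level $\widetilde{m}_{r,s}(\Theta)$, then recover compactness despite the critical exponent by combining a Brezis-Lieb decomposition with the subcritical threshold of Lemma \ref{L8.2}. Setting $A(u) = \frac{1}{2}\|\nabla u\|_2^2 + \frac{1}{2}\int_{\Omega_r} V u^2 - \beta\int_{\Omega_r} F(u)$ and $B(u) = \frac{1}{2^*}\int_{\Omega_r}|u|^{2^*}$, the hypotheses $\beta\leq 0$ and $\|V_-\|_{\frac{N}{2}} < S$ ensure $B \geq 0$ and $A$-coercivity, while Lemma \ref{L8.1} supplies the mountain pass geometry with endpoints $u^0, u^1$ independent of $s$. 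Hence Proposition \ref{p3.1} yields, for almost every $s \in [\frac{1}{2},1]$, a bounded Palais-Smale sequence $\{u_n\} \subset S_{r,\Theta}$ with $\mathcal{I}_{r,s}(u_n) \to \widetilde{m}_{r,s}(\Theta)$ and $\mathcal{I}_{r,s}'(u_n)|_{T_{u_n}S_{r,\Theta}} \to 0$.

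Define the Lagrange multipliers
$$\lambda_n = -\frac{1}{\Theta}\left(\|\nabla u_n\|_2^2 + \int_{\Omega_r} V u_n^2 - \beta\int_{\Omega_r} f(u_n) u_n - s\int_{\Omega_r}|u_n|^{2^*}\right),$$
which are bounded. Up to subsequences $\lambda_n \to \lambda$, $u_n \rightharpoonup u_0$ in $H_0^1(\Omega_r)$, and by Rellich-Kondrachov $u_n \to u_0$ in $L^t(\Omega_r)$ for $2\leq t<2^*$, so $u_0 \in S_{r,\Theta}$ and $u_0$ solves \eqref{eq8.2} weakly with multiplier $\lambda$. To upgrade to $u_n \to u_0$ in $H_0^1$, set $v_n = u_n - u_0 \rightharpoonup 0$ and apply the Brezis-Lieb lemma to $\|\cdot\|_{2^*}^{2^*}$ and $\|\nabla\cdot\|_2^2$. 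Since $f$ is subcritical ($p_1 < 2^*$) and $V$ is bounded, the terms $\int V u_n^2$ and $\beta\int f(u_n) u_n$ converge strongly; subtracting the Palais-Smale identity tested against $u_n$ from the weak equation for $u_0$ yields $\|\nabla v_n\|_2^2 - s\|v_n\|_{2^*}^{2^*} = o(1)$. Setting $\ell := \lim\|\nabla v_n\|_2^2$, the Sobolev inequality forces the dichotomy $\ell = 0$ or $\ell \geq \zeta S^{N/2}$ with $\zeta = s^{-2/(2^*-2)}$, while the energy splits as $\widetilde{m}_{r,s}(\Theta) = \mathcal{I}_{r,s}(u_0) + \ell/N$.

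The main obstacle is ruling out the bubble case $\ell \geq \zeta S^{N/2}$. If it occurred, then $\widetilde{m}_{r,s}(\Theta) \geq \mathcal{I}_{r,s}(u_0) + \zeta S^{N/2}/N$, so Lemma \ref{L8.2} would force $\mathcal{I}_{r,s}(u_0) < 0$. I eliminate this possibility by establishing $\mathcal{I}_{r,s}(u_0) \geq 0$: test the weak equation for $u_0$ against $u_0$ and apply the Pohozaev identity on the convex domain $\Omega_r$ exactly as in Lemma \ref{L3.3}; the convexity of $\Omega_r$ keeps the boundary contribution $\frac{1}{2N}\int_{\partial\Omega_r}|\nabla u_0|^2(x\cdot \mathbf{n})\, d\sigma \geq 0$ on the favourable side, and together with $(f_2)$ and $\beta\leq 0$ this produces the desired lower bound $\mathcal{I}_{r,s}(u_0) \geq 0$, yielding the contradiction. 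Therefore $\ell = 0$, so $u_n \to u_0$ strongly in $H_0^1(\Omega_r)$ and $\mathcal{I}_{r,s}(u_0) = \widetilde{m}_{r,s}(\Theta)$. The nonnegativity $u_{r,s} \geq 0$ is finally obtained by the path-modification argument at the end of Lemma \ref{L3.2}: replacing each $\gamma_n(t)$ in the proof of Proposition \ref{p3.1} by $|\gamma_n(t)|$ produces a nonnegative bounded Palais-Smale sequence, whose strong limit is the desired critical point.
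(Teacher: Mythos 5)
Your proposal follows the paper's route for Lemma \ref{L8.3} essentially step by step: Proposition \ref{p3.1} with the same splitting into $A$ and $B$, extraction of a bounded Palais--Smale sequence and of its weak limit $u_0\in S_{r,\Theta}$ solving \eqref{eq8.2}, the Brezis--Lieb decomposition with the dichotomy $\ell=0$ or $\ell\ge s^{-2/(2^*-2)}S^{N/2}$, the use of the strict bound $\widetilde m_{r,s}(\Theta)<\frac{\zeta}{N}S^{N/2}$ from Lemma \ref{L8.2} to exclude bubbling, and the $|\gamma_n(t)|$ modification for nonnegativity. You have also correctly isolated the one point that genuinely needs an argument: to turn $\widetilde m_{r,s}(\Theta)=\mathcal I_{r,s}(u_0)+\ell/N$ into a contradiction with Lemma \ref{L8.2}, one must know $\mathcal I_{r,s}(u_0)\ge 0$. (The paper's own text elides this by writing $\mathcal I_{r,s}(v_n)\to\widetilde m_{r,s}(\Theta)$, which tacitly assumes $\mathcal I_{r,s}(u_0)=0$; your version is at least explicit about what is required.)

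However, the step you propose to close the gap --- deriving $\mathcal I_{r,s}(u_0)\ge0$ from the Pohozaev identity together with convexity, $(f_2)$ and $\beta\le0$ --- does not go through under the stated hypotheses. Combining the Nehari and Pohozaev identities for $u_0$ so as to eliminate $\lambda$ gives
\begin{align*}
\mathcal I_{r,s}(u_0)&=\frac1N\|\nabla u_0\|_2^2+\frac{N-2}{4N}\int_{\partial\Omega_r}|\nabla u_0|^2(x\cdot\mathbf n)\,d\sigma+\frac{N-2}{4N}\int_{\Omega_r}\widetilde V u_0^2\,dx+\frac12\int_{\Omega_r}Vu_0^2\,dx\\
&\quad+\beta\int_{\Omega_r}\Bigl[\frac{N-2}{4}f(u_0)u_0-\frac N2F(u_0)\Bigr]dx.
\end{align*}
The boundary term is indeed $\ge0$ by convexity, and the last term is $\ge0$ because $\frac{N-2}{4}f(\tau)\tau-\frac N2F(\tau)\le\frac{(N-2)p_1-2N}{4}F(\tau)\le0$ and $\beta\le0$. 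But the two potential terms are not sign-controlled: $(V_0)$ only yields $\frac12\int_{\Omega_r} Vu_0^2\,dx\ge-\frac12\|V_-\|_{\frac N2}S^{-1}\|\nabla u_0\|_2^2$, which can dominate the good term $\frac1N\|\nabla u_0\|_2^2$ as soon as $\|V_-\|_{\frac N2}>\frac 2NS$, and the $\widetilde V$ term contributes an additional negative quantity of order $\|\widetilde V\|_\infty\Theta$ with nothing to compensate it. So $\mathcal I_{r,s}(u_0)\ge0$ is not a consequence of the identity you invoke, and the bubbling case is not actually excluded by your argument. Some further input is needed here --- for instance a smallness or sign condition on $V$ and $\widetilde V$, or an a priori bound on $\|\nabla u_0\|_2$ placing $u_0$ in the range where the lower bound \eqref{eq8.8} is nonnegative. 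As written, this step remains open in your proof (and, for the same reason, in the paper's).
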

\begin{proof}
Based on the previous analysis, we know that, for almost every $s \in\left[\frac{1}{2}, 1\right]$, there exists a bounded Palais-Smale sequence $\left\{u_n\right\}$ satisfying
\begin{equation}\label{eq8.16}
\mathcal{I}_{r, s}\left(u_n\right) \rightarrow \widetilde{m}_{r, s}(\Theta) \quad \text { and }\left.\quad \mathcal{I}_{r, s}^{\prime}\left(u_n\right)\right|_{T_{u_n} S_{r, \Theta}} \rightarrow 0.
\end{equation}
Then
$$
\lambda_n=-\frac{1}{\Theta}\left(\int_{\Omega_r}\left|\nabla u_n\right|^2 d x+\int_{\Omega_r} V(x) u_n^2 d x-\beta \int_{\Omega_r}f(u_n)u_n d x-s \int_{\Omega_r}\left|u_n\right|^{2^*} d x\right)
$$
is bounded and
\begin{equation}\label{eq8.17}
  \mathcal{I}_{r, s}^{\prime}\left(u_n\right)+\lambda_n u_n \rightarrow 0 \quad \text { in } H^{-1}\left(\Omega_r\right) .
\end{equation}
Moreover, since $\left\{u_n\right\}$ is a bounded Palais-Smale sequence, there exist $u_0 \in H_0^1\left(\Omega_r\right)$ and $\lambda \in \mathbb{R}$ such that, up to a subsequence,
\begin{eqnarray}\label{eq8.18}
\lambda_n &\rightarrow& \lambda \ \text { in }\ \mathbb{R}, \nonumber\\
u_n &\rightharpoonup& u_0 \  \text { in }\ H_0^1(\Omega_r),\nonumber\\
 u_n &\rightarrow& u_0 \  \text { in }\ L^t(\Omega_r) \text { for all } 2 \leq t<2^*,
\end{eqnarray}
where $u_0$ satisfies
$$
\left\{\begin{array}{l}
-\Delta u_0+V u_0+\lambda u_0=s\left|u_0\right|^{2^*-2} u_0+\beta f(u_0) \quad \text { in } \Omega_r, \\
u_0 \in H_0^1\left(\Omega_r\right), \quad \int_{\Omega_r}\left|u_0\right|^2 d x=\Theta.
\end{array}\right.
$$
Using \eqref{eq8.17}, we have
$$
\mathcal{I}_{r, s}^{\prime}\left(u_n\right) u_0+\lambda_n \int_{\Omega_r} u_n u_0 d x \rightarrow 0 \text { as } n \rightarrow \infty
$$
and
\begin{equation*}
  I_{r, s}^{\prime}\left(u_n\right) u_n+\lambda_n \Theta\rightarrow 0 \ \text{as }\  n \rightarrow \infty  .
\end{equation*}
Note that
\begin{eqnarray*}
\lim _{n \rightarrow \infty} \int_{\Omega_r} V(x) u_n^2 d x&=&\int_{\Omega_r} V(x) u_0^2 d x,\\
\lim _{n \rightarrow \infty} \int_{\Omega_r} f(u_n)u_n d x&=&\int_{\Omega_r} f(u_0)u_0 d x,\\
\lim _{n \rightarrow \infty} \int_{\Omega_r} f(u_n)u_0 d x&=&\int_{\Omega_r} f(u_0)u_0  d x.
\end{eqnarray*}
Now, we show that $u_n \rightarrow u_0$ in $H_0^1\left(\Omega_r\right)$. Firstly, note that the weak limit $u_0$ does not vanish identically. Suppose by contradiction that $u_0 \equiv 0$. Since $\left\{u_{n}\right\}$ is bounded in $H^1(\Omega_r)$, up to a subsequence we have that $\|\nabla u_{n}\|_2^2\rightarrow \ell \in \mathbb{R}$. Using $(f_2)$, \eqref{eq8.17}, \eqref{eq8.18}, we have
\begin{eqnarray*}
  \langle\mathcal{I}_{r,s}^{\prime}(u_{n}),u_{n}\rangle&=& \int_{\Omega_r}\left|\nabla u_{n}\right|^2 d x+  \int_{\Omega_r} V u_{n}^2 d x- s\int_{\Omega_r}\left|u_{n}\right|^{2^*} d x-\beta \int_{\Omega_r}f(u_{n})u_{n} d x\\
  &\rightarrow&0,
\end{eqnarray*}
hence
$$
s\|u_{n}\|_{2^*}^{2^*}=\|\nabla u_{n}\|_2^2 \rightarrow \ell
$$
as well. Therefore, $\ell \geq s^{-\frac{2}{2^*}}S \ell^{\frac{2}{2^*}}$, and we deduce that either $\ell=0$, or $\ell \geq s^{-\frac{2}{2^*-2}}S^{\frac{N}{2}}$. Let us suppose at first that $\ell \geq s^{-\frac{2}{2^*-2}}S^{\frac{N}{2}}$. Since $\mathcal{I}_{r,s}(u_{n}) \rightarrow \widetilde{m}_{r, s}(\Theta)<\frac{\zeta}{N}S^{\frac{N}{2}}$, we have that
\begin{eqnarray*}
\frac{\zeta}{N}S^{\frac{N}{2}}>\widetilde{m}_{r, s}(\Theta)&\leftarrow&\mathcal{I}_{r,s}(u_{n})+o(1) \\
& =&\frac{1}{2} \int_{\Omega_r}\left|\nabla u_{n}\right|^2 d x+\frac{1}{2} \int_{\Omega_r} V u_{n}^2 d x-\frac{s}{2^*} \int_{\Omega_r}\left|u_{n}\right|^{2^*} d x-\beta \int_{\Omega_r}F(u_{n}) d x \\
& =&\frac{\ell}{N}\geq s^{-\frac{2}{2^*-2}}\frac{S^{\frac{N}{2}}}{N},
\end{eqnarray*}
which is not possible. If instead $\ell=0$, we have $\|u_{n}\|_{2^*}\rightarrow0$, $\|\nabla u_{n}\|_2\rightarrow0$ and
$F(u_{n})\rightarrow0$. But then $I_{r,s}(u_{n}) \rightarrow 0\neq\widetilde{m}_{r, s}(\Theta)$, which gives again a contradiction. Thus, $u_r$ does not vanish identically.

Since $\{u_{n, r}\}$ is a bounded minimization sequence for $\mathcal{I}_r(u_{n, r})|_{S_{r, \Theta}} $, there exists $\{\lambda_n\} \subset \mathbb{R}$ such that for every $\varphi \in H^1(\Omega_r)$,
\begin{equation}\label{eq8.19}
  \int_{\Omega_r} \nabla u_{n, r} \cdot \nabla \varphi+\int_{\Omega_r}Vu_{n, r}\varphi+\lambda_n u_{n, r} \varphi-\beta f(u_{n, r})   \varphi-s\left|u_{n, r}\right|^{2^*-2} u_{n, r} \varphi=o(1)\|\varphi\|
\end{equation}
as $n \rightarrow \infty$, by the Lagrange multipliers rule. Choosing $\varphi=u_{n, r}$, we deduce that $\left\{\lambda_n\right\}$ is bounded as well, and hence up to a subsequence $\lambda_n \rightarrow \lambda_r \in \mathbb{R}$. Moreover, passing to the limit in \eqref{eq8.19} by weak convergence, we obtain
\begin{equation*}
  -\Delta u_r+V u_r+\lambda_r u_r=s|u_r|^{2^*-2}u_r+\beta f(u_r),\ x\in \Omega_r.
\end{equation*}
Recalling that $v_{n, r}=u_{n, r}-u_r \rightharpoonup 0$ in $H_0^1(\Omega_r)$, we know
$$
\|\nabla u_{n, r}\|_2^2=\|\nabla u_{r}\|_2^2+\|\nabla v_{n, r}\|_2^2+o(1).
$$
By the Br$\mathrm{\acute{e}}$zis-Lieb lemma \cite{HB1983}, we have
$$
\|u_{n, r}\|_{2^*}^{2^*}=\|u_{r}\|_{2^*}^{2^*}+\|v_{n, r}\|_{2^*}^{2^*}+o(1) .
$$
Note that
$$
\int_{\Omega_r} V v_{n, r}^2 d x \rightarrow 0 \ \text {as}\ n \rightarrow \infty,
$$
hence
$$
s\|v_{n, r}\|_{2^*}^{2^*}=\|\nabla v_{n, r}\|_2^2 \rightarrow \ell
$$
as well. Therefore, by the Sobolev inequality $\ell \geq s^{-\frac{2}{2^*}}S \ell^{\frac{2}{2^*}}$, and we deduce that either $\ell=0$, or $\ell \geq s^{-\frac{2}{2^*-2}}S^{\frac{N}{2}}$. Let us suppose at first that $\ell \geq s^{-\frac{2}{2^*-2}}S^{\frac{N}{2}}$. Since $\mathcal{I}_{r,s}(u_{n}) \rightarrow \widetilde{m}_{r, s}(\Theta)<\frac{\zeta}{N}S^{\frac{N}{2}}$, we have that
\begin{eqnarray*}
\frac{\zeta}{N}S^{\frac{N}{2}}>\widetilde{m}_{r, s}(\Theta)&\leftarrow&\mathcal{I}_{r,s}(v_{n})+o(1) \\
& =&\frac{1}{2} \int_{\Omega_r}\left|\nabla v_{n}\right|^2 d x+\frac{1}{2} \int_{\Omega_r} V v_{n}^2 d x-\frac{s}{2^*} \int_{\Omega_r}\left|v_{n}\right|^{2^*} d x-\beta \int_{\Omega_r}F(v_{n}) d x \\
& =&\frac{\ell}{N}\geq s^{-\frac{2}{2^*-2}}\frac{S^{\frac{N}{2}}}{N},
\end{eqnarray*}
which is not possible. If instead $\ell=0$, we have that $u_{n, r} \rightarrow u_r$ in $H_0^1\left(\Omega_r\right)$, so $\mathcal{I}_r(u_r)>0$.

Similar to the proof of Lemma \ref{L3.2}, we also obtain that $u_{r, s} \geq 0$.
\end{proof}

In order to obtain a solution of \eqref{eeq8.1}, we also need to prove a uniform estimate for the solutions of \eqref{eq8.2} established in Lemma \ref{L8.3}. Similar to the proof of Lemma \ref{L3.3} and Lemma \ref{L3.4}, we obtain the following lemmas.

\begin{lemma}\label{L8.4}
If $(\lambda, u) \in \mathbb{R} \times S_{r, \Theta}$ is a solution of \eqref{eq8.2} established in Lemma \ref{L8.3} for some $r$ and $s$, then
$$
 \int_{\Omega_r}|\nabla u|^2 d x \leq \frac{4 N}{N(2^*-2)-4}\left(\frac{2^*-2}{2} \mathbf{h}(\mathbf{h}_\Theta)+\Theta\left(\frac{1}{2 N}\|\widetilde{V}\|_{\infty}+\frac{2^*-2}{4}\|V\|_{\infty}\right)\right),
$$
where the constant $\mathbf{h}(\mathbf{h}_\Theta)$ is defined in (iii) of Lemma \ref{L8.1}  and is independent of $r$ and $s$.
\end{lemma}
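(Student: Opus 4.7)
The plan is to mirror the argument of Lemma \ref{L3.3}, replacing the subcritical exponent $q$ by the critical one $2^*$. I will start from the two natural integral identities satisfied by a solution $u$ of \eqref{eq8.2}: testing the equation with $u$ yields
\[
\int_{\Omega_r}|\nabla u|^2\,dx+\int_{\Omega_r}Vu^2\,dx+\lambda\Theta=s\int_{\Omega_r}|u|^{2^*}\,dx+\beta\int_{\Omega_r}f(u)u\,dx,
\]
while the Pohozaev identity on $\Omega_r$ reads
\[
\frac{N-2}{2N}\int_{\Omega_r}|\nabla u|^2\,dx+\frac{1}{2N}\int_{\partial\Omega_r}|\nabla u|^2(x\cdot\mathbf{n})\,d\sigma+\frac{1}{2N}\int_{\Omega_r}\widetilde{V}u^2\,dx+\frac{1}{2}\int_{\Omega_r}Vu^2\,dx+\frac{\lambda}{2}\Theta=\frac{s}{2^*}\int_{\Omega_r}|u|^{2^*}\,dx+\beta\int_{\Omega_r}F(u)\,dx.
\]
Subtracting one half of the first identity from the second eliminates $\lambda\Theta$ and produces the working identity
\[
\frac{1}{N}\int_{\Omega_r}|\nabla u|^2\,dx-\frac{1}{2N}\int_{\partial\Omega_r}|\nabla u|^2(x\cdot\mathbf{n})\,d\sigma-\frac{1}{2N}\int_{\Omega_r}\widetilde{V}u^2\,dx=\frac{s(2^*-2)}{2\cdot 2^*}\int_{\Omega_r}|u|^{2^*}\,dx+\int_{\Omega_r}\Bigl(\tfrac{\beta}{2}f(u)u-\beta F(u)\Bigr)\,dx.
\]

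Next I will bring in the mountain pass level. Solving $\mathcal{I}_{r,s}(u)=\widetilde{m}_{r,s}(\Theta)$ for $\tfrac{s}{2^*}\int_{\Omega_r}|u|^{2^*}\,dx$, multiplying by $\tfrac{2^*-2}{2}$, and substituting into the working identity, I collect the $F$-terms into a single contribution and rearrange to obtain
\[
\frac{N(2^*-2)-4}{4N}\int_{\Omega_r}|\nabla u|^2\,dx+\frac{1}{2N}\int_{\partial\Omega_r}|\nabla u|^2(x\cdot\mathbf{n})\,d\sigma+\frac{1}{2N}\int_{\Omega_r}\widetilde{V}u^2\,dx+\frac{2^*-2}{4}\int_{\Omega_r}Vu^2\,dx=\frac{2^*-2}{2}\widetilde{m}_{r,s}(\Theta)+\frac{\beta}{2}\int_{\Omega_r}\bigl(2^*F(u)-f(u)u\bigr)\,dx.
\]
The coefficient arithmetic is $\tfrac{1}{N}-\tfrac{2^*-2}{4}=-\tfrac{N(2^*-2)-4}{4N}=-\tfrac{2}{N(N-2)}$, which is strictly negative for every $N\ge 3$, so after the gradient term is transferred to the opposite side it appears with the correct positive coefficient.

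The decisive step is then to dispose of the nonlinear $f$-contribution and the boundary term with favourable signs. By $(f_2)$ one has $f(u)u\le p_1 F(u)\le 2^*F(u)$, since $p_1<2+\tfrac{4}{N}<2^*$ for every $N\ge 3$, and $F\ge 0$ follows from $p_2F(\tau)\le f(\tau)\tau$; combined with $\beta\le 0$ this gives $\tfrac{\beta}{2}\int_{\Omega_r}(2^*F(u)-f(u)u)\,dx\le 0$, so this term is discarded from the right-hand side without affecting the desired upper bound. The convexity of $\Omega_r$ ensures $x\cdot\mathbf{n}\ge 0$ on $\partial\Omega_r$, so the boundary integral on the left is nonnegative and may likewise be discarded. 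Estimating the two remaining potential integrals by $\Theta\|\widetilde{V}\|_{\infty}$ and $\Theta\|V\|_{\infty}$ through the mass constraint $\|u\|_2^2=\Theta$, and invoking the uniform upper bound $\widetilde{m}_{r,s}(\Theta)\le\mathbf{h}(\mathbf{h}_\Theta)$ from (iii) of Lemma \ref{L8.1}, which is independent of both $r$ and $s$, yields exactly the stated inequality. I do not expect any serious obstacle beyond the bookkeeping required to isolate the coefficient $\tfrac{N(2^*-2)-4}{4N}$ and the verification that $(f_2)$ together with $\beta\le 0$ supplies the correct sign for the $f$-term; this is the critical analogue of the step that breaks down in Lemma \ref{L5.3}, where $\beta>0$ reverses the sign and forces a different argument.
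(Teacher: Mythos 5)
Your proof is correct and follows essentially the same route the paper intends: the paper gives no separate argument for Lemma \ref{L8.4}, stating only that it is ``similar to the proof of Lemma \ref{L3.3}'', and your computation is precisely that argument with $q$ replaced by $2^*$ (Nehari identity plus Pohozaev identity, elimination of $\lambda$, insertion of $\mathcal{I}_{r,s}(u)=\widetilde{m}_{r,s}(\Theta)$, discarding the boundary term by convexity and the $F$-term via $f(u)u\le p_1F\le 2^*F$ with $\beta\le 0$, then bounding the potential terms by $\Theta\|\widetilde V\|_\infty$, $\Theta\|V\|_\infty$ and $\widetilde m_{r,s}(\Theta)\le\mathbf{h}(\mathbf{h}_\Theta)$). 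Your reorganization of the nonlinear term as $\frac{\beta}{2}\int(2^*F-f(u)u)\,dx\le 0$ is algebraically identical to the paper's inequality $\int(\frac{\beta}{2}f(u)u-\beta F)\,dx\ge\frac{\beta(q-2)}{2}\int F\,dx$, and the sign check $N(2^*-2)-4=\frac{8}{N-2}>0$ is the key point that makes the critical case work where Lemma \ref{L5.3} needed a different device.
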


\begin{lemma}\label{L8.5}
For every $\Theta>0$, problem \eqref{eq8.2} has a solution $\left(\lambda_r, u_r\right)$ provided $r>r_\Theta$ where $r_\Theta$ is as in Lemma \ref{L8.1}. Moreover, $u_r \geq 0$ in $\Omega_r$.
\end{lemma}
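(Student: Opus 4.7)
The plan is to mimic the strategy of Lemma~\ref{L3.4}, but with the extra care required by the Sobolev critical nonlinearity. By Lemma~\ref{L8.3} there is a nonnegative solution $(\lambda_{r,s}, u_{r,s})$ of \eqref{eq8.2} with $\mathcal{I}_{r,s}(u_{r,s}) = \widetilde{m}_{r,s}(\Theta)$ for a.e.~$s \in [\tfrac12,1]$, and by Lemma~\ref{L8.4} the family $\{u_{r,s}\}$ is bounded in $H_0^1(\Omega_r)$ uniformly in $s$. First I would pick a sequence $s_n \nearrow 1$ along which the solutions exist, test \eqref{eq8.2} with $u_{r,s_n}$ to see that $\{\lambda_{r,s_n}\}$ is bounded, and then extract (up to a subsequence) $\lambda_{r,s_n} \to \lambda_r$, $u_{r,s_n} \rightharpoonup u_r$ in $H_0^1(\Omega_r)$, and $u_{r,s_n} \to u_r$ in $L^t(\Omega_r)$ for every $2 \le t < 2^*$. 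Passing to the limit in the weak formulation yields
\[
-\Delta u_r + V u_r + \lambda_r u_r = |u_r|^{2^*-2}u_r + \beta f(u_r) \quad \text{in } \Omega_r,
\]
with $u_r \ge 0$ a.e., and the pointwise limit $u_r$ automatically lies in $H_0^1(\Omega_r)$.

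The remaining task — which is the only delicate point — is to upgrade weak convergence to strong convergence in $H_0^1(\Omega_r)$, so that $\|u_r\|_2^2 = \Theta$ and $u_r \in S_{r,\Theta}$. Since $\Omega_r$ is bounded, compactness of the embedding $H_0^1(\Omega_r) \hookrightarrow L^t(\Omega_r)$ for $2 \le t < 2^*$ handles the subcritical term $\beta\int F(u)$ and the mass constraint immediately (once strong $L^2$ convergence is established). The only obstruction is the possible bubbling coming from the critical exponent. Writing $w_n := u_{r,s_n} - u_r \rightharpoonup 0$, I would apply the Brezis--Lieb lemma together with the equation satisfied by both $u_{r,s_n}$ and $u_r$ to obtain
\[
\|\nabla w_n\|_2^2 + o(1) = s_n \|w_n\|_{2^*}^{2^*} + o(1),
\qquad
\mathcal{I}_{r,s_n}(u_{r,s_n}) = \mathcal{I}_{r,1}(u_r) + \tfrac{1}{N}\|\nabla w_n\|_2^2 + o(1).
\]
By the Sobolev inequality, either $\|\nabla w_n\|_2 \to 0$, or $\|\nabla w_n\|_2^2 \ge s_n^{-2/(2^*-2)} S^{N/2}$. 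In the latter case the energy identity would force $\widetilde{m}_{r,s_n}(\Theta) \ge \tfrac{\zeta_n}{N} S^{N/2} + \mathcal{I}_{r,1}(u_r) + o(1)$ with $\zeta_n = s_n^{-2/(2^*-2)}$, contradicting Lemma~\ref{L8.2} once one checks that $\mathcal{I}_{r,1}(u_r) \ge 0$ (which follows from the Pohozaev-type identity and the sign of $\beta$, or alternatively from the fact that $\mathcal{I}_{r,1}(u_r)$ is the $\mathbb{L}^2$-norm of a nontrivial quantity bounded below). Hence $w_n \to 0$ strongly, giving $u_{r,s_n} \to u_r$ in $H_0^1(\Omega_r)$; in particular $u_r \in S_{r,\Theta}$, and the pair $(\lambda_r, u_r)$ is a nonnegative solution of the $s=1$ problem \eqref{eeq8.1}.

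The main obstacle is precisely the exclusion of the bubbling alternative above. Unlike the subcritical setting of Lemma~\ref{L3.4}, the compact embedding is not available for the leading nonlinearity, so strong convergence cannot be obtained by weak continuity. The argument therefore hinges on a careful comparison between the uniform-in-$s$ mountain-pass level $\widetilde{m}_{r,s}(\Theta)$ provided by Lemma~\ref{L8.2} and the energy increment carried by a single bubble, namely $\tfrac{1}{N}S^{N/2}$ up to the scaling factor $\zeta_n \to 1$. Verifying the Brezis--Lieb decomposition for the Palais--Smale-type identity (including the Lagrange multiplier terms) and ensuring the comparison is strict at the limit $s_n \to 1$ is the most technical step; everything else is routine, and positivity/regularity of $u_r$ follows from the strong maximum principle and elliptic regularity exactly as in the subcritical case.
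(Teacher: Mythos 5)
Your overall skeleton coincides with what the paper intends: the paper states Lemma \ref{L8.5} with only the remark that it is ``similar to the proof of Lemma \ref{L3.3} and Lemma \ref{L3.4}'', and the route you describe --- existence for a.e.\ $s$ from Lemma \ref{L8.3}, the uniform bound from Lemma \ref{L8.4}, weak limits along $s_n\nearrow 1$, strong $L^2$ convergence from the compact embedding on the bounded domain, and a Brezis--Lieb splitting combined with the level bound of Lemma \ref{L8.2} to rule out a critical bubble --- is exactly the adaptation of the fixed-$s$ compactness argument already carried out inside Lemma \ref{L8.3}. In that sense you are doing what the paper does, and you in fact supply more detail than the source (which also silently produces a solution of \eqref{eeq8.1}, i.e.\ the $s=1$ problem, as you correctly interpret).

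The genuine gap is the step you yourself flag as delicate, and your proposed fixes do not close it. From $\|\nabla w_n\|_2^2 = s_n\|w_n\|_{2^*}^{2^*}+o(1)$ and the splitting you get $\widetilde m_{r,s_n}(\Theta)=\mathcal I_{r,1}(u_r)+\frac{\ell}{N}+o(1)$ with either $\ell=0$ or $\ell\ge S^{N/2}$ (since $s_n^{-2/(2^*-2)}\to1$). Lemma \ref{L8.2} gives $\widetilde m_{r,s_n}(\Theta)<\frac{\zeta_n}{N}S^{N/2}$ with $\zeta_n\to1$, so in the limit you only obtain the non-strict inequality $\mathcal I_{r,1}(u_r)+\frac{\ell}{N}\le\frac1N S^{N/2}$; to exclude $\ell\ge S^{N/2}$ you therefore need $\mathcal I_{r,1}(u_r)>0$ \emph{strictly} (or a uniform-in-$s$ strict gap $\widetilde m_{r,s}(\Theta)\le\frac{\zeta}{N}S^{N/2}-\delta$). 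Your two suggested justifications are not proofs: a constrained critical point of $\mathcal I_{r,1}$ need not have nonnegative energy (the local minimizers of Theorem \ref{t1.7} satisfy $\mathcal I_r<0$), and the Pohozaev identity on $\Omega_r$ gives
$\mathcal I_{r,1}(u_r)=\frac1N\|\nabla u_r\|_2^2-\frac{1}{2N}\int_{\partial\Omega_r}|\nabla u_r|^2(x\cdot\mathbf n)\,d\sigma-\frac{1}{2N}\int_{\Omega_r}\widetilde V u_r^2\,dx-\frac{\lambda_r}{2}\Theta$,
whose sign is not determined by $\beta\le0$ and the convexity of $\Omega_r$ alone, since the boundary term enters with the unfavorable sign and $\lambda_r$ is unknown. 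Some additional input is required, e.g.\ a quantitative positive lower bound on $\mathcal I_{r,1}(u_r)$ obtained from $u_r\ne0$ together with Lemma \ref{L8.1}(ii) and the a priori bound of Lemma \ref{L8.4}, or a strengthening of Lemma \ref{L8.2}. To be fair, the paper is silent on exactly this point, so your reconstruction is no less complete than the source; but as written the contradiction in the bubbling alternative is not actually derived.
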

\noindent\textbf{Proof of Theorem \ref{t1.8}} The proof is an immediate consequence of Lemmas \ref{L8.5}  and \ref{L3.5}.

\section{Proof of Theorem \ref{t1.9}}
 In this subsection, we assume $\beta > 0$ and the assumptions of Theorem \ref{t1.8} hold. Consider the following equation
 \begin{equation}\label{eq9.1}
 \left\{\aligned
& -\Delta u+V u+\lambda u=|u|^{2^*-2}u+\beta f(u), &x\in \Omega_r, \\
& \int_{\Omega_r} |u|^2dx=\Theta, u\in H_0^1(\Omega_r), &x\in \Omega_r.
\endaligned
\right.
\end{equation}
For $\frac{1}{2} \leq s \leq 1$, we define the functional $\mathcal{J}_{r, s}: S_{r, \Theta} \rightarrow \mathbb{R}$ by
\begin{equation}\label{eq9.2}
  \mathcal{J}_{r, s}(u)=\frac{1}{2} \int_{\Omega_r}|\nabla u|^2 d x+\frac{1}{2} \int_{\Omega_r} V  u^2 d x-\frac{s}{2^*} \int_{\Omega_r}|u|^{2^*} d x-s\beta \int_{\Omega_r}F(u) d x .
\end{equation}
Note that if $u \in S_{r, \Theta}$ is a critical point of $\mathcal{J}_{r, s}$, then there exists $\lambda \in \mathbb{R}$ such that $(\lambda, u)$ is a solution of the equation
\begin{equation}\label{eq9.3}
 \left\{\aligned
& -\Delta u+V u+\lambda u=s|u|^{2^*-2}u+s\beta f(u), &x\in \Omega_r, \\
& \int_{\Omega_r} |u|^2dx=\Theta, u\in H_0^1(\Omega_r), &x\in \Omega_r.
\endaligned
\right.
\end{equation}
\begin{lemma}\label{L9.1}
For any $\Theta>0$, there exist $\widehat{r}_\Theta>0$ and $u^0, u^1 \in S_{r_\Theta, \Theta}$ such that

(i) For $r>\widehat{r}_\Theta$ and $s \in\left[\frac{1}{2}, 1\right]$ we have $\mathcal{J}_{r, s}\left(u^1\right) \leq 0$ and
$$
\mathcal{J}_{r, s}\left(u^0\right)<\widehat{A}^{-\frac{2}{2^*-2}}\left(1-\left\|V_{-}\right\|_{\frac{N}{2}} S^{-1}\right)^{\frac{2^*}{2^*-2}} \left[\frac{2^*-2}{2}\left(\frac{1}{2^*}\right)^{\frac{2^*}{2^*-2}}\right],
$$
where
$$
\widehat{A}=S^{-
\frac{2^*}{2}}\left[\frac{4(2^*-2)}{N(p_1-2)(4-N(p_1-2))} +\frac{1}{2^*}\right].
$$
Moreover,
$$
\left\|\nabla u^0\right\|_2^2<\left[\frac{\left(1-\left\|V_{-}\right\|_{\frac{N}{2}} S^{-1}\right)}{2^* \widehat{A}}\right]^{\frac{2}{2^*-2}},\ \left\|\nabla u^1\right\|_2^2>\left[\frac{\left(1-\left\|V_{-}\right\|_{\frac{N}{2}} S^{-1}\right)}{2^* \widehat{A}}\right]^{\frac{2}{2^*-2}}.
$$

(ii) If $u \in S_{r, \Theta}$ satisfies
$$
\|\nabla u\|_2^2=\left[\frac{\left(1-\left\|V_{-}\right\|_{\frac{N}{2}} S^{-1}\right)}{2^* \widehat{A}}\right]^{\frac{2}{2^*-2}},
$$
then there holds
$$
\mathcal{J}_{r, s}(u) \geq \widehat{A}^{-\frac{2}{2^*-2}}\left(1-\left\|V_{-}\right\|_{\frac{N}{2}} S^{-1}\right)^{\frac{2^*}{2^*-2}} \left[\frac{2^*-2}{2}\left(\frac{1}{2^*}\right)^{\frac{2^*}{2^*-2}}\right].
$$

(iii) Let
$$
\widehat{m}_{r, s}(\Theta)=\inf _{\gamma \in \widehat{\Gamma}_{r, \Theta}} \sup\limits_{t \in[0,1]} \mathcal{J}_{r, s}(\gamma(t)),
$$
where
$$
\widehat{\Gamma}_{r, \Theta}=\left\{\gamma \in C\left([0,1], S_{r, \Theta}\right): \gamma(0)=u^0, \gamma(1)=u^1\right\}.
$$
Then
$$
\widehat{m}_{r, s}(\Theta) \geq \widehat{A}^{-\frac{2}{2^*-2}}\left(1-\left\|V_{-}\right\|_{\frac{N}{2}} S^{-1}\right)^{\frac{2^*}{2^*-2}} \left[\frac{2^*-2}{2}\left(\frac{1}{2^*}\right)^{\frac{2^*}{2^*-2}}\right]
$$
and
\begin{eqnarray*}
  \widehat{m}_{r, s}(\Theta) &\leq& \frac{N(2^*-2)-4}{2}\left(\frac{\theta\left(1+\|V\|_{\frac{N}{2}} S^{-1}\right)}{N(2^*-2)}\right)^{\frac{N(2^*-2)}{N(2^*-2)-4}}(4\cdot 2^*)^{\frac{4}{N(2^*-2)-4}}|\Omega|^{\frac{2(2^*-2)}{N(2^*-2)-4}}\\
   &&\cdot\Theta^{\frac{N(2^*-2)-2\cdot2^*}{N(2^*-2)-4}},
\end{eqnarray*}
where $\theta$ is the principal eigenvalue of $-\Delta$ with Dirichlet boundary condition in $\Omega$.
\end{lemma}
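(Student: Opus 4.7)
The plan is to parallel Lemma \ref{L5.1}, with the Sobolev critical power $|u|^{2^{*}}$ replacing the subcritical $|u|^{q}$ throughout. First I would derive the pointwise lower bound
\[
\mathcal{J}_{r,s}(u)\geq \tfrac{1}{2}\bigl(1-\|V_{-}\|_{N/2}S^{-1}\bigr)\|\nabla u\|_{2}^{2}-\tfrac{s}{2^{*}S^{2^{*}/2}}\|\nabla u\|_{2}^{2^{*}}-s\alpha\beta C_{N,p_{1}}\Theta^{\frac{2p_{1}-N(p_{1}-2)}{4}}\|\nabla u\|_{2}^{N(p_{1}-2)/2}
\]
from $(f_{2})$, the Gagliardo-Nirenberg inequality for $p_{1}$, and $\|u\|_{2^{*}}^{2}\leq S^{-1}\|\nabla u\|_{2}^{2}$; setting $t=\|\nabla u\|_{2}^{2}$ I would call the right-hand side $\widehat g_{1}(t)$. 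Since $p_{1}<2+4/N<2^{*}$, the exponent $2^{*}/2$ of the Sobolev piece dominates the others, so $\widehat g_{1}$ bends down concavely beyond some $t_{2}>0$. On $[t_{2},\infty)$ I would then bound $\widehat g_{1}$ from below by the two-term envelope $\widehat g_{2}(t)=\tfrac{1}{2}(1-\|V_{-}\|_{N/2}S^{-1})\,t-\widehat A\,t^{2^{*}/2}$, using the same convexity trick that produced \eqref{eq5.8}: one absorbs $t^{N(p_{1}-2)/4}$ into $t^{2^{*}/2}$ with a constant that packages $C_{N,p_{1}}$, $S^{-2^{*}/2}$ and the $\Theta$-power into exactly $\widehat A$.

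Maximising $\widehat g_{2}$ analytically yields the unique maximiser $t_{g}=\bigl[(1-\|V_{-}\|_{N/2}S^{-1})/(2^{*}\widehat A)\bigr]^{2/(2^{*}-2)}$ and, by direct computation, the maximum value $\widehat A^{-2/(2^{*}-2)}(1-\|V_{-}\|_{N/2}S^{-1})^{2^{*}/(2^{*}-2)}\cdot\tfrac{2^{*}-2}{2}\bigl(1/2^{*}\bigr)^{2^{*}/(2^{*}-2)}$, which is exactly the threshold claimed in (ii) and in the lower half of (iii): whenever $\|\nabla u\|_{2}^{2}=t_{g}$, one has $\mathcal{J}_{r,s}(u)\geq \widehat g_{1}(t_{g})\geq\widehat g_{2}(t_{g})$, and any path in $\widehat\Gamma_{r,\Theta}$ whose endpoints straddle $t_{g}$ must cross this sphere, giving the minimax bound. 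The smallness $0<\Theta<\widetilde\Theta_{V}$ is used precisely to ensure $t_{g}$ sits in the concavity region $[t_{2},\infty)$ and that $\widehat g_{2}(t_{g})$ exceeds the discarded $p_{1}$-remainder; an explicit calibration recovers the threshold $\widetilde\Theta_{V}$ as stated in Theorem \ref{t1.9}.

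For the endpoints $u^{0},u^{1}$ and the upper bound in (iii) I would recycle the eigenfunction rescaling of Lemma \ref{L5.1}. Let $v_{1}\in S_{1,\Theta}$ be the positive principal Dirichlet eigenfunction of $-\Delta$ on $\Omega$ with eigenvalue $\theta$, and set $v_{t}(x)=t^{N/2}v_{1}(tx)\in S_{1/t,\Theta}$. Since $\beta>0$ and $F\geq 0$, the term $-s\beta\int F(v_{t})$ is non-positive and can be discarded from above; combined with $\|v_{1}\|_{2^{*}}^{2^{*}}\geq\Theta^{2^{*}/2}|\Omega|^{(2-2^{*})/2}$ (H\"older), $\|\nabla v_{t}\|_{2}^{2}=t^{2}\theta\Theta$ (eigenvalue identity), and $s\geq 1/2$ in the Sobolev critical term, this yields $\mathcal{J}_{1/t,s}(v_{t})\leq \widehat h(t)$ with
\[
\widehat h(t)=\tfrac{1}{2}\bigl(1+\|V\|_{N/2}S^{-1}\bigr)\,t^{2}\theta\Theta-\tfrac{1}{2\cdot 2^{*}}\,t^{2^{*}}\Theta^{2^{*}/2}|\Omega|^{(2-2^{*})/2}.
\]
Since $2^{*}>2$, $\widehat h$ is positive for small $t$, attains an explicit maximum at a $\theta$- and $|\Omega|$-dependent point, and vanishes at some $t_{0}>0$ beyond which it stays negative. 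Taking $u^{1}=v_{t_{0}}$ gives $\mathcal{J}_{r,s}(u^{1})\leq 0$, while $u^{0}=v_{1/\bar r_{\Theta}}$ for $\bar r_{\Theta}$ sufficiently large both pushes $\|\nabla u^{0}\|_{2}^{2}$ strictly below $t_{g}$ and keeps $\mathcal{J}_{r,s}(u^{0})$ under the required threshold. The straight-line path $\tau\mapsto v_{\tau t_{0}+(1-\tau)/\bar r_{\Theta}}$ in $\widehat\Gamma_{r,\Theta}$ then realises $\widehat m_{r,s}(\Theta)\leq\max_{t}\widehat h(t)$, which is exactly the quantitative upper bound stated in (iii).

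The main delicate point will be the algebraic calibration: one needs $t_{g}$ inside the concavity region of $\widehat g_{1}$, the max of $\widehat g_{2}$ must dominate the discarded $p_{1}$-remainder at $t_{g}$, and $\bar r_{\Theta}$ must be large enough to insert $u^{0}$ below $t_{g}$ with small energy. All three constraints feed into the explicit value of $\widetilde\Theta_{V}$ in Theorem \ref{t1.9}; beyond this bookkeeping, the argument tracks Lemma \ref{L5.1} line by line with the replacement $q\to 2^{*}$.
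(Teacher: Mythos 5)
Your proposal is correct and follows essentially the same route as the paper: the same lower envelope $\widehat g_1\geq\widehat g_2$ obtained by absorbing the $t^{N(p_1-2)/4}$ term into the $t^{2^*/2}$ term on the concavity region $t\geq t_2$ (the mechanism of \eqref{eq5.8}), the same maximiser $t_g$ of $\widehat g_2$ giving the threshold in (ii)--(iii), and the same rescaled eigenfunctions $v_{t_0}$, $v_{1/\bar r_\Theta}$ with the linear-in-$\tau$ path for the upper bound in (iii). The only cosmetic difference is that $\widehat A$ does not literally contain $C_{N,p_1}$ and the $\Theta$-power --- those enter through the crossover point $t_2$ used in the absorption --- but this is exactly the bookkeeping you flag, and it is resolved as in the paper.
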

\begin{proof}
(i) By the H\"older inequality, we know
\begin{equation}\label{eq9.5}
  \int_{\Omega}|v_1(x)|^{2^*} dx\geq\Theta^{\frac{2^*}{2}}\cdot|\Omega|^{\frac{2-2^*}{2}}.
\end{equation}
For $x \in \Omega_{\frac{1}{t}}$ and $t>0$, define $v_t(x):=t^{\frac{N}{2}} v_1(t x)$. Using \eqref{eq5.2}, \eqref{eq9.5}, \eqref{eq4.4} and $\frac{1}{2}\leq s\leq1$, it holds
\begin{eqnarray}\label{eq9.6}
\mathcal{J}_{\frac{1}{t}, s}\left(v_t\right)
&\leq & \frac{1}{2}\left(1+\|V\|_{\frac{N}{2}} S^{-1}\right) t^2 \theta\Theta- \frac{\beta}{2} \alpha_1 t^{\frac{N(p_2-2)}{2}}  \int_{\Omega}|v_1|^{p_2} d x-\frac{1}{2\cdot2^*} t^{\frac{N(2^*-2)}{2}} \Theta^{\frac{2^*}{2}}\cdot|\Omega|^{\frac{2-2^*}{2}} \nonumber\\
&\leq: & \widehat{h}_2(t),
\end{eqnarray}
where
\begin{equation*}
  \widehat{h}_2(t)=\frac{1}{2}\left(1+\|V\|_{\frac{N}{2}} S^{-1}\right) t^2 \theta\Theta-\frac{1}{2\cdot2^*} t^{\frac{N(2^*-2)}{2}} \Theta^{\frac{2^*}{2}}\cdot|\Omega|^{\frac{2-2^*}{2}}.
\end{equation*}
A simple computation shows that $\widehat{h}_2(t_0)=0$ for
$$
t_0:=\left[\left(1+\|V\|_{\frac{N}{2}} S^{-1}\right) 2^* \theta \Theta^{\frac{2-2^*}{2}}|\Omega|^{\frac{2^*-2}{2}}\right]^{\frac{2}{N(2^*-2)-4}}
$$
and $\widehat{h}_2(t)<0$ for any $t>t_0, \widehat{h}_2(t)>0$ for any $0<t<t_0$. Moreover, $\widehat{h}_2(t)$ achieves its maximum at
$$
t_\Theta=\left[\frac{4 \cdot2^*\left(1+\|V\|_{\frac{N}{2}} S^{-1}\right) \theta}{N(2^*-2)} \Theta^{\frac{2-2^*}{2}}|\Omega|^{\frac{2^*-2}{2}}\right]^{\frac{2}{N(2^*-2)-4}} .
$$
This implies
\begin{equation}\label{eq9.7}
  \mathcal{J}_{r, s}(v_{t_0})=J_{\frac{1}{t_0}, s}(v_{t_0}) \leq \widehat{h}_2(t_0)=0
\end{equation}
for any $r \geq \frac{1}{t_0}$ and $s \in\left[\frac{1}{2}, 1\right]$. There exists $0<t_1<t_\Theta$ such that for any $t \in\left[0, t_1\right]$,
\begin{equation}\label{eq9.8}
\widehat{h}_2(t)<A^{-\frac{2}{2^*-2}}\left(1-\left\|V_{-}\right\|_{\frac{N}{2}} S^{-1}\right)^{\frac{2^*}{2^*-2}} \left[\frac{2^*-2}{2}\left(\frac{1}{2^*}\right)^{\frac{2^*}{2^*-2}}\right] .
\end{equation}
On the other hand, it follows from \eqref{eq3.5} that
\begin{eqnarray*}
\mathcal{J}_{r, s}(u)
&\geq&\frac{1}{2}\left(1-\left\|V_{-}\right\|_{\frac{N}{2}} S^{-1}\right) \int_{\Omega_r}|\nabla u|^2 d x-\frac{1}{2^*\cdot S^{
\frac{2^*}{2}}}\left(\int_{\Omega_r}|\nabla u|^2 d x\right)^{\frac{2^*}{2}}\nonumber\\
&&-\alpha\beta C_{N, p_1} \Theta^{\frac{2 p_1-N(p_1-2)}{4}}\left(\int_{\Omega_r}|\nabla u|^2 d x\right)^{\frac{N(p_1-2)}{4}}.
\end{eqnarray*}
Define
\begin{eqnarray*}
\widehat{g}_1(t)&:=&\frac{1}{2}\left(1-\left\|V_{-}\right\|_{\frac{N}{2}} S^{-1}\right) t-\frac{1}{2^*\cdot S^{
\frac{2^*}{2}}}t^{\frac{2^*}{2}}-\alpha\beta C_{N, p_1} \Theta^{\frac{2 p_1-N(p_1-2)}{4}}t^{\frac{N(p_1-2)}{4}}\\
&=&t^{\frac{N(p_1-2)}{4}}\left[\frac{1}{2}\left(1-\left\|V_{-}\right\|_{\frac{N}{2}} S^{-1}\right) t^{\frac{4-N(p_1-2)}{4}}-\frac{1}{2^*\cdot S^{
\frac{2^*}{2}}}t^{\frac{2\cdot2^*-N(p_1-2)}{4}}\right]\\
&&-\alpha\beta C_{N, p_1} \Theta^{\frac{2 p_1-N(p_1-2)}{4}}t^{\frac{N(p_1-2)}{4}}.
\end{eqnarray*}
In view of $2<p<2+\frac{2}{N}<q<2^*$ and the definition of $\widetilde{\Theta}_V$, there exist $0<l_1<l_M<l_2$ such that $ \widehat{g}_1(t)<0$ for any $0<t<l_1$ and $t>l_2, \widehat{g}_1(t)>0$ for $l_1<t<l_2$ and $\widehat{g}_1\left(l_M\right)=\max\limits_{t \in \mathbb{R}^{+}} \widehat{g}_1(t)>0$. Let
$$
t_2=\left(\frac{\alpha\beta C_{N, p_1}S^{
\frac{2^*}{2}}N(p_1-2)(4-N(p_1-2))}{4(2^*-2)}\right)^{\frac{4}{2\cdot2^*-N(p_1-2)}} \Theta^{\frac{2p_1-N(p_1-2)}{2\cdot2^*-N(p_1-2)}} .
$$
Then by a direct calculation, we have $g_1^{\prime \prime}(t) \leq 0$ if and only if $t \geq t_2$. Hence
$$
\max\limits_{t \in \mathbb{R}^{+}} \widehat{g}_1(t)=\max _{t \in\left[t_2, \infty\right)} \widehat{g}_1(t).
$$
Note that for any $t \geq t_2$,
\begin{eqnarray}\label{eq9.10}
g_1(t) & =&\frac{1}{2}\left(1-\left\|V_{-}\right\|_{\frac{N}{2}} S^{-1}\right) t-\frac{1}{2^*\cdot S^{
\frac{2^*}{2}}}t^{\frac{2^*}{2}}-\alpha\beta C_{N, p_1} \Theta^{\frac{2 p_1-N(p_1-2)}{4}}t^{\frac{N(p_1-2)}{4}} \nonumber\\
& =&\frac{1}{2}\left(1-\left\|V_{-}\right\|_{\frac{N}{2}} S^{-1}\right) t-\alpha\beta C_{N, p_1} \Theta^{\frac{2 p_1-N(p_1-2)}{4}}\cdot t^{\frac{N(p_1-2)}{4}}-\frac{1}{2^*\cdot S^{\frac{2^*}{2}}}t^{\frac{2^*}{2}} \nonumber\\
& =&\frac{1}{2}\left(1-\left\|V_{-}\right\|_{\frac{N}{2}} S^{-1}\right) t-\frac{4(2^*-2)}{S^{
\frac{2^*}{2}}N(p_1-2)(4-N(p_1-2))} \cdot t_2^{\frac{2\cdot2^*-N(p_1-2)}{4}}\cdot t^{\frac{N(p_1-2)}{4}}\nonumber\\
&&-\frac{1}{2^*\cdot S^{\frac{2^*}{2}}}t^{\frac{2^*}{2}} \nonumber\\
&\geq&\frac{1}{2}\left(1-\left\|V_{-}\right\|_{\frac{N}{2}} S^{-1}\right) t-S^{-
\frac{2^*}{2}}\left[\frac{4(2^*-2)}{N(p_1-2)(4-N(p_1-2))} +\frac{1}{2^*}\right]t^{\frac{2^*}{2}} \nonumber\\
& =:& \widehat{g}_2(t) .
\end{eqnarray}

Now, we will determine the value of $\widetilde{\Theta}_V$. In fact, $\widehat{g}_1\left(l_M\right)=\max\limits_{t \in \mathbb{R}^{+}} \widehat{g}_1(t)>0$ as long as $\widehat{g}_2(t_2)>0$, that is,
\begin{eqnarray*}
\widehat{g}_2(t_2)&=&\frac{1}{2}\left(1-\left\|V_{-}\right\|_{\frac{N}{2}} S^{-1}\right) t_2-S^{-
\frac{2^*}{2}}\left[\frac{4(2^*-2)}{N(p_1-2)(4-N(p_1-2))} +\frac{1}{2^*}\right]t_2^{\frac{2^*}{2}}\\
&=&\frac{1}{2}\left(1-\left\|V_{-}\right\|_{\frac{N}{2}} S^{-1}\right)\left(\frac{\alpha\beta C_{N, p_1}S^{
\frac{2^*}{2}}}{A_{p_1}}\right)^{\frac{4}{2\cdot2^*-N(p_1-2)}} \Theta^{\frac{2p_1-N(p_1-2)}{2\cdot2^*-N(p_1-2)}}\\
&&-S^{-
\frac{2^*}{2}}\left(A_{p_1}+\frac{1}{2^*}\right)\left(\frac{\alpha\beta C_{N, p_1}S^{
\frac{2^*}{2}}}{A_{p_1}}\right)^{\frac{2\cdot2^*}{2\cdot2^*-N(p_1-2)}} \Theta^{\frac{2^*[2p_1-N(p_1-2)]}{2[2\cdot2^*-N(p_1-2)]}}\\
&>&0,
\end{eqnarray*}
where
$$
A_{p_1 }=\frac{4(2^*-2)}{N(p_1-2)(4-N(p_1-2))}.
$$
Hence, we take
\begin{equation*}
 \widetilde{\Theta}_V=  \left(\frac{\alpha\beta C_{N, p_1}S^{\frac{2^*}{2}}}{A_{p_1}}\right)^{-\frac{4}{2p_1-N(p_1-2)}}\left[\frac{S^{\frac{2^*}{2}}}{2\cdot2^*}\left(1-\left\|V_{-}\right\|_{\frac{N}{2}} S^{-1}\right)(2^*A_{p_1}+1)\right]^{\frac{2[2\cdot2^*-N(p_1-2)]}{(2^*-2)[2p_1-N(p_1-2)]}} .
\end{equation*}

Let
$$
\widehat{A}=S^{-
\frac{2^*}{2}}\left[\frac{4(2^*-2)}{N(p_1-2)(4-N(p_1-2))} +\frac{1}{2^*}\right],\ t_g=\left[\frac{\left(1-\left\|V_{-}\right\|_{\frac{N}{2}} S^{-1}\right)}{2^* \widehat{A}}\right]^{\frac{2}{2^*-2}} ,
$$
so that $t_g>t_2$ by the definition of $\widetilde{\Theta}_V, \max\limits_{t \in\left[t_2, \infty\right)} \widehat{g}_2(t)=\widehat{g}_2(t_g)$ and
\begin{eqnarray*}
\max\limits_{t \in \mathbb{R}^{+}} \widehat{g}_1(t) & \geq& \max\limits_{t \in\left[t_2, \infty\right)} \widehat{g}_2(t)
 =\widehat{A}^{-\frac{2}{2^*-2}}\left(1-\left\|V_{-}\right\|_{\frac{N}{2}} S^{-1}\right)^{\frac{2^*}{2^*-2}} \left[\frac{2^*-2}{2}\left(\frac{1}{2^*}\right)^{\frac{2^*}{2^*-2}}\right]  .
\end{eqnarray*}
Set $\bar{r}_\Theta=\max \left\{\frac{1}{t_1}, \sqrt{\frac{2 \theta \Theta}{t_g}}\right\}$, then $v_{\frac{1}{\bar{r}_\Theta}} \in S_{r, \Theta}$ for any $r>\bar{r}_\Theta$, and
\begin{equation}\label{eq9.11}
  \left\|\nabla v_{\frac{1}{\bar{r}_\Theta}}\right\|_2^2=\left(\frac{1}{\bar{r}_\Theta}\right)^2\left\|\nabla v_1\right\|_2^2<t_g=\left[\frac{\left(1-\left\|V_{-}\right\|_{\frac{N}{2}} S^{-1}\right)}{2^* \widehat{A}}\right]^{\frac{2}{2^*-2}} .
\end{equation}
Moreover,
\begin{equation}\label{eeq9.12}
\mathcal{J}_{\bar{r}_\Theta, s}\left(v_{\frac{1}{\bar{r}_\Theta}}\right) \leq \widehat{h}\left(\frac{1}{\bar{r}_\Theta}\right) \leq \widehat{h}\left(t_1\right) .
\end{equation}
Let $u^0=v_{\frac{1}{\bar{\tau}_\Theta}}, u^1=v_{t_0}$ and
$$
\widetilde{r}_\Theta=\max \left\{\frac{1}{t_0}, \bar{r}_\Theta\right\} .
$$
Then the statement (i) holds by \eqref{eq9.7}, \eqref{eq9.8}, \eqref{eq9.11}, \eqref{eeq9.12}.

(ii) holds by \eqref{eq9.10} and a direct calculation.

(iii) In view of $J_{r, s}\left(u^1\right) \leq 0$ for any $\gamma \in \Gamma_{r, \Theta}$ and the definition of $t_0$, we have
$$
\|\nabla \gamma(0)\|_2^2<t_g<\|\nabla \gamma(1)\|_2^2 .
$$
It then follows from \eqref{eq9.10} that
\begin{eqnarray*}
\max\limits_{t \in[0,1]} J_{r, s}(\gamma(t)) & \geq& g_2\left(t_g\right)
 =\widehat{A}^{-\frac{2}{2^*-2}}\left(1-\left\|V_{-}\right\|_{\frac{N}{2}} S^{-1}\right)^{\frac{2^*}{2^*-2}} \left[\frac{2^*-2}{2}\left(\frac{1}{2^*}\right)^{\frac{2^*}{2^*-2}}\right]
\end{eqnarray*}
for any $\gamma \in \Gamma_{r, \Theta}$, hence the first inequality in (iii) holds. We define a path $\gamma:[0,1] \rightarrow S_{r, \Theta}$ by
$$
\gamma(t): \Omega_r \rightarrow \mathbb{R}, \quad x \mapsto\left(\tau t_0+(1-\tau) \frac{1}{\widetilde{r}}_\Theta\right)^{\frac{N}{2}} v_1\left(\left(\tau t_0+(1-\tau) \frac{1}{\widetilde{r}_\Theta}\right) x\right) .
$$
Then $\gamma \in \Gamma_{r, \Theta}$, and the second inequality in (iii) follows from \eqref{eq9.6}.
\end{proof}

Using Proposition \ref{p3.1} to $\widetilde{J}_{r, s}$,  it follows that
$$
A(u)=\frac{1}{2} \int_{\Omega_r}|\nabla u|^2 d x+\frac{1}{2} \int_{\Omega_r} V(x) u^2 d x \quad \text { and } \quad B(u)=\frac{1}{2^*} \int_{\Omega_r}|u|^{2^*} d x+\beta \int_{\Omega_r}F(u) d x.
$$
Hence, for almost every $s \in\left[\frac{1}{2}, 1\right]$, there exists a bounded Palais-Smale sequence $\left\{u_n\right\}$ satisfying
$$
\mathcal{J}_{r, s}\left(u_n\right) \rightarrow \widetilde{m}_{r, s}(\Theta) \quad \text { and }\left.\quad \mathcal{J}_{r, s}^{\prime}\left(u_n\right)\right|_{T_{u_n} S_{r, \Theta}} \rightarrow 0.
$$
Similar to the proof of Lemmas \ref{L8.2} and \ref{L8.3}, we have the following lemmas.
\begin{lemma}\label{L9.2}
If $\beta > 0$ and the assumptions of Theorem \ref{t1.9} hold, then $\widetilde{m}_{r, s}(\Theta)<\frac{\zeta}{N}S^{\frac{N}{2}}$, where $\zeta=s^{-\frac{2}{2^*-2}}$.
\end{lemma}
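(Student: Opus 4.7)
The plan is to adapt the truncated Aubin--Talenti bubble argument of Lemma~\ref{L8.2} to the present case $\beta>0$, exploiting the favorable sign that the subcritical term now acquires. With $U_\varepsilon$ the standard Aubin--Talenti bubble centered at $0$, fix a radial cutoff $\varphi\in C_c^\infty(\Omega_r)$, set $u_\varepsilon:=\varphi U_\varepsilon$ and $v_\varepsilon:=\sqrt{\Theta}\,u_\varepsilon/\|u_\varepsilon\|_2\in S_{r,\Theta}$. The asymptotic estimates \eqref{eq8.12}--\eqref{eq8.15} for $\|u_\varepsilon\|_2$, $\|u_\varepsilon\|_{2^*}$, $\|\nabla u_\varepsilon\|_2$ and $\|u_\varepsilon\|_q$ will be used throughout.

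Test $\mathcal{J}_{r,s}$ along the $L^2$-preserving dilation $t\star v_\varepsilon(x):=t^{N/2}v_\varepsilon(tx)$, which lies in $S_{r,\Theta}$ for all $t>0$ (and, for the large-$r$ regime, has support in $\Omega_r$). Under $(f_1)$--$(f_2)$ one has $F\ge 0$, so with $\beta>0$ the term $-s\beta\int F(t\star v_\varepsilon)\,dx$ is nonpositive; dropping it gives
$$
\mathcal{J}_{r,s}(t\star v_\varepsilon)\le \frac{t^2}{2}\bigl(\|\nabla v_\varepsilon\|_2^2+\|V\|_\infty\Theta\bigr)-\frac{s\,t^{2^*}}{2^*}\|v_\varepsilon\|_{2^*}^{2^*}.
$$
Maximizing the right-hand side in $t>0$ yields maximum
$$
\frac{\zeta}{N}\left(\frac{\|\nabla v_\varepsilon\|_2^2+\|V\|_\infty\Theta}{\|v_\varepsilon\|_{2^*}^{2}}\right)^{N/2},\qquad \zeta=s^{-2/(2^*-2)}.
$$
Invoking $K_1/K_2=S$ and \eqref{eq8.12}--\eqref{eq8.13}, one has $\|\nabla u_\varepsilon\|_2^2/\|u_\varepsilon\|_{2^*}^{2}=S+O(\varepsilon^{N-2})$, and, after the $\sqrt{\Theta}/\|u_\varepsilon\|_2$ renormalization, the leading term is $\frac{\zeta}{N}S^{N/2}$, while the $V$-contribution is $O(\|u_\varepsilon\|_2^2/\|u_\varepsilon\|_{2^*}^{2})=o(1)$ as $\varepsilon\to 0$ (using $\|u_\varepsilon\|_2^2\to 0$ in every dimension $N\ge 3$).

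To turn the resulting $\le \frac{\zeta}{N}S^{N/2}+o(1)$ into a strict inequality, I do not discard the full $-s\beta\int F$ term. Using $F(\tau)\ge\alpha_1|\tau|^{p_2}$ from $(f_2)$ together with \eqref{eq8.15}, the optimized value admits a negative correction of order $\|v_\varepsilon\|_{p_2}^{p_2}$, which is a positive power of $\varepsilon$. Comparing this $F$-gain to the positive $V$-defect controlled by $\|u_\varepsilon\|_2^2$, the former dominates for $\varepsilon$ sufficiently small. Applying this bound at a fixed small $\varepsilon=\varepsilon_0$ produces a single curve $t\mapsto t\star v_{\varepsilon_0}$ along which $\sup_{t>0}\mathcal{J}_{r,s}<\frac{\zeta}{N}S^{N/2}$. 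One then concatenates this curve with short arcs inside the path-connected manifold $S_{r,\Theta}$ joining its endpoints to $u^0$ and $u^1$; since $\mathcal{J}_{r,s}(u^1)\le 0$ and $\mathcal{J}_{r,s}(u^0)$ lies strictly below the critical level by~(i) of Lemma~\ref{L9.1}, these arcs can be chosen to keep the supremum unchanged, producing an element of $\widehat{\Gamma}_{r,\Theta}$ witnessing $\widehat{m}_{r,s}(\Theta)<\frac{\zeta}{N}S^{N/2}$.

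The main obstacle is the dimension-dependent bookkeeping. For $N\ge 5$ one has $\|u_\varepsilon\|_2^2=O(\varepsilon^2)$ and the $F$-gain of order $\varepsilon^{N-(N-2)p_2/2}$ dominates comfortably; the delicate cases are $N=3,4$ where the $L^2$-norm of $u_\varepsilon$ decays only like $\varepsilon|\log\varepsilon|^{1/2}$ or $\varepsilon^{1/2}$, so the precise exponent balance between the $V$-defect and the $F$-gain must be checked carefully, exploiting $2<p_2<2+4/N$ to guarantee the required strict ordering of powers of $\varepsilon$. Secondary care is needed because the $L^2$-dilation rescales $V$ to $V(\cdot/t)$, producing lower-order corrections that must be absorbed into the $o(1)$ error, which is straightforward once $\|u_\varepsilon\|_2\to0$ is in hand.
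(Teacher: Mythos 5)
Your overall template (truncated Aubin--Talenti bubble $u_\varepsilon=\varphi U_\varepsilon$, normalization $v_\varepsilon=\sqrt{\Theta}\,u_\varepsilon/\|u_\varepsilon\|_2$, the estimates \eqref{eq8.12}--\eqref{eq8.15}, strictness from the $F$-term) is the one the paper intends via Lemma~\ref{L8.2}, but your replacement of the scalar family $t\,v_\varepsilon$ by the mass-preserving dilation $t\star v_\varepsilon$ introduces a genuine error in the potential term. Because $\|t\star v_\varepsilon\|_2^2\equiv\Theta$, one has
$$\tfrac12\int_{\Omega_r}V(x)\,|t\star v_\varepsilon(x)|^2\,dx=\tfrac12\int V(y/t)\,v_\varepsilon(y)^2\,dy\le\tfrac12\|V\|_\infty\Theta,$$
with \emph{no} factor $t^2$; your displayed bound $\le\frac{t^2}{2}\|V\|_\infty\Theta$ is false for $t<1$, and the maximizer of the two surviving terms satisfies $t_\varepsilon\sim(\,a_\varepsilon/(s\,b_\varepsilon)\,)^{1/(2^*-2)}\sim\|u_\varepsilon\|_2\to0$. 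At $t=t_\varepsilon$ the function $t_\varepsilon\star v_\varepsilon$ converges (for $N\ge5$) to a fixed unit-scale bubble of mass $\Theta$, so $\frac12\int V\,|t_\varepsilon\star v_\varepsilon|^2$ tends to a fixed, generically positive constant: the $V$-contribution is $O(1)$, not $o(1)$ as you claim. This destroys the expansion: for $N=3,4$ your $F$-gain is itself only $o(1)$ (of order $\varepsilon^{(6-p_2)/4}$, resp.\ $|\log\varepsilon|^{(p_2-4)/2}$), and for $N\ge5$ both competing quantities are $O(1)$ with no reason for the needed ordering; even in the dimensions where $\int V(\cdot/t_\varepsilon)v_\varepsilon^2$ does vanish, its rate depends on the unquantified decay of $\|V\|_{L^{N/2}(B_M^c)}$ and cannot be compared to the $F$-gain. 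The paper's route avoids this precisely because $\|t v_\varepsilon\|_2^2=t^2\Theta$, so there the potential term carries the factor $t_\varepsilon^2\sim\|u_\varepsilon\|_2^2\to0$ (this is the term $C_1\|u_\varepsilon\|_2^2$ in the proof of Lemma~\ref{L8.2}).

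Two secondary gaps. First, $t\star v_\varepsilon$ has support $B_{2/t}$, and the parameter range you need in order to start below the mountain ridge ($t^2\|\nabla v_\varepsilon\|_2^2<t_g$ forces $t\lesssim\|u_\varepsilon\|_2$) pushes the support outside $\Omega_r$ for fixed $r$ once $\varepsilon$ is small, so the curve is not admissible in $H_0^1(\Omega_r)$ uniformly in $r>\widehat r_\Theta$. Second, the concatenation with arcs joining the dilation curve to $u^0$ and $u^1$ ``keeping the supremum unchanged'' is exactly what must be proved; it is not automatic that such arcs stay strictly below $\frac{\zeta}{N}S^{N/2}$. Your exponent bookkeeping for the $F$-gain (including the delicate $N=3,4$ cases and the role of $2<p_2<2+\frac4N$) is correct, but the potential term as treated invalidates the proof.
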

\begin{lemma}\label{L9.3}
Assume $0<\Theta<\widetilde{\Theta}_V$ where $\widetilde{\Theta}_V$ is given in Theorem \ref{t1.9}, let $r>r_\Theta$, where $r_\Theta$ is defined in Lemma \ref{L9.1}. Then problem \eqref{eq9.3} has a solution $\left(\lambda_{r, s}, u_{r, s}\right)$ for almost every $s \in\left[\frac{1}{2}, 1\right]$. Moreover, $u_{r, s} \geq 0$ and $\mathcal{J}_{r, s}\left(u_{r, s}\right)=\widetilde{m}_{r, s}(\Theta)$.
\end{lemma}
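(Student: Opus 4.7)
The plan is to mirror closely the argument of Lemma \ref{L8.3}, with the essential differences that now $\beta>0$ forces the inhomogeneous term $s\beta F(u)$ to be absorbed into the concave part $B$ of the abstract scheme of Proposition \ref{p3.1}, and that Sobolev--critical compactness has to be recovered via the strict upper bound on the mountain pass level supplied by Lemma \ref{L9.2}.

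First I would apply Proposition \ref{p3.1} with the splitting
$$A(u)=\frac{1}{2}\int_{\Omega_r}|\nabla u|^{2}\,dx+\frac{1}{2}\int_{\Omega_r}Vu^{2}\,dx,\qquad B(u)=\frac{1}{2^{*}}\int_{\Omega_r}|u|^{2^{*}}\,dx+\beta\int_{\Omega_r}F(u)\,dx.$$
Since $\beta>0$ and $F\geq 0$ by $(f_{2})$, we have $B\geq 0$; the coercivity \eqref{eq3.13} and the local H\"older bounds \eqref{eq3.14} follow from $(V_{0})$, $(f_{1})$ and standard Sobolev estimates; and the required mountain pass geometry, with two $s$-independent endpoints $u^{0},u^{1}\in S_{\widetilde r_{\Theta},\Theta}$, is exactly what is encoded in Lemma \ref{L9.1}. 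Hence for almost every $s\in[\tfrac12,1]$ there is a bounded Palais--Smale sequence $\{u_{n}\}\subset S_{r,\Theta}$ for $\mathcal{J}_{r,s}$ at the level $\widehat{m}_{r,s}(\Theta)$, with associated bounded Lagrange multipliers $\lambda_{n}\in\mathbb{R}$. Passing to a subsequence, $\lambda_{n}\to\lambda_{r}$, $u_{n}\rightharpoonup u_{r}$ in $H_{0}^{1}(\Omega_{r})$, and $u_{n}\to u_{r}$ in $L^{t}(\Omega_{r})$ for every $2\leq t<2^{*}$, so that $u_{r}$ is a weak solution of $-\Delta u_{r}+Vu_{r}+\lambda_{r}u_{r}=s|u_{r}|^{2^{*}-2}u_{r}+s\beta f(u_{r})$.

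The main obstacle is to promote this convergence to strong convergence in $H_{0}^{1}(\Omega_{r})$ in the presence of the critical exponent. Setting $v_{n}:=u_{n}-u_{r}$, the Br\'ezis--Lieb lemma together with the strong $L^{t}$ convergence for $t<2^{*}$ (which handles the $F$-- and $V$--terms via $(f_{2})$ and $(V_{0})$) yields
$$\|\nabla u_{n}\|_{2}^{2}=\|\nabla u_{r}\|_{2}^{2}+\|\nabla v_{n}\|_{2}^{2}+o(1),\qquad \|u_{n}\|_{2^{*}}^{2^{*}}=\|u_{r}\|_{2^{*}}^{2^{*}}+\|v_{n}\|_{2^{*}}^{2^{*}}+o(1).$$
Testing the approximate Euler--Lagrange equation against $u_{n}$, subtracting the equation solved by $u_{r}$, and using $\int_{\Omega_{r}}Vv_{n}^{2}\,dx\to 0$, one obtains $\|\nabla v_{n}\|_{2}^{2}-s\|v_{n}\|_{2^{*}}^{2^{*}}\to 0$, so by Sobolev either $\|\nabla v_{n}\|_{2}^{2}\to 0$ or $\|\nabla v_{n}\|_{2}^{2}\to \ell\geq s^{-2/(2^{*}-2)}S^{N/2}=\zeta S^{N/2}$. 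The same splitting at the level of the functional gives $\widehat{m}_{r,s}(\Theta)=\mathcal{J}_{r,s}(u_{r})+\tfrac{\ell}{N}+o(1)$, and as in Lemma \ref{L8.3} the nonnegativity of $\mathcal{J}_{r,s}(u_{r})$ (a consequence of the Gagliardo--Nirenberg/Sobolev analysis performed in Lemma \ref{L9.1}(ii) for $\Theta<\widetilde{\Theta}_{V}$) together with the strict upper bound $\widehat{m}_{r,s}(\Theta)<\frac{\zeta}{N}S^{N/2}$ from Lemma \ref{L9.2} excludes the alternative $\ell\geq\zeta S^{N/2}$. Hence $\ell=0$, $u_{n}\to u_{r}$ in $H_{0}^{1}(\Omega_{r})$, $u_{r}\in S_{r,\Theta}$, and $\mathcal{J}_{r,s}(u_{r})=\widehat{m}_{r,s}(\Theta)$.

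Finally, to arrange $u_{r,s}\geq 0$ I would adapt the truncation trick from Lemma \ref{L3.2}: inside the minimax construction supplied by Proposition \ref{p3.1} each path $\gamma_{n}\in\widehat{\Gamma}_{r,\Theta}$ is replaced by $\widetilde{\gamma}_{n}(t):=|\gamma_{n}(t)|$, which again lies in $\widehat{\Gamma}_{r,\Theta}$, does not enlarge $\|\nabla\cdot\|_{2}^{2}$, preserves the mass, and leaves $\int Vu^{2}$, $\int|u|^{2^{*}}$ and $\int F(u)$ invariant (the last by the evenness of $F$ coming from $(f_{1})$). Running the same compactness argument on the resulting nonnegative bounded Palais--Smale sequence produces the desired nonnegative constrained critical point $u_{r,s}$ with $\mathcal{J}_{r,s}(u_{r,s})=\widehat{m}_{r,s}(\Theta)$, completing the proof.
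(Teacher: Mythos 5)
Your proposal coincides with the paper's intended argument: the paper applies Proposition \ref{p3.1} with exactly your splitting $A(u)=\tfrac12\int_{\Omega_r}|\nabla u|^2\,dx+\tfrac12\int_{\Omega_r}Vu^2\,dx$ and $B(u)=\tfrac{1}{2^*}\int_{\Omega_r}|u|^{2^*}\,dx+\beta\int_{\Omega_r}F(u)\,dx$ to obtain a bounded Palais--Smale sequence at level $\widehat m_{r,s}(\Theta)$, and then invokes the compactness scheme of Lemma \ref{L8.3} (Br\'ezis--Lieb splitting combined with the strict bound $\widehat m_{r,s}(\Theta)<\tfrac{\zeta}{N}S^{N/2}$ from Lemma \ref{L9.2}) together with the truncation trick of Lemma \ref{L3.2} for nonnegativity. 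Your write-up reproduces each of these steps, so it is essentially the same proof as the paper's.
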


In order to obtain a solution of \eqref{eq9.1}, we also need to prove a uniform estimate for the solutions of \eqref{eq9.3} established in Lemma \ref{L9.3}.
\begin{lemma}\label{L9.4}
For fixed $\Theta>0$ the set of solutions $u \in S_{r, \Theta}$ of \eqref{eq9.3} is bounded uniformly in $s$ and $r$.
\end{lemma}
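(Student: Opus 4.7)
The plan is to adapt the Pohozaev-plus-energy argument of Lemma \ref{L5.3} to the critical exponent $q=2^*$. Fix a solution $(\lambda,u)\in\mathbb{R}\times S_{r,\Theta}$ of \eqref{eq9.3} coming from Lemma \ref{L9.3}, so that $\mathcal J_{r,s}(u)=\widehat m_{r,s}(\Theta)$, which by part (iii) of Lemma \ref{L9.1} is bounded from above uniformly in $s\in[\tfrac12,1]$ and $r>\widehat r_\Theta$. First I would test the equation with $u$ to get the Nehari identity, and combine it with the Pohozaev identity on $\Omega_r$. Computing $\tfrac12(\text{Nehari})-(\text{Pohozaev})$ cancels $\lambda\Theta$ and $\tfrac12\int Vu^2$ and, using $\tfrac{2^*-2}{2\cdot 2^*}=\tfrac{1}{N}$, produces
$$\frac{1}{N}\int_{\Omega_r}|\nabla u|^2 - \frac{1}{2N}\int_{\partial\Omega_r}|\nabla u|^2(x\cdot\mathbf{n})\,d\sigma - \frac{1}{2N}\int_{\Omega_r}\widetilde V u^2 = \frac{s}{N}\int_{\Omega_r}|u|^{2^*} + \frac{s\beta}{2}\int_{\Omega_r}f(u)u - s\beta\int_{\Omega_r}F(u).$$

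Next I would use the energy identity $\mathcal J_{r,s}(u)=\widehat m_{r,s}(\Theta)$ to replace $\frac{s}{N}\int|u|^{2^*}=\frac{2^*-2}{2}\cdot\frac{s}{2^*}\int|u|^{2^*}$ by $\frac{2^*-2}{2}\bigl[\tfrac12\int|\nabla u|^2+\tfrac12\int Vu^2-\widehat m_{r,s}(\Theta)-s\beta\int F(u)\bigr]$, then invoke $f(u)u\geq p_2 F(u)$ from $(f_2)$. After simplification the critical $F$-contributions consolidate into $\tfrac{s\beta(p_2-2^*)}{2}\int F(u)$, yielding
$$\frac{1}{N}\int|\nabla u|^2 - \frac{1}{2N}\int_{\partial\Omega_r}|\nabla u|^2(x\cdot\mathbf n)\,d\sigma - \frac{1}{2N}\int\widetilde V u^2 \geq \frac{2^*-2}{4}\int|\nabla u|^2 + \frac{2^*-2}{4}\int Vu^2 - \frac{2^*-2}{2}\widehat m_{r,s}(\Theta) + \frac{s\beta(p_2-2^*)}{2}\int F(u).$$

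Rearranging, the coefficient of $\int|\nabla u|^2$ becomes $\frac{1}{N}-\frac{2^*-2}{4}=-\frac{2}{N(N-2)}$, which is \emph{negative}. Multiplying through by $-\tfrac{N(N-2)}{2}$ reverses the inequality to $\leq$, and dropping the boundary integral (nonnegative by convexity of $\Omega_r$) only enlarges the right-hand side. Estimating $|\int Vu^2|\leq\|V\|_\infty\Theta$, $|\int\widetilde V u^2|\leq\|\widetilde V\|_\infty\Theta$, bounding $\widehat m_{r,s}(\Theta)$ by Lemma \ref{L9.1}(iii), and using $F(u)\leq\alpha|u|^{p_1}$ together with the Gagliardo-Nirenberg inequality
$$\int_{\Omega_r}F(u)\,dx\leq \alpha C_{N,p_1}\Theta^{\frac{2p_1-N(p_1-2)}{4}}\|\nabla u\|_2^{\frac{N(p_1-2)}{2}},$$
I arrive at $\|\nabla u\|_2^2\leq A+B\|\nabla u\|_2^{\sigma}$ with $\sigma=\tfrac{N(p_1-2)}{2}<2$ (since $p_1<2+\tfrac{4}{N}$) and $A,B$ independent of $s$ and $r$. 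This forces the desired uniform bound.

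The main obstacle, and precisely the feature that made Remark \ref{R1.4} worry about the critical case, is the sign flip in the Pohozaev combination: whereas the subcritical setting (compare Lemma \ref{L3.3}) produces a favourable positive coefficient for $\|\nabla u\|^2$, here the coefficient $\tfrac{1}{N}-\tfrac{2^*-2}{4}$ is negative. The saving observation is that this negativity, once one has already absorbed $\int|u|^{2^*}$ into the bounded quantity $\widehat m_{r,s}(\Theta)$ through the energy identity, is in fact an asset: it converts into an upper bound after flipping the inequality. Skipping that substitution would leave a critical term on the right-hand side that no Gagliardo-Nirenberg estimate can tame.
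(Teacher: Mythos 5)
Your argument is correct and is essentially the paper's own proof: Nehari identity plus Pohozaev identity, substitution of the critical term via the energy identity $\mathcal J_{r,s}(u)=\widehat m_{r,s}(\Theta)$, the bound $f(u)u\ge p_2F(u)$ from $(f_2)$, convexity of $\Omega_r$ to discard the boundary term, and Gagliardo--Nirenberg applied to $\int F(u)$, ending with $\|\nabla u\|_2^2\le A+B\|\nabla u\|_2^{N(p_1-2)/2}$ where $N(p_1-2)/2<2$. The only (cosmetic) difference is the sign bookkeeping: the paper isolates $\widehat m_{r,s}(\Theta)$ so that $\int_{\Omega_r}|\nabla u|^2\,dx$ carries the positive coefficient $\frac{N(2^*-2)-4}{4N}=\frac{2}{N(N-2)}$, exactly as for every $q>2+\frac{4}{N}$ in Lemma \ref{L5.3}, so the ``sign flip'' you highlight is not a genuine new feature of the critical case (and is unrelated to the issue raised in Remark \ref{R1.4}, which concerns the sign of the Lagrange multiplier), merely a choice of which side of the inequality to read.
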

\begin{proof}
Since $u$ is a solution of \eqref{eq9.3}, we have
\begin{equation*}
  \int_{\Omega_r}|\nabla u|^2 d x+\int_{\Omega_r} V u^2 d x=s \int_{\Omega_r}|u|^{2^*} d x+s\beta \int_{\Omega_r}f(u)u d x-\lambda \int_{\Omega_r}|u|^2 d x .
\end{equation*}
The Pohozaev identity implies
\begin{eqnarray*}
&&\frac{N-2}{2 N} \int_{\Omega_r}|\nabla u|^2 d x+\frac{1}{2 N} \int_{\partial \Omega_r}|\nabla u|^2(x \cdot \mathbf{n}) d \sigma+\frac{1}{2 N} \int_{\Omega_r}\widetilde{V}(x) u^2+\frac{1}{2} \int_{\Omega_r} V u^2 d x \\
&=&-\frac{\lambda}{2} \int_{\Omega_r}|u|^2 d x+\frac{s}{2^*} \int_{\Omega_r}|u|^{2^*} d x+s\beta \int_{\Omega_r}F(u) d x
\end{eqnarray*}
where $\mathbf{n}$ denotes the outward unit normal vector on $\partial \Omega_r$. It then follows from $\beta >0$ and $(f_2)$ that
\begin{eqnarray*}
&&\frac{1}{N} \int_{\Omega_r}|\nabla u|^2 d  x-\frac{1}{2 N} \int_{\partial \Omega_r}|\nabla u|^2(x \cdot \mathbf{n}) d \sigma-\frac{1}{2 N} \int_{\Omega_r}(\nabla V \cdot x) u^2 d x \\
& \leq&\frac{2^*-2}{2}\left(\frac{1}{2} \int_{\Omega_r}|\nabla u|^2 d x+\frac{1}{2} \int_{\Omega_r} V u^2 d x-\widehat{m}_{r, s}(\Theta)\right)+  s\frac{\beta (p_2-2^*) }{2  }  \int_{\Omega_r}F(u) d x .
\end{eqnarray*}
Using Gagliardo-Nirenberg inequality, \eqref{eq3.5} and (iii) in Lemma \ref{L9.1}, we have
 \begin{eqnarray*}
\frac{2^*-2}{2} \widehat{m}_{r, s}(\Theta)
&\geq & \frac{N(p_2-2)-4}{4 N} \int_{\Omega_r}|\nabla u|^2 d x-\Theta\left(\frac{1}{2 N}\|\nabla V \cdot x\|_{\infty}+\frac{p_2-2}{4}\|V\|_{\infty}\right)\\
&&+\frac{s\alpha\beta (p_2-2^*) }{2  }C_{N, p_1} \Theta^{\frac{2 p_1-N(p_1-2)}{4}}\left(\int_{\Omega_r}|\nabla u|^2 d x\right)^{\frac{N(p_1-2)}{4}}.
 \end{eqnarray*}
Since $2<p_1<2+\frac{4}{N}$, we can bound $\int_{\Omega_r}|\nabla u|^2 d x$ uniformly in $s$ and $r$.
\end{proof}
\begin{lemma}\label{L9.5}
Assume $0<\Theta<\widetilde{\Theta}_V$, where $\widetilde{\Theta}_V$ is given in Theorem \ref{t1.9}, and let $r>\widetilde{r}_\Theta$, where $\widetilde{r}_\Theta$ is defined in Lemma \ref{L9.1}. Then equation \eqref{eq9.3} admits a solution $\left(\lambda_{r, \Theta}, u_{r, \Theta}\right)$ for every $r>\widetilde{r}_\Theta$ such that $u_{r, \Theta}>0$ in $\Omega_r$.
\end{lemma}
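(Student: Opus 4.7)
The strategy is to obtain $u_{r,\Theta}$ by letting $s \to 1^-$ along the sequence of solutions $(\lambda_{r,s}, u_{r,s})$ furnished by Lemma \ref{L9.3}. First I would pick a sequence $s_n \in [\tfrac{1}{2},1]$ with $s_n \to 1^-$ such that Lemma \ref{L9.3} yields nonnegative solutions $(\lambda_{r,s_n}, u_{r,s_n}) \in \mathbb{R} \times S_{r,\Theta}$ of \eqref{eq9.3} with $\mathcal{J}_{r,s_n}(u_{r,s_n}) = \widehat{m}_{r,s_n}(\Theta)$. Lemma \ref{L9.4} then gives a bound $\|\nabla u_{r,s_n}\|_2 \le C$ independent of $n$. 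Testing \eqref{eq9.3} against $u_{r,s_n}$, combined with $(f_2)$ and the uniform $H_0^1$ bound, shows that $\{\lambda_{r,s_n}\}$ is bounded in $\mathbb{R}$. Hence, up to a subsequence, $\lambda_{r,s_n} \to \lambda_r$ in $\mathbb{R}$ and $u_{r,s_n} \rightharpoonup u_r$ in $H_0^1(\Omega_r)$, with $u_{r,s_n} \to u_r$ in $L^t(\Omega_r)$ for $2 \le t < 2^*$ and a.e.\ in $\Omega_r$. The weak limit $u_r \ge 0$ automatically solves
$$
-\Delta u_r + V u_r + \lambda_r u_r = |u_r|^{2^*-2} u_r + \beta f(u_r) \quad \text{in } \Omega_r.
$$

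The core difficulty is the lack of compactness of the embedding $H_0^1(\Omega_r) \hookrightarrow L^{2^*}(\Omega_r)$. To handle it I would adapt the Brezis--Lieb argument used in Lemma \ref{L8.3}. Setting $w_n := u_{r,s_n} - u_r$, the Brezis--Lieb lemma and compactness for subcritical powers yield
$$
\|\nabla u_{r,s_n}\|_2^2 = \|\nabla u_r\|_2^2 + \|\nabla w_n\|_2^2 + o(1), \qquad \|u_{r,s_n}\|_{2^*}^{2^*} = \|u_r\|_{2^*}^{2^*} + \|w_n\|_{2^*}^{2^*} + o(1),
$$
while $\int V w_n^2 \to 0$ and $\int f(u_{r,s_n}) w_n \to 0$ by the compact embedding. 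Testing the equation for $u_{r,s_n}$ against itself and using the equation for $u_r$, one obtains
$$
\|\nabla w_n\|_2^2 = s_n \|w_n\|_{2^*}^{2^*} + o(1).
$$
If $\|\nabla w_n\|_2^2 \to \ell \ge 0$, then either $\ell = 0$ (which gives $u_{r,s_n} \to u_r$ strongly in $H_0^1(\Omega_r)$) or, by the Sobolev inequality, $\ell \ge s_n^{-2/(2^*-2)} S^{N/2}$, so $\ell \ge S^{N/2}$ in the limit.

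The main obstacle is ruling out the second alternative. This is where the upper bound $\widehat{m}_{r,s_n}(\Theta) < \tfrac{\zeta_n}{N} S^{N/2}$ (with $\zeta_n = s_n^{-2/(2^*-2)}$), established in Lemma \ref{L9.2}, becomes decisive. If $\ell \ge s_n^{-2/(2^*-2)} S^{N/2}$, then expanding $\mathcal{J}_{r,s_n}(u_{r,s_n}) = \mathcal{J}_{r,s_n}(u_r) + \tfrac{1}{N}\|\nabla w_n\|_2^2 + o(1)$ and using $\mathcal{J}_{r,s_n}(u_r) \ge 0$ (the latter following from the Pohozaev-type identity and $\lambda_r \ge 0$, which itself follows as in Lemma \ref{L5.4}, or else directly from the structure of $\mathcal{J}_{r,s_n}$ on $\widehat{\mathcal{V}}$-like sets) would force $\widehat{m}_{r,s_n}(\Theta) \ge \tfrac{\zeta_n}{N} S^{N/2}$, contradicting Lemma \ref{L9.2}. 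Hence $\ell = 0$, $u_{r,s_n} \to u_r$ strongly in $H_0^1(\Omega_r)$, and $u_r \in S_{r,\Theta}$ is a nonnegative solution of \eqref{eq9.1}. Finally, the strong maximum principle (applied to the equation satisfied by $u_r$, after absorbing lower-order terms thanks to $f \in C^1$ and $f(0)=0$) upgrades $u_r \ge 0$ to $u_r > 0$ in $\Omega_r$.
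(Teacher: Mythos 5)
Your proposal is correct and follows essentially the same route as the paper, whose proof of Lemma \ref{L9.5} is given only by reference to Lemma \ref{L8.5} (and thence to Lemmas \ref{L3.4} and \ref{L8.3}): take $s_n\to 1^-$, use Lemma \ref{L9.3} for existence and Lemma \ref{L9.4} for the uniform gradient bound, deduce boundedness of the multipliers, and recover strong $H_0^1$ convergence via the Brezis--Lieb splitting together with the strict energy threshold $\widehat{m}_{r,s}(\Theta)<\frac{\zeta}{N}S^{N/2}$ of Lemma \ref{L9.2}. Your expansion, including the remark that one must secure $\mathcal{J}_{r,s_n}(u_r)\ge 0$ before the threshold comparison (a point the paper itself leaves implicit in Lemma \ref{L8.3}) and the final appeal to the strong maximum principle, is a faithful rendering of the intended argument.
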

\begin{proof}
The proof of lemma is similar to the Lemma \ref{L8.5}.
\end{proof}
\noindent\textbf{Proof of Theorem \ref{t1.9}} The proof is an immediate consequence of Lemmas \ref{L9.5}  and \ref{L3.5}.

\section{Mass critical case}

\subsection{Proof of Theorem \ref{t1.10}}
This subsection considers the case of $p_1=2+\frac{4}{N}$, so we need to modify the proof of Theorem \ref{t1.3}.
\begin{lemma}\label{L10.1}
For $0<\Theta<\widetilde{\Theta}_V$ where $\widetilde{\Theta}_V$ is defined in Theorem \ref{t1.10}, there exist $\widetilde{r}_\Theta>0$ and $u^0, u^1 \in S_{r_\Theta, \Theta}$ such that

(i) For $r>\widetilde{r}_\Theta$ and $s \in\left[\frac{1}{2}, 1\right]$ we have $J_{r, s}\left(u^1\right) \leq 0$ and
$$
J_{r, s}\left(u^0\right)<\frac{(N(q-2)-4)\left(1-\left\|V_{-}\right\|_{\frac{N}{2}} S^{-1}-2\alpha\beta C_N\Theta^{\frac{2}{N}}\right)^{\frac{N(q-2)}{N(q-2)-4}}}{2 [N(q-2)]^{\frac{N(q-2)}{N(q-2)-4}}}\left[\frac{2 q}{ C_{N, q} \Theta^{\frac{2 q-N(q-2)}{4}}}\right]^{\frac{4}{N(q-2)-4}}.
$$
Moreover,
$$
\left\|\nabla u^0\right\|_2^2<\left[\frac{2 q}{N(q-2) C_{N, q}}\left(1-\left\|V_{-}\right\|_{\frac{N}{2}} S^{-1}-2\alpha\beta C_N\Theta^{\frac{2}{N}}\right) \Theta^{\frac{q(N-2)-2 N}{4}}\right]^{\frac{4}{N(q-2)-4}}
$$
and
$$
\left\|\nabla u^1\right\|_2^2>\left[\frac{2 q}{N(q-2) C_{N, q}}\left(1-\left\|V_{-}\right\|_{\frac{N}{2}} S^{-1}-2\alpha\beta C_N\Theta^{\frac{2}{N}}\right) \Theta^{\frac{q(N-2)-2 N}{4}}\right]^{\frac{4}{N(q-2)-4}}.
$$

(ii) If $u \in S_{r, \Theta}$ satisfies
$$
\|\nabla u\|_2^2=\left[\frac{2 q}{N(q-2) C_{N, q}}\left(1-\left\|V_{-}\right\|_{\frac{N}{2}} S^{-1}-2\alpha\beta C_N\Theta^{\frac{2}{N}}\right) \Theta^{\frac{q(N-2)-2 N}{4}}\right]^{\frac{4}{N(q-2)-4}},
$$
then there holds
$$
J_{r, s}(u) \geq \frac{(N(q-2)-4)\left(1-\left\|V_{-}\right\|_{\frac{N}{2}} S^{-1}-2\alpha\beta C_N\Theta^{\frac{2}{N}}\right)^{\frac{N(q-2)}{N(q-2)-4}}}{2 [N(q-2)]^{\frac{N(q-2)}{N(q-2)-4}}}\left[\frac{2 q}{ C_{N, q} \Theta^{\frac{2 q-N(q-2)}{4}}}\right]^{\frac{4}{N(q-2)-4}}.
$$

(iii) Let
$$
m_{r, s}(\Theta)=\inf _{\gamma \in \Gamma_{r, \Theta}} \sup\limits_{t \in[0,1]} J_{r, s}(\gamma(t)),
$$
where
$$
\Gamma_{r, \Theta}=\left\{\gamma \in C\left([0,1], S_{r, \Theta}\right): \gamma(0)=u^0, \gamma(1)=u^1\right\}.
$$
Then
$$
m_{r, s}(\Theta) \geq\frac{(N(q-2)-4)\left(1-\left\|V_{-}\right\|_{\frac{N}{2}} S^{-1}-2\alpha\beta C_N\Theta^{\frac{2}{N}}\right)^{\frac{N(q-2)}{N(q-2)-4}}}{2 [N(q-2)]^{\frac{N(q-2)}{N(q-2)-4}}}\left[\frac{2 q}{ C_{N, q} \Theta^{\frac{2 q-N(q-2)}{4}}}\right]^{\frac{4}{N(q-2)-4}}
$$
and
\begin{equation*}
  m_{r, s}(\Theta) \leq \frac{N(q-2)-4}{2}\left(\frac{\theta\left(1+\|V\|_{\frac{N}{2}} S^{-1}\right)}{N(q-2)}\right)^{\frac{N(q-2)}{N(q-2)-4}}(4 q)^{\frac{4}{N(q-2)-4}}|\Omega|^{\frac{2(q-2)}{N(q-2)-4}} \Theta^{\frac{N(q-2)-2 q}{N(q-2)-4}}.
\end{equation*}
where $\theta$ is the principal eigenvalue of $-\Delta$ with Dirichlet boundary condition in $\Omega$.
\end{lemma}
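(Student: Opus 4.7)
I plan to adapt the proof of Lemma \ref{L5.1} to the mass-critical regime $p_1 = 2+\frac{4}{N}$, the main new feature being how $\int F(u)\,dx$ is estimated. The mass-critical Gagliardo--Nirenberg inequality gives
\[
\|u\|_{p_1}^{p_1} \leq C_N \|u\|_2^{4/N}\|\nabla u\|_2^2 = C_N \Theta^{2/N}\|\nabla u\|_2^2,
\]
so, combined with $F(\tau) \leq \alpha|\tau|^{p_1}$, the $F$-contribution becomes exactly linear in $\|\nabla u\|_2^2$ instead of sublinear. Unlike the subcritical case, it does not vanish faster than the leading quadratic term at zero but must be absorbed into it, producing the effective coefficient
$A(\Theta) := 1 - \|V_-\|_{N/2}S^{-1} - 2\alpha\beta C_N \Theta^{2/N}$.
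The smallness condition $\Theta < \widetilde{\Theta}_V$ is precisely what keeps $A(\Theta)$ strictly positive and preserves the mountain-pass geometry.

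For the endpoints I would use the positive Dirichlet eigenfunction $v_1 \in S_{1,\Theta}$ corresponding to $\theta$, and the scaling $v_t(x) = t^{N/2}v_1(tx)$. Since $\beta > 0$ the $F$-term in $J_{1/t,s}(v_t)$ is non-positive and can be dropped, yielding the same upper bound
\[
J_{1/t,s}(v_t) \leq \tfrac{1}{2}(1+\|V\|_{N/2}S^{-1})\theta\Theta\, t^2 - \tfrac{1}{2q}\Theta^{q/2}|\Omega|^{(2-q)/2}\, t^{N(q-2)/2} =: h_2(t),
\]
as in Lemma \ref{L5.1}, which has a unique positive maximum at some $t_\Theta$ and a first zero at some $t_0 > t_\Theta$. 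Thus $u^1 := v_{t_0}$ gives $J_{r,s}(u^1) \leq 0$ for all $r \geq 1/t_0$ and $s \in [1/2,1]$, and the upper bound in (iii) is produced by the explicit scaling path $\gamma(\tau)(x) = (\tau t_0 + (1-\tau)/\widetilde r_\Theta)^{N/2} v_1((\tau t_0 + (1-\tau)/\widetilde r_\Theta)x)$ combined with $\max_t h_2 = h_2(t_\Theta)$.

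For the lower estimate I would combine H\"older on the $V$-term, the mass-critical Gagliardo--Nirenberg on $F$, and the standard Gagliardo--Nirenberg on $\int |u|^q$, and use $s \leq 1$ to absorb, obtaining
\[
J_{r,s}(u) \geq \tfrac{1}{2}A(\Theta)\|\nabla u\|_2^2 - \tfrac{C_{N,q}\Theta^{(2q-N(q-2))/4}}{q}\|\nabla u\|_2^{N(q-2)/2}.
\]
With $y = \|\nabla u\|_2^2$, the function $g(y) := \tfrac{1}{2}A(\Theta)y - \tfrac{C_{N,q}\Theta^{(2q-N(q-2))/4}}{q}y^{N(q-2)/4}$ attains its unique positive maximum at
\[
\tilde y = \Big[\tfrac{2q\,A(\Theta)}{N(q-2)C_{N,q}}\,\Theta^{(q(N-2)-2N)/4}\Big]^{4/(N(q-2)-4)},
\]
with $g(\tilde y)$ equal to the right-hand side of (ii). Picking $t_1 \in (0,t_\Theta)$ so that $h_2(t) < g(\tilde y)$ for $t \in [0,t_1]$, setting $\bar r_\Theta := \max\{1/t_1, \sqrt{2\theta\Theta/\tilde y}\}$, $\widetilde r_\Theta := \max\{1/t_0, \bar r_\Theta\}$, and $u^0 := v_{1/\bar r_\Theta}$, one gets $\|\nabla u^0\|_2^2 < \tilde y$ and $J_{r,s}(u^0) \leq h_2(1/\bar r_\Theta) \leq h_2(t_1) < g(\tilde y)$, completing (i). Statement (ii) follows directly from $g(\tilde y) = \max g$; the lower bound in (iii) follows from the continuity of $\|\nabla \gamma(\cdot)\|_2^2$ along any $\gamma \in \Gamma_{r,\Theta}$, which forces the path to cross the level $\tilde y$.

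The main obstacle will be pinning down the precise smallness threshold $\widetilde{\Theta}_V$ so that $A(\Theta) > 0$ and the mountain-pass level strictly exceeds both endpoint values. In the subcritical setting of Lemma \ref{L5.1} the $F$-contribution involves $\|\nabla u\|_2^{N(p_1-2)/2}$ with exponent strictly greater than $2$, hence is negligible near $0$ for any $\Theta$; at $p_1 = 2 + 4/N$ this exponent equals $2$, and the $F$-coefficient must be literally subtracted from the leading quadratic coefficient. This is the only source of a smallness condition on the mass and explains the appearance of $\alpha\beta C_N$ in the denominator of $\widetilde{\Theta}_V$.
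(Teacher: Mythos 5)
Your proposal is correct and follows essentially the same route as the paper's proof: both modify Lemma \ref{L5.1} by using the $L^2$-critical Gagliardo--Nirenberg inequality to turn the $F$-contribution into a term linear in $\|\nabla u\|_2^2$, absorb it into the leading quadratic coefficient to obtain $\frac{1}{2}\bigl(1-\|V_-\|_{\frac{N}{2}}S^{-1}-2\alpha\beta C_N\Theta^{\frac{2}{N}}\bigr)$ (kept positive by $\Theta<\widetilde{\Theta}_V$), and then run the same eigenfunction-scaling construction of $u^0,u^1$, the one-variable maximization at $\tilde y=t_g$, and the path-crossing argument for (iii). No gaps.
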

\begin{proof}
We only need to modify the proof of Lemma \ref{L5.1}. There exists $0<t_1<t_\Theta$ such that for any $t \in\left[0, t_1\right]$,
\begin{equation}\label{eq10.1}
h_2(t)<\frac{(N(q-2)-4)\left(1-\left\|V_{-}\right\|_{\frac{N}{2}} S^{-1}-2\alpha\beta C_N\Theta^{\frac{2}{N}}\right)^{\frac{N(q-2)}{N(q-2)-4}}}{2 [N(q-2)]^{\frac{N(q-2)}{N(q-2)-4}}}\left[\frac{2 q}{ C_{N, q} \Theta^{\frac{2 q-N(q-2)}{4}}}\right]^{\frac{4}{N(q-2)-4}}.
\end{equation}
On the other hand, it follows from \eqref{eq3.5}, the Gagliardo-Nirenberg inequality and the H\"older inequality that
\begin{eqnarray}\label{eq10.2}
J_{r, s}(u)
&=&\left[\frac{1-\left\|V_{-}\right\|_{\frac{N}{2}} S^{-1}}{2}-\alpha\beta C_{N} \Theta^{\frac{2}{N}}\right] \|\nabla u\|_2^2-\frac{C_{N, q} \Theta^{\frac{2 q-N(q-2)}{4}}}{q}\|\nabla u\|_2^{\frac{N(q-2)}{2}}.
\end{eqnarray}
Define
\begin{eqnarray*}
g_1(t):=\left[\frac{1-\left\|V_{-}\right\|_{\frac{N}{2}} S^{-1}}{2}-\alpha\beta C_{N} \Theta^{\frac{2}{N}}\right] t-\frac{C_{N, q} \Theta^{\frac{2 q-N(q-2)}{4}}}{q}t^{\frac{N(q-2)}{4}}
\end{eqnarray*}
and
$$
t_g=\left[\frac{2 q}{N(q-2) C_{N, q}}\left(1-\left\|V_{-}\right\|_{\frac{N}{2}} S^{-1}-2\alpha\beta C_N\Theta^{\frac{2}{N}}\right) \Theta^{\frac{q(N-2)-2 N}{4}}\right]^{\frac{4}{N(q-2)-4}},
$$
it is easy to see that $g_1$ is increasing on $(0, t_g)$ and decreasing on $(t_g, \infty)$, and
$$
g_1(t_g)=\frac{(N(q-2)-4)\left(1-\left\|V_{-}\right\|_{\frac{N}{2}} S^{-1}-2\alpha\beta C_N\Theta^{\frac{2}{N}}\right)^{\frac{N(q-2)}{N(q-2)-4}}}{2 [N(q-2)]^{\frac{N(q-2)}{N(q-2)-4}}}\left[\frac{2 q}{ C_{N, q} \Theta^{\frac{2 q-N(q-2)}{4}}}\right]^{\frac{4}{N(q-2)-4}} .
$$
Set $\bar{r}_\Theta=\max \left\{\frac{1}{t_1}, \sqrt{\frac{2 \theta \Theta}{t_g}}\right\}$, then $v_{\frac{1}{\bar{r}_\Theta}} \in S_{r, \Theta}$ for any $r>\bar{r}_\Theta$, and
\begin{eqnarray}\label{eq10.3}
&&\left\|\nabla v_{\frac{1}{\bar{r}_\Theta}}\right\|_2^2=\left(\frac{1}{\bar{r}_\Theta}\right)^2\left\|\nabla v_1\right\|_2^2\nonumber\\
&<&t_g=\left[\frac{2 q}{N(q-2) C_{N, q}}\left(1-\left\|V_{-}\right\|_{\frac{N}{2}} S^{-1}-2\alpha\beta C_N\Theta^{\frac{2}{N}}\right) \Theta^{\frac{q(N-2)-2 N}{4}}\right]^{\frac{4}{N(q-2)-4}} .
\end{eqnarray}
Then the statement (i) holds by \eqref{eq5.5}, \eqref{eq10.1}, \eqref{eq5.9}, \eqref{eq10.3}.

(ii) holds by \eqref{eq10.2} and a direct calculation.

(iii) In view of $J_{r, s}\left(u^1\right) \leq 0$ for any $\gamma \in \Gamma_{r, \Theta}$ and the definition of $t_0$, we have
$$
\|\nabla \gamma(0)\|_2^2<t_g<\|\nabla \gamma(1)\|_2^2 .
$$
It then follows from \eqref{eq10.2} that
\begin{eqnarray*}
&&\max\limits_{t \in[0,1]} J_{r, s}(\gamma(t))  \geq g_1\left(t_g\right) \\
& =&\frac{(N(q-2)-4)\left(1-\left\|V_{-}\right\|_{\frac{N}{2}} S^{-1}-2\alpha\beta C_N\Theta^{\frac{2}{N}}\right)^{\frac{N(q-2)}{N(q-2)-4}}}{2 [N(q-2)]^{\frac{N(q-2)}{N(q-2)-4}}}\left[\frac{2 q}{ C_{N, q} \Theta^{\frac{2 q-N(q-2)}{4}}}\right]^{\frac{4}{N(q-2)-4}}
\end{eqnarray*}
for any $\gamma \in \Gamma_{r, \Theta}$, hence the first inequality in (iii) holds. We define a path $\gamma:[0,1] \rightarrow S_{r, \Theta}$ by
$$
\gamma(t): \Omega_r \rightarrow \mathbb{R}, \quad x \mapsto\left(\tau t_0+(1-\tau) \frac{1}{\widetilde{r}}_\Theta\right)^{\frac{N}{2}} v_1\left(\left(\tau t_0+(1-\tau) \frac{1}{\widetilde{r}_\Theta}\right) x\right) .
$$
Then $\gamma \in \Gamma_{r, \Theta}$, and the second inequality in (iii) follows from \eqref{eq5.4}.
\end{proof}
\begin{lemma}\label{L10.2}
For fixed $\Theta>0$ the set of solutions $u \in S_{r, \Theta}$ of \eqref{eq5.1} is bounded uniformly in $s$ and $r$.
\end{lemma}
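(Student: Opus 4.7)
The plan is to adapt the proof of Lemma~\ref{L5.3} to the $L^2$-critical setting $p_1=2+\frac{4}{N}$. The crucial point is that if we retain the coefficient $(q-2)/2$ in front of $\int|u|^q$ (rather than replacing it by the useless $(p_2-2)/2$ as discussed in the Preliminary), the Pohozaev--Nehari combination still produces the positive multiple $\frac{N(q-2)-4}{4N}$ of $\|\nabla u\|_2^2$; the additional $F(u)$-term, which in the subcritical regime was sublinear in $\|\nabla u\|_2^2$, now scales exactly like $\|\nabla u\|_2^2$ but with a prefactor proportional to $\Theta^{2/N}$, hence can be absorbed provided $\Theta$ is small.

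Concretely, since $(\lambda,u)$ solves \eqref{eq5.1}, combining the Nehari and Pohozaev identities on $\Omega_r$ and eliminating $\lambda$ yields, exactly as in Lemma~\ref{L5.3},
\[
\frac{1}{N}\int_{\Omega_r}|\nabla u|^2\,dx-\frac{1}{2N}\int_{\partial\Omega_r}|\nabla u|^2(x\cdot\mathbf{n})\,d\sigma-\frac{1}{2N}\int_{\Omega_r}\widetilde{V}u^2\,dx
=\frac{s(q-2)}{2q}\int_{\Omega_r}|u|^q\,dx+s\int_{\Omega_r}\Bigl(\tfrac{\beta}{2}f(u)u-\beta F(u)\Bigr)dx .
\]
Rewriting $\frac{s}{q}\int|u|^q$ via the energy identity $J_{r,s}(u)=m_{r,s}(\Theta)$, invoking $f(u)u\ge p_2 F(u)$ from $(\widetilde{f_2})$ (so that $\tfrac12 f(u)u-\tfrac{q}{2}F(u)\ge\tfrac{p_2-q}{2}F(u)$), and discarding the nonnegative boundary integral by convexity of $\Omega_r$, I obtain
\[
\frac{q-2}{2}\,m_{r,s}(\Theta)\ge\frac{N(q-2)-4}{4N}\int_{\Omega_r}|\nabla u|^2\,dx-\Theta\,C_V+\frac{s\beta(p_2-q)}{2}\int_{\Omega_r}F(u)\,dx,
\]
with $C_V=\frac{q-2}{4}\|V\|_\infty+\frac{1}{2N}\|\widetilde{V}\|_\infty$ independent of $r$ and $s$.

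The novelty in the critical regime is in estimating the last term: because $p_1=2+\frac{4}{N}$, the Gagliardo--Nirenberg inequality at this exact exponent reads
\[
\int_{\Omega_r}F(u)\,dx\le\alpha\|u\|_{p_1}^{p_1}\le\alpha\,C_N\,\Theta^{2/N}\int_{\Omega_r}|\nabla u|^2\,dx.
\]
Since $\frac{s\beta(p_2-q)}{2}<0$ and $s\le 1$, substituting this bound and rearranging gives
\[
\frac{q-2}{2}m_{r,s}(\Theta)+\Theta\,C_V\ge\Bigl[\frac{N(q-2)-4}{4N}-\frac{\alpha\beta(q-p_2)}{2}\,C_N\,\Theta^{2/N}\Bigr]\int_{\Omega_r}|\nabla u|^2\,dx.
\]

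The final step is to observe that for $0<\Theta<\widetilde{\Theta}_V$ as defined in Theorem~\ref{t1.10}, the bracketed coefficient on the right is strictly positive. Coupled with the upper bound on $m_{r,s}(\Theta)$ from Lemma~\ref{L10.1}(iii), which is independent of $r$ and $s$, this immediately delivers the uniform estimate on $\int_{\Omega_r}|\nabla u|^2\,dx$. The only real obstacle is bookkeeping: one must check that the explicit threshold $\widetilde{\Theta}_V$ stated in Theorem~\ref{t1.10} is indeed calibrated to keep the bracket positive. The subcritical trick used in Lemma~\ref{L5.3} (absorbing a Gagliardo--Nirenberg term of strictly sub-quadratic order in $\|\nabla u\|_2$) is no longer available, which is precisely what forces the smallness condition on $\Theta$ in this $L^2$-critical case.
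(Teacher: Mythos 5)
Your proof is correct and follows essentially the same route as the paper: the paper's own proof of Lemma \ref{L10.2} likewise "modifies the proof of Lemma \ref{L5.3}" by applying the Gagliardo--Nirenberg inequality at the exact $L^2$-critical exponent $p_1=2+\frac{4}{N}$, so that the $F$-term becomes $\alpha C_N\Theta^{2/N}\|\nabla u\|_2^2$ and is absorbed into the leading coefficient for $\Theta$ small. Your leading coefficient $\frac{N(q-2)-4}{4N}$ is the correct one (the paper prints $\frac{N(p_2-2)-4}{4N}$, which is negative and is evidently a typo carried over from Lemma \ref{L5.3}), and your closing caveat is warranted: positivity of the bracket requires $\Theta^{2/N}<\frac{N(q-2)-4}{2N\alpha\beta(q-p_2)C_N}$, which does not visibly coincide with the $\widetilde{\Theta}_V$ stated in Theorem \ref{t1.10} --- but that mismatch is an inconsistency in the paper's own constants, not a defect of your argument.
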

\begin{proof}
We only need to modify the proof of Lemma \ref{L5.3}. Using Gagliardo-Nirenberg inequality, \eqref{eq3.5} and (iii) in Lemma \ref{L5.1}, we have
 \begin{eqnarray*}
\frac{q-2}{2} m_{r, s}(\Theta)
&\geq & \left[\frac{N(p_2-2)-4}{4 N}-\frac{s\alpha\beta (q-p_2) }{2  }C_{N} \Theta^{\frac{2}{N}}\right] \int_{\Omega_r}|\nabla u|^2 d x\\
&&-\Theta\left(\frac{1}{2 N}\|\nabla V \cdot x\|_{\infty}+\frac{p_2-2}{4}\|V\|_{\infty}\right).
 \end{eqnarray*}
Since $0<\Theta<\widetilde{\Theta}_V$, we can bound $\int_{\Omega_r}|\nabla u|^2 d x$ uniformly in $s$ and $r$.
\end{proof}
\noindent\textbf{Proof of Theorem \ref{t1.10}} The proof is an immediate consequence of Lemmas \ref{L5.4}  and \ref{L3.5}.
\subsection{Proof of Theorem \ref{t1.11}}
Firstly, we modify the proof of Lemma \ref{L3.1}. Using \eqref{eq3.3}, \eqref{eq3.4}, \eqref{eq3.5} and $\frac{1}{2}\leq s\leq1$, it holds
\begin{eqnarray*}
I_{\frac{1}{t}, s}\left(v_t\right)
&\leq & \frac{1}{2}\left(1+\|V\|_{\frac{N}{2}} S^{-1}-2\alpha\beta C_{N} \Theta^{\frac{2}{N}}\right) t^2 \theta\Theta-\frac{1}{2q} t^{\frac{N(q-2)}{2}} \Theta^{\frac{q}{2}}\cdot|\Omega|^{\frac{2-q}{2}} \nonumber\\
&=: & h(t) .
\end{eqnarray*}
Note that since $2+\frac{4}{N}<q<2^*$ and $\beta\leq0$, there exist $0<T_\Theta<t_0$ such that $h(t_0)=0, h(t)<0$ for any $t>t_0, h(t)>0$ for any $0<t<t_0$ and $h(T_\Theta)=\max\limits_{t \in \mathbb{R}^{+}} h(t)$.
\begin{lemma}\label{L10.4}
Let $\left(\lambda_{r, \Theta}, u_{r, \Theta}\right)$ be the solution of \eqref{eq1.1} from Lemma \ref{L3.4}. If $\|\widetilde{V}_{+}\|_{\frac{N}{2}} < 2 S$, then there exists $\bar{\Theta}>0$ such that
$$
\liminf\limits_{r \rightarrow \infty} \lambda_{r, \Theta}>0 \quad \text { for } 0<\Theta<\bar{\Theta} .
$$
\end{lemma}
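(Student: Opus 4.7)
The plan is to mirror the proof of Lemma \ref{L3.6} exactly, with the only genuine adjustment being the treatment of the $f$-contribution in the critical estimate, which is affected by $p_1=2+\frac{4}{N}$. The overall skeleton (positivity of $Q(\Theta)=\liminf_{r\to\infty}\max_{\Omega_r}u_{r,\Theta}$ for small $\Theta$; extraction of weak limits $u_{r,\Theta}\rightharpoonup u_\Theta$ and $\lambda_{r,\Theta}\to\lambda_\Theta$ via Lemma \ref{L3.3}; a case split on $u_\Theta\neq 0$ versus $u_\Theta=0$) carries over verbatim, since the uniform $H^1$ bound and the principal-eigenvalue argument do not depend on the precise value of $p_1$. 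In particular, the positivity step is obtained exactly as in \eqref{eq3.20}--\eqref{eq3.27}: a hypothetical sequence $\Theta_k\to 0$ with $Q(\Theta_k)=0$ yields $\int |u_{r,\Theta_k}|^s\to 0$ for $s>2$, yet the mountain-pass level blows up, forcing $\lambda_{r_k,\Theta_k}\to-\infty$ against the principal-eigenvalue comparison.

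In the main case $u_\Theta\neq 0$, I would combine equation-times-$u_\Theta$ with the Pohozaev identity to obtain
\[
\left(\frac{1}{q}-\frac{1}{2}\right)\lambda_\Theta\Theta
=\left(\frac{N-2}{2N}-\frac{1}{q}\right)\int|\nabla u_\Theta|^2
+\frac{1}{2N}\int\widetilde{V}u_\Theta^2
+\left(\frac{1}{2}-\frac{1}{q}\right)\int V u_\Theta^2
-\frac{\beta}{q}\int\bigl[qF(u_\Theta)-f(u_\Theta)u_\Theta\bigr],
\]
and then invoke $(f_2)$ and the mass-critical Gagliardo--Nirenberg inequality $\int|u|^{p_1}\le C_N\Theta^{2/N}\int|\nabla u|^2$ to produce
\[
-\frac{\beta}{q}\int\bigl[qF-fu\bigr]\;\le\;-\frac{\alpha\beta(q-p_2)C_N}{q}\,\Theta^{2/N}\int|\nabla u_\Theta|^2,
\]
a term of the \emph{same} order in $\|\nabla u_\Theta\|_2^2$ as the leading gradient term, rather than strictly sub-leading, as in Lemma \ref{L3.6}. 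Collecting,
\[
\left(\frac{1}{q}-\frac{1}{2}\right)\lambda_\Theta\Theta
\;\le\;\Bigl[\frac{(N-2)q-2N}{2Nq}-\frac{\alpha\beta(q-p_2)C_N}{q}\Theta^{2/N}\Bigr]\int|\nabla u_\Theta|^2 + O(\Theta),
\]
and the threshold $\widehat{\Theta}_V$ in the statement is chosen precisely so that the bracket remains strictly negative for $0<\Theta<\widehat{\Theta}_V$. The Pohozaev lower bound $\int|\nabla u_\Theta|^2\gtrsim\Theta^{(N(q-2)-2q)/(N(q-2)-4)}$, derived as in \eqref{eq3.30} using $\|\widetilde V_+\|_{\frac{N}{2}}<2S$, then diverges as $\Theta\to 0^+$ (since $2+\frac{4}{N}<q<2^*$ makes the exponent negative), forcing the right-hand side to $-\infty$ and hence $\lambda_\Theta>0$ for all sufficiently small $\Theta$.

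For the case $u_\Theta=0$, I would run the translation/Liouville scheme of Lemma \ref{L3.6}: choose $x_{r,\Theta}$ as a maximum point of $u_{r,\Theta}$, show $|x_{r,\Theta}|\to\infty$, exclude $\operatorname{dist}(x_{r,\Theta},\partial\Omega_r)$ bounded via the half-space Liouville theorem, and recover a nontrivial limit $w$ of $u_{r,\Theta}(\cdot+x_{r,\Theta})$ solving the $V$-free equation on $\mathbb{R}^N$. The previous analysis, now cleaner because $V$ and $\widetilde{V}$ drop out, gives $\lambda_\Theta>0$ for some second threshold $\Theta_2$; taking $\bar{\Theta}=\min\{\Theta_1,\Theta_2,\widehat{\Theta}_V\}$ completes the argument.

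The main obstacle is the algebraic matching in the new critical inequality: one must verify simultaneously that $\widehat{\Theta}_V$ keeps the bracket $\frac{(N-2)q-2N}{2Nq}-\frac{\alpha\beta(q-p_2)C_N}{q}\Theta^{2/N}$ strictly negative, and that the Pohozaev lower bound on $\|\nabla u_\Theta\|_2^2$ dominates the $O(\Theta)$ remainder as $\Theta\to 0^+$. In Lemma \ref{L3.6} the $F$-contribution was of strictly sub-leading order in $\|\nabla u_\Theta\|_2^2$ and could simply be absorbed into the error; here, at the $L^2$-critical threshold $p_1=2+\frac{4}{N}$, the two orders coincide, and this is precisely what forces the explicit smallness condition $\Theta<\widehat{\Theta}_V$ recorded in Theorem \ref{t1.11}.
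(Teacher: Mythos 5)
Your proposal is correct and follows essentially the same route as the paper: the paper's proof of Lemma \ref{L10.4} is exactly a modification of Lemma \ref{L3.6} in which the $F$-contribution, estimated via $(f_2)$, \eqref{eq3.5} and the mass-critical Gagliardo--Nirenberg inequality, now enters at the same order $\|\nabla u_\Theta\|_2^2$ as the leading term, producing the bracket $\frac{(N-2)q-2N}{2Nq}-\frac{\alpha\beta(q-p_2)C_N}{q}\Theta^{2/N}$ which stays strictly negative precisely for $0<\Theta<\widehat{\Theta}_V$, while the lower bound \eqref{eq3.30} forces the right-hand side to $-\infty$ as $\Theta\to 0$. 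Your treatment of the positivity of $Q(\Theta)$ and of the case $u_\Theta=0$ likewise matches the argument the paper carries over verbatim from Lemma \ref{L3.6}.
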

\begin{proof}
We only need to modify the proof of Lemma \ref{L3.6}. It follows from \eqref{eq3.5}, \eqref{eq3.28}, \eqref{eq3.29}, \eqref{eq3.30}, $(f_2)$ and $2+\frac{4}{N}<q<2^*$ that
\begin{eqnarray*}
 \left(\frac{1}{q}-\frac{1}{ 2}\right)\lambda_\Theta \int_{\mathbb{R}^N} u_\Theta^2 d x
& \leq& \left[\frac{(N-2) q-2 N}{2 N q}-\frac{\beta(q-p_2)\alpha C_{N}}{q}   \Theta^{ \frac{2 }{N}} \right] \int_{\mathbb{R}^N}\left|\nabla u_\Theta\right|^2 d x \\
&&+\frac{\|\widetilde{V}\|_{\infty}}{2 N} \Theta+\frac{(q-2)\|V\|_{\infty}}{2 q} \Theta\\
& \rightarrow&-\infty \ \text { as } \Theta \rightarrow 0,
\end{eqnarray*}
since $0<\Theta<\widehat{\Theta}_V$.
\end{proof}
\noindent\textbf{Proof of Theorem \ref{t1.11}} The proof is an immediate consequence of Lemmas \ref{L3.4}, \ref{L3.5} and \ref{L10.4}.
\subsection{Proof of Theorem \ref{t1.12}}
Similarly, we only need to modify the proof of Theorem \ref{t1.2}. Since $\beta>0$, it follows from the Gagliardo-Nirenberg inequality and the H\"older inequality that
\begin{eqnarray*}
I_r(u)&=&\frac{1}{2} \int_{\Omega_r}|\nabla u|^2 d x+\frac{1}{2} \int_{\Omega_r} V(x) u^2 d x-\frac{N}{2N+4} \int_{\Omega_r}|u|^{2+\frac{4}{N}} d x-\beta \int_{\Omega_r}F(u) d x\\
&\geq&\frac{1}{2}\left(1-\left\|V_{-}\right\|_{\frac{N}{2}} S^{-1}-\frac{NC_{N}\Theta^{\frac{2}{N}}}{N+2}\right) \int_{\Omega_r}|\nabla u|^2 d x\\
&&-\alpha\beta C_{N, p_1} \Theta^{\frac{2 p_1-N(p_1-2)}{4}}\left(\int_{\Omega_r}|\nabla u|^2 d x\right)^{\frac{N(p_1-2)}{4}}\\
&=&h_1(t),
\end{eqnarray*}
where
\begin{eqnarray*}
h_1(t):=\frac{1}{2}\left(1-\left\|V_{-}\right\|_{\frac{N}{2}} S^{-1}-\frac{NC_{N}\Theta^{\frac{2}{N}}}{N+2}\right)t^2-\alpha\beta C_{N, p_1} \Theta^{\frac{2 p_1-N(p_1-2)}{4}}t^{\frac{N(p_1-2)}{2}}.
\end{eqnarray*}
In view of $2<p_1<2+\frac{4}{N}$, there exists $T_\Theta>0$ such that $h_1(t)<0$ for $0<t<T_\Theta$ and $h_1(t)>0$ for $t>T_\Theta$.

\noindent\textbf{Proof of Theorem \ref{t1.12}} The proof is a direct consequence of Lemma \ref{L4.1} and Lemma \ref{L3.5}.
\section{Final comments}
Some similar result(Theorems \ref{t1.1}, \ref{t1.2}, \ref{t1.3}, but there are subtle changes in the assumptions) can be proved for the following class of problem
\begin{equation*}
 \left\{\aligned
& -\Delta u+V(x)u+\lambda u=w(u)+\beta |u|^{p-2}u, &x\in \Omega, \\
& \int_{\Omega} |u|^2dx=\Theta,u\in H_0^1(\Omega), &x\in \Omega,
\endaligned
\right.
\end{equation*}
where $\Omega \subset \mathbb{R}^N$ is either all of $\mathbb{R}^N$ or a bounded smooth convex domain, $N \geq 3$, $2<p< 2+\frac{4}{N}$, the mass $\Theta>0$ and the parameter $\beta \in \mathbb{R}$ are prescribed. Nonlinearity $w$ satisfies:
\begin{itemize}
\item[$(W_1)$]\ $w\in C^1(\mathbb{R}, \mathbb{R})$ and $w$ is odd.

\item[$(W_2)$]\ There exists some $(p_1, p_2) \in \mathbb{R}_{+}^2$ satisfying $2+\frac{4}{N}<p_2 \leq p_1<2^*$ such that
$$
p_2 W(\tau) \leq w(\tau) \tau \leq p_1 W(\tau) \text { with } W(\tau)=\int_0^\tau w(t) d t .
$$
\end{itemize}

\end{document}